\newcommand{\+}{\protect\nobreakdash-}
\newcommand{\<}{\protect\nobreakdash--}
\renewcommand{\:}{\colon}
\newcommand{\rarrow}{\longrightarrow}
\newcommand{\ot}{\otimes}
\newcommand{\ocn}{\odot}
\newcommand{\tim}{\rightthreetimes}
\newcommand{\lrarrow}{\mskip.5\thinmuskip\relbar\joinrel\relbar\joinrel
 \rightarrow\mskip.5\thinmuskip\relax}
\DeclareMathOperator{\Hom}{Hom}
\DeclareMathOperator{\Ext}{Ext}
\DeclareMathOperator{\End}{End}
\DeclareMathOperator{\Id}{Id}
\DeclareMathOperator{\id}{id}
\DeclareMathOperator{\coker}{coker}
\DeclareMathOperator{\rad}{rad}
\DeclareMathOperator{\tp}{top}
\newcommand{\modl}{{\operatorname{\mathsf{--mod}}}}
\newcommand{\modr}{{\operatorname{\mathsf{mod--}}}} 
\newcommand{\vect}{{\operatorname{\mathsf{--vect}}}}
\newcommand{\contra}{{\operatorname{\mathsf{--contra}}}}
\newcommand{\discr}{{\operatorname{\mathsf{discr--}}}} 
\newcommand{\Sets}{\mathsf{Sets}}
\newcommand{\Funct}{\mathsf{Funct}}
\newcommand{\Add}{\mathsf{Add}}
\newcommand{\Prod}{\mathsf{Prod}}
\newcommand{\Rex}{\mathsf{Rex}}
\newcommand{\Ab}{\mathsf{Ab}}
\newcommand{\inj}{\mathsf{inj}}
\newcommand{\proj}{\mathsf{proj}}
\newcommand{\fp}{\mathsf{fp}}
\newcommand{\rop}{{\mathrm{op}}}
\newcommand{\sop}{{\mathsf{op}}}
\newcommand{\sss}{{\mathsf{ss}}}
\newcommand{\Mat}{\mathfrak{Mat}}
\newcommand{\A}{\mathfrak A}
\newcommand{\B}{\mathfrak B}
\newcommand{\C}{\mathfrak C}
\newcommand{\D}{\mathfrak D}
\newcommand{\E}{\mathfrak E}
\newcommand{\F}{\mathfrak F}
\newcommand{\HH}{\mathfrak H}
\newcommand{\I}{\mathfrak I}
\newcommand{\J}{\mathfrak J}
\newcommand{\K}{\mathfrak K}
\newcommand{\R}{\mathfrak R}
\renewcommand{\S}{\mathfrak S}
\newcommand{\T}{\mathfrak T}
\newcommand{\U}{\mathfrak U}
\newcommand{\V}{\mathfrak V}
\newcommand{\MM}{\mathfrak M}
\newcommand{\M}{\mathcal M}
\newcommand{\N}{\mathcal N}
\newcommand{\cV}{\mathcal V}
\newcommand{\sA}{\mathsf A}
\newcommand{\sB}{\mathsf B} 
\newcommand{\sC}{\mathsf C}
\newcommand{\sD}{\mathsf D}
\newcommand{\boZ}{\mathbb Z}
\newcommand{\boQ}{\mathbb Q}
\newcommand{\boT}{\mathbb T}
\newcommand{\Section}[1]{\bigskip\section{#1}\medskip}
\theoremstyle{plain}
\newtheorem{thm}{Theorem}[section]
\newtheorem{qst}[thm]{Question}
\newtheorem{conj}[thm]{Conjecture}
\newtheorem{lem}[thm]{Lemma}
\newtheorem{prop}[thm]{Proposition}
\newtheorem{cor}[thm]{Corollary}
\theoremstyle{definition}
\newtheorem{ex}[thm]{Example}
\newtheorem{exs}[thm]{Examples}
\newtheorem{rem}[thm]{Remark}
\newtheorem{rems}[thm]{Remarks}
\begin{document}

\title{Topologically semisimple and topologically perfect
topological rings}

\author{Leonid Positselski}

\address{Leonid Positselski, Institute of Mathematics
of the Czech Academy of Sciences \\
\v Zitn\'a~25, 115~67 Prague~1 \\ Czech Republic; and
\newline\indent Laboratory of Algebra and Number Theory \\
Institute for Information Transmission Problems \\
Moscow 127051 \\ Russia} 

\email{positselski@yandex.ru}

\author{Jan \v S\v tov\'\i\v cek}

\address{Jan {\v S}{\v{t}}ov{\'{\i}}{\v{c}}ek, Charles University
in Prague, Faculty of Mathematics and Physics, Department of Algebra,
Sokolovsk\'a 83, 186 75 Praha, Czech Republic}

\email{stovicek@karlin.mff.cuni.cz}

\begin{abstract}
 Extending the Wedderburn--Artin theory of (classically) semisimple
associative rings to the realm of topological rings with right linear
topology, we show that the abelian category of left contramodules
over such a ring is split (equivalently, semisimple) if and only if
the abelian category of discrete right modules over the same ring is
split (equivalently, semisimple).
 Our results in this direction complement those of
Iovanov--Mesyan--Reyes.
 An extension of the Bass theory of left perfect rings to
the topological realm is formulated as a list of conjecturally
equivalent conditions, many equivalences and implications between
which we prove.
 In particular, all conditions are equivalent for topological rings
with a countable base of neighborhoods of zero and for topologically
right coherent topological rings.
 Considering the rings of endomorphisms of modules as topological rings
with the finite topology, we establish a close connection between
the concept of a topologically perfect topological ring
and the theory of modules with perfect decomposition.
 Our results also apply to endomorphism rings and direct sum
decompositions of objects in certain additive categories more general
than the categories of modules; we call them
\emph{topologically agreeable} categories.
 We show that any topologically agreeable split abelian category is
Grothendieck and semisimple.
 We also prove that a module $\Sigma$\+coperfect over its endomorphism
ring has a perfect decomposition provided that either the endomorphism
ring is commutative or the module is countably generated, partially
answering a question of Angeleri H\"ugel and Saor\'\i n.
\end{abstract}

\maketitle

\tableofcontents

\section*{Introduction}
\medskip

% Part 1. The main goal.
 An abelian category $\sA$ is called \emph{semisimple} if all its
objects are (possibly infinite) coproducts of simple objects.
 For the category of modules over an associative ring $\sA=S\modl$,
this can be equivalently restated as the condition that all short
exact sequences in $\sA$ split.
 This property is left-right symmetric: the category of left modules
over an associative ring $S$ is semisimple if and only if the category
of right $S$\+modules is.
 Such rings $S$ are called \emph{classically semisimple} (or
``semisimple Artinian'').
 The classical Wedderburn--Artin theorem describes them as finite
products of rings of matrices (of some finite size) over skew-fields.
  
 An associative ring $R$ is said to be \emph{left perfect} if all
flat left $R$\+modules are projective, or equivalently, all descending
chains of cyclic right $R$\+modules terminate.
 The equivalence of these two and several other conditions describing
left perfect rings was established in the classical paper of
Bass~\cite[Theorem~P]{Bas}.
 In particular, one of these conditions characterizes perfect rings by
their structural properties: a ring $R$ is left perfect if and only if
its Jacobson radical $H$ is left T\+nilpotent and the quotient ring
$R/H$ is classically semisimple.

 In this paper we consider complete, separated topological associative
rings with a right linear topology (which means that open right ideals
form a base of neighborhoods of zero).
 Our aim is to extend the theorems of Wedderburn--Artin and Bass to such
topological rings.

% End of Part 1.

\smallskip

% Part 2. Motivation.

 Right linear topological rings are ubiquitous, be it in commutative
algebra, where one often considers the $I$-adic topology induced by
an ideal $I$, the $R$\+topology introduced by Matlis on a commutative
domain (or even an arbitrary commutative ring) $R$, non-commutative
generalizations of both the adic topologies and the $R$\+topology in
the form of Gabriel topologies, or the naturally arising pseudocompact
topology on the vector-space dual of a coalgebra over a field.

 It is beyond our knowledge and beyond the scope of this introduction
to give an adequate historical overview of topological structures in
the theory of rings and modules, or in additive categories.
 The reader will find small elements of such historical discussion
below in this introduction, as well as in
Remarks~\ref{Q/R-endomorphisms-remark} and~\ref{almost-perfect-remark}
and elsewhere in the main body of the paper.

 The key class of examples for us is provided
by endomorphism rings of (possibly infinitely generated) modules.
A classical construction equips any such endomorphism ring with
the \emph{finite topology}. In fact, a similar construction applies
to endomorphism rings in any locally finitely generated Grothendieck
category, providing us with even more examples.
Conversely, as we explain in this paper, every complete, separated topological ring
with right linear topology can be obtained as
the ring of endomorphisms of a module equipped with the finite topology.

 This directly relates the theory of topological
rings to the theory of (decompositions of) modules over ordinary rings,
objects in locally finitely generated Grothendieck categories or, more
generally, objects in what we call \emph{topologically agreeable} categories.
Most remarkably, however, this interaction between the theory of
topological rings with right linear topology and the theory of direct
sum decompositions of modules brings applications in both directions.

% End of Part 2.

\smallskip

% Part 3. Summary of results on topologically semisimple rings.

 One of the distinctive features of working with
right linear topological rings, as opposed to ordinary rings, is that 
a symmetry between the categories of left and right modules is lost
from the outset.
With a complete, separated topological associative ring $\S$ with right
linear topology, we associate the abelian category $\S\contra$ of left
\emph{$\S$\+contramodules} and the abelian category $\discr\S$ of
\emph{discrete} right \emph{$\S$\+modules}.
These are two abelian categories of quite different nature: while
$\discr\S$ is a hereditary pretorsion class in $\modr\S$ and
a Grothendieck abelian category, $\S\contra$ is a locally presentable
abelian category with enough projective objects~\cite{PR}.

 Then it turns that $\S\contra$ is a semisimple Grothendieck abelian
category if and only if the abelian category $\discr\S$ is semisimple.
 Moreover, the above two equivalent properties of a topological ring $\S$
are also equivalent to the seemingly weaker conditions that all short
exact sequences are split in $\S\contra$, or that all short exact
sequences are split in $\discr\S$.
 Topological rings $\S$ satisfying these equivalent conditons are called
\emph{topologically semisimple}.
 We describe them as the infinite topological products of the topological
rings of infinite-sized, row-finite matrices over skew-fields.

 An extension of the Wedderburn--Artin theory to topological rings
was also studied in the paper of Iovanov, Mesyan, and Reyes~\cite{IMR},
and the same class of topological rings (up to the passage to
the opposite ring) was obtained as the result, characterized by a list
of equivalent conditions different from ours.
 The authors of~\cite{IMR} discuss pseudo-compact modules, while we
prefer to consider discrete modules.
 There are no contramodules in~\cite{IMR}, so the topological
semisimplicity is described in~\cite{IMR} in terms of modules on one
side only, while we have both left and right modules of two different
kinds.
 There are many equivalent characterizations of topologically
semisimple topological rings in~\cite[Theorem~3.10]{IMR}, with
the proof of the equivalence substantially based on the preceding
results of the book of Warner~\cite{War}.
 So our results on topologically semisimple topological rings complement
those of Iovanov, Mesyan, and Reyes by providing further conditions
equivalent to the ones on their list.

% End of Part 3.

\smallskip

% Part 4. Summary of results on topologically perfect rings.

 Extending Bass' theory of left perfect rings to the topological realm
is a harder task, at which we only partially succeed.
 Given a complete, separated topological associative ring $\R$ with
right linear topology, we show that projectivity of all flat left
$\R$\+contramodules implies the descending chain condition for cyclic
discrete right $\R$\+modules.
 The converse implication is equivalent to a positive answer to
a certain open question in the theory of direct sum decompositions of
modules (Question~\ref{as-main-question} below), as we explain, and
we show that it holds under various additional assumptions
including the cases where
\begin{enumerate}
\item the topological ring $\R$ has a countable base of
neighborhoods of zero (proved in this paper),
\item the topological ring $\R$ is topologically right coherent
(this is an interpretation of a result by Roos in~\cite{Ro}, see below), or
\item the underlying ring of $\R$ is commutative
(this follows from results in~\cite{Pproperf}).
\end{enumerate}

Using a combination of contramodule-theoretic techniques developed
in the paper~\cite{Pproperf} with results on direct sum decompositions
of modules, we further show that
all flat left contramodules over a topological ring $\R$ are projective
if and only if $\R$ has a certain set of structural properties.
Namely, the topological Jacobson radical $\HH$ of the topological
ring $\R$ has to be \emph{topologically left T\+nilpotent} and
\emph{strongly closed in\/ $\R$}, and the topological quotient ring
$\S=\R/\HH$ needs to be topologically semisimple.
We call such (complete, separated, right linear) topological rings
$\R$ \emph{topologically left perfect}.

 The classical Govorov--Lazard theorem tells that flat modules over
a ring are precisely the direct limits of (finitely generated)
projective modules.
In the contramodule context, it is easy to prove that direct limits
of projective contramodules are flat, but it is not known whether
an analogue of the Govorov--Lazard theorem holds.%
\footnote{A counterexample is available now: \cite[Example~10.2]{PPT}.}
Nevertheless, we prove that \emph{if} the class of projective left
$\R$\+contramodules is closed under direct limits, \emph{then} all
flat left $\R$\+contramodules are projective.

 Further conditions on a topological ring $\R$ which, as we show,
are equivalent to the topological perfectness, are formulated in
terms of \emph{projective covers} in the abelian category $\R\contra$.
In fact, the main result of the paper~\cite{BPS} tells that
a direct limit of projective contramodules is projective whenever
it has a projective cover.
On the other hand, one can show that all left contramodules over
a topologically left perfect topological ring have projective covers.
Thus all flat left $\R$\+contramodules are projective if and only if
all flat left $\R$\+contramodules have projective covers, if and only
if all left $\R$\+contramodules have projective covers, and if and
only if the topological ring $\R$ is topologically left perfect.

% small historical discussion

\smallskip

 It is worth mentioning that we essentially never consider topological
modules in this paper, but only topological rings and topologies on
additive categories.
 One reason for that is because topological modules rarely form
abelian categories (pseudo-compact modules are a notable exception,
but we prefer to work with discrete modules).
 The following historical examples illustrate the point.

 Harrison~\cite{Harr} called an abelian group $C$ ``co-torsion'' if
$\Hom_\boZ(\boQ,C)=0=\Ext^1_\boZ(\boQ,C)$ (in the present-day language,
such abelian groups would be called ``reduced cotorsion'').
 The category of abelian groups with these properties is abelian.
 Matlis~\cite{Mat} extended this definition to modules over
a commutative domain~$R$: an $R$\+module $C$ was called ``cotorsion''
in~\cite{Mat} if $\Hom_R(Q,C)=0=\Ext^1_R(Q,C)$, where $Q$ is the field
of fractions of $R$ (in the modern language, such modules might be
called ``h\+reduced Matlis cotorsion'').
 The category of $R$\+modules with these properties is abelian
whenever the projective dimension of the $R$\+module $Q$ does not
exceed~$1$; such commutative domains are now known as \emph{Matlis
domains}.

 The so-called ``$R$\+topology'' on the domain $R$ and all $R$\+modules
was also defined and studied in the memoir~\cite{Mat}.
 One of the main results~\cite[Theorem~6.10]{Mat} was that a bounded
torsion $R$\+module is cotorsion if and only if it is complete in
the $R$\+topology.
 This result was extended to arbitrary commutative rings $R$
in the book~\cite[Corollary~2.3]{Mat2} and further to commutative rings
$R$ with a fixed multiplicative subset $S\subset R$ in
the paper~\cite[Theorem~2.5]{PMat}, where the localization $S^{-1}R$
plays the role of the field/ring of fractions~$Q$.
 The ``$S$\+topology'' on any $R$\+module $M$ (including the ring $R$
itself) is defined by the rule that the submodules $sM\subset M$,
where $s\in S$, form a base of neighborhoods of zero.

 In the paper~\cite{PMat}, the ``$S$\+contramodules'' terminology is
used for what Matlis called cotorsion modules.
 These form a different, but related category of contramodules as
compared to the ones studied in the present paper.
 An $R$\+module with bounded $S$\+torsion is an $S$\+contramodule if
and only if it is complete in the $S$\+topology.
 When the projective dimension of the $R$\+module $S^{-1}R$ does not
exceed~$1$, the category of $S$\+contramodules is
abelian~\cite[Theorem~3.4]{PMat} (while the category of $R$\+complete
or $S$\+complete modules is usually not abelian).

Thus, there is a strong connection between topological completeness
of modules and certain homological properties of modules in this context,
but the homological conditions are more convenient to work with, as
they define better-behaved module categories.
% Contramodules over topological rings, which we study in this paper,
%are almost generalizations of such classes of modules defined by
%homological conditions (see~\cite[Examples~2.4(3) and~5.4(2)]{Pper} for
%a discussion of the comparison).
 Contramodules over topological rings,
which we study in this paper, can be viewed as one step further
in this direction. Their categories in the above cases
are very closely related, and often equivalent, to classes
of modules defined by such homological conditions
(see~\cite[Examples~2.4(3) and~5.4(2)]{Pper} for
a discussion of the comparison).
Their advantage, however, is a broader generality and the fact that
the concept is intrinsic to a complete, separated topological
ring with right linear topology, without any reference to specific
classes of modules.

% End of Part 4.

\smallskip

% Part 5. Connections (to direct sum decomposition theory and others).

 Let us now look closer at the connection of our results to the existing
literature and open problems in module theory and category theory.
 The theory of direct sum decompositions of modules goes back to
the classical Krull--Schmidt--Remak--Azumaya uniqueness
theorem~\cite[Section~V.5]{St}, \cite[Section~2]{Fac}.
There exists an extensive literature on this subject now, with lots of
known results and open problems~\cite{CJ,Har,MM,GG2,GT}.
The fact that the finite topology on the endomorphism ring of a module
is relevant in the study of its direct sum decompositions is
well understood~\cite{CN}, but contramodules over the topological rings
of endomorphisms have not been used in such studies yet.
This is a new technique which we bring to bear on the subject
(using the approach originally developed in our previous
paper~\cite{PS}).

In particular, the above-mentioned open question was posed by
Angeleri H\"ugel and Saor\'\i n as~\cite[Question~1 in Section~2]{AS}:

\begin{qst} \label{as-main-question}
Let $A$ be an associative ring and $M$ be an $A$\+module.
Denote by $R$ the ring of $A$\+linear endomorphisms of~$M$.
Assume that $M$ is\/ $\Sigma$\+coperfect over~$R$.
Does it follow that the $A$\+module $M$ has a perfect decomposition?
\end{qst}

Here an $R$\+module is said to be \emph{coperfect} if it satisfies
the descending chain condition on cyclic (equivalently, finitely
generated) $R$\+submodules.
An $R$\+module $M$ is \emph{$\Sigma$\+coperfect} if the countable direct
sum $M^{(\omega)}$ of copies of $M$ is a coperfect $R$\+module.
It follows from our results that the answer to
Question~\ref{as-main-question} is positive whenever either the ring
$R$ is commutative, or it is topologically right coherent in
the finite topology, or the $A$\+module $M$ is countably generated.

It was shown in the same paper~\cite[Theorem~1.4]{AS} that
an $A$\+module $M$ has perfect decomposition if and only if, for
any direct system of $A$\+modules $M_i\in\Add(M)$ indexed by
a linearly ordered set of indices~$i$, the natural surjective
$A$\+module morphism $\bigoplus_i M_i\rarrow\varinjlim_i M_i$ is split.
In this paper we extend this result to objects of (above mentioned)
topologically agreeable additive categories.

We stress again that interaction between the theory of
topological rings with right linear topology and the theory of direct
sum decompositions of modules brings applications in both directions.
In particular, the above-mentioned results concerning flat
contramodules are applications of module theory to topological algebra.
Another such application is the theorem that any split abelian
category admitting a topologically agreeable structure is Grothendieck 
and semisimple.
Our results concerning the question of Angeleri H\"ugel and Saor\'\i n,
on the other hand, are applications of topological algebra to
module theory.

 Applications of topological rings to abelian categories,
more specifically to certain classes of Grothendieck abelian categories,
were initiated by Gabriel already in his dissertation~\cite{Gab}.
 This approach was further developed by Roos in~\cite{Ro}.
 In fact, Gabriel described locally finite Grothendieck categories
in terms of pseudo-compact
topological rings~\cite[n$^{\mathrm{os}}$~IV.3\<4]{Gab}.
 Roos described locally Noetherian Grothendieck categories and their
conjugate counterparts, the locally coperfect locally coherent
Grothendieck categories, in terms of topologically coperfect and
coherent topological rings~\cite[Theorem~6]{Ro}.
 It follows from Roos' result that all flat left $\R$\+contramodules are
projective whenever the category of discrete right $\R$\+modules is
locally coherent and satisfies the descending chain condition for
coherent (or finitely generated, or cyclic) objects/modules.
This is what is behind one of the cases above where
the descending chain condition on cyclic discrete right $\R$\+modules
characterizes topological perfectness.

Finally, the theory of topological rings provides new cases
where the following open problem due to Enochs (\cite[Section~5.4]{GT})
can be answered affirmatively:

\begin{qst} \label{enochs-question}
Let $A$ be an associative ring and $\sC$ be a covering class of modules. Is $\sC$ closed under direct limits?
\end{qst}

The idea is again to study the problem for classes of the form $\sC=\Add(M)$, where $M$ is a module, and translate the problem under suitable hypotheses to the question whether suitable contramodules over the endomorphism ring $\R=\End(M)^\sop$ have projective covers and, thus, whether $\R$ is topologically left perfect. This direction is already beyond the scope of this paper and we refer to~\cite{BP2,BPS} for a detailed account.

% End of Part 5.

\medskip\noindent
\textbf{Acknowledgement.}
 The authors are grateful to Jan Trlifaj, Pavel P\v r\'\i hoda,
Pace Nielsen, and Manuel Reyes for very helpful discussions and
communications.
 The first-named author is supported by research plan
RVO:~67985840. 
 The second-named author was supported by the Czech Science
Foundation grant number 17-23112S.

\Section{Preliminaries on Topological Rings}

 We mostly refer to~\cite[Section~2]{Pproperf}
(see also~\cite[Section~2]{Pcoun}) and the references therein
for the preliminary material, so the section below only contains
a brief sketch of the key definitions and constructions.

 All \emph{topological abelian groups} in this paper are presumed to
have a base of neighborhoods of zero consisting of open subgroups.
 Subgroups, quotient groups, and products of topological groups are
endowed with the induced/quotient/product topologies.
 The \emph{completion} of a topological abelian group $A$ is
the abelian group $\A=\varprojlim_U A/U$ (where $U$ ranges over
the open subgroups in~$A$) endowed with
the \emph{projective limit topology}.
 A topological abelian group $A$ is \emph{complete} if the completion
morphism $A\rarrow\A$ is surjective, and \emph{separated} if this map
is injective.
 The completion $\A$ of any topological abelian group $A$ is complete
and separated~\cite[Sections~2.1\<2 and~8]{Pproperf}.

 Unless otherwise mentioned, all \emph{rings} are presumed to be
associative and unital.
 The Jacobson radical of a ring $R$ is denoted by $H(R)$.
 A topological ring $R$ is said to have a \emph{right linear topology}
if open right ideals form a base of neighborhoods of zero in~$R$.
 The completion $\R$ of a topological ring $R$ with right linear
topology is again a topological ring with right linear topology, and
the completion map $R\rarrow\R$ is a continuous ring
homomorphism~\cite[Section~2.3]{Pproperf}.

 Let $R$ be a topological ring with right linear topology and $\R$
be the completion of~$R$.
 A right $R$\+module $\N$ is said to be \emph{discrete} if
the annihilator of every element of $\N$ is an open right ideal in~$R$.
 The full subcategory of discrete right $R$\+modules $\discr R
\subset\modr R$ is closed under submodules, quotients, and infinite
direct sums in the category of right $R$\+modules $\modr R$;
so $\discr R$ is a Grothendieck abelian category.
 The categories of discrete right modules over a topological ring $R$
and its completion $\R$ are naturally equivalent, $\discr R\cong
\discr\R$ \,\cite[Section~2.4]{Pproperf}.

 Given an abelian group $A$ and a set $X$, we use the notation
$A[X]=A^{(X)}$ for the direct sum of $X$ copies of the group~$A$.
 For a complete, separated topological abelian group~$\A$, we put
$\A[[X]]=\varprojlim_{\U\subset\A}(\A/\U)[X]$, where the projective
limit is taken over all the open subgroups $\U\subset\A$.
 The set $\A[[X]]$ is interpreted as the set of all \emph{infinite
formal linear combinations} $\sum_{x\in X} a_xx$ of elements of $X$
with the coefficients $a_x\in\A$ forming an $X$\+indexed family
of elements $(a_x)_{x\in X}$ \emph{converging to zero} in
the topology of $\A$.
 The latter condition means that, for every open subgroup $\U\subset\A$,
the set of all $x\in X$ for which $a_x\notin\U$ is finite.
 A closed subgroup $\U$ in a complete, separated topological abelian
group $\A$ is said to be \emph{strongly closed} if the quotient
group $\A/\U$ is complete and the map $\A[[X]]\rarrow(\A/\U)[[X]]$
induced by the morphism $\A\rarrow\A/\U$ is surjective for every
set~$X$ \,\cite[Sections~2.5 and~2.11]{Pproperf}.

 The assignment of the set $\A[[X]]$ to a set $X$ is naturally extended
to a covariant functor $X\longmapsto\A[[X]]\:\Sets\rarrow\Sets$
from the category of sets to itself (or, if one wishes, to the category
$\Ab$ of abelian groups).
 Given a complete, separated topological associative ring $\R$ with
right linear topology, the functor $X\longmapsto\R[[X]]$ has
a natural structure of a \emph{monad} on the category of sets
\cite[Section~2.6]{Pproperf}.
 This means that for every set $X$, one also considers a natural map
$\epsilon_X\:X\rarrow\R[[X]]$ (called the ``point measure'' map
and defined in terms of the zero and unit elements in $\R$) and
a natural map $\phi_X\:\R[[\R[[X]]]]\rarrow\R[[X]]$ (called
the ``opening of parentheses'' map and defined in terms of
the multiplication of pairs of elements and infinite sums of
zero-converging families of elements in~$\R$), which satisfy
certain associativity and unitality conditions.

 A \emph{left contramodule} over a topological ring $\R$ is, by
the definition, a module (or, in the more conventional terminology,
an ``algebra'') over this monad on $\Sets$.
 In other words, a left $\R$\+contramodule $\C$ is a set endowed
with a \emph{left contraaction} map $\pi_\C\:\R[[\C]]\rarrow\C$
satisfying the associativity and unitality equations with respect
to the natural transformations $\phi$ and~$\epsilon$.
 The \emph{free} left $\R$\+contramodule $\R[[X]]$ spanned by a set $X$
is the free module/algebra over the monad $X\longmapsto\R[[X]]$ on
$\Sets$.
 The category of left $\R$\+contramodules $\R\contra$ is a locally
presentable abelian category with enough projective objects; the latter
are precisely the direct summands of the free left $\R$\+contramodules
$\R[[X]]$.
 The free left $\R$\+contramodule with one generator $\R=\R[[*]]$ is
a projective generator of the abelian category $\R\contra$.
 The underlying set of a left $\R$\+contramodule carries a natural
left $\R$\+module structure, which provides a faithful, exact,
limit-preserving forgetful functor $\R\contra\rarrow\R\modl$
\,\cite[Section~2.7]{Pproperf}, \cite[Sections~1.1\<2 and~5]{PR}.

 A class of examples of left $\R$\+contramodules is constructed by
dualizing discrete right $\R$\+modules.
 Let $A$ be an associative ring and $\N$ be an $A$\+$\R$\+bimodule
whose underlying right $\R$\+module is discrete.
 Let $V$ be a left $A$\+module.
 Then the abelian group $\D=\Hom_A(\N,V)$ has a natural structure of
left $\R$\+contramodule with the contraaction map given by the rule
$$
 \pi_\D\left(\sum\nolimits_{d\in\D}r_dd\right)(b)
 =\sum\nolimits_{d\in\D}d(br_d)
 \qquad\text{for all $b\in\N$}.
$$
 Here the sum in the right-hand side is finite because the annihilator
of~$b$ is open in $\R$ and the family of elements $(r_d\in\R)_{d\in\D}$
converges to zero in~$\R$ \,\cite[Section~2.8]{Pproperf},
\cite[Section~2.8]{Pcoun}.

 The \emph{contratensor product} $\N\ocn_\R\C$ of a discrete right
$\R$\+module $\N$ and a left $\R$\+contramodule $\C$ is an abelian
group constructed as the cokernel of (the difference of) the natural
pair of abelian group homomorphisms $\N\ot_\boZ\R[[\C]]
\rightrightarrows\N\ot_\boZ\C$.
 Here one of the two maps $\N\ot_\boZ\R[[\C]]\rarrow\N\ot_\boZ\C$ is
simply induced by the contraaction map $\pi_\C\:\R[[\C]]\rarrow\C$,
while the other one is constructed in terms of the right action of
$\R$ in $\N$ (using the assumption that this right action is
\emph{discrete} in combination with the description of $\R[[\C]]$
as the set of all formal linear combinations of elements of $\C$
with \emph{zero-convergent} families of coefficients in~$\R$).
 The contratensor product $\N\ocn_\R\C$ is, generally speaking,
a quotient group of the tensor product $\N\ot_\R\C$.

 For any $A$\+$\R$\+bimodule $\N$ whose underlying right $\R$\+module
is discrete, any left $A$\+module $V$, and any left $\R$\+contramodule
$\C$, there is a natural isomorphism of abelian
groups~\cite[Section~2.8]{Pproperf}, \cite[Section~2.8]{Pcoun}
$$
 \Hom^\R(\C,\Hom_A(\N,V))\,\cong\,\Hom_A(\N\ocn_\R\C,\>V),
$$
where $\Hom^\R(\C,\D)$ is the notation for the group of all
morphisms $\C\rarrow\D$ in the abelian category $\R\contra$.

 For any left $\R$\+contramodule $\C$ and any set $X$, there is
a natural isomorphism of abelian groups~\cite[Section~2.7]{Pproperf}
$$
 \Hom_\R(\R[[X]],\C)\,\cong\,\Hom_\Sets(X,\C).
$$
 For any discrete right $\R$\+module $\N$ and any set $X$, there is
a natural isomorphism of abelian groups~\cite[Section~2.8]{Pproperf}
$$
 \N\ocn_\R\R[[X]]\,\cong\,\N[X]=\N^{(X)}.
$$

\Section{Split and Semisimple Abelian Categories}

 A nonzero object in an abelian category is \emph{simple} if it has
no nonzero proper subobjects.
 An object is \emph{semisimple} if it is a coproduct of simple objects.
 An abelian category is called \emph{semisimple} if all its objects
are semisimple.
 We will say that an abelian category $\sA$ is \emph{split} if all
short exact sequences in $\sA$ split.

\begin{lem}
 Every semisimple abelian category is split.
\end{lem}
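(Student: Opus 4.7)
The plan is to prove the stronger statement that every subobject of a semisimple object is a direct summand. This immediately gives splitting of any short exact sequence $0\rarrow A\rarrow B\rarrow C\rarrow 0$ in $\sA$, since a retraction $B\rarrow A$ of the inclusion exhibits the splitting.

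So fix a decomposition $B=\bigoplus_{i\in I}S_i$ into simple objects and a subobject $A\subseteq B$. I would apply Zorn's lemma to the poset of those subsets $J\subseteq I$ for which the canonical morphism $A\oplus\bigoplus_{j\in J}S_j\rarrow B$ is a monomorphism, i.e.\ for which $A\cap\bigoplus_{j\in J}S_j=0$. This poset is nonempty (take $J=\emptyset$) and closed under unions of chains, since a would-be element witnessing a nonzero intersection already lies in some bounded stage. A maximal $J$ therefore exists.

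Next I would argue that for this maximal $J$ the sum $A+\bigoplus_{j\in J}S_j$ is all of $B$. For any $i\in I$ the intersection $S_i\cap\bigl(A+\bigoplus_{j\in J}S_j\bigr)$ is a subobject of the simple object $S_i$, so it is either $0$ or $S_i$. If it were $0$ for some $i$, then one checks that $J\cup\{i\}$ still satisfies the direct-sum condition, contradicting maximality of $J$; hence $S_i\subseteq A+\bigoplus_{j\in J}S_j$ for every~$i$. Combined with $A\cap\bigoplus_{j\in J}S_j=0$, this gives $B=A\oplus\bigoplus_{j\in J}S_j$, as desired.

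The only genuine point requiring care, and the main obstacle in a general abelian category where one has no elements to manipulate, is the final implication ``all $S_i$ lie in $N$ implies $N=B$'' (applied to $N=A+\bigoplus_{j\in J}S_j$). I would handle this diagrammatically: the identity $\id_B$ factors as $B=\bigoplus_iS_i\to B$ via the coproduct structure, and if each coproduct injection $S_i\hookrightarrow B$ factors through the subobject $N\hookrightarrow B$, then by the universal property of $\bigoplus_iS_i$ the identity of $B$ factors through $N$, forcing $N=B$. The Zornification and the simple-object dichotomy then combine to yield the splitting, completing the argument.
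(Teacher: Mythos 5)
Your overall strategy (show that every subobject of a semisimple object is a direct summand, via a maximal family $J$ with $A\cap\coprod_{j\in J}S_j=0$) is the classical module-theoretic proof, but it does not go through at the level of generality of the lemma. The genuine gap is the step ``this poset is closed under unions of chains, since a would-be element witnessing a nonzero intersection already lies in some bounded stage.'' There are no elements to manipulate in an abstract abelian category, and the correct categorical formulation of that step is: the direct limit of the chain of monomorphisms $A\oplus\coprod_{j\in J_\alpha}S_j\rarrow B$ is again a monomorphism. That is precisely the Ab5 axiom, which is \emph{not} among the hypotheses --- a semisimple abelian category is only assumed to be abelian with the coproducts needed to state semisimplicity. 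The paper deliberately isolates this Zorn argument as Lemma~\ref{Ab5-direct-sum} and Corollary~\ref{simple-Zorn}, both stated \emph{only for Ab5 categories} and used in Theorem~\ref{semisimple-category}, exactly because it is unavailable here; moreover, whether a cocomplete split (or semisimple) abelian category with a generator must be Ab5 is raised in the paper as an open question, so you cannot hope to derive Ab5 from the hypotheses.

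The paper's proof (suggested by Rickard) avoids the issue by a different route: given an epimorphism $f\:X\rarrow Y$ with $Y=\coprod_i S_i$, pull $f$ back along each split monomorphism $S_i\rarrow Y$ to get epimorphisms $f_i\:X_i\rarrow S_i$; it suffices to section each $f_i$, and this is done by decomposing $X_i$ into simples and applying the Schur lemma to find a summand of $X_i$ mapping isomorphically onto $S_i$. No transfinite maximality argument, hence no exactness of filtered colimits, is needed. Your final diagrammatic step (the coproduct injections factoring through $N$ force $N=B$) is correct as written; it is the chain-union step that requires either the Ab5 hypothesis or a different proof altogether.
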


\begin{proof}
 The following proof was suggested to us by J.~Rickard~\cite{R-MO}.
 Let $\sA$ be a semisimple abelian category, and let $f\:X\rarrow Y$
be an epimorphism in~$\sA$.
 By assumption, the object $Y$ is a coproduct of a family of simple
objects, $Y=\coprod_i S_i$.
 Let $f_i\:X_i\rarrow S_i$ be the pullback of the morphism~$f$ along
the split monomorphism $S_i\rarrow Y$.
 In order to show that the epimorphism~$f$ has a section, it suffices
to check that so does the epimorphism~$f_i$ for every index~$i$
(see~\cite[Proposition~A.1]{CS}).

 By assumption, the object $X_i$ is a coproduct of a family of simple
objects, too: $X_i=\coprod_j T_{ij}$.
 The morphism $f_i\:X_i\rarrow S_i$ corresponds to a family of
morphisms $f_{ij}\:T_{ij}\rarrow S_i$.
 By the Schur Lemma, every morphism~$f_{ij}$ is either zero or
an isomorphism.
 If $f_{ij}=0$ for every~$j$, then $f_i=0$, which is impossible for
an epimorphism~$f_i$ with a nonzero codomain~$S_i$.
 Hence there exists an index~$j$ for which $f_{ij}$ is an isomorphism.
 Now the composition of the inverse morphism $f_{ij}^{-1}:S_i
\rarrow T_{ij}$ with the split monomorphism $T_{ij}\rarrow X_i$ provides
a section of the epimorphism~$f_i$.
\end{proof}

 The material below in this section is essentially well-known.
 We include it for the sake of completeness of the exposition.

 An abelian category is \emph{Ab5} if it is cocomplete and has exact
functors of direct limits (\,$=$~filtered colimits).
 A \emph{Grothendieck category} is an abelian category which is Ab5
and has a set of generators (or equivalently, a single generator).

 A split Grothendieck abelian category is called
\emph{spectral}~\cite{GO}.

\begin{rem} \label{spectral-remark}
 The theory of spectral categories is surprisingly complicated
(as compared to the na\"\i ve expectation that all split abelian
categories should be semisimple).
 The terminology ``spectral category'' refers to the spectral theory
of operators in infinite-dimensional topological vector spaces (in
functional analysis); in particular, a reference to the functional
analysis concepts of ``discrete'' and ``continuous'' spectrum
is presumed.
 A spectral category is called \emph{discrete} if it is semisimple,
and \emph{continuous} if it has no simple objects.
 Every spectral category has a unique decomposition into
the Cartesian product of a discrete and a continuous spectral
category~\cite[Section~V.6]{St}.
 Furthermore, spectral categories $\sA$ with a chosen generator $G$
correspond bijectively to left self-injective von~Neumann
regular rings $R$ in the following way.
 To a pair $(\sA,G)$ the (opposite ring to) the ring of endomorphisms
of the generator, $R=\Hom_\sA(G,G)^\rop$, is assigned (so $R$ acts
in $G$ on the right).
 To a left self-injective von~Neumann regular associative ring $R$,
the full subcategory $\sA=\Prod(R)\subset R\modl$ of all the direct
summands of products copies of the (injective) free left $R$\+module $R$ 
is assigned, with the chosen generator $G=R$.
 The category $\sA$ can be also interpreted as a quotient category
of $R\modl$ \,\cite[Section~XII.1]{St}.
 A good exposition of some aspects of this theory can be found
in~\cite[Chapter~I]{GB}.
 A concrete class of examples of continuous spectral categories is 
described below in Example~\ref{complete-boolean}.
\end{rem}

 The following theorem characterizes and describes the semisimple
Grothendieck (\,$=$~discrete spectral) abelian categories
(cf.~\cite[Proposition~V.6.7]{St}).

\begin{thm} \label{semisimple-category}
 Let $\sA$ be an abelian category with set-indexed coproducts and
a generator.
 Then the following conditions are equivalent:
\begin{enumerate}
\item $\sA$ is Ab5 and every object of $\sA$ is the sum of its simple
subobjects;
\item $\sA$ is Ab5, split, and every nonzero object of $\sA$ has
a simple subquotient object;
\item every object of\/ $\sA$ is a coproduct of simple objects, and for
every simple object $S\in\sA$ the functor\/ $\Hom_\sA(S,{-})\:\sA
\rarrow\Ab$ preserves coproducts;
\item there is a set $X$ and an $X$\+indexed family of division rings
(\,$=$~skew-fields) $D_x$, \,$x\in X$, such that the category\/ $\sA$ is
equivalent to the Cartesian product of the categories of vector spaces
over~$D_x$,
$$
 \sA\,\cong\,\mathop{\text{\huge $\times$}}_{x\in X} D_x\modl.
$$
\end{enumerate}
\end{thm}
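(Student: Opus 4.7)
The plan is to establish $(1) \Leftrightarrow (2)$ separately and then close the triangle $(1) \Rightarrow (3) \Rightarrow (4) \Rightarrow (1)$. For $(1) \Leftrightarrow (2)$: I would first observe that $(1)$ upgrades automatically to ``every object is a coproduct of simples'' via a Zorn's lemma argument, choosing a maximal subfamily of the given simple subobjects whose sum is already a coproduct and using Ab5 together with simplicity to exclude any further simple subobject not yet present. The preceding Lemma then supplies the split property, and the existence of simple subobjects inside any nonzero object yields the simple subquotient condition, whence $(1) \Rightarrow (2)$. Conversely, for $(2) \Rightarrow (1)$ I would run the standard socle argument: if the sum $X_0$ of all simple subobjects of $X$ is proper, apply $(2)$ to $X/X_0$ to produce $Y \subset Z \subset X$ with $X_0 \subset Y$ and $Z/Y$ simple; splitting the sequence $0 \to Y \to Z \to Z/Y \to 0$ then exhibits a simple subobject $S' \cong Z/Y$ of $X$ satisfying both $S' \subset X_0 \subset Y$ and $S' \cap Y = 0$ (the latter because $S'$ is the image of the section), a contradiction.

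For $(1) \Rightarrow (3)$, the Zorn step above already yields the coproduct decomposition, so the remaining task is to show that $\Hom_\sA(S,-)$ preserves coproducts for every simple $S$. I would decompose each factor into simples to reduce to proving that any $f \colon S \to \coprod_i S_i$ with all $S_i$ simple factors through a finite subcoproduct. The Ab5 hypothesis is essential here: the image $\im f$ is either zero or simple, while the identity $\coprod_i S_i = \varinjlim_F \coprod_{i \in F} S_i$ over finite $F$ combined with Ab5-exactness of filtered colimits gives $\im f = \varinjlim_F (\im f \cap \coprod_{i \in F} S_i)$; each intersection is a subobject of the simple $\im f$ and so is $0$ or $\im f$, whence a nonzero colimit forces $\im f$ to sit inside some finite subcoproduct. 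This factorization step is the main obstacle --- not because it is technically deep, but because without Ab5 the simples need not be compact and the argument collapses.

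For $(3) \Rightarrow (4)$, I would choose representatives $\{S_x\}_{x \in X}$ of the isomorphism classes of simple objects (a set, since $\sA$ has a generator), set $D_x = \End_\sA(S_x)^{\rop}$ (a skew-field by Schur's lemma), and define $T \colon \sA \to \prod_{x \in X} D_x\modl$ by $T(M) = (\Hom_\sA(S_x, M))_{x \in X}$, where $\Hom_\sA(S_x,M)$ is viewed as a left $D_x$-module via precomposition. Writing $M = \coprod_\alpha S_{x(\alpha)}$ and invoking $(3)$ together with Schur identifies $T(M)$ with $(D_x^{(I_x)})_x$, where $I_x = \{\alpha : x(\alpha) = x\}$, and a parallel computation matches $\Hom_\sA(M, N)$ with $\Hom_{\prod_x D_x\modl}(T(M), T(N))$ term by term, giving full faithfulness. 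Essential surjectivity is obtained by picking a $D_x$-basis $J_x$ of each input $V_x$ and forming $\coprod_x S_x^{(J_x)}$ in $\sA$. Finally, $(4) \Rightarrow (1)$ is immediate: products of Ab5 categories are Ab5, and in each $D_x\modl$ every vector space is the sum of its one-dimensional (simple) subspaces.
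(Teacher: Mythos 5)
Your proposal is correct and follows essentially the same route as the paper: the Zorn-type argument for extracting a coproduct of simples from a sum (relying on Ab5 to see that a sum is direct when all finite subsums are), the Ab5-compactness of simple objects to get that $\Hom_\sA(S,-)$ preserves coproducts, the socle argument for (2)\,$\Rightarrow$\,(1), and the functor $M\mapsto(\Hom_\sA(S_x,M))_x$ with quasi-inverse built from copowers of the $S_x$ for (3)\,$\Rightarrow$\,(4). The only difference is cosmetic (you close the cycle as (1)\,$\Rightarrow$\,(3)\,$\Rightarrow$\,(4)\,$\Rightarrow$\,(1) with (1)\,$\Leftrightarrow$\,(2) on the side, whereas the paper proves (1)\,$\Rightarrow$\,(2),(3) at once), so there is nothing substantive to flag.
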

 
\begin{rem} \label{discrete-modules-split-iff-semisimple}
 The third condition in Theorem~\ref{semisimple-category}\,(2), saying
that every nonzero object of $\sA$ has a simple subquotient, is always
satisfied for the abelian category of modules over an associative ring
$\sA=A\modl$ (because every nonzero $A$\+module has a nonzero cyclic
submodule, which in turn has a maximal proper submodule).
 Thus the category $A\modl$ is split/spectral if and only if it is
semisimple.
 Moreover, for any topological ring $R$ with right linear topology,
the same applies to the abelian category $\sA=\discr R$ of discrete
right $R$\+modules, which is also Grothendieck and has the property
that every nonzero object has a simple subquotient.
\end{rem}

 The following lemma shows that in Ab5-categories, as in the categories
of modules, the sum of a family of subobjects is direct whenever
the sum of any finite subfamily of these objects is.

\begin{lem} \label{Ab5-direct-sum}
 Let $\sA$ be an Ab5-category, $M\in\sA$ be an object, and
$(N_x\subset M)_{x\in X}$ be a family of subobjects in~$M$.
 Suppose that for every finite subset $Z\subset X$ the induced
morphism\/ $\coprod_{z\in Z} N_z\rarrow M$ is a monomorphism.
 Then the induced morphism\/ $\coprod_{x\in X} N_x\rarrow M$ is
a monomorphism, too.
\end{lem}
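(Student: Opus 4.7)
The plan is to realize the coproduct over $X$ as a filtered colimit of the finite coproducts, and then invoke Ab5.

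More precisely, let $\mathcal{F}$ denote the directed poset of finite subsets $Z\subset X$ ordered by inclusion. For $Z\subset Z'$ in $\mathcal{F}$, the canonical split monomorphism $\coprod_{z\in Z} N_z\rarrow\coprod_{z\in Z'} N_z$ makes $(\coprod_{z\in Z} N_z)_{Z\in\mathcal{F}}$ into a filtered diagram in $\sA$. Since coproducts commute with filtered colimits, the natural morphism
$$
\varinjlim_{Z\in\mathcal{F}}\,\coprod_{z\in Z} N_z \,\rarrow\, \coprod_{x\in X} N_x
$$
is an isomorphism. The morphisms $\coprod_{z\in Z} N_z\rarrow M$ are compatible with the transition maps of this diagram (both are induced by the given inclusions $N_x\subset M$), so they form a morphism of filtered diagrams from $(\coprod_{z\in Z} N_z)_{Z\in\mathcal{F}}$ to the constant diagram at~$M$, whose colimit is the canonical morphism $\coprod_{x\in X} N_x\rarrow M$.

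By hypothesis every morphism $\coprod_{z\in Z} N_z\rarrow M$ is a monomorphism. Since $\sA$ is Ab5, the filtered colimit functor $\varinjlim_{\mathcal{F}}\:\Funct(\mathcal{F},\sA)\rarrow\sA$ is exact, and in particular preserves monomorphisms. Applying it to the termwise monomorphism of diagrams above yields the desired conclusion that $\coprod_{x\in X} N_x\rarrow M$ is a monomorphism.

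There is essentially no obstacle here beyond the standard fact that filtered colimits in an Ab5-category preserve monomorphisms; the only thing to verify is the trivial compatibility of the finite-coproduct maps with the inclusions $Z\subset Z'$, which is immediate from the universal property of the coproduct since both compositions $N_z\rarrow\coprod_{z\in Z}N_z\rarrow M$ and $N_z\rarrow\coprod_{z\in Z'}N_z\rarrow M$ coincide with the given monomorphism $N_z\hookrightarrow M$ for every $z\in Z$.
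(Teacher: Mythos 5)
Your proof is correct and follows exactly the same route as the paper's: write $\coprod_{x\in X}N_x$ as the filtered colimit over finite subsets $Z\subset X$ of the finite coproducts $\coprod_{z\in Z}N_z$, observe that the given maps to $M$ assemble into a morphism of directed diagrams into the constant diagram at $M$, and invoke exactness of filtered colimits in an Ab5-category to conclude that the colimit of these monomorphisms is a monomorphism. The paper states this in two sentences; your version just spells out the compatibility checks.
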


\begin{proof}
 In any cocomplete category, one has $\coprod_{x\in X} N_x=
\varinjlim_{Z\subset X}\coprod_{z\in Z} N_z$ (where the direct
limit is taken over all the finite subsets $Z\subset X$).
 In an Ab5-category, the direct limit of a diagram of monomorphisms
is a monomorphism.
\end{proof}

\begin{cor}[{\cite[Proposition~V.6.2]{St}}] \label{simple-Zorn}
 Let\/ $\sA$ be an Ab5-category, $M\in\sA$ be an object, and
$L\subset M$ be a suboboject.
 Assume that $M$ is the sum of a family $(S_x\subset M)_{x\in X}$
of simple subobojects of $M$ (i.~e., no proper subobject of $M$
contains $S_x$ for all $x\in X$).
 Then there exists a subset $Y\subset X$ such that $M=L\oplus
\coprod_{y\in Y}S_y$.
\end{cor}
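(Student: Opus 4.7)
The plan is to apply Zorn's lemma to the poset $\mathcal{P}$ of those subsets $Y \subset X$ for which the induced morphism
\[ L \oplus \coprod_{y \in Y} S_y \longrightarrow M \]
is a monomorphism, ordered by inclusion. The empty set lies in $\mathcal{P}$, so $\mathcal{P}$ is nonempty. Given a chain $(Y_i)_{i \in I}$ in $\mathcal{P}$ with union $Y = \bigcup_i Y_i$, every finite subset $F \subset Y$ is contained in some $Y_i$, so the induced morphism $L \oplus \coprod_{z \in F} S_z \to M$ is a monomorphism. Applying Lemma~\ref{Ab5-direct-sum} to the family consisting of $L$ together with the $S_y$ for $y \in Y$ yields $Y \in \mathcal{P}$, furnishing the required upper bound.

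Let $Y$ be a maximal element of $\mathcal{P}$ and let $N \subset M$ denote the image of the monomorphism $L \oplus \coprod_{y \in Y} S_y \hookrightarrow M$. It remains to prove $N = M$; by the spanning hypothesis on the $S_x$, this reduces to showing $S_x \subset N$ for every $x \in X$. If some $S_{x_0}$ were not contained in $N$, then the intersection (pullback) $S_{x_0} \cap N$ would be a proper subobject of the simple object $S_{x_0}$, hence zero. In an abelian category this is equivalent to the morphism $N \oplus S_{x_0} \to M$ being a monomorphism, which rewrites as the morphism $L \oplus \coprod_{y \in Y \cup \{x_0\}} S_y \to M$ being a monomorphism. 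Hence $Y \cup \{x_0\} \in \mathcal{P}$ strictly contains $Y$, contradicting maximality.

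The main subtlety is precisely the step passing from a chain to its union, which is where the Ab5 hypothesis enters via Lemma~\ref{Ab5-direct-sum}: without exactness of filtered colimits one could not deduce that an ``infinite internal coproduct'' is a monomorphism from the corresponding statements about its finite sub-coproducts. The remainder of the argument is the classical Zorn's lemma proof that every semisimple module admits a simple direct-sum complement, transposed to categorical language with pullbacks replacing elementwise intersections and with the standard fact that $N \oplus S_{x_0} \to M$ is monic iff the pullback $N \cap S_{x_0}$ vanishes.
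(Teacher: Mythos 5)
Your proof is correct and is precisely the ``standard Zorn lemma argument based on Lemma~\ref{Ab5-direct-sum}'' that the paper invokes without writing out: Zorn applied to the subsets $Y$ for which $L\oplus\coprod_{y\in Y}S_y\rarrow M$ is monic, with Lemma~\ref{Ab5-direct-sum} handling unions of chains and simplicity of the $S_x$ forcing a maximal $Y$ to give all of $M$. No discrepancies with the paper's (sketched) approach.
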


\begin{proof}
 Provable by a standard Zorn lemma argument based on
Lemma~\ref{Ab5-direct-sum}.
\end{proof}

 The next lemma says that simple objects in Ab5-categories are
\emph{finitely generated} in the sense of~\cite[Section~V.3]{St}
or~\cite[Section~1.E]{AR}.

\begin{lem} \label{simple-finitely-generated}
 Let\/ $\sA$ be an Ab5-category and $S\in\sA$ be a simple object.
 Then the functor\/ $\Hom_\sA(S,{-})\:\sA\rarrow\Ab$ preserves
direct limits of diagrams of monomorphisms.
\end{lem}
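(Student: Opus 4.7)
The plan is to show directly that for any filtered diagram $(M_i)_{i\in I}$ in $\sA$ with monomorphic transition maps and colimit $M=\varinjlim_i M_i$, the canonical map $\varinjlim_i \Hom_\sA(S,M_i)\rarrow\Hom_\sA(S,M)$ is an isomorphism. By Ab5 the structural maps $\beta_i\: M_i\rarrow M$ are themselves monomorphisms, so each $\Hom_\sA(S,M_i)\rarrow\Hom_\sA(S,M)$ is injective; injectivity of the colimit map in the filtered case is then immediate. The real work is surjectivity: given $\alpha\:S\rarrow M$, I need to factor $\alpha$ through some~$\beta_i$. If $\alpha$ is not a monomorphism then $\ker\alpha$ is a nonzero subobject of the simple object $S$, so $\ker\alpha=S$ and $\alpha=0$; any factorization works. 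So assume $\alpha$ is a monomorphism and view $S$ as a subobject of~$M$.

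For each $i\in I$, form the pullback $S_i=S\times_M M_i$, which is naturally a subobject of~$S$. By simplicity each $S_i$ is either $0$ or $S$, and since the induced transitions $S_i\rarrow S_j$ are monomorphisms between subobjects of $S$, the set $J=\{i\in I\mid S_i=S\}$ is upward-closed in~$I$. The crux is the identification $\varinjlim_i S_i\cong S$. In the abelian category $\sA$ the pullback $S_i$ is isomorphic to the kernel of the difference map $d_i\:S\oplus M_i\rarrow M$, \,$(s,m)\mapsto\alpha(s)-\beta_i(m)$. The Ab5 axiom forces filtered colimits to be exact, so they commute with kernels and with finite direct sums, whence
$$
 \varinjlim_i S_i\,\cong\,\ker\bigl(S\oplus M\rarrow M,\ (s,m)\mapsto\alpha(s)-m\bigr),
$$
which is the graph of $\alpha$ and therefore isomorphic to~$S$. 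Since $\varinjlim_i S_i\cong S\neq 0$ while each $S_i$ is $0$ or $S$, the set $J$ cannot be empty (a filtered colimit of zero objects is zero). Choosing any $i\in J$ yields $S\subseteq M_i$ inside $M$, which is the desired factorization of $\alpha$ through~$\beta_i$.

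I expect the main obstacle to be the colimit computation $\varinjlim_i S_i\cong S$, which rests on the commutation of filtered colimits with finite limits (kernels) in an Ab5-category. This is a standard consequence of Ab5 but is the one place where the Ab5 hypothesis does genuine work beyond what Lemma~\ref{Ab5-direct-sum} already provided; the rest of the argument only uses simplicity of $S$ and the elementary fact that a filtered colimit along inclusions of an upward-closed system of copies of $S$ indexed by $J$ stabilizes as soon as $J\neq\emptyset$.
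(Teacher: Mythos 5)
Your proof is correct and follows essentially the same route as the paper's: both reduce to the observation that the (image of the) simple object is the directed union of its intersections with the $M_i$, each of which is $0$ or all of $S$ by simplicity, so some $M_i$ must already contain it. The only cosmetic difference is that you derive $\varinjlim_i S_i\cong S$ from exactness of filtered colimits via a kernel computation, whereas the paper invokes the equivalent Ab5 intersection formula $K=\varinjlim_z(K\cap M_z)$ directly.
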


\begin{proof}
 Let $(m_{z,w}\:M_w\to M_z)_{w<z}$ be a diagram of objects in $\sA$
and monomorphisms between them, indexed by some directed poset~$Z$.
 Set $M=\varinjlim_{z\in Z}M_z$.
 Since $\sA$ is Ab5, the natural morphisms $M_z\rarrow M$ are also
monomorphisms; so one can consider the objects $M_z$ as subobjects
in~$M$.
 Furthermore, the condition that $\sA$ is Ab5 can be equivalently
expressed by saying that for any subobject $K\subset M$ one has
$K=\varinjlim_{z\in Z}(K\cap M_z)$ \,\cite[Section~III.1]{Mit}.

 Now let $f\:S\rarrow M$ be a morphism in~$\sA$ and $f(S)\subset M$
be its image.
 Since $S$ is simple, $f(S)$ is either simple or zero, and it
follows that for every $z\in Z$ the intersection $f(S)\cap M_z$
is either the whole $f(S)$ or zero.
 We have $f(S)=\varinjlim_{z\in Z}f(S)\cap M_z$; so if $f(S)\cap M_z=0$
for all $z\in Z$, then $f=0$ and there is nothing to prove.
 Otherwise, there exists $z\in Z$ such that $f(S)\subset M_z$.
 Hence the morphism~$f$ factorizes through the monomorphism
$M_z\rarrow M$, as desired.
\end{proof}

 In particular, it follows from Lemma~\ref{simple-finitely-generated}
that simple objects in Ab5-categories are \emph{weakly finitely
generated} in the sense of~\cite[Section~9.2]{PS}.

\begin{cor} \label{simple-wfg}
 Let\/ $\sA$ be an Ab5-category and $S\in\sA$ be a simple object.
 Then the functor\/ $\Hom_\sA(S,{-})\:\sA\rarrow\Ab$ preserves
coproducts. \qed
\end{cor}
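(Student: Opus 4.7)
The plan is to deduce Corollary~\ref{simple-wfg} directly from Lemma~\ref{simple-finitely-generated} by presenting an arbitrary coproduct as a filtered colimit of finite subcoproducts along monomorphisms.

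First, given an $X$\+indexed family of objects $(M_x)_{x\in X}$ in $\sA$, I would write
\[
 \coprod_{x\in X} M_x \,=\, \varinjlim_{Z\subset X} \coprod_{z\in Z} M_z,
\]
the direct limit being taken over the directed poset of finite subsets $Z\subset X$, with transition morphisms induced by the inclusions $Z\subset Z'$. These transition morphisms are split monomorphisms (inclusions of direct summands), hence monomorphisms.

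Next, I would apply Lemma~\ref{simple-finitely-generated} to conclude that
\[
 \Hom_\sA\!\left(S,\coprod_{x\in X} M_x\right) \,\cong\, \varinjlim_{Z\subset X} \Hom_\sA\!\left(S,\coprod_{z\in Z} M_z\right).
\]
For each finite $Z$, the functor $\Hom_\sA(S,-)$ preserves the finite coproduct $\coprod_{z\in Z} M_z$, since finite coproducts coincide with finite products in the additive category $\sA$ and $\Hom_\sA(S,-)$ always preserves products. Therefore the right-hand side above rewrites as $\varinjlim_{Z\subset X}\bigoplus_{z\in Z}\Hom_\sA(S,M_z)$, which by the standard description of coproducts in $\Ab$ as filtered colimits of finite subcoproducts equals $\bigoplus_{x\in X}\Hom_\sA(S,M_x)$, as required.

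No step looks like a serious obstacle; the only subtlety is to remember that Lemma~\ref{simple-finitely-generated} is formulated for diagrams of monomorphisms (not arbitrary direct limits), so it is essential to use the presentation of the coproduct as a colimit along split inclusions rather than, say, the canonical presentation indexed by all elements. Once this is arranged, the two preservations—filtered colimit along monomorphisms (from the lemma) and finite coproduct (from additivity)—combine to give the result.
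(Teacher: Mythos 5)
Your proof is correct and is exactly the deduction the paper intends: the corollary is stated with no written proof precisely because it follows from Lemma~\ref{simple-finitely-generated} by writing the coproduct as the filtered colimit of its finite subcoproducts (which are biproducts, hence preserved by any additive $\Hom$) along split monomorphisms. You have simply supplied the routine details the paper leaves implicit.
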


\begin{proof}[Proof of Theorem~\ref{semisimple-category}]
 (1)\,$\Longrightarrow$\,(2)\,\&\,(3) follows from
Corollaries~\ref{simple-Zorn} and~\ref{simple-wfg}.

 (2)\,$\Longrightarrow$\,(1) Given an object $M\in\sA$, consider
its socle (\,$=$~the sum of all simple subobjects) $N\subset M$.
 If $N\ne M$, then the quotient object $M/N$ has a simple
subquotient object.
 Since $\sA$ is split, this leads to a simple subobject in $M$
not contained in~$N$.
 The contradiction proves that $N=M$.

 (3)\,$\Longrightarrow$\,(4) It is important here that in any
abelian category with a generator the isomorphism classes
of simple objects form a set.
 Indeed, if $G\in\sA$ is a generator and $S\in\sA$ is simple, then
there exists a nonzero morphism $G\rarrow S$, so $S$ is a quotient
of~$G$.
 Now the subobjects of $G$ (hence also the quotient objects of~$G$)
form a set of the cardinality not exceeding that of the powerset
of $\Hom_\sA(G,G)$.
 (If $\sA$ is split, all subobjects and quotient objects of $G$ are
direct summands, and their cardinality does not exceed that of
the set of all idempotent endomorphisms of~$G$.)

 Now let $(S_x)_{s\in X}$ be a set of representatives of all
the isomorphism classes of simple objects in~$\sA$.
 Put $D_x=\Hom_\sA(S_x,S_x)^\rop$ (by the Schur lemma, $D_x$~are
division rings).
 The desired equivalence of categories is provided by the functor
$F\:\sA\rarrow\mathop{\text{\Large $\times$}}_{x\in X} D_x\modl$
taking an object $M\in\sA$ to the collection of left $D_x$\+vector
spaces $(\Hom_\sA(S_x,M))_{x\in X}$.
 The inverse functor $G$ takes a collection of left $D_x$\+vector spaces
$(V_x\in D_x\modl)_{x\in X}$ to the object $\coprod_{x\in X}
(S_x\ot_{D_x}V_x)\in\sA$ (where $S_x\ot_{D_x}{-}$ is a functor
taking the coproduct $D_x^{(Y)}$ of $Y$ copies of $D_x\in D_x\modl$ to
the coproduct $S_x^{(Y)}\in\sA$ of $Y$ copies of the object $S_x$,
for any set~$Y$).

 Essentially, the first condition in~(3) describes the objects of
the category $\sA$, and the second condition fully describes its
morphisms.
 This allows to prove that the functors $F$ and $G$ are mutually
inverse equivalences.
 
 The implication (4)\,$\Longrightarrow$\,(1) is obvious.
\end{proof}

\begin{ex} \label{complete-boolean}
 Let us give an explicit example (or, rather, a class of examples) of
\emph{continuous} spectral categories $\sA$ with a chosen generator~$G$.
 These examples have an additional advantage that the ring
$R=\Hom_\sA(G,G)^\rop$ is \emph{commutative}.

 Following~\cite[Section~XII.1]{St}, \cite[Chapter~I]{GB},
and/or Remark~\ref{spectral-remark} above, spectral categories $\sA$
with a chosen generator $G$ are described by left self-injective
von~Neumann regular rings~$R$.
 Isomorphism classes of simple objects in $\sA$ correspond to ring
direct factors in $R$ isomorphic to the (opposite ring of) the ring of
endomorphisms of a vector space over a skew-field (the dimension of
the vector space being equal to the multiplicity with which the simple
object occurs in the chosen generator).
 So continuous spectral categories $\sA$ are described by left
self-injective von~Neumann regular rings $R$ which have no such
ring direct factors.
 In particular, if we want the ring $R$ to be commutative, then such
continuous spectral categories $\sA$ with a generator $G$ are described
by self-injective commutative von~Neumann regular rings $R$ such that
no ring direct factor of $R$ is a field.

 Now let us restrict to the following particular case.
 Let $R$ be a Boolean ring, that is an associative unital ring in
which all the elements are idempotent.
 Then $R$ is a commutative algebra over the field $\boZ/2\boZ$ and
a von~Neumann regular ring.
 Furthermore, there is a natural partial order on $R$ which makes $R$ 
a distributive lattice with complements (this structure is called
a \emph{Boolean algebra})~\cite[Section~III.4]{St}, \cite{GH}.
 According to~\cite[Section~XII.3]{St}, a Boolean ring is
self-injective if and only if its Boolean algebra is \emph{complete}
(that is, complete as a lattice).
 A Boolean ring has no field direct factors (i.~e., ring direct
factors isomorphic to $\boZ/2\boZ$) if and only if its Boolean algebra
has no \emph{atoms}.

 A discussion of complete Boolean algebras can be found
in~\cite[Chapter~38]{GH}; they are classified by \emph{extremally
disconnected} compact Hausdorff topological spaces (which means
compact Hausdorff topological spaces in which the closure of every
open subset is open; to such a space $Z$, the algebra of
all its clopen subsets is assigned).
 Moving to a specific example of a complete Boolean algebra, choose
a Hausdorff topological space $X$ without isolated points, and
consider the Boolean algebra/ring $R$ of all open or closed subsets
in $X$ considered \emph{up to nowhere dense subsets}.
 The key observation is that, viewing nowhere dense subsets
as negligible, there is \emph{no difference} between open and closed
subsets (since for any open subset $U\subset X$ with the closure
$\overline{U}\subset X$, the complement $\overline{U}\setminus U$ is
nowhere dense in~$X$).
 Alternatively, the usual approach is to consider \emph{regular
open sets}, i.~e., open subsets in $X$ which coincide with
the interior of their closure~\cite[Chapter~10]{GH}.
 These form the desired complete Boolean ring $R$ without atoms,
which is consequently self-injective commutative von~Neumann regular
without field direct factors.
\end{ex}

\Section{Topologically Agreeable Additive Categories}
\label{topologically-agreeable-secn}

 The following definitions and construction were suggested in
the manuscript~\cite{Cor}.
 Let $\sA$ be an additive category with set-indexed coproducts.
 If set-indexed products exist in the category $\sA$, one says
that $\sA$ is \emph{agreeable} if for every family of
objects $N_x\in\sA$ (indexed by elements~$x$ of some set~$X$)
the natural morphism from the coproduct to the product
$\coprod_{x\in X} N_x\rarrow \prod_{x\in X} N_x$ is
a monomorphism in~$\sA$.

 This condition can be reformulated so as to avoid the assumption
of existence of products in $\sA$.
 For every object $M$ and a family of objects $N_x\in\sA$, consider
the natural map of abelian groups
$$\textstyle
 \eta\:\Hom_\sA\bigl(M,\coprod_{x\in X} N_x\bigr)\lrarrow
 \prod_{x\in X}\Hom_\sA(M,N_x),
$$
assigning to a morphism $f\:M\rarrow\coprod_{x\in X}N_x$ the collection
of its compositions $\eta(f)=(\pi_y\circ f)_{y\in X}$ with
the projection morphisms $\pi_y\:\coprod_{x\in X}N_x\rarrow N_y$.
 An additive category $\sA$ with set-indexed coproducts is said to be
\emph{agreeable} if, for all objects $M$ and $N_x\in\sA$,
the map~$\eta$ is injective.
 It is the latter, more general definition that was formulated
in~\cite{Cor} and that we will use in the sequel.

 Let $\sA$ be an agreeable category, $M\in\sA$ be an object, and
$(N_x\in\sA)_{x\in X}$ be a family of objects.
 A family of morphisms $(f_x\:M\to N_x)_{x\in X}$ is said to be
\emph{summable} if there exists a (necessarily unique, by assumption)
morphism $f\:M\rarrow\coprod_{x\in X}N_x$ whose image under
the map~$\eta$ is equal to the element~$(f_x)_{x\in X}\in
\prod_{x\in X}\Hom_\sA(M,N_x)$.

 The particular case when all the objects $N_x$ are one and the same,
$N_x=N$, is important.
 Let $M$ and $N$ be two fixed objects in~$\sA$ and
$(f_x\:M\to N)_{x\in X}$ be a summable family of morphisms
between them.
 The \emph{sum} $\sum_{x\in X} f_x\:M\rarrow N$ of the summable family
of morphisms $(f_x)_{x\in X}$ is defined as the composition
$$
 M\overset f\lrarrow N^{(X)}\overset\Sigma\lrarrow N
$$
of the related morphism $f\:M\rarrow N^{(X)}=\coprod_{x\in X}N$ with
the natural summation morphism $\Sigma\:N^{(X)}\rarrow N$.
 The morphism~$\Sigma$ is defined by the condition that its
composition $\Sigma\iota_x\:N\rarrow N^{(X)}\rarrow N$
with the coproduct injection $\iota_x\:N\rarrow N^{(X)}$ is
the identity morphism $N\rarrow N$ for every $x\in X$.

\begin{ex} \label{grothendieck-agreeable}
 Any Grothendieck abelian category is agreeable.
 In fact, if $\sA$ is a complete, cocomplete abelian category with
exact direct limits, then the natural morphism
$\coprod_{x\in X}N_x\rarrow\prod_{x\in X}N_x$ is a monomorphism
for every family of objects $N_x\in\sA$, since it is the direct
limit of the split monomorphisms $\prod_{z\in Z}N_z\rarrow
\prod_{x\in X}N_x$ taken over the directed poset of all finite
subsets $Z\subset X$.
\end{ex}

\begin{rems}
 (1)~More generally, any abelian category satisfying Ab5 is agreeable.
 Indeed, let $\sA$ be an Ab5\+category, $(N_x)_{x\in X}$ be
a family of objects in~$\sA$, and $M\in\sA$ be an object.
 Given a nonzero morphism $f\:M\rarrow\coprod_{x\in X}N_x$, consider
its image~$f(M)$.
 The object $\coprod_{x\in X}N_x$ is the direct limit of its
subobjects $\coprod_{z\in Z}N_z$, where $Z$ ranges over all the finite
subsets of~$X$.
 Hence the subobject $f(M)\subset\coprod_{x\in X}N_x$ is the direct
limit of its subobjects $f(M)\cap\coprod_{z\in Z}N_z$
(cf.\ the proof of Lemma~\ref{simple-finitely-generated}).
 Since $f(M)\ne0$, there exists a finite subset $Z\subset X$ such that
the object $f(M)\cap\coprod_{z\in Z}N_z$ is nonzero.
 It follows that the composition of the morphism~$f$ with
the projection $\coprod_{x\in X}N_x\rarrow\coprod_{z\in Z}N_z=
\prod_{z\in Z}N_z$ is nonzero.
 Thus there exists $z\in Z$ for which the morphism $\pi_z\circ f\:
M\rarrow N_z$ is nonzero, so $\eta(f)\ne0$.

 (2)~Conversely, any agreeable abelian category $\sA$ satisfies Ab4,
i.~e., the functors of infinite coproducts in $\sA$ are exact
(cf.~\cite[Section~III.1]{Mit}).
 Indeed, let $g_x\:K_x\rarrow L_x$ be a family of monomorphisms
in~$\sA$; we have to prove that the morphism $\coprod_{x\in X}g_x\:
\coprod_{x\in X}K_x\rarrow\coprod_{x\in X}L_x$ is a monomorphism.
 Let $f\:M\rarrow\coprod_{x\in X}K_x$ be a nonzero morphism.
 Then there exists $y\in X$ such that the composition of~$f$ with
the projection map $\pi_y\:\coprod_{x\in X}K_x\rarrow K_y$ is nonzero.
 Hence the composition $g_y\pi_yf\:M\rarrow K_y\rarrow L_y$ is nonzero,
too.
 Denoting by~$\rho_y$ the projection map $\coprod_{x\in X}L_x\rarrow
L_y$, we have $\rho_y\circ\coprod_{x\in X}g_x=g_y\pi_y$.
 Hence $\rho_y\circ\coprod_{x\in X}g_x\circ f=g_y\pi_yf\ne0$, and it
follows that the composition of~$f$ with the morphism
$\coprod_{x\in X}g_x$ is nonzero.

 (3)~Moreover, any complete agreeable abelian category with
an injective cogenerator satisfies Ab5.
 This is the result of the paper~\cite{PSab5}.
\end{rems}

 In this paper, we will be mostly interested in a more special
class of additive categories, which we call \emph{topologically
agreeable}.
 In fact, a topologically agreeable category is an additive
category with the following additional structure.

 A \emph{right topological} additive category $\sA$ is an additive
category in which, for every pair of objects $M$ and $N\in\sA$,
the abelian group $\Hom_\sA(M,N)$ is endowed with a topology in
such a way that the following two conditions are satisfied:
\begin{enumerate}
\renewcommand{\theenumi}{\roman{enumi}}
\item the composition maps
$$
 \Hom_\sA(L,M)\times\Hom_\sA(M,N)\lrarrow\Hom_\sA(L,N)
$$
are continuous (as functions of two arguments) for all objects
$L$, $M$, $N\in\sA$;
\item open $\Hom_\sA(N,N)$\+submodules form a base of neighborhoods
of zero in $\Hom_\sA(M,N)$ for any two objects $M$, $N\in\sA$.
\end{enumerate}

 A right topological additive category $\sA$ is said to be
\emph{complete} (resp., \emph{separated}) if the topological
abelian group $\Hom_\sA(M,N)$ is complete (resp., separated)
for every pair of objects $M$ and $N\in\sA$.

 For any object $M$ in a right topological additive category $\sA$,
the topology on the group of endomorphisms $\Hom_\sA(M,M)$ makes it
a topological ring with a \emph{left} linear topology.
 Here the notation presumes that the ring $\Hom_\sA(M,M)$ acts on
the object $M\in\sA$ on the left.
 We will usually consider the opposite ring $\R=\Hom_\sA(M,M)^\rop$,
which acts on $M$ on the right.
 Hence $\R$ is a topological ring with a right linear topology.
 When the topological additive category $\sA$ is complete (resp.,
separated), so is the ring~$\R$.

\begin{lem} \label{right-topological-coproduct-lemma}
 Let\/ $\sA$ be a right topological additive category, let $M$ and
$N\in\sA$ be two objects, and let $X$ be a set such that a coproduct
$N^{(X)}$ of $X$ copies of $N$ exists in\/~$\sA$.
 Let $(f_x\:M\to N)_{x\in X}$ be a family of morphisms converging to
zero in the topology of the abelian group\/ $\Hom_\sA(M,N)$.
 Then the family of morphisms $(\iota_xf_x)_{x\in X}$ converges to zero
in the group\/ $\Hom_\sA(M,N^{(X)})$.
\end{lem}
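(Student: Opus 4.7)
The plan is to unwind the definition of zero-convergence: I must show that for every basic open neighborhood $V$ of zero in $\Hom_\sA(M, N^{(X)})$, the set $\{x \in X : \iota_x f_x \notin V\}$ is finite. By condition~(ii) in the definition of a right topological additive category, I may assume $V$ is an open $\Hom_\sA(N^{(X)}, N^{(X)})$-submodule, i.e.\ closed under left composition with arbitrary endomorphisms of $N^{(X)}$. My strategy is then to produce a \emph{single} open neighborhood $U$ of zero in $\Hom_\sA(M, N)$ such that $\iota_x f \in V$ for every $f \in U$ and every $x \in X$; combining this uniform estimate with the hypothesis $f_x \to 0$ then yields the conclusion.

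To produce such a $U$, fix an auxiliary index $x_0 \in X$. By condition~(i), composition is jointly continuous, so in particular the one-variable map $f \mapsto \iota_{x_0} f$ from $\Hom_\sA(M, N)$ to $\Hom_\sA(M, N^{(X)})$ is continuous; hence some open neighborhood $U$ of zero in $\Hom_\sA(M, N)$ satisfies $\iota_{x_0} U \subset V$. To upgrade this to uniformity in $x$, I will use the coproduct projection $\pi_{x_0}\colon N^{(X)} \to N$, defined via the universal property of the coproduct as the morphism whose $x_0$-th component is $\id_N$ and whose other components vanish. Since $\pi_{x_0} \iota_{x_0} = \id_N$, for every $x \in X$ we have the identity $\iota_x = (\iota_x \pi_{x_0}) \iota_{x_0}$.

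Applied to $f \in U$, this rewrites $\iota_x f$ as $(\iota_x \pi_{x_0})(\iota_{x_0} f)$. Now $\iota_{x_0} f \in V$ and $\iota_x \pi_{x_0}$ is an endomorphism of $N^{(X)}$, so the left $\Hom_\sA(N^{(X)}, N^{(X)})$-submodule property of $V$ forces $\iota_x f \in V$---this is precisely the uniform estimate sought. Selecting a cofinite $X' \subset X$ with $f_x \in U$ for all $x \in X'$ (available by the convergence assumption), we then conclude that $\iota_x f_x \in V$ for all $x \in X'$. The only mildly delicate point is spotting that condition~(ii) is the right tool for converting pointwise continuity of composition into the needed uniformity over $x$, via the trick of factoring $\iota_x$ through $\iota_{x_0}$; once that is recognized, no genuine obstacle remains.
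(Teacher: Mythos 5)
Your proof is correct and follows essentially the same route as the paper's: fix $x_0$, use axiom~(i) to get $\iota_{x_0}f_x\to 0$ in $\Hom_\sA(M,N^{(X)})$, then use axiom~(ii) to post-compose with an endomorphism of $N^{(X)}$ carrying $\iota_{x_0}$ to~$\iota_x$. The only (cosmetic) difference is that you use the endomorphism $\iota_x\pi_{x_0}$ where the paper uses the coordinate-permuting automorphism $\sigma_{x,x_0}$.
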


\begin{proof}
 For any two elements $x$ and $y\in X$, denote by $\sigma_{x,y}\:N^{(X)}
\rarrow N^{(X)}$ the automorphism permuting the coordinates~$x$ and~$y$.
 In particular, $\sigma_{x,x}=\id_{N^{(X)}}$; and for any $x$, $y\in X$
we have $\sigma_{x,y}\iota_y=\iota_x\:N\rarrow N^{(X)}$.
 Choose a fixed element $x_0\in X$.
 Since the family of morphisms $f_x\:M\rarrow N$ converges to zero in
the topology of $\Hom_\sA(M,N)$, it follows from the continuity
axiom~(i) that the family of morphisms
$(\iota_{x_0}f_x\:M\to N^{(X)})_{x\in X}$ converges to zero in
the topology of the group $\Hom_\sA(M,N^{(X)})$.
 By axiom~(ii) (applied to the objects $M$ and $N^{(X)}\in\sA$),
we can conclude that the family of morphisms
$\iota_xf_x=\sigma_{x,x_0}\iota_{x_0}f_x\:M\rarrow N^{(X)}$ also
converges to zero in the topology of $\Hom_\sA(M,N^{(X)})$.
\end{proof}

\begin{lem} \label{agreeable-agreement-lemma}
 Let\/ $\sA$ be an additive category that is simultaneously agreeable
and complete, separated right topological.
 Let $M$ and $N\in\sA$ be two objects.
 Then any family of morphisms $f_x\in\Hom_\sA(M,N)$ converging to
zero in the topology of the abelian group\/ $\Hom_\sA(M,N)$ is summable
in the agreeable category\/~$\sA$.
 Moreover, the sum\/ $\sum^{\mathrm{top}}_{x\in X}f_x\in\Hom_\sA(M,N)$
defined as the limit of finite partial sums in the topology of
the group\/ $\Hom_\sA(M,N)$ coincides with the sum\/
$\sum_{x\in X} f_x=\sum_{x\in X}^{\mathrm{agr}}f_x$ computed in
the agreeable category\/ $\sA$ (so our notation is unambiguous).
\end{lem}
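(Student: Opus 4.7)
The plan is to construct the summability witness as a topologically convergent series in $\Hom_\sA(M,N^{(X)})$, and then deduce both the agreeable summability of $(f_x)_{x\in X}$ and the equality of the two sums from continuity of composition. More concretely, I would first apply Lemma~\ref{right-topological-coproduct-lemma} to lift zero-convergence of $(f_x)$ in $\Hom_\sA(M,N)$ to zero-convergence of $(\iota_xf_x)_{x\in X}$ in $\Hom_\sA(M,N^{(X)})$. This latter abelian group is complete and separated by the standing assumption on $\sA$, and it has a base of neighborhoods of zero consisting of open subgroups by axiom~(ii). In any such topological abelian group a zero-convergent family is unconditionally summable: only finitely many $\iota_xf_x$ escape any given open subgroup $\U$, so the net of finite partial sums indexed by finite subsets $Z\subset X$ ordered by inclusion is Cauchy, and completeness together with separatedness deliver a unique limit
$$
 f\;=\;\sum\nolimits^{\mathrm{top}}_{x\in X}\iota_xf_x\;\in\;\Hom_\sA(M,N^{(X)}).
$$

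The second step is to identify $f$ as the witness of agreeable summability of $(f_x)$. Since $\sA$ is agreeable, the map $\eta$ is injective, so it suffices to verify $\pi_y\circ f=f_y$ for every $y\in X$, where $\pi_y\:N^{(X)}\to N$ is the canonical retraction determined by $\pi_y\iota_x=\id_N$ for $x=y$ and $0$ otherwise. Continuity axiom~(i) makes left composition with the fixed morphism~$\pi_y$ continuous as a map $\Hom_\sA(M,N^{(X)})\to\Hom_\sA(M,N)$, and hence it commutes with the topological limit of the Cauchy net of partial sums. Since all but one term of $(\pi_y\iota_xf_x)_{x\in X}$ vanishes, the partial sums stabilize at $f_y$, giving $\pi_y\circ f=f_y$ as required.

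The third step is the equality of sums, which is the same continuity argument with the summation morphism $\Sigma\:N^{(X)}\to N$ (satisfying $\Sigma\iota_x=\id_N$ for all $x\in X$) in place of~$\pi_y$:
$$
 \sum\nolimits_{x\in X}^{\mathrm{agr}}f_x\;=\;\Sigma\circ f\;=\;\sum\nolimits^{\mathrm{top}}_{x\in X}\Sigma\iota_xf_x\;=\;\sum\nolimits^{\mathrm{top}}_{x\in X}f_x.
$$
The only place where I would take genuine care is the appeal to completeness and separatedness of $\Hom_\sA(M,N^{(X)})$ legitimizing the topological series defining $f$; once that is granted, everything else is purely formal, driven by continuity of composition together with agreeability's injectivity of~$\eta$.
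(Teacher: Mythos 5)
Your proposal is correct and follows essentially the same route as the paper's proof: lift zero-convergence to $\Hom_\sA(M,N^{(X)})$ via Lemma~\ref{right-topological-coproduct-lemma}, form the topological sum $f=\sum^{\mathrm{top}}_{x}\iota_xf_x$ using completeness and separatedness, and then use continuity of composition with $\pi_y$ and with $\Sigma$ to get summability and the equality of the two sums. The only difference is that you spell out the Cauchy-net argument for the existence of the topological sum, which the paper leaves implicit.
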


\begin{proof}
 By Lemma~\ref{right-topological-coproduct-lemma}, the family of
elements $(\iota_xf_x)_{x\in X}$ converges to zero in the topological
abelian group $\Hom_\sA(M,N^{(X)})$.
 Since this group is complete and separated by assumption,
the sum
$$\textstyle
 f=\sum^{\mathrm{top}}_{x\in X}\iota_xf_x\:M\rarrow N^{(X)},
$$
understood as the limit of finite partial sums, is well-defined.
 Using the continuity of composition again, one can see that
$$\textstyle
 \pi_y\circ f=\pi_y\circ\sum^{\mathrm{top}}_{x\in X}(\iota_x\circ f_x)=
 \sum^{\mathrm{top}}_{x\in X}(\pi_y\circ\iota_x\circ f_x)=f_y,
 \qquad y\in X,
$$
so $\eta(f)=(f_x)_{x\in X}$.
 This proves that the family of morphisms~$(f_x)$ is summable in
the agreeable category~$\sA$.
 Finally, the same continuity axiom~(i) implies that
$$\textstyle
 \sum^{\mathrm{agr}}_{x\in X}f_x=\Sigma\circ f=
 \sum^{\mathrm{top}}_{x\in X}(\Sigma\circ\iota_x\circ f_x)
 =\sum^{\mathrm{top}}_{x\in X}f_x,
$$
so the two notions of infinite summation agree.
\end{proof}

 Notice that the converse assertion to
Lemma~\ref{agreeable-agreement-lemma} certainly does \emph{not} hold
in general.
 In fact, endowing all the abelian groups $\Hom_\sA(M,N)$ with
the discrete topology defines a complete, separated right topological
additive category structure on any additive category $\sA$ in such a way
that no infinite family of nonzero morphisms converges to zero in
$\Hom_\sA(M,N)$.
% So a right topological structure on an additive category $\sA$ has to
%be nontrivial enough in order to be useful.

 A \emph{topologically agreeable} category $\sA$ is an agreeable
additive category endowed with a complete, separated right topological
additive category structure in such a way that, for any two objects
$M$, $N\in\sA$, every summable family of morphisms $f_x\:M\rarrow N$
converges to zero in the topology of $\Hom_\sA(M,N)$.

\begin{exs} \label{full-subcategory-top-agreeable}
 (1)~Any full subcategory closed under coproducts in an agreeable
category is agreeable. \par
 (2)~Any full subcategory closed under coproducts in a topologically
agreeable category is topologically agreeable.
\end{exs}

 An additive category $\sA$ is said to be \emph{idempotent-complete}
if all the idempotent endomorphisms of objects in $\sA$ have their
images in~$\sA$.
 Given an additive category $\sA$, the additive category obtained by
adjoining to $\sA$ the images of all the idempotent endomorphisms of
its objects is called the \emph{idempotent completion} of~$\sA$
(see, e.~g., \cite[Expos\'{e}~IV, Exercice~7.5(b)]{SGA4},
\cite[Section~1]{BaSch} or \cite[Section~4.4]{Fac2}).

\begin{exs} \label{idempotent-completion-top-agreeable}
 (1)~The idempotent completion of any agreeable additive category
is agreeable. \par
 (2)~Any structure of a right topological category on an additive
category $\sA$ can be extended in a unique way to a structure of
right topological category on the idempotent completion of~$\sA$.
 Indeed, given a topological abelian group $\A$ and its continuous
idempotent endomorphism $e\:\A\rarrow\A$, there exists a unique
topology on the abelian group $e\A$ for which both the inclusion
$e\A\rarrow\A$ and the projection $e\:\A\rarrow e\A$ are continuous.
 Hence both the existence and uniqueness follow from the continuity
axiom~(i). \par
 (3) In the context of~(2), if $\sA$ is a topologically agreeable
category, then such is the idempotent completion of~$\sA$.
\end{exs}

\begin{ex} \label{modules-topologically-agreeable-example}
 (1)~For any associative ring $A$, the category of left $A$\+modules
$\sA=A\modl$ is topologically agreeable.
 Indeed, $\sA$ is agreeable by Example~\ref{grothendieck-agreeable},
and the right topological structure on $\sA$ is defined by
the classical construction of the \emph{finite topology} on the group
of morphisms $\Hom_A(M,N)$ between two left $A$\+modules.
 Specifically, a base of neighborhoods of zero in $\Hom_A(M,N)$ is
provided by the annihilators of finitely generated $A$\+submodules
$E\subset M$ (see the references in~\cite[Example~2.13]{Pproperf},
and~\cite[Section~7.1]{PS} for a further discussion).

 One easily observes that a family of morphisms
$(f_x\:M\rarrow N_x)_{x\in X}$ in $A\modl$ corresponds to a morphism
$M\rarrow\bigoplus_{x\in X}N_x$ if and only if the family
is \emph{locally finite}, that is, for every finitely generated
submodule $E\subset M$ the set of all $x\in X$ for which
$f_x|_E\ne0$ is finite.
 When $N_x=N$ is one and the same module for all $x\in X$, this is
equivalent to convergence of the family of elements $(f_x)_{x\in X}$
to zero in the finite topology on the group $\Hom_A(M,N)$.

 (2)~More generally, the same construction as in~(1) provides
a topologically agreeable category structure on any locally finitely
generated (Grothendieck) abelian category (in the sense
of~\cite[Section~1.E]{AR}).
 This includes all locally finitely presentable abelian categories,
and in particular, all locally Noetherian and locally coherent
Grothendieck categories (cf.\ the discussion in
Section~\ref{topologically-coherent-secn} below).
\end{ex}

\begin{exs} \label{lwfg-closed-functors-examples}
 As it is essentially shown in the paper~\cite[Sections~9\<10]{PS},
further examples of topologically agreeable additive categories
include: \par
 (1)~all locally weakly finitely generated abelian
categories~\cite[Section~9.2]{PS}; and \par
 (2)~all the additive categories admitting a \emph{closed additive
functor} into a locally weakly finitely generated abelian category,
as discussed in~\cite[Section~9.3]{PS} (in particular, the categories
of comodules over coalgebras and corings~\cite[Proposition~10.4]{PS}
and semimodules over semialgebras~\cite[Proposition~10.8]{PS}).
\end{exs}

\begin{exs} \label{closed-functors-full-generality-examples}
 (1)~More generally, let $\sA$ be an additive category, $\sC$ be
an agreeable additive category, and $F\:\sA\rarrow\sC$
be a faithful functor preserving coproducts.
 Then the category $\sA$ is agreeable.
 
 (2)~Let $\sA$ be an additive category, $\sC$ be a right topological
additive category, and $F\:\sA\rarrow\sC$ be a faithful additive functor.
 Then for any two objects $M$ and $N\in\sC$ one can endow the group
$\Hom_\sA(M,N)$ with the induced topology of a subgroup in
the topological abelian group $\Hom_\sC(F(M),F(N))$.
 This construction makes $\sA$ a right topological additive category.

 (3)~Let $\sA$ be an additive category and $\sC$ be a topologically
agreeable additive category.
 An additive functor $F\:\sA\rarrow\sC$ is said to be \emph{closed} if
it is faithful, preserves coproducts, and for every pair of objects
$M$ and $N\in\sA$, the image of the injective map $F:\Hom_\sA(M,N)
\rarrow\Hom_\sC(F(M),F(N))$ is a closed subgroup in
$\Hom_\sC(F(M),F(N))$.

 Given a closed functor $F\:\sA\rarrow\sC$, the induced right
topological category structure on $\sA$ (as in~(2)) is clearly
complete and separated.
 Moreover, this topology makes $\sA$ a topologically agreeable
category.

 Indeed, let $(f_x\:M\rarrow N)_{x\in X}$ be a summable family of
morphisms in the agreeable category~$\sA$, and let
$f\:M\rarrow N^{(X)}$ be the related morphism into the coproduct.
 Then the family of morphisms $(F(f_x)\:F(M)\rarrow F(N))_{x\in X}$
is summable in $\sC$, since there is the morphism $F(f)\:F(M)
\rarrow F(N^{(X)})=F(N)^{(X)}$.
 Since $\sC$ is topologically agreeable by assumption, it follows
that the family of morphisms $(F(f_x))_{x\in X}$ converges to zero
in the topological group $\Hom_\sC(F(M),F(N))$.
 Therefore, the family of morphisms $(f_x)_{x\in X}$ converges to
zero in the induced topology of the subgroup
$\Hom_\sA(M,N)\subset\Hom_\sC(F(M),F(N))$.
\end{exs}

\begin{exs}
 A topologically agreeable category structure on a given agreeable
category does \emph{not} need to be unique.
 It is instructive to start with the following example of a bijective
continuous homomorphism of complete, separated topological rings with
right linear topologies $f\:\R'\rarrow\R''$ such that the induced map
$f[[X]]\:\R'[[X]]\rarrow\R''[[X]]$ is bijective for every set $X$, yet
the inverse homomorphism $f^{-1}\:\R''\rarrow\R'$ is not continuous.
 This will show that a complete, separated topology on an abelian group
is in no way determined by the related zero-convergent families of
elements and their sums.

 (1)~Let $R$ be the ring of (commutative) polynomials in an uncountable
set of variables~$x_i$ over a field~$k$, and let $S\subset R$ be
the multiplicative subset generated by the elements $x_i\in R$.
 Let $\R'$ denote the ring $R$ endowed with the discrete topology, and
let $\R''$ be the ring $R$ endowed with the \emph{$S$\+topology}
(in which the ideals $sR$, \,$s\in S$, form a base of neighborhoods
of zero).
 By~\cite[Proposition~1.16]{GT}, \,$\R''$ is a complete, separated
topological ring.
 One can easily see that \emph{no infinite family of nonzero elements
converges to zero} in~$\R''$.
 The identity map $f\:\R'\rarrow\R''$ is a continuous ring homomorphism,
and one has $\R'[[X]]=R[X]=\R''[[X]]$ for any set $X$; still the map
$f^{-1}\:\R''\rarrow\R'$ is not continuous.
 (Cf.~\cite[Remark~6.3]{Pcoun}.)

 (2)~Now let us present an example of an agreeable category with two
different topologically agreeable structures.
 For this purpose, one does not have to look any further than
the categories of modules $\sA=A\modl$ over associative rings~$A$.
 The constructions of
Examples~\ref{modules-topologically-agreeable-example}\,(1)
and~\ref{lwfg-closed-functors-examples}\,(1) provide two different
topologically agreeable structures on $A\modl$, generally speaking.

 A left $A$\+module $D$ is said to be \emph{weakly finitely
generated}~\cite[Section~9.2]{PS} if, for any family of left
$A$\+modules $(N_x)_{x\in X}$, the natural map of abelian groups
$\bigoplus_{x\in X}\Hom_A(D,N_x)\rarrow
\Hom_A(D,\>\bigoplus_{x\in X}N_x)$ is an isomorphism.
 Such modules $D$ are known as \emph{dually slender} or ``small'' in
the literature~\cite{EGT,Zem} (cf.~\cite[Remark~9.4]{PS}); and
an associative ring $A$ is said to be \emph{left steady} if all
such modules are finitely generated.
 In the \emph{weakly finite topology} of
Example~\ref{lwfg-closed-functors-examples}\,(1), for any left
$A$\+modules $M$ and $N$, annihilators of weakly finitely generated
submodules $D\subset M$ form a base of neighborhoods of zero in
the topological group $\Hom_A(M,N)$.

 Let $\sA''$ denote the topologically agreeable category of left
$A$\+modules with the finite topology on the groups of homomorphisms,
and let $\sA'$ stand for the topologically agreeable category of left
$A$\+modules with the weakly finite topology on the Hom groups.
 Then the identity functor $F\:\sA'\rarrow\sA''$ induces continuous
bijective maps $\Hom_{\sA'}(M,N)\rarrow\Hom_{\sA''}(M,N)$ for all
left $A$\+modules $M$ and $N$, but the inverse map
$\Hom_{\sA''}(M,N)\rarrow\Hom_{\sA'}(M,N)$ does not need to be
continuous.

 (3)~To give a specific example, let $A=k\{x,y\}$ be the free associative
algebra with two generators.
 Then any injective $A$\+module is weakly finitely
generated~\cite[Lemma~3.2]{Zem}.
 Let $M$ be an (infinitely generated) injective cogenerator of $A\modl$.
 Then the ring $\R'=\Hom_{\sA'}(M,M)^\rop$ is discrete, while the ring
$\R''=\Hom_{\sA''}(M,M)^\rop$ is not.
 Indeed, let $\U\subset\R''$ be an open right ideal.
 Then $\U$ contains the annihilator of some finitely generated
submodule $E\subset M$.
 This annihilator is nonzero, as $\Hom_A(M/E,M)\ne0$.
 Thus the zero ideal is not open in~$\R''$.
 But it is open in $\R'$, since $M$ is weakly finitely generated
(so one can take $D=M$).

 As in~(1), we have a continuous bijective map of complete, separated
topological rings with right linear topologies $f\:\R'\rarrow\R''$,
where $\R'$ is discrete but $\R''$ is not.
 Still, no infinite family of nonzero elements converges to zero
in~$\R''$.

 (4)~In fact, the functor $F\:\sA'\rarrow\sA''$ is an equivalence of
topologically agreeable categories \emph{if and only if} the ring $A$ is
left steady.
 Indeed, let $M$ be a weakly finitely generated left $A$\+module
that is not finitely generated.
 Put $N=\bigoplus_{E\subset M}M/E$, where $E$ ranges over all
the finitely generated $A$\+submodules of~$M$.
 Then the topological abelian group $\A'=\Hom_{\sA'}(M,N)$ is discrete,
since $M$ is weakly finitely generated.
 Let us show that the complete, separated topological abelian group
$\A''=\Hom_{\sA''}(M,N)$ is \emph{not} discrete.
 Let $\U\subset\A''$ be an open subgroup.
 Then $\U$ contains the annihilator of some finitely generated
submodule $E\subset M$.
 By construction, this annihilator is nonzero, as $\Hom_A(M/E,N)\ne0$.
 So the zero subgroup is not open in~$\A''$.

 Set $L=M\oplus N$, and consider the complete, separated right linear
topological rings $\R'=\Hom_{\sA'}(L,L)^\rop$ and $\R''=
\Hom_{\sA''}(L,L)^\rop$.
 Then the identity map $f\:\R'\rarrow\R''$ is a continuous ring
homomorphism such that the induced map $f[[X]]\:\R'[[X]]\rarrow
\R''[[X]]$ is bijective for every set~$X$.
 Yet it is clear from the previous paragraph and the argument in
Example~\ref{idempotent-completion-top-agreeable}\,(2) that the map
$f^{-1}\:\R''\rarrow\R'$ is not continuous.

 (5)~Slightly more generally, let $A$ be an associative ring and $M$
be a \emph{self-small} left $A$\+module, i.~e., a module for which
the natural map $\bigoplus_{x\in X}\Hom_A(M,M)\rarrow
\Hom_A(M,\>\bigoplus_{x\in X}M)$ is an isomorphism for any set~$X$.
 Then the finite topology on the ring $\R''=\Hom_{\sA''}(M,M)^\rop$ is
a complete, separated right linear topology in which no infinite
family of nonzero elements converges to zero.
 The weakly finite topology on the ring $\R'=\Hom_{\sA'}(M,M)^\rop$ has
the same properties.
\end{exs}

\begin{rem} \label{Q/R-endomorphisms-remark}
 Beyond the historical discussion in~\cite[Example~2.13]{Pproperf}
and~\cite[Section~7.1]{PS}, let us mention one class of examples of
topological rings of endomorphisms which appears in the literature.
 Let $R$ be a commutative domain and $Q$ be its ring of fractions.
 Consider the $R$\+module $K=Q/R$.
 Then the $R$\+algebra $\Hom_R(K,K)$ is
commutative~\cite[Proposition~7.1]{Mat} and isomorphic to
the completion of $R$ in
the $R$\+topology~\cite[Proposition~6.4]{Mat}.
 The completion (projective limit) topology on $\Hom_R(K,K)$ as
the completion of $R$ coincides with
the $R$\+topology~\cite[Theorems~6.8 and~6.10]{Mat} and with
the finite topology on $\Hom_R(K,K)$ as the endomorphism ring.

 The similar results hold for an arbitrary commutative ring $R$ and
its full ring of quotients $Q$ \,\cite[Section~2]{Mat2}, or even
more generally, for a commutative ring $R$ with a multiplicative
subset of regular elements $S\subset R$ and the $R$\+module
$K=(S^{-1}R)/R$.
 In the latter case, the commutative $R$\+algebra $\Hom_R(K,K)$ is
isomorphic to the completion of $R$ in the $S$\+topology;
the completion topology on $\Hom_R(K,K)$ coincides with
the $S$\+topology and with the finite topology of the endomorphism
ring~\cite[Proposition~3.2, and Theorems~2.3 and~2.5]{PMat}.
 A construction of a two-sided linear topology on the endomorphism
ring of a similar module $K$ for a noncommutative ring $R$ was
suggested in the paper~\cite[Proposition~3.5]{FN}.
\end{rem}

 In the rest of this section, we discuss the question of
\emph{existence} of a topologically agreeable structure and
consequences of such existence.

\begin{rem} \label{projective-contramodules-topologically-agreeable}
 The reader should be warned that the abelian category $\R\contra$
of left contramodules over a topological ring $\R$ is rarely agreeable, 
generally speaking (see, e.~g., the discussion
in~\cite[Section~1.5]{Prev}).
 However, the additive category of \emph{projective} left
$\R$\+contramodules $\R\contra_\proj$ is agreeable, and in fact has
a natural topologically agreeable category structure, which can be
explicitly constructed as follows.
 The construction of the matrix topology in
Section~\ref{matrix-topology-secn} (extended from the square
row-zero-convergent matrices with entries in $\R$ to the rectangular
ones in the obvious way) provides an agreeable topologization on
the full subcategory of \emph{free} left $\R$\+contramodules
in $\R\contra$.
 This topologization can be extended to the whole category
$\R\contra_\proj$ as explained in
Examples~\ref{idempotent-completion-top-agreeable}.
\end{rem}

 More generally, the following lemma holds.

\begin{lem} \label{agreeable-projectives-locally-presentable}
 Let\/ $\sB$ be a cocomplete abelian category with a projective
generator.
 Assume that the full subcategory of projective objects\/
$\sB_\proj\subset\sB$ is agreeable.
 Then the category\/ $\sB$ is locally presentable.
\end{lem}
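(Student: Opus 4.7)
The plan is to associate to $\sB$ a complete, separated topological ring $\R$ with right linear topology in a canonical way, and then to exhibit an equivalence $\sB\simeq\R\contra$. Local presentability of $\sB$ will then follow from the local presentability of $\R\contra$ recalled in the preliminaries.

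Concretely, let $P$ be the chosen projective generator of $\sB$ and set $\R:=\Hom_\sB(P,P)^\rop$. Agreeability of $\sB_\proj$ supplies, for every set $X$, an injection $\eta_X\:\Hom_\sB(P,P^{(X)})\hookrightarrow\R^X$, and I identify $\Hom_\sB(P,P^{(X)})$ with its image $\R[[X]]\subseteq\R^X$, regarding its elements as ``zero-convergent'' $X$\+indexed families in~$\R$. The matrix topology construction referenced in Remark~\ref{projective-contramodules-topologically-agreeable} then determines a canonical complete, separated right linear topology on~$\R$ compatibly with this data.

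Next I would define $F:=\Hom_\sB(P,-)\:\sB\rarrow\R\contra$. The set $F(M)=\Hom_\sB(P,M)$ carries its evident left $\R$\+module structure by precomposition, and the contraaction $\R[[F(M)]]\rarrow F(M)$ sends a zero-convergent family $(r_g)_{g\in F(M)}$, viewed as a morphism $\tilde r\:P\rarrow P^{(F(M))}$ in $\sB$, to the composition of $\tilde r$ with the canonical morphism $P^{(F(M))}\rarrow M$ induced by the family $(g\:P\to M)_{g\in F(M)}$; associativity and unitality of this contraaction follow from the functorial compatibility of the matrix-topological construction with compositions in $\sB$. By design, $F(P^{(X)})=\R[[X]]$ is the free left $\R$\+contramodule on $X$, and $F$ induces a bijection $\Hom_\sB(P^{(J)},P^{(I)})\cong\Hom_{\R\contra}(\R[[J]],\R[[I]])$ on hom-sets between free objects.

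To conclude that $F$ is an equivalence, I would use that every object of $\sB$ is a cokernel of a morphism $P^{(J)}\rarrow P^{(I)}$ (as $P$ is a projective generator), and every left $\R$\+contramodule is likewise a cokernel of a morphism $\R[[J]]\rarrow\R[[I]]$ between free contramodules. Combining the bijection between hom-sets of free objects with the exactness of $F$ (which follows from projectivity of~$P$) yields full faithfulness and essential surjectivity, giving $\sB\simeq\R\contra$. The main obstacle will be verifying the associativity of the constructed contraaction on $F(M)$ for non-projective $M$, and checking that the induced assignment on cokernels is well-defined and is the inverse of the contratensor product functor $\C\longmapsto P\ocn_\R\C$ on $\R\contra$; both steps amount to careful bookkeeping with matrix-indexed compositions, with the agreeable structure on $\sB_\proj$ ensuring the requisite summability at every step.
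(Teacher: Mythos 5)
Your proposal has a genuine gap, and in fact the route you choose is blocked by a counterexample that appears later in this very paper. You assume that the agreeable structure on $\sB_\proj$ determines a complete, separated right linear topology on $\R=\Hom_\sB(P,P)^\rop$ for which $\Hom_\sB(P,P^{(X)})$ becomes $\R[[X]]$, and hence that $\sB\simeq\R\contra$. Neither step is available under the hypotheses of the lemma, which only assume that $\sB_\proj$ is \emph{agreeable}, not \emph{topologically agreeable}. First, the paper's discussion of non-unique topologically agreeable structures shows explicitly that ``a complete, separated topology on an abelian group is in no way determined by the related zero-convergent families of elements and their sums,'' so the summability data coming from agreeability does not single out a topology on $\R$; the matrix-topology construction of Remark~\ref{projective-contramodules-topologically-agreeable} that you invoke presupposes a topological ring as input rather than producing one. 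Second, and decisively, Example~\ref{ring-summation-structure-counterex} exhibits categories $\sB$ satisfying exactly the hypotheses of this lemma (nonzero continuous spectral categories, e.g.\ the one of Example~\ref{complete-boolean}) that are \emph{not} equivalent to $\T\contra$ for any complete, separated right linear topological ring $\T$. So the equivalence $\sB\simeq\R\contra$ you aim for simply does not exist in general, even though the conclusion of the lemma (local presentability) is still true.

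The paper's actual argument sidesteps topological rings entirely. Since $P$ is a projective generator, $\sB$ is equivalent to the category of modules over the monad $\boT_P\:X\longmapsto\Hom_\sB(P,P^{(X)})$ on $\Sets$, and it suffices to show this monad is $\kappa$\+accessible for $\kappa$ the successor cardinal of $|\Hom_\sB(P,P)|$. The only input needed from agreeability is a cardinality bound: in a summable family of morphisms $(f_x\:P\to P)_{x\in X}$ no nonzero morphism can occur infinitely often (otherwise the cancellation trick yields a contradiction), so every summable family of nonzero morphisms has cardinality smaller than $\kappa$, which gives the accessibility. If you want to salvage your outline, this monad-accessibility estimate is the step you should substitute for the (unavailable) contramodule realization.
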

 
\begin{proof}
 This is explained in the discussion in~\cite[Section~1.2]{PR}.
 In fact, let $P$ be a projective generator of~$\sB$.
 Then we claim that the category $\sB$ is locally $\kappa$\+presentable,
where $\kappa$~is the successor cardinal of the cardinality of the set
$\Hom_\sB(P,P)$.

 Indeed, the category $\sB$ is equivalent to the category of modules
over the monad $\boT_P\:X\longmapsto\Hom_\sB(P,P^{(X)})$ on the category
of sets; so it suffices to show that this monad is $\kappa$\+accessible
(see~\cite[Section~1.1]{PR} or~\cite[Section~1]{Pper}).
 For this purpose, one simply observes that a summable (in
the terminology of~\cite{PR}, ``admissible'') family of morphisms
$(f_x\:P\rarrow P)_{x\in X}$ cannot have any given nonzero morphism
$P\rarrow P$ repeated in it more than a finite number of times;
or otherwise the cancellation trick leads to a contradiction.
 Thus the cardinality of any summable family of nonzero morphisms
$(f_x\:P\rarrow P)$ is smaller than~$\kappa$.
\end{proof}

 Given an additive category $\sA$ with set-indexed coproducts and
an object $M\in\sA$, we denote by $\Add(M)\subset\sA$
the full subcategory consisting of all the direct summands of coproducts
of copies of $M$ in~$\sA$.
 The following theorem is the main result of what we call ``generalized
tilting theory'' (see~\cite{PS,PS2}).

\begin{thm} \label{generalized-tilting}
\textup{(a)} Let\/ $\sA$ be a idempotent-complete additive category
with set-indexed coproducts and $M\in\sA$ be an object.
 Then there exists a unique abelian category\/ $\sB$ with enough
projective objects such that the full subcategory\/ $\sB_\proj\subset\sB$
of projective objects in\/ $\sB$ is equivalent to the full subcategory\/
$\Add(M)\subset\sA$, that is
$$
 \sA\supset\Add(M)\cong\sB_\proj\subset\sB.
$$ \par
\textup{(b)} In the context of~\textup{(a)}, assume additionally that
the additive category\/ $\sA$ is agreeable.
 Then the abelian category\/ $\sB$ is locally presentable and, for
any family of projective objects $(P_x\in\sB)_{x\in X}$,
the natural morphism\/ $\coprod_{x\in X} P_x\rarrow\prod_{x\in X} P_x$
is a monomorphism in\/~$\sB$. \par
\textup{(c)} In the context of~\textup{(b)}, assume additionally that
the additive category\/ $\sA$ is topologically agreeable.
 Then the category\/ $\sB$ is equivalent to the category of left
contramodules over the topological ring\/ $\R=\Hom_\sA(M,M)^\rop$,
that is\/ $\sB=\R\contra$.
 The equivalence of additive categories\/ $\Add(M)\cong\R\contra_\proj$
takes the object $M\in\Add(M)$ to the free left\/ $\R$\+contramodule
with one generator\/ $\R\in\R\contra_\proj$.
\end{thm}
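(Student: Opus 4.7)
\textbf{Part (a).} The plan is to realize $\sB$ as the free abelian completion of $\sP := \Add(M)$ under cokernels. Concretely, I would take $\sB$ to be the category whose objects are diagrams $(P_1 \xrightarrow{d} P_0)$ with $P_i\in\sP$, and whose morphisms $(P_1\to P_0)\to(Q_1\to Q_0)$ are equivalence classes of commutative squares, with two squares identified when their difference on $P_0\to Q_0$ factors through~$d$. A routine verification (for which I would cite the standard references for the theory of ``abelianizations'' of additive categories) shows that $\sB$ is abelian, the functor $P\mapsto(0\to P)$ is a fully faithful embedding $\sP\hookrightarrow\sB$ whose essential image is $\sB_\proj$, and every object of $\sB$ has a two-term projective resolution by objects coming from~$\sP$. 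Uniqueness follows from the universal property: if $\sB'$ is another such category, then sending a projective resolution $P_1\to P_0\to B$ in $\sB$ to the cokernel of the corresponding morphism in $\sB'$ (under the given equivalence $\sB_\proj\cong\sP\cong\sB'_\proj$) gives an exact functor $\sB\to\sB'$ which is an equivalence by a standard resolution argument.

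\textbf{Part (b).} By construction, $\sB$ is cocomplete, and the image of $M$ is a projective generator (coproducts in $\sB_\proj$ are computed by coproducts in $\sA$). The full subcategory $\sB_\proj\cong\Add(M)$ is agreeable as a full subcategory of the agreeable category~$\sA$ by Example~\ref{full-subcategory-top-agreeable}(1). Thus both hypotheses of Lemma~\ref{agreeable-projectives-locally-presentable} are satisfied, and $\sB$ is locally presentable. The statement that $\coprod_{x\in X} P_x\rarrow\prod_{x\in X} P_x$ is a monomorphism for any family of projectives then is precisely the agreeability of $\sB_\proj$, transported to $\sB$ via the inclusion $\sB_\proj\hookrightarrow\sB$.

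\textbf{Part (c).} Let $P\in\sB$ correspond to $M\in\Add(M)$. Since $\sB$ is a cocomplete abelian category with a projective generator $P$, it is equivalent to the category of modules over the monad $\boT_P\:X\longmapsto\Hom_\sB(P,P^{(X)})$ on $\Sets$, as recalled in the proof of Lemma~\ref{agreeable-projectives-locally-presentable}. The key computation is to identify this monad with $X\longmapsto\R[[X]]$. By the equivalence $\sB_\proj\cong\Add(M)$, we have
\[
 \Hom_\sB(P,P^{(X)}) \;\cong\; \Hom_\sA(M,M^{(X)}).
\]
Via the map $\eta$ of the agreeable structure, a morphism $M\to M^{(X)}$ corresponds to a summable family $(f_x\:M\to M)_{x\in X}$, and by Lemma~\ref{agreeable-agreement-lemma} together with the topologically agreeable hypothesis, such summable families are exactly the zero-convergent families in the topological ring $\Hom_\sA(M,M)$, i.e.\ elements of $\R[[X]]$. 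This matches the underlying sets of the two monads. The monad structure maps (unit $\epsilon_X$ and composition $\phi_X$) then correspond under this identification: the unit sends $x\in X$ to the $x$-th coproduct inclusion $\iota_x\:M\to M^{(X)}$, which matches the point-measure map; and the multiplication $\phi_X$ encoding composition in $\sB$ corresponds to the ``opening of parentheses'' defined via multiplication and infinite summation in $\R$, where the well-definedness of the resulting infinite sums on the topological side is precisely what the continuity axiom~(i) guarantees on the $\sA$-side. Thus $\boT_P \cong (X\mapsto\R[[X]])$ as monads, so $\sB\simeq\R\contra$, and the object $P$ corresponds under this equivalence to $\boT_P(\ast)=\R[[\ast]]=\R$, the free left $\R$-contramodule with one generator.

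\textbf{Main obstacle.} The nontrivial part is the compatibility of monad structures in~(c): one must verify that the identification $\Hom_\sA(M,M^{(X)})\cong\R[[X]]$ intertwines the composition maps on the two sides. This is where the full force of the topologically agreeable hypothesis is used; in particular, the compatibility requires both continuity of composition (axiom~(i)) in order to form and manipulate the necessary infinite sums, and the equality of the agreeable sum with the topological limit sum provided by Lemma~\ref{agreeable-agreement-lemma}.
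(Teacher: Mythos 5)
Your parts (b) and (c) follow essentially the same route as the paper: (b) combines Example~\ref{full-subcategory-top-agreeable}\,(1) with Lemma~\ref{agreeable-projectives-locally-presentable}, and (c) identifies the monad $X\mapsto\Hom_\sB(P,P^{(X)})\cong\Hom_\sA(M,M^{(X)})$ with $X\mapsto\R[[X]]$ via Lemma~\ref{agreeable-agreement-lemma}; you correctly locate the crux in the compatibility of the two monad multiplications. (For the monomorphism claim in~(b) one should add the one-line reduction to projective test objects, which is immediate since $\sB$ has enough projectives.)

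Part~(a), however, contains a genuine gap. You replace the paper's construction of $\sB$ (the category of modules over the additive monad $\boT_M\:X\mapsto\Hom_\sA(M,M^{(X)})$ on sets, following~\cite{PS2}) by the Freyd-type category of two-term complexes in $\Add(M)$ modulo homotopy, i.e.\ the category of finitely presented additive functors $\Add(M)^\sop\rarrow\Ab$. That category always has cokernels, but it is abelian \emph{only if} $\Add(M)$ has weak kernels; without them, kernels of morphisms between finitely presented functors fail to be finitely presented (for an ordinary ring this is exactly the reason why finitely presented modules form an abelian category only over a coherent ring). So the ``routine verification'' you defer to the standard references requires as input that $\Add(M)$ has weak kernels, and this is precisely where the hypothesis of set-indexed coproducts must enter: the weak kernel of $f\:P\rarrow Q$ in $\Add(M)$ is the canonical morphism $M^{(S)}\rarrow P$, where $S\subset\Hom_\sA(M,P)$ is the set of all $g$ with $fg=0$ (any $h\:L\rarrow P$ with $L\in\Add(M)$ and $fh=0$ factors through it upon writing $L$ as a retract of a copower of~$M$). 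A similar silent step occurs when you assert that $\sB$ is cocomplete and that coproducts in $\sB_\proj$ are computed as in $\sA$: one must check that the functor represented by $\coprod_{i}P_i$ is the coproduct of the functors represented by the $P_i$ inside the finitely presented functor category, which holds because every finitely presented functor takes coproducts in $\Add(M)$ to products of abelian groups and products are exact in $\Ab$. (Also, the homotopy relation should identify two squares when the difference of their $P_0\rarrow Q_0$ components factors through the differential $Q_1\rarrow Q_0$ of the \emph{target}, not through~$d$.) With these points supplied your construction does work and then agrees with the paper's by the uniqueness assertion of~(a), whose proof you give correctly.
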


\begin{proof}
 Part~(a) is~\cite[Theorem~1.1(a)]{PS2}
(see also~\cite[Section~6.3]{PS}).
 In part~(b), we observe that the category $\Add(M)$ is agreeable
by Example~\ref{full-subcategory-top-agreeable}\,(1), and it follows
that the category $\sB_\proj$ is agreeable, too.
 Let $P\in\sB_\proj$ be the object corresponding to the object
$M\in\Add(M)$ under the equivalence $\Add(M)\cong\sB_\proj$;
then $P$ is a projective generator of~$\sB$.
 By Lemma~\ref{agreeable-projectives-locally-presentable},
the category $\sB$ is locally presentable.
 Finally, part~(c) essentially follows from
Lemma~\ref{agreeable-agreement-lemma} (cf.\ the proofs
of~\cite[Theorems~7.1, 9.9, and~9.11]{PS}).
\end{proof}

 If $\sA$ is abelian, the equivalence of categories $\Add(M)\cong\sB_\proj$ in
Theorem~\ref{generalized-tilting} can be extended to a pair of
adjoint functors between the whole categories $\sA$ and~$\sB$
\cite[Section~1]{PS2}.
 The fully faithful functor $\Add(M)\cong\sB_\proj\hookrightarrow\sB$
extends to a left exact functor $\Psi\:\sA\rarrow\sB$, while
the fully faithful functor $\sB_\proj\cong\Add(M)\hookrightarrow\sA$
extends to a right exact functor $\Phi\:\sB\rarrow\sA$, with
the functor $\Phi$ left adjoint to~$\Psi$.

 We will need the following more explicit description of
the functor~$\Psi$ \,\cite[proof of Proposition~6.2]{PS},
\cite[Section~1]{PS2}.
 The abelian category $\sB$ can be constructed as the category
of modules over the additive monad $\boT_M\:X\longmapsto
\Hom_\sA(M,M^{(X)})$ on the category of sets.
 Hence the category $\sB$ is endowed with a faithful, exact,
limit-preserving forgetful functor $\sB\rarrow\Ab$ to the category
of abelian groups.
 The composition of the right exact functor $\Psi\:\sA\rarrow\sB$ with
the exact forgetful functor $\sB\rarrow\Ab$ is computed as
the functor $\Hom_\sA(M,{-})\:\sA\rarrow\Ab$.

 In particular, in the context of Theorem~\ref{generalized-tilting}(c),
the monad $\boT_M$ is isomorphic to the monad $X\longmapsto\R[[X]]$
on the category of sets.
 The forgetful functor $\sB\cong\R\contra\rarrow\Ab$ assigns to
a left $\R$\+contramodule its underlying abelian group.

\medskip

 We say that an agreeable category is \emph{topologizable} if it
admits a topologically agreeable category structure.
 Notice that any topologizable category is (agreeable, hence)
additive with set-indexed coproducts by definition.

 The following theorem is one of the main results of this paper
(cf.\ the closely related Theorem~\ref{contra-split-iff-semisimple}).
 Its proof will be given below in Section~\ref{split-categories-secn}.

\begin{thm} \label{topologizable-spectral-is-semisimple}
 Any topologizable split abelian category is Ab5 and semisimple.
 So any topologizable spectral category is discrete spectral.
\end{thm}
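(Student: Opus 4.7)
The plan is to reduce the claim, via the generalized tilting theory of Theorem~\ref{generalized-tilting}, to the special case of contramodule categories, where the implication ``split $\Rightarrow$ semisimple'' is the content of the companion Theorem~\ref{contra-split-iff-semisimple}. I fix a topologically agreeable structure on $\sB$ and study the full subcategories $\Add(M)\subset\sB$ one object $M\in\sB$ at a time.

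First I would observe that, because $\sB$ is split, every morphism in $\sB$ factors as a split epimorphism followed by a split monomorphism, and hence the kernel and cokernel of any morphism between objects of $\Add(M)$ are direct summands of those objects and therefore again lie in $\Add(M)$. This promotes $\Add(M)$ to an abelian subcategory of $\sB$; it is automatically split, and by Example~\ref{full-subcategory-top-agreeable}(2) it inherits a topologically agreeable structure from~$\sB$.

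Next I would apply Theorem~\ref{generalized-tilting}(c) to the topologically agreeable category $\sA:=\Add(M)$ and the object $M\in\sA$, noting that the subcategory of direct summands of copowers of $M$ computed inside $\sA$ already equals $\sA$. The theorem produces a topological ring $\R=\End_\sB(M)^\sop$ and a locally presentable abelian category $\sB'\cong\R\contra$ with $\sB'_\proj\cong\sA$. But $\sA=\Add(M)$ is itself abelian with every object projective (since split), so the uniqueness in Theorem~\ref{generalized-tilting}(a) forces $\sB'\cong\sA$. Thus $\R\contra\cong\Add(M)$ and every left $\R$-contramodule is projective, i.e., $\R\contra$ is split. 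Theorem~\ref{contra-split-iff-semisimple} then makes $\R\contra$ semisimple Grothendieck, so $M$ decomposes as a coproduct of simple objects of $\Add(M)$. Since $\sB$ is split, any proper $\sB$-subobject of such a simple would split off and therefore already lie in $\Add(M)$, so these simples are also simple in $\sB$. Consequently every $M\in\sB$ is a coproduct of simples of $\sB$, which is the definition of semisimplicity.

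For the Ab5 assertion, given any simple $S\in\sB$ and any family $(N_i)_{i\in I}$ in $\sB$, I would apply the previous step to $M':=S\oplus\bigoplus_{i\in I}N_i$; then $\Add(M')$ is a semisimple Grothendieck (hence Ab5) full subcategory of $\sB$ containing $S$, every $N_i$, and $\bigoplus_i N_i$. Corollary~\ref{simple-wfg} applied inside $\Add(M')$, together with the fullness of $\Add(M')\subset\sB$, yields the isomorphism $\bigoplus_i\Hom_\sB(S,N_i)\cong\Hom_\sB(S,\bigoplus_i N_i)$; coupled with semisimplicity of every object of $\sB$, this lets one verify the Ab5 axiom for $\sB$ by reducing each test to a suitable $\Add(M')$. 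The main obstacle I anticipate is the upgrade from $\Add(M)\cong\R\contra_\proj$ to $\Add(M)\cong\R\contra$, which rests on the uniqueness part of Theorem~\ref{generalized-tilting}(a) combined with the observation that splitness of $\sB$ makes $\Add(M)$ an abelian (not merely additive) subcategory; the semisimplicity of the resulting split $\R\contra$ is then imported from Theorem~\ref{contra-split-iff-semisimple}.
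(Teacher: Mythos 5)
Your proposal is correct and follows essentially the same route as the paper: pass to $\Add(M)$ for each object $M$, identify it with $\R\contra_\proj$ via Theorem~\ref{generalized-tilting}, use the fact that an abelian category with enough projectives is determined by its subcategory of projectives to upgrade this to $\R\contra=\R\contra_\proj$, and then invoke Theorem~\ref{contra-split-iff-semisimple}. The only difference is that you spell out the closure of $\Add(M)$ under kernels/cokernels and the final Ab5 verification in more detail than the paper, which simply asserts these points.
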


\Section{Topological Rings as Endomorphism Rings}
\label{as-endomorphism-rings-secn}

 The aim of this section is to show that the categories of modules over
associative rings are, in a rather strong sense, representative among
all the topologically agreeable categories (so some results about
direct sum decompositions in topologically agreeable categories follow
from the same results for modules, as we will see below in
Section~\ref{perfect-decompositions-secn}).
 More specifically, in this section we prove that all complete,
separated topological rings with right linear topology can be realized
as the rings of endomorphisms of modules over associative rings
(endowed with the finite topology).

 Let $\sC$ be a small category.
 Then the category $\Funct(\sC,\Ab)$ of all functors from $\sC$ to
the category of abelian groups is a locally finitely presentable
Grothendieck abelian category.
 According to
Example~\ref{modules-topologically-agreeable-example}\,(2),
the category $\Funct(\sC,\Ab)$ is topologically agreeable; so for
any two functors $F$, $G\:\sC\rarrow\Ab$ there is a natural complete,
separated topology on the abelian group $\Hom_\Funct(F,G)$.
 Moreover, the ring $\Hom_\Funct(F,F)^\rop$ is a topological ring
with right linear topology.
 
 This topology can be explicitly described as follows.
 By the definition, a base of neighborhoods of zero in
$\Hom_\Funct(F,G)$ consists of the annihilators of finitely generated
subfunctors $E\subset F$.
 Simply put, this means that one has to choose a finite sequence of
objects $C_1$,~\dots, $C_m\in\sC$ and some element $e_j\in F(C_j)$
for every $j=1$,~\dots,~$m$; and consider the subgroup in
$\Hom_\Funct(F,G)$ consisting of all the natural transformations
$F\rarrow G$ annihilating the chosen elements~$e_j$.
 Subgroups of this form constitute a base of neighborhoods of zero in
$\Hom_\Funct(F,G)$.

 More generally, let $\sD$ be a (not necessarily small) category and
$\sC\subset\sD$ be a small subcategory.
 For every functor $F\:\sD\rarrow\Ab$ and object $D\in\sD$ consider
the induced homomorphism of abelian groups
$$
 F_{\sC/D}\:\bigoplus_{C\to D} F(C)\lrarrow F(D),
$$
where the direct sum in the left-hand side is taken over all pairs
(an object $C\in\sC$, a morphism $C\rarrow D$ in~$\sD$).
 We will say that the full subcategory $\sC\subset\sD$ is \emph{weakly
dense for~$F$} if the map $F_{\sC/D}$ is surjective for every object
$D\in\sD$.
 The full subcategory in the category of functors $\Funct(\sD,\Ab)$ 
consisting of all the functors $F\:\sD\rarrow\Ab$ for which the full
subcategory $\sC\subset\sD$ is weakly dense will be denoted by
$\Funct(\sD/\sC,\Ab)\subset\Funct(\sD,\Ab)$.

 Clearly, if the full subcategory $\sC\subset\sD$ is weakly dense for
a functor $F\:\sD\rarrow\Ab$, then any morphism $F\rarrow G$ of
functors $\sD\rarrow\Ab$ is determined by its restriction to
the full subcategory $\sC\subset\sD$.
 In particular, the restriction functor
$$
 \rho=\rho_{\sD/\sC}\:\Funct(\sD/\sC,\Ab)\rarrow\Funct(\sC,\Ab)
$$
assigning to a functor $F\:\sD\rarrow\Ab$ its restriction
$\rho(F)=F|_\sC$ to the full subcategory $\sC\subset\sD$ is faithful.
 It follows that morphisms $F\rarrow G$ between any two fixed
functors $F\in\Funct(\sD/\sC,\Ab)$ and $G\in\Funct(\sD,\Ab)$ form
a set rather than a proper class.
 Moreover, the following assertion holds.

\begin{lem} \label{restriction-closed}
 The restriction functor $\rho_{\sD/\sC}$ is a closed additive functor
in the sense of
Example~\textup{\ref{lwfg-closed-functors-examples}\,(2)}
or~\textup{\ref{closed-functors-full-generality-examples}\,(3)},
and~\cite[Section~9.3]{PS}.
\end{lem}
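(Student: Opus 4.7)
The plan is to verify the three defining conditions of a closed additive functor (Example~\ref{closed-functors-full-generality-examples}(3)) for $\rho = \rho_{\sD/\sC}$: faithfulness, preservation of coproducts, and closedness of the image of each induced map $\Hom_{\Funct(\sD,\Ab)}(F,G) \hookrightarrow \Hom_{\Funct(\sC,\Ab)}(\rho F,\rho G)$, where the latter carries the finite topology coming from the locally finitely generated Grothendieck category $\Funct(\sC,\Ab)$.

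Faithfulness has essentially been noted in the paragraph preceding the statement: weak density of $\sC$ for $F$ forces any natural transformation $F\to G$ to be determined by its restriction to $\sC$. Preservation of coproducts is pointwise: coproducts in $\Funct(\sD,\Ab)$ are computed object-wise, the surjectivity of $F_{\sC/D}$ is clearly preserved under direct sums, so $\Funct(\sD/\sC,\Ab)$ is closed under coproducts in $\Funct(\sD,\Ab)$, and restriction to $\sC$ is object-wise, hence commutes with coproducts.

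The main content is closedness of the image. A base of neighborhoods of zero in $\Hom_{\Funct(\sC,\Ab)}(\rho F,\rho G)$ consists of annihilators of the finitely generated subfunctors $E\subset\rho F$, each determined by a finite list of elements $e_j\in F(C_j)$ with $C_j\in\sC$. I need to show that if $\phi\colon\rho F\to\rho G$ is such that, for every such finite list $(e_j)$, there exists $\psi\in\Hom_{\Funct(\sD,\Ab)}(F,G)$ with $\psi_{C_j}(e_j)=\phi_{C_j}(e_j)$ for all $j$, then $\phi$ itself is the restriction of some $\tilde\phi\colon F\to G$.

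The candidate extension $\tilde\phi$ is forced by weak density together with naturality: for $D\in\sD$ and $x\in F(D)$, choose a decomposition $x=\sum_i F(f_i)(e_i)$ with $f_i\colon C_i\to D$ in $\sD$, $C_i\in\sC$, $e_i\in F(C_i)$, and set $\tilde\phi_D(x):=\sum_i G(f_i)(\phi_{C_i}(e_i))$. The main obstacle is showing that this is independent of the chosen decomposition; once well-definedness is known, naturality of $\tilde\phi$ over an arbitrary $h\colon D'\to D''$ in $\sD$ follows from applying the formula to $x\in F(D')$ and to $F(h)(x)=\sum_i F(h\circ f_i)(e_i)$ and using functoriality of~$G$. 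To prove well-definedness, given a second decomposition $x=\sum_j F(g_j)(e'_j)$, I would take the finitely generated subfunctor $E\subset\rho F$ generated by the union $\{e_i\}\cup\{e'_j\}$ and invoke the closure hypothesis to pick $\psi\in\Hom_{\Funct(\sD,\Ab)}(F,G)$ with $\psi_{C_i}(e_i)=\phi_{C_i}(e_i)$ and $\psi_{C'_j}(e'_j)=\phi_{C'_j}(e'_j)$; naturality of $\psi$ over the morphisms $f_i$ and $g_j$ in $\sD$ then collapses both candidate values $\sum_i G(f_i)(\phi_{C_i}(e_i))$ and $\sum_j G(g_j)(\phi_{C'_j}(e'_j))$ to the single value $\psi_D(x)$.
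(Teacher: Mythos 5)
Your proposal is correct and follows essentially the same route as the paper: both reduce closedness to the extension problem, define the candidate extension $\tilde\phi_D$ via the surjections $F_{\sC/D}$, and establish well-definedness by producing a global morphism $\psi\:F\rarrow G$ agreeing with $\phi$ on the finitely many elements appearing in a relation (equivalently, in the kernel of $F_{\sC/D}$) and invoking its naturality. The only difference is cosmetic: the paper phrases well-definedness as the composite $\bigoplus_{C\to D}F(C)\rarrow\bigoplus_{C\to D}G(C)\rarrow G(D)$ annihilating the kernel of $F_{\sC/D}$, while you compare two decompositions directly, and you additionally spell out the faithfulness, coproduct-preservation, and naturality checks that the paper leaves implicit.
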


\begin{proof}
 Essentially, we have to check that, given two functors
$F\in\Funct(\sD/\sC,\Ab)$ and $G\in\Funct(\sD,\Ab)$, a morphism
of functors $g\:F|_\sC\rarrow G|_\sC$ can be extended to a morphism
of functors $f\:F\rarrow G$ provided that, for every finitely
generated subfunctor $E\subset F|_\sC$ there exists a morphism
of functors $h\:F\rarrow G$ such that the restriction of $h|_\sC$
to $E$ coincides with the restriction of~$g$ to~$E$.

 The latter condition means that, for every finite sequence of
objects $C_1$,~\dots, $C_m\in\sC$ and any elements $e_j\in F(C_j)$,
\ $j=1$,~\dots,~$m$, there exists a morphism of functors
$h\:F\rarrow G$ such that $h(e_j)=g(e_j)$ for all $1\le j\le m$.
 We have to show that there exists a morphism of functors
$f\:F\rarrow G$ such that $f|_\sC=g$; in other words, this means that
for every object $D\in\sD$ the map of abelian groups
$$
 \bigoplus_{C\to D} g_C\:\bigoplus_{C\to D} F(C)\lrarrow
 \bigoplus_{C\to D} G(C)
$$
descends to a map $f_D\:F(D)\rarrow G(D)$ forming a commutative square
with the maps $F_{\sC/D}$ and~$G_{\sC/D}$.
 Let $e\in\bigoplus_{C\to D} F(C)$ be an element annihilated by
the surjective map $F_{\sC/D}$; then the element~$e$ has a finite
number of nonzero components $e_j\in F(C_j)$, \ $C_j\in\sC$, \
$1\le j\le m$.
 Now existence of a morphism of functors $h\:F\rarrow G$ agreeing
with the morphism of functors $g\:F|_\sC\rarrow G|_\sC$ on
the elements $e_j\in F(C_j)$ guarantees that the element~$e$ is also
annihilated by the composition of maps $\bigoplus_{C\to D} F(C)
\rarrow\bigoplus_{C\to D} G(C)\rarrow G(D)$, implying existence
of the desired map~$f_D$.
\end{proof}

 In view of Lemma~\ref{restriction-closed}, for any two functors
$F\in\Funct(\sD/\sC,\Ab)$ and $G\in\Funct(\sD,\Ab)$, the group
$\Hom_\Funct(F,G)$ is a closed subgroup of the topological abelian
group $\Hom_\Funct(F|_\sC,\allowbreak G|_\sC)$.
 We endow the group $\Hom_\Funct(F,G)$ with the induced topology,
making it a complete, separated topological abelian group.
 In particular, the ring $\Hom_\Funct(F,F)^\rop$ becomes a complete,
separated topological ring with a right linear topology and
a closed subring in $\Hom_\Funct(F|_\sC,F|_\sC)^\rop$.

 Explicitly, a base of neighborhoods of zero in $\Hom_\Funct(F,G)$
is provided by the annihilators of finite sets of elements
$e_j\in F(C_j)$, \ $C_j\in\sC$.
 Now we observe that, due to the condition of surjectivity of
the maps $F_{\sC/D}$ imposed on the functor~$F$, the collection of
the annihilator subgroups of all finite sets of elements
$e_j\in F(D_j)$, \ $D_j\in\sD$, \ $1\le j\le m$, is another base of 
neighborhoods of zero for \emph{the same} topology on $\Hom_\Funct(F,G)$.
 Thus the complete, separated topology on the group $\Hom_\Funct(F,G)$
that we have constructed depends only on the (possibly large) category
$\sD$ and the fuctors $F$ and $G$, and does \emph{not} depend on
the choice of a weakly dense small subcategory $\sC\subset\sD$ for
the functor~$F$.

 In the rest of this section we apply the above considerations to
one specific large category $\sD$ with a small subcategory $\sC$ and
a functor $F\:\sD\rarrow\Ab$.
 Namely, let $R$ be a topological ring with a right linear topology,
and let $\sD=\discr R$ be the abelian category of discrete right
$R$\+modules.
 Furthermore, let $\sC\subset\sD$ be the full subcategory consisting
of all the cyclic discrete right $R$\+modules $R/I$, where $I\subset R$
ranges over the open right ideals in~$R$.
 Finally, let $F\:\discr R\rarrow\Ab$ be the forgetful functor,
assigning to a discrete right $R$\+module $\N$ its underlying
abelian group~$\N$.

\begin{prop} \label{endomorphisms-of-forgetful}
 The restriction map\/ $\Hom_\Funct(F,F)\rarrow
\Hom_\Funct(F|_\sC,F|_\sC)$ is bijective (hence a topological ring
isomorphism).
 The complete, separated topological ring\/ $\Hom_\Funct(F,F)^\rop$ is 
naturally isomorphic, as a topological ring, to the completion\/ $\R$ of
the topological ring $R$ (with the projective limit topology on\/~$\R$).
\end{prop}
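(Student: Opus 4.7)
My plan is to prove the two assertions separately: first the bijectivity of the restriction map $\rho\:\Hom_\Funct(F,F)\rarrow\Hom_\Funct(F|_\sC,F|_\sC)$, then the identification of $\Hom_\Funct(F,F)^\rop$ with $\R$ as topological rings.

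For the bijectivity of~$\rho$, the key observation is that $\sC$ is weakly dense for~$F$: for every $\N\in\discr R$ and every $n\in\N$, setting $I=\mathrm{Ann}(n)$, the formula $r+I\mapsto rn$ defines a morphism $\alpha_n^I\:R/I\rarrow\N$ in $\discr R$; the same formula works for any open right ideal $I\subset\mathrm{Ann}(n)$. Injectivity of~$\rho$ is then immediate from naturality applied to the maps~$\alpha_n^I$. For surjectivity, given $\tau\in\Hom_\Funct(F|_\sC,F|_\sC)$, I would set $\widetilde\tau_\N(n):=\alpha_n^I(\tau_{R/I}(1+I))$ for any open right ideal $I\subset\mathrm{Ann}(n)$. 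Three points must be verified: independence of the choice of~$I$, which follows from naturality of~$\tau$ under the quotient maps $R/I'\twoheadrightarrow R/I$ for $I'\subset I$ (one compares two choices via their intersection); additivity in~$n$, which reduces to the identity $\alpha_{n+m}^I=\alpha_n^I+\alpha_m^I$ whenever $I\subset\mathrm{Ann}(n)\cap\mathrm{Ann}(m)$; and naturality in~$\N$, which follows from $\alpha_{f(n)}^I=f\circ\alpha_n^I$ for morphisms $f\:\N\rarrow\N'$ in $\discr R$. The equality $\rho(\widetilde\tau)=\tau$ is clear from the case $\N=R/I$, $n=1+I$.

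For the topological ring identification, define $\Phi(\tau):=(r_I)_I$ by $r_I+I:=\tau_{R/I}(1+I)$. Naturality along the quotient maps $R/J\twoheadrightarrow R/I$ for $J\subset I$ places $(r_I)_I$ in $\varprojlim_I R/I=\R$. The central technical step is to identify $\tau_{R/I}$ with the action of $\Phi(\tau)$ on $R/I$ viewed as a right $\R$\+module: for $s\in R$ and any open right ideal $J\subset\{r\in R:sr\in I\}$, the map $R/J\rarrow R/I$, \ $r+J\mapsto sr+I$, is a morphism in~$\sC$, and naturality of~$\tau$ applied to it yields $\tau_{R/I}(s+I)=s\cdot r_J+I$, which is exactly the right action $(s+I)\cdot\Phi(\tau)$. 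From this, $(\tau\circ\tau')_{R/I}(1+I)=\tau_{R/I}(r'_I+I)=(r'_I+I)\cdot\Phi(\tau)$, so $\Phi(\tau\circ\tau')=\Phi(\tau')\cdot\Phi(\tau)$ in~$\R$, making $\Phi$ a ring isomorphism $\Hom_\Funct(F,F)^\rop\cong\R$. For the topology, a base of neighborhoods of zero in $\Hom_\Funct(F,F)$ consists of annihilators of finite families of elements $e_j=s_j+I_j\in F(R/I_j)$; naturality allows one to reduce to annihilators of single generators $1+J$ (absorb the $s_j$'s by passing to any open right ideal $J\subset\bigcap_j\{r:s_jr\in I_j\}$), and under~$\Phi$ the annihilator of $1+J$ corresponds precisely to the kernel of the projection $\R\twoheadrightarrow R/J$, yielding the projective limit topology on~$\R$.

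The main obstacle I foresee is the identification of $\tau_{R/I}$ with right multiplication by $\Phi(\tau)\in\R$; once that is in hand, both the anti\+multiplicativity of~$\Phi$ and its continuity reduce to routine verifications. As a conceptual sanity check, the abelian group part of the statement can be recovered almost formally by writing $F\cong\varinjlim_I\Hom_{\discr R}(R/I,{-})$ and invoking the Yoneda lemma to obtain $\Hom_\Funct(F,F)\cong\varprojlim_I F(R/I)=\R$; but transporting the ring structure and handling the opposite ring convention still requires the explicit analysis above.
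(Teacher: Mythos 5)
Your proposal is correct and follows essentially the same route as the paper: the core steps — extracting $t_\I+\I=\tau_{R/I}(1+I)$, checking compatibility under the projections to land in $\varprojlim_I R/I=\R$, using the morphisms $r+J\mapsto sr+I$ to identify $\tau_{R/I}$ with right multiplication by the resulting element, and matching the annihilator topology with the projective limit topology via the cosets $1+\I$ — are exactly those of the paper's proof. The only (harmless) organizational difference is that you establish bijectivity of the restriction map directly by an explicit extension argument, whereas the paper gets its injectivity and closed image for free from the weak-density discussion and Lemma~\ref{restriction-closed} preceding the proposition and only needs to prove that the composite map from $\R$ is surjective.
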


\begin{proof}
 Since $\discr R=\discr\R$, we can replace the topological ring $R$
by its completion $\R$ from the outset and assume that we are dealing
with the category $\sD=\discr\R$, its full subcategory $\sC$ spanned
by the cyclic discrete right modules $\R/\I$, where $\I$ runs over
the open right ideals in $\R$, and the forgetful functor
$F\:\discr\R\rarrow\Ab$.
 Then the right action of $\R$ on the discrete right modules over it
provides a natural ring homomorphism $\R\rarrow\Hom_\Funct(F,F)^\rop$.
 Since we already know that $\Hom_\Funct(F,F)$ is a closed subring
in $\Hom_\Funct(F|_\sC,F|_\sC)$ with the induced topology, it suffices
to show that the composition $\R\rarrow\Hom_\Funct(F,F)^\rop\rarrow
\Hom_\Funct(F|_\sC,F|_\sC)^\rop$ is an isomorphism of topological rings.

 Since the topological ring $\R$ is separated, for any element $t\in\R$
there exists an open right ideal $\I\subset\R$ not containing~$t$.
 Then the element~$t$ acts nontrivially on the coset $1+\I\in F(\R/\I)$,
taking it to the coset $t+\I\ne0$.
 This proves injectivity of the map $\R\rarrow
\Hom_\Funct(F|_\sC,F|_\sC)^\rop$.
 To prove surjectivity, consider a natural transformation
$\tau\:F|_\sC\rarrow F|_\sC$.
 For every open right ideal $\I\in\R$, we can apply~$\tau$ to
the coset $1+\I\in F(\R/\I)$, obtaining an element $t_\I+\I=
\tau_{\R/\I}(1+\I)\in F(\R/\I)=\R/\I$.
 For every open right ideal $\J\subset\I$, there is a morphism
$p_{\I,\J}\:\R/\J\rarrow\R/\I$ in the category $\sC$ taking
the coset $r+\J$ to the coset $r+\I$ for every $r\in\R$.
 The commutativity equation $\tau_{\R/\I}F|_\sC(p_{\I,\J})=
F|_\sC(p_{\I,\J})\tau_{\R/\J}$ on the natural transformation~$\tau$
applied to the coset $1+\J\in F(\R/\J)$ implies that
$p_{\I,\J}(t_\J)=t_\I$.
 Thus the family of cosets $(t_\I+\I\in\R/\I)$ represents a well-defined
element~$t$ of the projective limit $\varprojlim_{\I\subset\R}\R/\I=\R$.

 Let us show that our natural transformation~$\tau$ is the image of
the element~$t$ under the map $\R\rarrow\Hom_\Funct(F|_\sC,F|_\sC)^\rop$.
 We have to check that, for every open right ideal $\I\subset\R$ and
every coset $s+\I\in\R/\I$, the image of $s+\I$ under~$\tau$ is given by
the rule $\tau_{\R/\I}(s+\I)=st+\I$.
 Choose an open right ideal $\J\subset\R$ such that $s\J\subset\I$.
 Then there is a morphism $s_{\I,\J}\:\R/\J\rarrow\R/\I$ in
the category $\sC$ taking the coset $r+\J$ to the coset $sr+\I$
for every $r\in\R$.
 Now the commutativity equation $\tau_{\R/\I}F|_\sC(s_{\I,\J})=
F|_\sC(s_{\I,\J})\tau_{\R/\J}$ on the natural transformation~$\tau$
applied to the coset $1+\J\in F(\R/\J)$ implies the desired equality
$\tau_{\R/\I}(s+\I)=st+\I$.

 It remains to show that the topologies on the rings $\R$ and
$\Hom_\Funct(F,F)$ agree.
 Indeed, for any finite sequence of discrete right $\R$\+modules
$\N_1$,~\dots, $\N_m$ and any chosen elements $e_j\in\N_j$,
the intersection of the annihilators of the elements~$e_j$ is
an open right ideal in~$\R$.
 Conversely, any open right ideal $\I\subset\R$ is the annihilator of
the single coset $1+\I\in\R/\I$.
\end{proof}

\begin{prop} \label{endomorphisms-of-functor=of-module}
 Let\/ $\sC$ be a small category and $F\:\sC\rarrow\Ab$ be a functor.
 Then there exists an associative ring $A$ and a left $A$\+module $M$
such that the topological ring\/ $\Hom_\Funct(F,F)^\rop$ is
isomorphic to the topological ring\/ $\Hom_A(M,M)^\rop$ (with
the finite topology on the latter; see
Example~\textup{\ref{modules-topologically-agreeable-example}\,(1)}).
\end{prop}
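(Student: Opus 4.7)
The plan is to realize $F$ as a left module over the \emph{category ring} of $\sC$ (with a unit formally adjoined), and to match both the ring structure and the finite topology via the direct-sum decomposition $M = \bigoplus_{C \in \sC} F(C)$.

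First I would construct the ring. Let $A_0 = \bigoplus_{C, C' \in \sC} \boZ[\Hom_\sC(C, C')]$ with multiplication extending composition in $\sC$ bilinearly (and with noncomposable products declared zero). The identities $\id_C \in \Hom_\sC(C, C)$ yield orthogonal idempotents $e_C \in A_0$. To cover the case of infinitely many objects in $\sC$, set $A = \boZ\cdot 1 \oplus A_0$, the formal unitization. Put $M = \bigoplus_{C \in \sC} F(C)$, and equip it with a left $A$-module structure by letting $1$ act as $\id_M$, each $e_C$ act as the projection onto the summand $F(C)$, and the basis element $[f] \in A_0$ corresponding to $f \: C \to C'$ act as the composite $M \twoheadrightarrow F(C) \xrightarrow{F(f)} F(C') \hookrightarrow M$; associativity of the action is just functoriality of~$F$.

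Next I would identify the abstract endomorphism rings. Any $\phi \in \End_A(M)$ commutes with each $e_C$, hence preserves the decomposition and so decomposes as $\phi = (\phi_C)_{C \in \sC}$ with $\phi_C \in \End_\boZ(F(C))$; the remaining condition that $\phi$ commutes with every $[f]$ reads $\phi_{C'} \circ F(f) = F(f) \circ \phi_C$, which is precisely naturality. Conversely, any natural transformation $F \to F$ defines an $A$-linear endomorphism by the same formulas. This yields a natural ring isomorphism $\End_A(M) \cong \End_\Funct(F)$, and passing to opposite rings delivers the isomorphism claimed in the proposition.

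Finally I would match the topologies. By the explicit description recalled earlier in this section, a base of neighborhoods of zero in $\Hom_\Funct(F,F)$ consists of annihilators of finite families of elements $e_j \in F(C_j)$, $C_j \in \sC$. On the module side, any element of $M$ has only finitely many nonzero components; consequently, every finitely generated $A$-submodule of $M$ coincides with an $A$-submodule generated by a finite list of elements $x_j \in F(C_j)$ lying in individual summands, and every such finite list generates a finitely generated $A$-submodule. Since the annihilator of a submodule equals the annihilator of any generating set, the two bases of neighborhoods of zero match under the ring isomorphism above. The only (modest) obstacle is the absence of a unit in $A_0$ when $\sC$ has infinitely many objects; this is dispatched by the unit-adjunction, after which the direct-sum structure of $M$ makes both the algebraic and topological identifications immediate.
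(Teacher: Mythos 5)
Your proof is correct and follows essentially the same route as the paper: the paper also takes $M=\bigoplus_{C\in\sC}F(C)$, lets the projectors $p_C$ and the maps $s_{C,D}$ (your $e_C$ and $[f]$) generate the acting ring, identifies $A$-endomorphisms with natural transformations via commutation with these operators, and matches the finite topology with the annihilators of finite families $e_j\in F(C_j)$. The only cosmetic difference is that you build $A$ as an abstract unitized category ring whereas the paper takes $A$ to be the unital subring of $\Hom_\boZ(M,M)$ generated by these operators; since $\End_A(M)$ depends only on the image of $A$ in $\End_\boZ(M)$, the two choices are interchangeable.
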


\begin{proof}
 Set $M=\bigoplus_{C\in\sC}F(C)$ to be the direct sum of the abelian
groups $F(C)$ over all the objects $C\in\sC$.
 Then every natural transformation $\tau\:F\rarrow F$ acts naturally by
an abelian group homomorphism $\tau_M\:M\rarrow M$ by the direct sum
of the maps $\tau_C\:F(C)\rarrow F(C)$.

 For every object $C\in\sC$, let $p_C\:M\rarrow M$ denote the projector
onto the direct summand $F(C)$ in~$M$.
 For every morphism $s\:C\rarrow D$ in $\sC$, let $s_{C,D}\:M\rarrow M$
denote the map whose restriction to $F(C)\subset M$ is equal to
the composition of the map $F(s)\:F(C)\rarrow F(D)$ with the inclusion
$F(D)\rarrow M$, while the restriction of~$s_{C,D}$ to $F(C')$ is zero
for all $C'\in\sC$, \,$C'\ne C$.
 So, in particular, $p_C=(\id_C)_{C,C}$.
 
 Let $A$ be the subring (with unit) in $\Hom_\boZ(M,M)$ generated by
the maps~$p_C$ and $s_{C,D}$, \ $C$, $D\in\sC$.
 Then the maps~$\tau_M$ are exactly all the abelian group endomorphisms
of $M$ commuting with all the maps~$p_C$ and~$s_{C,D}$.
 Indeed, any map $t\:M\rarrow M$ commuting with the projectors~$p_C$
for all $C\in\sC$ has the form $t=\bigoplus_{C\in\sC}t_C$ for some maps
$t_C\:F(C)\rarrow F(C)$.
 If the map~$t$ also commutes with the maps $s_{C,D}$ for all
morphisms $s\:C\rarrow D$ in $\sC$, then the collection of maps~$t_C$
is an endomorphism of the functor~$F$.
 Thus $\Hom_\Funct(F,F)\cong\Hom_A(M,M)$.

 Furthermore, the annihilators of finite subsets in $M$ form the same
collection of subgroups in $\Hom_A(M,M)$ as the annihilators of finite
sequences of elements $e_j\in F(C_j)$, \ $C_j\in\sC$, \ $j=1$,~\dots,~$m$.
 So the topologies on the rings $\Hom_\Funct(F,F)$ and $\Hom_A(M,M)$
also agree.
\end{proof}

\begin{cor} \label{topological-rings-are-endomorphism-rings}
 For every complete, separated topological ring\/ $\R$ with right
linear topology, there exists an associative ring $A$ and a left
$A$\+module $M$ such that\ $\R$ is isomorphic, as a topological ring,
to the ring\/ $\Hom_A(M,M)^\rop$ (endowed with the finite topology).
\end{cor}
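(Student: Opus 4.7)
The plan is to read the corollary as the composite of the two immediately preceding propositions, applied to one carefully chosen small subcategory $\sC$ and a forgetful functor~$F$. Given a complete, separated topological ring $\R$ with right linear topology, I would take $\sC\subset\discr\R$ to be the full subcategory whose objects are the cyclic discrete right $\R$\+modules $\R/\I$, where $\I$ runs over the open right ideals of~$\R$. This is a small category, since the open right ideals of $\R$ form a set. Let $F\:\discr\R\rarrow\Ab$ denote the forgetful functor.

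Proposition~\ref{endomorphisms-of-forgetful} then yields a topological ring isomorphism $\R\cong\Hom_\Funct(F,F)^\rop$, together with the statement that the restriction map gives a further topological ring isomorphism $\Hom_\Funct(F,F)^\rop\cong\Hom_\Funct(F|_\sC,F|_\sC)^\rop$. Because $\sC$ is small, I can next invoke Proposition~\ref{endomorphisms-of-functor=of-module} applied to the functor $F|_\sC\:\sC\rarrow\Ab$, which provides an associative ring $A$ and a left $A$\+module $M$ together with a topological ring isomorphism $\Hom_\Funct(F|_\sC,F|_\sC)^\rop\cong\Hom_A(M,M)^\rop$, where the right-hand side carries the finite topology.

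Composing the three isomorphisms delivers the desired identification $\R\cong\Hom_A(M,M)^\rop$ as topological rings. There is essentially no obstacle to overcome: all of the substantive work has been carried out already in Propositions~\ref{endomorphisms-of-forgetful} and~\ref{endomorphisms-of-functor=of-module}, and this corollary is a pure assembly step. The only bookkeeping item is the smallness of $\sC$, which is immediate from the definition of a topological ring with a right linear topology.
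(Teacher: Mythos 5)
Your proposal is correct and follows exactly the route the paper takes: the paper's proof of this corollary is precisely the composition of Proposition~\ref{endomorphisms-of-forgetful} (with $\sC$ the cyclic discrete modules $\R/\I$ and $F$ the forgetful functor) with Proposition~\ref{endomorphisms-of-functor=of-module} applied to $F|_\sC$, yielding $M=\bigoplus_{\I\subset\R}\R/\I$. The paper merely adds an explicit description of the resulting ring $A$ and module $M$, which your assembly argument implicitly produces as well.
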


\begin{proof}
 The assertion follows from
Propositions~\ref{endomorphisms-of-forgetful}
and~\ref{endomorphisms-of-functor=of-module}.
 For convenience, let us spell out the specific construction of
the ring $A$ and the module $M$ that we obtain.
 The underlying abelian group of $M$ is the direct sum
$\oplus_{\I\subset\R}\R/\I$ of the quotient groups of $\R$ by its
open right ideals.
 So $M$ is naturally a discrete right $\R$\+module (as it should be).
 The ring $A$ acting on $M$ on the left is simplest constructed as
the ring of endomorphisms of the right $\R$\+module~$M$.
 The above proof of Proposition~\ref{endomorphisms-of-functor=of-module}
provides a smaller ring $A$ which works just as well.
 Namely, it is the subring (with unit) in $\Hom_{\R^\rop}(M,M)$ or
$\Hom_\boZ(M,M)$ generated by the compositions
$$
 M\lrarrow\R/\J\overset{s_{\I,\J}}\lrarrow\R/\I\lrarrow M
$$
of the direct summand projections $M\rarrow\R/\J$, the direct summand
inclusions $\R/\I\rarrow M$, and the maps~$s_{\I,\J}$ mentioned in
the proof of Proposition~\ref{endomorphisms-of-forgetful}.
\end{proof}

\begin{rem}
 Let $B$ be a set of open right ideals forming a base of neighborhoods
of zero in a topological ring $R$ with right linear topology.
 Consider the category $\sD=\discr R$ and the full subcategory
$\sC\subset\sD$ consisting of all the cyclic discrete right
$R$\+modules $R/I$ with the ideal $I\in B$.
 Let $F\:\discr R\rarrow\Ab$ be the forgetful functor.
 The assertion of Proposition~\ref{endomorphisms-of-forgetful} remains
valid in this context, and the proof is essentially the same.
 In the particular case of a complete, separated topological ring $\R$
with a right linear topology, the construction of
Corollary~\ref{topological-rings-are-endomorphism-rings} gets
modified accordingly, providing a ``smaller'' ring $A$ and
left $A$\+module $M=\bigoplus_{\I\in B}\R/\I$ such that
$\R$ is isomorphic, as a topological ring, to $\Hom_A(M,M)^\rop$.

 The following question was raised at the end of~\cite[Section~2]{IMR}.
 Let $k$~be a field and $V$ be an infinite-dimensional $k$\+vector
space.
 Can one characterize complete, separated (associative and unital) topological $k$\+algebras $\R$ with right linear topology that can be
realized as closed subalgebras of $\Hom_k(V,V)^\rop$\,?

 The results of this section together with the first paragraph of
this remark allow to give a complete answer to this question.
 Let $\lambda=\dim_kV$ denote the cardinality of a basis of~$V$.
 Then a complete, separated topological $k$\+algebra $\R$ with
right linear topology can be realized as a closed subalgebra in
$\Hom_k(V,V)^\rop$ (with the finite topology on $\Hom_k(V,V)^\rop$
and the induced topology on~$\R$) if and only if the following two
conditions hold:
\begin{enumerate}
\renewcommand{\theenumi}{\roman{enumi}}
\item $\R$ has a base of neighborhoods of zero of the cardinality
not exceeding~$\lambda$; and
\item for every open right ideal $\I\subset\R$, one has
$\dim_k\R/\I\le\lambda$.
\end{enumerate}

 Indeed, if~(i) and~(ii) hold and $B$ is a base of neighborhoods
of zero in $\R$ consisting of at most~$\lambda$ open right ideals,
then one can consider the right $\R$\+module
$N=\bigoplus_{\I\in B}\R/\I$.
 By assumption, $\dim_kN\le\lambda$.
 In case the inequality is strict, one can replace $N$ by
the direct sum $M=N^{(\lambda)}$ of $\lambda$~copies of~$N$;
otherwise, put $M=N$.
 Denoting by $A$ the ring of endomorphisms of the right $\R$\+module
$M$, one has $\R=\End_A(M,M)^\rop$, essentially by the first
paragraph of this remark.
 By~\cite[Lemma~2.2\,(2)]{IMR}, \,$\R$ is a closed subring in
$\Hom_k(M,M)^\rop$.

 Conversely, let $\R\subset\End_k(V,V)^\rop$ be a closed subring with
the induced topology.
 Choose a basis $(v_i)_{i\in\lambda}$ of the vector space~$V$.
 Then the annihilators of finite subsets of~$\{v_i\}$ in $\R$
provide a base of neighborhoods of zero in $\R$ of the cardinality
at most~$\lambda$.
 This proves~(i).
 Furthermore, for any open right ideal $\I\subset\R$, there exists
a finite-dimensional subspace $W\subset V$ such that $\I$ contains
the annihilator of $W$ in~$\R$.
 Then the action of $\R$ in $V$ provides a natural injective map
$\R/\I\rarrow\Hom_k(W,V)$.
 Since $\dim_k\Hom_k(W,V)=\lambda$, this proves~(ii).
 
 Thus, in particular, any complete, separated topological $k$\+algebra
$\R$ with a right linear topology can be realized as a closed subalgebra
in the algebra $\Hom_k(V,V)^\rop$ of endomorphisms of a $k$\+vector
space~$V$.
 Moreover, one can realize $\R$ as a ``bicommutant'' or
``bicentralizer'', i.~e., the algebra of all $k$\+linear operators
commuting with a certain ring $A$ of such operators acting on~$V$.

 The above construction is certainly not new.
 A far more advanced result concerning realization of algebras over
commutative rings as endomorphism algebras of modules is known
as Corner's realization theorem~\cite[Theorem~20.1]{GT}.
\end{rem}

\Section{Matrix Topologies}  \label{matrix-topology-secn}

% Let $\sA$ be an idempotent-complete additive category with set-indexed
%coproducts and $M$, $N\in\sA$ be two objects such that the full
%subcategories $\Add(M)$ and $\Add(N)\subset\sA$ coincide.
% The aim of this section is to show that the monad
%$X\longmapsto\Hom_\sA(M,M^{(X)})$ on the category of sets can be
%described in terms of a complete, separated, right linear topology on
%the ring $\R=\Hom_\sA(M,M)^\rop$ if and only if the monad
%$X\longmapsto\Hom_\sA(N,N^{(X)})$ can be described in terms of
%a similar topology on the ring $\S=\Hom_\sA(N,N)^\rop$.
% Moreover, given a choice of the topology on the ring $\R$,
%a topology on the ring $\S$ can be chosen in such a way that
%the categories of discrete right $\R$\+modules and discrete
%right $\S$\+modules are equivalent. (???)

 Let $\R$ be a complete, separated topological ring with a right linear
topology and $Y$ be a set.
 The aim of this section is to construct a complete, separated right
linear topology on a certain ring $\S=\Mat_Y(\R)$ of $Y$\+sized matrices
with the entries in $\R$, in such a way that the category of left
$\S$\+contramodules be equivalent to the category of left
$\R$\+contramodules and the category of discrete right $\S$\+modules
equivalent to the category of discrete right $\R$\+modules.
 Furthermore, the free $\R$\+contramodule $\R[[Y]]$ with the set of 
generators $Y$ corresponds to the free $\S$\+contramodule with
one generator $\S$ under the equivalence of categories
$\R\contra\cong\S\contra$.

 Specifically, we denote by $\Mat_Y(\R)$ the set of all
\emph{row-zero-convergent} matrices with entries in $\R$, meaning
matrices $(a_{x,y}\in\R)_{x,y\in Y}$ such that for every $x\in Y$
the family of elements $(a_{x,y})_{y\in Y}$ converges to zero in
the topology of~$\R$.
 The abelian group $\Mat_Y(\R)$, with the obvious entrywise additive
structure, is in fact an associative ring with the unit element
$1=(\delta_{x,y})_{x,y\in Y}$ and the matrix multiplication
$$
 (ab)_{x,z}=\sum\nolimits_{y\in Y}a_{x,y}b_{y,z},
 \qquad a, \,b\in\Mat_Y(\R),
$$
defined using the multiplication in $\R$ and the infinite summation
of zero-converging families of elements, understood as the limit of
finite partial sums in the topology of~$\R$.
 It is important for this construction that the topology in $\R$ is
right linear, so whenever a family of elements $a_y\in\R$ converges
to zero, so does the family of elements $a_yb_y$, for \emph{any}
family of elements $b_y\in\R$.
 For a similar reason, the matrix~$ab$ is row-zero-convergent
whenever the matrices $a$ and~$b$ are.
 The multiplication in $\Mat_Y(\R)$ is associative, because for
any three matrices $a$, $b$, $c\in\Mat_Y(\R)$ and any two fixed
indices $x$ and $w\in Y$, the whole $(Y\times Y)$\+indexed family
of triple products $(a_{x,y}b_{y,z}c_{z,w})_{y,z\in Y}$ converges to
zero in the topology of~$\R$.

 Let us define a topology on $\Mat_Y(\R)$.
 For any finite subset $X\subset Y$ and any open right ideal
$\I\subset\R$, denote by $\K_{X,\I}\subset\Mat_Y(\R)$ the subgroup
consisting of all matrices~$a=(a_{x,y})_{x,y\in Y}$ such that
$a_{x,y}\in\I$ for all $x\in X\subset Y$ and $y\in Y$.

\begin{lem}
 The collection of all the subgroups\/ $\K_{X,\I}$, where $X$ ranges
over the finite subsets in $Y$ and\/ $\I$ ranges over the open right
ideals in\/ $\R$, is a base of neighborhoods of zero in a complete,
separated right linear topology on the ring\/ $\Mat_Y(\R)$. 
\end{lem}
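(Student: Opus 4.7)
The plan is to verify, in order, four things: that each $\K_{X,\I}$ is a right ideal of $\Mat_Y(\R)$ and that these subgroups are closed under finite intersections (so they define a right linear topology), that multiplication is jointly continuous (of which only left translation needs argument), that the resulting topology is separated, and that it is complete.

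The first step is routine. To see that $\K_{X,\I}$ is a right ideal, take $a\in\K_{X,\I}$ and $b\in\Mat_Y(\R)$; for each $x\in X$ and $z\in Y$ the entry $(ab)_{x,z}=\sum_y a_{x,y}b_{y,z}$ is the limit of finite partial sums of a zero-convergent family of elements of $\I$, and since $\I$ is an open (hence closed) subgroup of $\R$, this limit also lies in $\I$. Finite intersections cause no trouble either, by the obvious inclusion $\K_{X\cup X',\,\I\cap\I'}\subset\K_{X,\I}\cap\K_{X',\I'}$.

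The main obstacle is continuity of left multiplication by a fixed matrix $b\in\Mat_Y(\R)$. Given a basic neighborhood $\K_{X,\I}$, I need to produce a finite $X'\subset Y$ and an open right ideal $\I'\subset\R$ with $b\cdot\K_{X',\I'}\subset\K_{X,\I}$. Since each row of $b$ indexed by $x\in X$ converges to zero, I can choose a finite $Y_x\subset Y$ with $b_{x,y}\in\I$ for $y\notin Y_x$, and set $X'=\bigcup_{x\in X}Y_x$. For the remaining indices $y\in X'$, using continuity of multiplication in $\R$ together with right-linearity of its topology, I find for each pair $(x,y)\in X\times X'$ an open right ideal $\I_{x,y}\subset\R$ with $b_{x,y}\I_{x,y}\subset\I$, and put $\I'=\I\cap\bigcap_{x,y}\I_{x,y}$. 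Then for any $a\in\K_{X',\I'}$ the sum $(ba)_{x,z}=\sum_y b_{x,y}a_{y,z}$ splits into a piece over $y\in X'$ (where $a_{y,z}\in\I'\subset\I_{x,y}$ forces $b_{x,y}a_{y,z}\in\I$) and a piece over $y\notin X'\supset Y_x$ (where $b_{x,y}\in\I$ suffices because $\I$ is a right ideal), so every finite partial sum lies in $\I$ and hence so does the limit.

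Separation is immediate: if a matrix $a$ lies in every $\K_{X,\I}$, then each entry $a_{x,y}$ lies in every open right ideal of $\R$ and hence vanishes by separation of $\R$. For completeness, I would identify $\Mat_Y(\R)$ with the projective limit $\varprojlim_{X,\I}\Mat_Y(\R)/\K_{X,\I}$. The quotient $\Mat_Y(\R)/\K_{X,\I}$ is canonically the set of $X$-indexed tuples of finitely supported families $Y\to\R/\I$; a compatible system across all $(X,\I)$ therefore produces, for each pair $(x,y)\in Y\times Y$, a coherent system of cosets yielding an element of $\varprojlim_\I\R/\I=\R$, and the finite-support condition modulo each $\I$ on each row delivers exactly the row-zero-convergence property defining $\Mat_Y(\R)$.
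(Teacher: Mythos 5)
Your proof is correct and follows essentially the same route as the paper's: verify the $\K_{X,\I}$ form a base of right ideals, establish continuity of left multiplication by splitting each row of the fixed matrix into a finite head (handled by shrinking the ideal) and a tail already inside $\I$, and obtain completeness and separation from the projective-limit description of the quotients $\Mat_Y(\R)/\K_{X,\I}$ as finitely supported $(X\times Y)$-matrices over $\R/\I$. You merely spell out a few steps the paper leaves implicit (e.g., using that open subgroups are closed to put the limits of partial sums back into $\I$).
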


\begin{proof}
 One easily observes that the collection of all subgroups
$\K_{X,\I}\subset\Mat_Y(\R)$ is a topology base.
 The quotient group $\Mat_Y(\R)/\K_{X,\I}$ is the group of all
row-finite rectangular $(X\times Y)$\+matrices with the entries
in~$\R/\I$.
 For a fixed finite subset $X\subset Y$, the projective limit of
the groups $\Mat_Y(\R)/\K_{X,\I}$ over all the open right ideals
$\I\subset\R$ is the group of all row-zero-convergent rectangular
$(X\times Y)$\+matrices (with a finite number of rows indexed
by the set~$X$) with entries in~$\R$.
 Passing to the projective limit over the finite subsets
$X\subset Y$, one obtains the whole group $\Mat_Y(\R)$.
 So our topology on this group is complete and separated.

 It is also easy to observe that $\K_{X,\I}$ is a right ideal in
the ring $\Mat_Y(\R)$.
 It remains to check that, for any matrix $a\in\Mat_Y(\R)$, any
finite subset $X\subset Y$ and any open right ideal $\I\subset\R$,
there exists a finite subset $W\subset Y$ and an open right
ideal $\J\subset\R$ such that $a\K_{W,\J}\subset\K_{X,\I}$
in $\Mat_Y(\R)$.
 Indeed, since $a$~is a row-zero-convergent matrix, there is
a finite subset $W\subset Y$ such that one has $a_{x,y}\in\I$
for all $x\in X$ and $y\in Y\setminus W$.
 Now $(a_{x,w})_{x\in X, w\in W}$ is just a finite matrix, and it
remains to choose an open right ideal $\J\subset\R$ in such
a way that $a_{x,w}\J\subset\I$ for all $x\in X$ and $w\in W$.
\end{proof}

\begin{prop} \label{matrix-discrete-modules}
 For any complete, separated topological ring\/ $\R$ with a right
linear topology and any nonempty set $Y$, the abelian category of
discrete right modules over the topological ring\/ $\Mat_Y(\R)$ is
equivalent to the abelian category of discrete right\/ $\R$\+modules.
\end{prop}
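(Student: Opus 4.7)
The plan is to exhibit the equivalence as a Morita-type correspondence built from the diagonal matrix units $e_y\in\S=\Mat_Y(\R)$ defined by $(e_y)_{x,z}=\delta_{x,y}\delta_{y,z}$. Fix $y_0\in Y$ (using that $Y$ is nonempty) and set $e=e_{y_0}$. A direct matrix computation identifies the corner ring $e\S e$ with $\R$ as topological rings, shows that $(e_y)_{y\in Y}$ is a family of pairwise orthogonal idempotents converging to zero in $\S$, and shows that $\sum_{y\in Y}e_y=1$ as a convergent sum in the matrix topology of $\S$.

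I would define the functors $F\:\discr\S\rarrow\discr\R$, \ $F(\N)=\N e$ with right $\R$\+action via $\R\cong e\S e$; and $G\:\discr\R\rarrow\discr\S$, \ $G(\M)=\M^{(Y)}$, where a matrix $a\in\S$ acts on a finitely supported row $(m_y)_{y\in Y}$ by $((m_y)\cdot a)_z=\sum_y m_y\cdot a_{y,z}$. For $F$ one checks from the shape of a basic neighborhood $\K_{X,\I}\subset\mathrm{Ann}_\S(n)$ that the annihilator of $ne\in\N e$ in $\R$ is either all of $\R$ (when $y_0\notin X$, for then $e\in\K_{X,\I}$) or contains $\I$ (when $y_0\in X$). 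For $G$, given $m\in\M^{(Y)}$ with finite support $X$, the open right ideal $\I\subset\R$ obtained as the finite intersection of the annihilators of the entries $m_y$, \ $y\in X$, is such that $\K_{X,\I}$ annihilates $m$; here the right linearity of $\R$ is essential. Both $F$ and $G$ are plainly exact.

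The natural isomorphism $FG\cong\id$ is immediate since $(m_y)\cdot e=(\delta_{y,y_0}m_{y_0})_y$, so $G(\M)\cdot e$ identifies with $\M$ via its $y_0$\+th component. The main step is the natural isomorphism $\id\cong GF$. Given $n\in\N$ with $\mathrm{Ann}_\S(n)\supset\K_{X,\I}$, the idempotent $e_y$ lies in $\K_{X,\I}$ for every $y\notin X$, whence $n e_y=0$ outside the finite set $X$; combined with the topological identity $\sum_{y\in Y}e_y=1$ this produces the genuinely finite decomposition $n=\sum_{y\in X}n e_y$. The orthogonality relations $e_ye_z=\delta_{y,z}e_y$ then give the abelian group decomposition $\N=\bigoplus_{y\in Y}\N e_y$, and each summand $\N e_y$ is carried to $\N e$ by the mutually inverse maps $n\mapsto n\cdot E_{y,y_0}$ and $m\mapsto m\cdot E_{y_0,y}$, where $E_{a,b}\in\S$ is the matrix unit at position $(a,b)$ (a legitimate element of $\S$ since its only nonzero entry is in a single row).

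The main technical obstacle is to verify that the resulting bijection $\Phi\:\N\rarrow(\N e)^{(Y)}=G(F(\N))$, \ $\Phi(n)=(nE_{y,y_0})_{y\in Y}$, is compatible with the right $\S$\+actions. Unwinding the definitions, this reduces to the identity $aE_{z,y_0}=\sum_{x\in Y}a_{x,z}E_{x,y_0}$ in $\S$, where the sum converges in the matrix topology: for any $\K_{X',\I'}$, only the finitely many indices $x\in X'$ contribute a summand possibly outside $\K_{X',\I'}$, while for $x\notin X'$ the matrix $a_{x,z}E_{x,y_0}$ automatically lies in $\K_{X',\I'}$ since all of its entries in rows indexed by $X'$ vanish. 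Applying the fact that the action of a right-linear topological ring on a discrete module commutes with such convergent sums (the kernel of the action on any fixed $n$ being open) then yields the $\S$\+linearity of $\Phi$, completing the construction of the equivalence $\discr\S\cong\discr\R$.
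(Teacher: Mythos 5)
Your proof is correct, and it arrives at the same equivalence as the paper: the functor $\M\longmapsto\M^{(Y)}$ with the row-times-matrix action is exactly the paper's $\cV_Y$, and both arguments ultimately rest on decomposing a discrete right $\Mat_Y(\R)$\+module along the diagonal idempotents and moving between the summands with matrix units. The differences are in how the two technical steps are justified. For the decomposition $\N=\bigoplus_{y\in Y}\N e_y$, the paper restricts the action to the closed diagonal subring $\R^Y\subset\Mat_Y(\R)$ and invokes the general description of discrete modules over a topological product of rings, whereas you derive it directly from the facts that $(e_y)_{y\in Y}$ is a zero-convergent complete family of orthogonal idempotents and that the orbit map $a\mapsto na$ is continuous on a discrete module (so that $n=\sum_{y}ne_y$ is genuinely finite). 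For the $\S$\+linearity of the resulting isomorphism, the paper argues by density (the subring generated by diagonal and elementary matrices contains all finitely supported matrices, hence is dense), whereas you verify it by the explicit convergent identity $aE_{z,y_0}=\sum_{x\in Y}a_{x,z}E_{x,y_0}$ in $\Mat_Y(\R)$ and continuity of the action again. Your corner-ring packaging $F(\N)=\N e$ with $e\S e\cong\R$ also makes the inverse functor more explicit than the paper's ``choose compatible isomorphisms $\N_x\cong\N_y$.'' The net effect is that your argument is self-contained where the paper reuses external machinery (which it needs elsewhere anyway); both are sound.
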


\begin{proof}
 The functor $\cV_Y\:\discr\R\rarrow\discr\Mat_Y(\R)$ assigns to
a discrete right $\R$\+module $\N$ the direct sum $\cV_Y(\N)=\N^{(Y)}$
of $Y$ copies of $\N$, viewed as the group of finitely supported
$Y$\+sized rows of elements of~$\N$.
 In other words, an element $m\in\cV_Y(\N)$ is a $Y$\+indexed family
of elements $(m_y\in\N)_{y\in Y}$ such that $m_y=0$ for all but
a finite subset of indices $y\in Y$.
 The ring $\Mat_Y(\R)$ acts on $\cV_Y(\N)$ by the usual formula for
the right action of matrices in rows,
$$
 (ma)_y=\sum\nolimits_{x\in Y}m_xa_{xy}, \qquad m\in \cV_Y(\N), \
 a\in\Mat_Y(\R).
$$
 One can easily check that the row~$ma$ is finitely supported, using
the assumptions that the row~$m$ is finitely supported, the matrix~$a$
is row-zero-convergent, and the right action of $\R$ on $\N$ is discrete.
 The action of $\cV_Y$ on morphisms between discrete right $\R$\+modules
is defined in the obvious way.
 The resulting functor $\cV_Y$ is clearly exact and faithful, so it
remains to check that it is surjective on morphisms and on
the isomorphism classes of objects.

 The closed subring of diagonal matrices in $\Mat_Y(\R)$ is isomorphic,
as a topological ring, to the product $\R^Y$ of $Y$ copies of
the ring $\R$ (endowed with the product topology).
 Let $\M$ be a discrete right $\Mat_Y(\R)$\+module; then $\M$ can be
also considered as a discrete right module over the subring
$\R^Y\subset\Mat_Y(\R)$.
 Then the description of discrete modules over a product of
topological rings~\cite[Lemma~8.1(a)]{Pproperf} shows that $\M$
decomposes naturally into a direct sum of discrete right $\R$\+modules
$\M=\bigoplus_{y\in Y}\N_y$, with the componentwise action of
the diagonal matrices from $\R^Y$ in $\bigoplus_{y\in Y}\N_y$.

 Let $e_{x,y}\in\Mat_Y(\R)$ denote the elementary matrix with
the entry~$1$ in the position~$(x,y)\in Y\times Y$ and the entry~$0$
in all other positions.
 Then the action of the elements $e_{x,y}$ and $e_{y,x}$ on the discrete
right $\Mat_Y(\R)$\+module $\M$ provides an isomorphism of discrete
right $\R$\+modules $\N_x\cong\N_y$.
 This makes all the discrete right $\R$\+modules $\N_y$, \,$y\in Y$
naturally isomorphic to each other; so we can set $\N=\N_y$.
 This provides an inverse functor $\discr\Mat_Y(\R)\rarrow
\discr\R$.

 Now we have a natural isomorphism of abelian groups
$\M\cong\cV_Y(\N)$ which agrees with the action of both the diagonal
matrices from $\R^Y$ and the elementary matrices~$e_{x,y}$ on
these two discrete right $\Mat_Y(\R)$\+modules.
 Since the subring generated by these two kinds of matrices in
$\Mat_Y(\R)$ contains all the finitely-supported matrices and is,
therefore, dense in the topology of $\Mat_Y(\R)$, it follows that
we have a natural isomorphism of discrete right $\Mat_Y(\R)$\+modules
$\M\cong\cV_Y(\N)$.
 This proves surjectivity of the functor $\cV_Y$ on objects; and its
surjectivity on morphisms is also clear from the above arguments.
\end{proof}

\begin{prop} \label{matrix-contramodules}
 For any complete, separated topological ring\/ $\R$ with a right
linear topology and any nonempty set $Y$, the abelian category of
left contramodules over the topological ring\/ $\Mat_Y(\R)$ is
equivalent to the abelian category of left\/ $\R$\+contramodules.
\end{prop}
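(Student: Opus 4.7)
The plan is to derive this proposition from Theorem~\ref{generalized-tilting}(c) applied to $\sA=\R\contra$ with the object $M=\R[[Y]]$. The abelian category $\R\contra$ is idempotent-complete and has set-indexed coproducts. For $Y$ nonempty, picking any element $y_0\in Y$ gives a split monomorphism $\R=\R[[\{y_0\}]]\hookrightarrow\R[[Y]]$, so $\R[[Y]]$ has $\R$ as a direct summand; since $\R$ is a projective generator of $\R\contra$, so is $\R[[Y]]$. Moreover, $\R[[X]]$ is a direct summand of $\R[[Y\times X]]=\R[[Y]]^{(X)}$ for every set $X$, hence $\Add(\R[[Y]])=\R\contra_\proj$.

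The second step is to identify the topological ring $\End_{\R\contra}(\R[[Y]])^\rop$ with $\Mat_Y(\R)$ equipped with the matrix topology built above. By the universal property of the free contramodule, a morphism $\R[[Y]]\to\R[[Y]]$ in $\R\contra$ is uniquely determined by a $Y$\+indexed family $(f_y)_{y\in Y}$ of elements of $\R[[Y]]$, and each $f_y=\sum_{y'\in Y}a_{y,y'}y'$ is itself a zero-convergent $Y$\+indexed family in $\R$. Assembling these gives exactly a row-zero-convergent matrix $(a_{y,y'})\in\Mat_Y(\R)$, and a direct computation using the associativity of the contraaction shows that composition in the opposite order corresponds to the matrix multiplication in $\Mat_Y(\R)$ defined via infinite sums of zero-convergent families. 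On the other hand, the topology on $\Hom_{\R\contra}(\R[[Y]],\R[[Y]])$ induced by the topologically agreeable structure on $\R\contra_\proj$ provided by Remark~\ref{projective-contramodules-topologically-agreeable} has a base of neighborhoods of zero consisting of those morphisms sending a chosen finite subset $X\subset Y$ of generators into elements of $\R[[Y]]$ whose entries all lie in a given open right ideal $\I\subset\R$; this is precisely the basic open subgroup $\K_{X,\I}$ defining the matrix topology.

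With these identifications in hand, Theorem~\ref{generalized-tilting}(c) produces an equivalence of abelian categories between the unique abelian category $\sB$ with $\sB_\proj\cong\Add(M)$ and $\End(M)^\rop\contra=\Mat_Y(\R)\contra$. Since $\R\contra$ manifestly satisfies $\R\contra_\proj=\Add(M)$, the uniqueness statement in Theorem~\ref{generalized-tilting}(a) forces $\sB=\R\contra$, so $\R\contra\simeq\Mat_Y(\R)\contra$. The same theorem also asserts that $M=\R[[Y]]\in\R\contra$ corresponds under the equivalence to the free $\Mat_Y(\R)$\+contramodule on one generator, as promised in the introductory paragraph of this section.

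I expect the main obstacle to be the topology-matching step: one must verify that the categorical notion of summability in the agreeable category $\R\contra_\proj$ (defined via coproducts and the universal map into the product) agrees entrywise with row-zero-convergence of matrices in $\R$, so that the topology on $\End_{\R\contra}(\R[[Y]])^\rop$ coming from the topologically agreeable structure genuinely coincides with the matrix topology constructed directly above. Once this is settled, the ring-level computations are routine unwindings of the universal property of $\R[[Y]]$, and the conclusion follows formally from Theorem~\ref{generalized-tilting}.
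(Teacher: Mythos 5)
Your argument is correct in outline, but it takes a genuinely different route from the paper's primary proof, which is a direct, explicit construction: the paper defines the functor $\V_Y(\C)=\C^Y$ with a hand-written $\Mat_Y(\R)$\+contraaction, checks contraassociativity, and inverts it by restricting to the diagonal subring $\R^Y\subset\Mat_Y(\R)$ and the elementary matrices $e_{x,y}$, using \cite[Lemma~7.1(b)]{Pproperf}. Your route is essentially the ``alternative indirect argument'' that the paper itself records in the remark following Lemma~\ref{matrix-free-contramodules-lemma}, with one variation: the paper's version first realizes $\R$ as $\Hom_A(M,M)^\rop$ for a module $M$ over an ordinary ring $A$ (Corollary~\ref{topological-rings-are-endomorphism-rings}) and then applies Theorem~\ref{generalized-tilting}(c) inside the topologically agreeable category $A\modl$ via $\Add(M)=\Add(M^{(Y)})$, whereas you stay inside $\R\contra_\proj$ and invoke the topologization of Remark~\ref{projective-contramodules-topologically-agreeable}. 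Two points deserve care in your version. First, Theorem~\ref{generalized-tilting}(b,c) requires the ambient category $\sA$ to be (topologically) agreeable, and $\R\contra$ itself rarely is; you must take $\sA=\R\contra_\proj$, which costs nothing since $\Add(\R[[Y]])=\R\contra_\proj$ for nonempty $Y$, but your opening sentence ``applied to $\sA=\R\contra$'' should be corrected accordingly. Second, you rightly flag that the whole weight of the argument rests on verifying that the rectangular-matrix topologization of $\R\contra_\proj$ is genuinely topologically agreeable, i.e., that a family of morphisms $\R[[Y]]\to\R[[Z]]$ is summable if and only if its matrices are jointly row-zero-convergent; this does check out (summability amounts to a single row-zero-convergent $Y\times(Z\times X)$ matrix, which unravels to exactly the convergence condition), but the paper only asserts it in a remark, so a complete write-up would have to supply this verification. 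What each approach buys: yours is shorter and conceptually cleaner, delegating the work to the generalized tilting machinery of \cite{PS}; the paper's direct proof is more laborious but simultaneously yields the companion equivalence for discrete modules (Proposition~\ref{matrix-discrete-modules}) and the explicit formulas for the simple objects used later in Remark~\ref{simple-objects-made-explicit}, neither of which falls out of the tilting-theoretic argument.
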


\begin{proof}
 The functor $\V_Y\:\R\contra\rarrow\Mat_Y(\R)\contra$ assigns to
a left $\R$\+contra\-module $\C$ the product $\V_Y(\C)=\C^Y$ of $Y$
copies of $\C$, viewed as the group of all $Y$\+sized columns of
elements of~$\C$.
 So the elements $d\in\V_Y(\C)$ are described as $Y$\+indexed
families of elements $(d_y\in\C)_{y\in Y}$.
 The left contraaction of the ring $\Mat_Y(\R)$ on the set $\V_Y(\C)$
is defined by a ``contra'' (infinite summation) version of the usual
formula for the left action of matrices on columns,
$$
 \pi_{\V_Y(\C)}\left(\sum_{d\in\V_Y(\C)} a_dd\right)_{\!x}\;=\;
 \pi_\C\left(\sum_{d\in\V_Y(\C),\,y\in Y} a_{d,x,y}d_y
 \right).
$$
 Here $a_d=(a_{d,x,y}\in\R)_{x,y\in Y}$ is an element of the ring
$\Mat_Y(\R)$, defined for every $d\in\V_Y(\C)$; the family of elements
$(a_d)_{d\in\V_Y(\C)}$ converges to zero in the topology of $\Mat_Y(\R)$.
 The expression in parentheses in the left-hand side is an element
of the set $\Mat_Y(\R)[[\V_Y(\C)]]$ of infinite formal linear
combinations of elements of $\V_Y(\C)$ with zero-convergent families
of coefficients in $\Mat_Y(\R)$.
 The whole left-hand side is the $x$\+indexed component of
the element of $\V_Y(\C)$ that we want to obtain by applying
the left $\Mat_Y(\R)$\+contraaction map to a given element
of $\Mat_Y(\R)[[\V_Y(\C)]]$.

 The expression in parentheses in the right-hand side is an element
of the set $\R[[\C]]$.
 The sum in the right-hand side is understood as the limit of finite
partial sums in the projective limit topology of the group
$\R[[\C]]=\varprojlim_{\I\subset\R}(\R/\I)[\C]$ (where $\I$~ranges
over the open right ideals in~$\R$).
 To check that this sum converges in $\R[[\C]]$, it suffices to observe
that, for any fixed index~$x$, the double-indexed family of elements
$(a_{d,x,y}\in\R)_{d\in\V_Y(\C),\,y\in Y}$ converges to zero in~$\R$.
 The latter observation follows from the definition of the topology
on $\Mat_Y(\R)$.

 Checking the contraunitality of this left contraaction of $\Mat_Y(\R)$
in $\V_Y(\C)$ is easy; and the contraassociativity follows from
the contraassociativity of the left $\R$\+contra\-action in $\C$,
essentially, for the same reason as the matrix multiplication is
generally associative.
 To simplify the task of checking the details, one can use the notation
of~\cite[Section~1.2]{Pweak} and~\cite[Section~2.1]{Prev} for
the contraaction operation and the contraassociativity axiom.

 As in the previous proof, the action of $\V_Y$ on morphisms of
left $\R$\+contramodules is defined in the obvious way, and
the resulting functor $\V_Y\:\R\contra\rarrow\Mat_Y(\R)\contra$ is
clearly exact and faithful.
 So it remains to check that it is surjective on morphisms and
on the isomorphism classes of objects.
 
 Let $\D$ be a left $\Mat_Y(\R)$\+contramodule.
 Restricting the $\Mat_Y(\R)$\+contraaction in $\D$ to the closed
subring of diagonal matrices $\R^Y\subset\Mat_Y(\R)$ and using
the description of contramodules over a product of topological rings
given in~\cite[Lemma~8.1(b)]{Pproperf}, we obtain a functorial
decomposition of $\D$ into a direct product of left $\R$\+contramodules
$\D=\prod_{y\in Y}\C_y$, with the componentwise contraaction of
the diagonal matrices from $\R^Y$ in $\prod_{y\in Y}\C_y$.
 As in the previous proof, the action of the elementary matrices
$e_{x,y}\in\Mat_Y(\R)$ provides natural isomorphisms of left
$\R$\+contramodules $\C_x\cong\C_y$.
 So we can set $\C=\C_y$; this defines an exact inverse functor
$\Mat_Y(\R)\contra\rarrow\R\contra$.

 Now we have a natural isomorphism of abelian groups $\D=\V_Y(\C)$,
and it essentially remains to show that this is an isomorphism of
$\Mat_Y(\R)$\+contramodules.
 For this purpose, we will demonstrate that a left
$\Mat_Y(\R)$\+contramodule structure on any contramodule $\D$ is
can be expressed in terms of the contraaction of the diagonal
subring $\R^Y$ and the action of the elementary matrices~$e_{x,y}$.

 Indeed, let $a_d=(a_{d,x,y}\in\R)_{x,y\in Y}\in\Mat_Y(\R)$, \,$d\in\D$
be a $\D$\+indexed family of elements converging to zero in $\Mat_Y(\R)$.
 Put $\S=\Mat_Y(\R)$ for brevity.
 For every element $d\in\D$ and a pair of indices $x$, $y\in Y$,
consider the element $e_{x,y}d\in\S[[\D]]$.
 This is a finite formal linear combination of elements of the set $\D$
with exactly one nonzero coefficient in~$\S$.

 Furthermore, for every fixed index $x\in Y$, consider the infinite
formal linear combination of finite formal linear combinations
$\sum_{d,y}a_{d,x,y}(e_{x,y}d)$.
 Here the coefficients are the matrix entries $a_{d,x,y}\in\R$ viewed
as the scalar (hence diagonal) matrices
$a_{d,x,y}\in\R\subset\Mat_Y(\R)$.
 The $(\D\times Y)$\+indexed family of elements $(a_{d,x,y})_{d\in\D,\,
y\in Y}$ converges to zero in $\R$, because the $\D$\+indexed family
of matrices $(a_d)_{d\in\D}$ consists of row-zero-convergent matrices
and converges to zero in $\Mat_Y(\R)$.
 As the embedding of the subring of scalar matrices $\R\rarrow
\Mat_Y(\R)$ is continuous, the $(\D\times Y)$\+indexed family of
elements $(a_{d,x,y})_{d,y}$ converges to zero in $\S=\Mat_Y(\R)$
as well.
 So we have $\sum_{d,y}a_{d,x,y}(e_{x,y}d)\in\S[[\S[[\D]]]]$ for
every $x\in Y$.

 Finally, we consider the element
\begin{equation} \label{sigma-matrix-contraaction}
 \sigma=\sum_{x\in Y}e_{x,x}\left(\sum_{d\in\D,\,y\in Y}
 a_{d,x,y}(e_{x,y}d)\right)\in\S[[\S[[\S[[\D]]]]]].
\end{equation}
 This is an infinite formal linear combination of elements of
the set $\S[[\S[[\D]]]]$ with the coefficients $(e_{x,x}\in\S)_{x\in Y}$,
which form a $Y$\+indexed family of elements converging to zero
in the topology of~$\S=\Mat_Y(\R)$.
 In fact, the elements~$e_{x,x}$ belong to the closed subring of
diagonal matrices $\R^Y\subset\Mat_Y(\R)$.

 Now the (iterated) contraassociativity axiom tells that all
the compositions of ``opening of parentheses'' (monad multiplication)
and contraaction maps acting from $\S[[\S[[\S[[\D]]]]]]$ into $\D$
are equal to each other.
 In particular, for any set $Z$ there is the squared monad
multiplication map
$$
 \phi_Z^{(2)}=\phi_Z\circ\S[[\phi_Z]]=\phi_Z\circ\phi_{\S[[Z]]}
 \:\,\S[[\S[[\S[[Z]]]]]]\lrarrow\S[[Z]].
$$
 Setting $Z=\D$ and applying this map to the element~$\sigma$, we
obtain
\begin{equation} \label{phi-squared-matrix-contraaction}
 \phi^{(2)}_\D(\sigma)=
 \sum_{d\in\D}\sum_{x,y\in Y}(e_{x,x}a_{d,x,y}e_{x,y})d=
 \sum_{d\in\D}a_dd,
\end{equation}
since $a_d=\sum_{x,y\in Y}a_{d,x,y}e_{x,y}=\sum_{x,y\in Y}
e_{x,x}a_{d,x,y}e_{x,y}$ as the limit of finite partial sums
converging in the topology of~$\Mat_Y(\R)$.

 On the other hand, for any set $Z$ endowed with an (arbitrary)
map of sets $\pi_Z\:\S[[Z]]\rarrow Z$, there is the iterated map
$$
 \pi_Z^{(3)}=\pi_Z\circ\S[[\pi_Z]]\circ\S[[\S[[\pi_Z]]]]
 \:\,\S[[\S[[\S[[Z]]]]]]\lrarrow Z.
$$
 In the situation at hand with $Z=\D$, we see
from~\eqref{sigma-matrix-contraaction} that the value of
$\pi_\D^{(3)}(\sigma)\in\D$ is uniquely determined by the action of
the elements $e_{x,y}$ and the contraaction of the diagonal
subring $\R^Y\subset\Mat_Y(\R)$ in~$\D$.
 The contraassociativity equation
$$
 \pi_\D(\phi^{(2)}_\D(\sigma))=\pi_\D^{(3)}(\sigma)
$$
together with the equality~\eqref{phi-squared-matrix-contraaction}
tells that the whole left contraaction of $\Mat_Y(\R)$ in $\D$ is
determined by (and expressed explicitly by the above formulas in
terms of) these data, concluding the proof.
\end{proof}

\begin{lem} \label{matrix-free-contramodules-lemma}
 In the category equivalence\/ $\Mat_Y(\R)\contra\cong\R\contra$
of Proposition~\ref{matrix-contramodules},
the free left contramodule with one generator\/ $\Mat_Y(\R)$ over
the topological ring\/ $\Mat_Y(\R)$ corresponds to the free
left contramodule\/ $\R[[Y]]$ with the set of generators $Y$ over
the topological ring\/~$\R$.
\end{lem}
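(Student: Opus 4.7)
The plan is to exhibit an explicit $\Mat_Y(\R)$\+contramodule isomorphism $\V_Y(\R[[Y]])\cong\Mat_Y(\R)$, where the right\+hand side carries its free\+contramodule\+on\+one\+generator structure given by left multiplication of matrices.

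First, I identify the underlying sets. By definition, $\V_Y(\R[[Y]])=\R[[Y]]^Y$ consists of all $Y$\+indexed families $(d_x)_{x\in Y}$ with $d_x=(a_{x,y})_{y\in Y}\in\R[[Y]]$; that is, each row $(a_{x,y})_{y\in Y}$ is a zero\+convergent family in $\R$. Collating such data into a single doubly\+indexed array yields precisely a row\+zero\+convergent $Y\times Y$ matrix, so there is a canonical bijection of sets $\V_Y(\R[[Y]])\cong\Mat_Y(\R)$, which I use henceforth as an identification.

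Next, I verify that this bijection is an isomorphism of left $\Mat_Y(\R)$\+contramodules. By the concluding part of the proof of Proposition~\ref{matrix-contramodules}, any $\Mat_Y(\R)$\+contraaction on a set is fully determined by its restriction to (a)~the closed diagonal subring $\R^Y\subset\Mat_Y(\R)$, and (b)~the elementary matrices $e_{x,y}$. For~(a), the action of $\R^Y$ on $\Mat_Y(\R)$ by left multiplication decomposes the matrix into its rows: for each $y\in Y$, the submodule $e_{y,y}\Mat_Y(\R)$ consists of matrices supported in the $y$\+th row, and is isomorphic as a left $\R$\+contramodule to $\R[[Y]]$. This matches exactly the natural product decomposition $\V_Y(\R[[Y]])=\R[[Y]]^Y=\prod_{y\in Y}\R[[Y]]$ with the componentwise $\R$\+contraaction, in accordance with \cite[Lemma~7.1(b)]{Pproperf}. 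For~(b), the $\V_Y$\+contraaction formula applied to the one\+term formal combination $e_{x,y}\cdot(d_i)_{i\in Y}$ produces the column whose $x'$\+th entry is $\phi_Y\bigl(\sum_{y'}(e_{x,y})_{x',y'}d_{y'}\bigr)$, which equals $d_y$ when $x'=x$ and $0$ otherwise. On the matrix side, left multiplication by $e_{x,y}$ zeros out all rows of a matrix except the $x$\+th, which it fills with the $y$\+th row of the original matrix. Under the collation bijection these two effects agree.

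Combining (a) and (b) with the determination principle from the proof of Proposition~\ref{matrix-contramodules} shows that the bijection $\V_Y(\R[[Y]])\cong\Mat_Y(\R)$ intertwines the two left $\Mat_Y(\R)$\+contraactions, establishing the lemma. The only technical care needed concerns the convergence bookkeeping: one must check that, when the $\V_Y$ formula is evaluated on $\C=\R[[Y]]$, the inner expression $\sum_{d,y}a_{d,x,y}d_y$ is genuinely an element of $\R[[\R[[Y]]]]$ so that $\pi_{\R[[Y]]}=\phi_Y$ applies. This, however, is immediate from the definition of the topology on $\Mat_Y(\R)$, exactly as in the proof of Proposition~\ref{matrix-contramodules}, so there is no real obstacle beyond the clean matching of the two pieces of data.
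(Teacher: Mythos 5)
Your argument is correct, but it runs the equivalence in the opposite direction from the paper and is correspondingly longer. The paper's proof is a one\-/sentence application of the \emph{inverse} functor $\D\longmapsto e_{x,x}\D$ constructed in the proof of Proposition~\ref{matrix-contramodules}: for $\D=\Mat_Y(\R)$ the subset $e_{x,x}\Mat_Y(\R)$ consists of the matrices supported in a single row, i.~e., of the zero\-/convergent $Y$\-/sized rows with entries in $\R$, which is visibly $\R[[Y]]$ as a left $\R$\+contramodule. You instead apply the forward functor $\V_Y$ to $\R[[Y]]$, collate $\R[[Y]]^Y$ into the set of row\-/zero\-/convergent matrices, and then invoke the determination principle from the end of the proof of Proposition~\ref{matrix-contramodules} (a left $\Mat_Y(\R)$\+contraaction is determined by its restriction to the diagonal subring $\R^Y$ and by the action of the elementary matrices~$e_{x,y}$) to match the transported structure with the free one given by left matrix multiplication. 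Your two checks are both right: the $\R^Y$\+contraaction splits $\Mat_Y(\R)$ into the product of its rows $e_{y,y}\Mat_Y(\R)\cong\R[[Y]]$, and left multiplication by $e_{x,y}$ moves row~$y$ into row~$x$, matching the single\-/term evaluation of the $\V_Y$\+formula. Since Proposition~\ref{matrix-contramodules} already establishes that the two functors are mutually inverse, computing either direction suffices; the paper's choice makes the identification immediate, while yours requires the extra verification that the collation bijection intertwines the generating pieces of the two contraactions, at the price of being more explicit about the matrix picture.
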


\begin{proof}
 Following the proof of Proposition~\ref{matrix-contramodules}
and~\cite[Lemma~8.1(b)]{Pproperf}, the left $\R$\+con\-tramodule $\C$
corresponding to a left $\Mat_Y(\R)$\+contramodule $\D$ can be
computed as the $\R$\+subcontramodule $\C=e_{x,x}\D$ in $\D$,
where $x$~is any chosen element of the set~$Y$.
 Hence, in the particular case of the free left
$\Mat_Y(\R)$\+contramodule $\D=\Mat_Y(\R)$, the left
$\R$\+contramodule $\C$ can be described as the set of all
zero-convergent $Y$\+sized rows (or rectangular
$(\{x\}\times Y)$\+matrices with one row) with entries in~$\R$,
which is isomorphic to $\R[[Y]]$ as an $\R$\+contramodule.
\end{proof}

\begin{rem}
 The above proof of Proposition~\ref{matrix-contramodules} has
the advantage of being direct and explicit, but it is quite involved.
 There is an alternative indirect argument based on the result of
Corollary~\ref{topological-rings-are-endomorphism-rings}.

 Let $A$ be an associative ring and $M$ be a left $A$\+module such
that the topological ring $\R$ is isomorphic to the topological ring
of endomorphism $\Hom_A(M,M)^\rop$ of the $A$\+module~$M$.
 Then the topological ring $\Mat_Y(\R)$ (with the above-defined
topology on it) is isomorphic to the topological ring of endomorphisms
$\Hom_A(M^{(Y)},M^{(Y)})^\rop$ of the direct sum $M^{(Y)}$ of $Y$
copies of the $A$\+module~$M$.
 By Theorem~\ref{generalized-tilting}(c), we have
$$
 \R\contra_\proj\cong\Add(M)=\Add(M^{(Y)})\cong\Mat_Y(\R)\contra_\proj.
$$
 An abelian category with enough projective objects is determined by
its full subcategory of projective objects; so the equivalence of
additive categories $\R\contra_\proj\cong\Mat_Y(\R)\contra_\proj$ extends
uniquely to an equivalence of abelian categories
$\R\contra\cong\Mat_Y(\R)\contra$.
 This is, of course, the same equivalence of categories
$\R\contra\cong\Mat_Y(\R)\contra$ as the one provided by
the constructions in the proof of
Proposition~\ref{matrix-contramodules}. {\hbadness=2000\par}

 Yet another proof of Proposition~\ref{matrix-contramodules} can be
found in~\cite[Theorem~7.9 and Example~7.10]{PS}.
 The direct approach worked out above in this section has also
another advantage, though: on par with the equivalence of
the contramodule categories, it allows to obtain an equivalence
of the categories of discrete modules in
Proposition~\ref{matrix-discrete-modules}.
\end{rem}

\Section{Topologically Semisimple Topological Rings}

 The concept of a topologically semisimple right linear topological
ring is based on Theorem~\ref{topologically-semisimple-ring}, which
we prove in this section.
 An important related result is
Theorem~\ref{contra-split-iff-semisimple}, whose proof we
postpone to Section~\ref{split-categories-secn}.

 Given an additive category $\sA$ with set-indexed coproducts, we denote
by $\Add_\infty(\sA,\Ab)$ the full subcategory in $\Funct(\sA,\Ab)$
consisting of all the functors $\sA\rarrow\Ab$ preserving all coproducts.
 If $M\in\sA$ is an object, we also denote by $\{M\}=\{M\}_\sA$
the full subcategory in $\sA$ spanned by the single object~$M$.
 When $\sA=\Add(M)$, the restriction functor $\Add_\infty(\sA,\Ab)
\rarrow\Funct(\{M\},\Ab)$ taking a functor $F\:\sA\rarrow\Ab$ to
the functor $F|_{\{M\}}\:\{M\}\rarrow\Ab$ is faithful.
 Hence morphisms between any two fixed objects in $\Add_\infty(\sA,\Ab)$
form a set. {\hbadness=1275\par}

 Similarly, given a cocomplete additive category $\sA$, we denote by
$\Rex_\infty(\sA,\Ab)$ the full subcategory in $\Funct(\sA,\Ab)$
consisting of all the functors $\sA\rarrow\Ab$ preserving all colimits
(or equivalently, all coproducts and cokernels).
 If an object $G\in\sA$ is a generator, then the restriction functor
$\Rex_\infty(\sA,\Ab)\rarrow\Funct(\{G\},\Ab)$ is faithful.
 So morphisms between any two fixed objects in $\Rex_\infty(\sA,\Ab)$
form a set.

\begin{lem} \label{discrete-modules-as-functors-on-contramodules}
 Let\/ $\R$ be a complete, separated topological ring with a right
linear topology.
 Then the functor
$$
 \Theta\:\discr\R\lrarrow\Rex_\infty(\R\contra,\Ab)
$$
induced by the pairing functor of contratensor product
$$
 \ocn_\R\:\discr\R\times\R\contra\lrarrow\Ab
$$
is fully faithful.
\end{lem}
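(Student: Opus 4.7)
The plan is to exploit the free left $\R$\+contramodule $\R=\R[[*]]$ as both a projective generator of $\R\contra$ and a testing object, using the identification $\Theta(\N)(\R)=\N\ocn_\R\R\cong\N$ that is the case $X=*$ of the formula $\N\ocn_\R\R[[X]]\cong\N^{(X)}$, natural in both $\N$ and~$X$.

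First I would verify that $\Theta(\N)$ actually lies in $\Rex_\infty(\R\contra,\Ab)$. Right exactness of $\N\ocn_\R(-)$ is immediate from its defining coequalizer presentation $\N\ot_\boZ\R[[\C]]\rightrightarrows\N\ot_\boZ\C\rarrow\N\ocn_\R\C\rarrow0$. Preservation of coproducts follows from the formula $\N\ocn_\R\R[[X]]\cong\N^{(X)}$ together with the fact that $\R[[X]]$ is the coproduct of $X$ copies of $\R$ in $\R\contra$; a general coproduct of contramodules is then handled by taking two-step free presentations of each summand and invoking right exactness.

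Second, for faithfulness: given $f\:\N_1\rarrow\N_2$ in $\discr\R$, the $\R$\+component $\Theta(f)_\R$, after the canonical identifications $\N_i\ocn_\R\R\cong\N_i$, is $f$ itself; hence $\Theta$ is faithful and in fact $f$ can be recovered from $\Theta(f)$ by evaluation at~$\R$.

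Third, for fullness: let $\tau\:\Theta(\N_1)\rarrow\Theta(\N_2)$ be a natural transformation and set $g=\tau_\R\:\N_1\rarrow\N_2$ after the identifications. To show $g$ is a morphism of discrete right $\R$\+modules I would apply naturality to the right-multiplication endomorphisms $\rho_r\:\R\rarrow\R$ for $r\in\R$, using the standard fact that $\End_{\R\contra}(\R)^\rop\cong\R$, so that each $\rho_r$ is a morphism in $\R\contra$. Under the identification $\N_i\ocn_\R\R\cong\N_i$, the induced map $\Theta(\N_i)(\rho_r)$ becomes right multiplication by~$r$ on $\N_i$, whence naturality of~$\tau$ gives $g(nr)=g(n)r$. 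To see that $\Theta(g)=\tau$, I would note that both are morphisms in $\Rex_\infty(\R\contra,\Ab)$ whose components at $\R$ coincide; since both sides preserve coproducts, they agree on every free contramodule $\R[[X]]=\coprod_X\R$, and since both preserve cokernels, agreement extends to every $\C\in\R\contra$ via a two-step free presentation $\R[[Y]]\rarrow\R[[X]]\rarrow\C\rarrow0$.

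The main obstacle I anticipate is the last step, namely the assertion that two morphisms in $\Rex_\infty(\R\contra,\Ab)$ agreeing at~$\R$ must agree everywhere. This ultimately relies on $\R$ generating $\R\contra$ in a strong sense: every contramodule is a cokernel of a morphism between coproducts of copies of $\R$. Once this generator argument is in place, the rest of the proof is a routine bookkeeping exercise built on the explicit formula $\N\ocn_\R\R[[X]]\cong\N^{(X)}$ and the description of $\End_{\R\contra}(\R)$ by right multiplication.
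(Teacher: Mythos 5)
Your proposal is correct and follows essentially the same route as the paper: the paper packages your argument as the formal observation that the restriction functor $\rho\:\Rex_\infty(\R\contra,\Ab)\rarrow\Funct(\{\R\},\Ab)\cong\modr\R$ is faithful (because $\R$ generates $\R\contra$) while $\rho\circ\Theta$ is the fully faithful inclusion $\discr\R\rarrow\modr\R$, whence $\Theta$ is fully faithful. Your explicit steps --- evaluation at $\R$ via $\N\ocn_\R\R\cong\N$, detecting the right module structure through $\End_{\R\contra}(\R)^\rop\cong\R$, and propagating agreement from $\R$ to all of $\R\contra$ via coproducts and free presentations --- are exactly the content of those two facts unpacked.
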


\begin{proof}
 The free left $\R$\+contramodule $\R$ with one generator is a generator
of $\R\contra$, so morphisms between any two fixed objects in
$\Rex_\infty(\R\contra,\Ab)$ form a set.
 The functor~$\ocn_\R$ preserves colimits (in both its arguments), so
the functor $\Theta$ indeed takes values in $\Rex_\infty(\R\contra,\Ab)$.
 Furthermore, let us consider the composition of $\Theta$ with
the restriction functor $\rho\:\Rex_\infty(\R\contra,\Ab)\rarrow
\Funct(\{\R\},\Ab)$
 $$
  \discr\R\overset\Theta\lrarrow\Rex_\infty(\R\contra,\Ab)
  \overset\rho\lrarrow\Funct(\{\R\},\Ab).
 $$
 The category $\Funct(\{\R\},\Ab)\cong(\Hom_{\R\contra}(\R,\R))\modl\cong
\modr\R$ is equivalent to the category of right $\R$\+modules, and
the functor $\rho\circ\Theta$ is isomorphic to the fully faithful 
inclusion functor $\discr\R\rarrow\modr\R$ (due to the natural
isomorphism of abelian groups $\N\ocn_\R\R\cong\N$ for any discrete
right $\R$\+module~$\N$).
 According to the discussion preceding the lemma, the functor~$\rho$ is
faithful.
 Since $\rho$ is faithful and $\rho\circ\Theta$ is fully faithful, it
follows that $\Theta$ is fully faithful.
\end{proof}

\begin{thm} \label{topologically-semisimple-ring}
 Let\/ $\S$ be a complete, separated topological ring with a
right linear topology.
 Then the following conditions are equivalent:
\begin{enumerate}
\item the abelian category\/ $\S\contra$ is Ab5 and semisimple;
\item the abelian category\/ $\discr\S$ is split (or equivalently,
semisimple);
\item there exists an associative ring $A$ and a \emph{semisimple} left
$A$\+module $M$ such that\/ $\S$ is isomorphic, as a topological ring,
to the endomorphism ring of $M$ endowed with the finite topology,
$\S\cong\End_A(M)^\rop=\Hom_A(M,M)^\rop$;
\item there is a set $X$, an $X$\+indexed family of nonempty sets $Y_x$,
and an $X$\+indexed family of division rings $D_x$, \,$x\in X$, such
that the topological ring\/ $\S$ is isomorphic to the product of
the endomorphism rings of $Y_x$\+dimensional vector spaces over~$D_x$,
$$
 \S\,\cong\,\prod_{x\in X}\End_{D_x}\bigl(D_x^{(Y_x)}\bigr)^\rop.
$$
\end{enumerate}
\end{thm}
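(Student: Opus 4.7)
My plan is to organize the four-way equivalence around condition~(4), establishing (4)$\Leftrightarrow$(3), (4)$\Rightarrow$(1),(2), and then closing the cycle with (2)$\Rightarrow$(4) and (1)$\Rightarrow$(4). The equivalence (4)$\Leftrightarrow$(3) is essentially a topological refinement of the classical Wedderburn decomposition. For (4)$\Rightarrow$(3), I would take $A=\prod_{x\in X}D_x$ and $M=\bigoplus_x D_x^{(Y_x)}$, a semisimple $A$\+module; the Hom-vanishing $\Hom_A(D_x,D_y)=0$ for $x\ne y$ yields $\End_A(M)^\sop\cong\prod_x\End_{D_x}(D_x^{(Y_x)})^\sop$ as rings, and the finite topology on $\End_A(M)^\sop$ coincides with the product of matrix topologies because a finite subset of $M$ meets only finitely many summands and only finitely many coordinates in each. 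Conversely, any semisimple $A$\+module admits an isotypical decomposition $M=\bigoplus_x S_x^{(Y_x)}$ with the $S_x$ pairwise non-isomorphic; Schur's lemma produces the skew-fields $D_x=\End_A(S_x)^\sop$, and the same Hom-vanishing yields the desired topological product decomposition.

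For (4)$\Rightarrow$(1) and (4)$\Rightarrow$(2), I would combine Propositions~\ref{matrix-discrete-modules} and~\ref{matrix-contramodules}, which give $\discr\Mat_{Y_x}(D_x)\simeq D_x\modl\simeq\Mat_{Y_x}(D_x)\contra$ for each~$x$, with the decomposition of discrete modules and of contramodules over a product of topological rings (\cite[Lemma~7.1]{Pproperf}). The outcome is equivalences $\discr\S\simeq\prod_x D_x\modl\simeq\S\contra$, and the implication (4)$\Rightarrow$(1) of Theorem~\ref{semisimple-category} identifies this product as a semisimple Ab5 Grothendieck category, proving~(1). Splitness of $\discr\S$ in~(2) then follows from Remark~\ref{discrete-modules-split-iff-semisimple}.

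The main work lies in the reverse implication (2)$\Rightarrow$(4), because we must recover the \emph{topological} ring\/ $\S$ from an abstract abelian equivalence, not merely its underlying ring. Upgrading splitness to semisimplicity via Remark~\ref{discrete-modules-split-iff-semisimple} and applying Theorem~\ref{semisimple-category}, I obtain $\discr\S\simeq\prod_x D_x\modl$ for some family of skew-fields. Proposition~\ref{endomorphisms-of-forgetful} now expresses\/ $\S$ as\/ $\End_\Funct(F)^\sop$, where $F\:\discr\S\rarrow\Ab$ is the forgetful functor. Transporting $F$ across the equivalence gives a coproduct-preserving $\tilde F\:\prod_x D_x\modl\rarrow\Ab$; writing every object as $(V_x)=\coprod_x\iota_x(V_x)$ and using that $D_x$ is a skew-field, one finds $\tilde F((V_x))=\bigoplus_x(W_x\ot_{D_x}V_x)$ with $W_x=\tilde F(\iota_x(D_x))=D_x^{(Y_x)}$ for some nonempty $Y_x$. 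A naturality argument with the coproduct injections forces every endomorphism of $\tilde F$ to be block-diagonal (off-diagonal components vanish because $\Hom(\iota_y(-),\iota_z(-))=0$ for $y\ne z$), producing the ring isomorphism $\End_\Funct(\tilde F)^\sop\cong\prod_x\Mat_{Y_x}(D_x)$. The delicate step is the final topological bookkeeping: I must match the base of neighborhoods of zero in $\End_\Funct(F)^\sop$ (annihilators of finite sets of elements of the underlying abelian groups) with the product of the bases $\{\K_{X,0}\}$ from Section~\ref{matrix-topology-secn}. For (1)$\Rightarrow$(4) the argument is parallel, now with $\S\contra\simeq\prod_x D_x\modl$, the projective generator\/ $\S$ corresponding to some $(D_x^{(Y_x)})$ with all $Y_x$ nonempty (as\/ $\S$ generates\/ $\S\contra$), and the topology on\/ $\S\cong\End_{\S\contra}(\S)^\sop$ recovered via the topologically agreeable structure on\/ $\S\contra_\proj$ supplied by Theorem~\ref{generalized-tilting}(c).
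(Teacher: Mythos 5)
Your overall plan is sound and most of it tracks the paper closely: your (4)$\Leftrightarrow$(3), your (4)$\Rightarrow$(1),(2) via Propositions~\ref{matrix-discrete-modules} and~\ref{matrix-contramodules} together with~\cite[Lemma~7.1]{Pproperf} (exactly the ``alternative'' route mentioned at the end of the paper's proof), and your (2)$\Rightarrow$(4) via Proposition~\ref{endomorphisms-of-forgetful} all work, modulo routine left/right bookkeeping.

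The gap is in your (1)$\Rightarrow$(4). From the abstract equivalence of $\S\contra$ with the Cartesian product of the categories $D_x\modl$ you correctly recover the underlying ring $\S\cong\prod_x\Mat_{Y_x}(D_x)$, but you then claim to recover the \emph{topology} ``via the topologically agreeable structure on $\S\contra_\proj$ supplied by Theorem~\ref{generalized-tilting}(c)''. This is circular: Theorem~\ref{generalized-tilting}(c) takes a topologically agreeable ambient category as \emph{input}, and the only topologically agreeable structure available on $\S\contra_\proj$ (Remark~\ref{projective-contramodules-topologically-agreeable}) is the one manufactured from the given topology of $\S$ --- so the procedure returns that topology unchanged rather than identifying it with the product of the matrix topologies. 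To transport the standard finite-topology structure from $\Add\bigl(\bigoplus_x D_x^{(Y_x)}\bigr)$ across the equivalence you would need to know that the equivalence is compatible with the two topologically agreeable structures, which is precisely what has to be proved; the paper's examples on the non-uniqueness of topologically agreeable structures show that an abstract (even agreeable) equivalence does not determine the topology on endomorphism rings. The paper sidesteps this by proving (1)$\Rightarrow$(2) instead: the contratensor product embeds $\discr\S$ fully faithfully into $\Add_\infty(\S\contra,\Ab)$, which is a split abelian category (Lemma~\ref{discrete-modules-as-functors-on-contramodules}), so $\discr\S$ is split; the topology is then genuinely reconstructed from the forgetful functor on $\discr\S$ by Proposition~\ref{endomorphisms-of-forgetful} in the step (2)$\Rightarrow$(4), which you already have. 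Replacing your (1)$\Rightarrow$(4) by this (1)$\Rightarrow$(2) closes the cycle.
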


 Here\/ $\End_{D_x}\bigl(D_x^{(Y_x)}\bigr)^\rop =\Mat_{Y_x}(D_x)$ is,
generally speaking, the ring of row-finite infinite matrices of
size $Y_x$ with entries in~$D_x$.
 It is endowed with the finite topology of the endomorphism ring of
the $D_x$\+module $D_x^{(Y_x)}$ (see
Example~\ref{modules-topologically-agreeable-example}\,(1)), which
coincides with the topology of the ring of matrices with entries in
the discrete ring $D_x$ (as defined in
Section~\ref{matrix-topology-secn}).
 The ring $\S$ is isomorphic, as a topological ring, to a (generally
speaking) infinite product of such rings of infinite matrices, endowed
with the product topology.
 Topological rings satisfying the equivalent conditions of
Theorem~\ref{topologically-semisimple-ring} are called
\emph{topologically semisimple}.

\begin{rem} \label{top-semisimple-von-Neumann-regular}
 Associative rings of the form described above appear in the theory
of direct sum decompositions of modules~\cite{Har,Ang,AS}, where people
seem to usually say that such a ring $\S$ is von~Neumann\- regular
(which it is---but it is a very special kind of von~Neumann\-
regular ring).
 Certainly, $\S$ is not classically semisimple as an abstract ring, 
generally speaking; it is not Artinian, and the categories $\S\modl$
and $\modr\S$ of left and right modules over it are not semisimple.
 But as a topological ring, $\S$ is topologically semisimple in
the sense of the above theorem.
 (See Remark~\ref{semiregular-remark} below for further discussion.)
\end{rem}

\begin{rem} \label{simple-objects-made-explicit}
 It is instructive to consider the simple objects of the semisimple 
abelian categories $\discr\S$ and $\S\contra$.
 There is only one simple discrete right module over the topological ring
$\S_x=\End_{D_x}\bigl(D_x^{(Y_x)}\bigr)^\rop$, namely,
the $Y_x$\+dimensional vector space~$D_x^{(Y_x)}$.
 There is also only one simple left $\S_x$\+contramodule, namely,
the product $D_x^{Y_x}$ of $Y_x$ copies of~$D_x$.
 The discrete module and contramodule structures on these objects were
explicitly described in the proofs of
Propositions~\ref{matrix-discrete-modules}
and~\ref{matrix-contramodules}.
 For the ring $\S=\prod_{x\in X}\S_x$, both the simple
discrete right modules and the simple left contramodules are indexed
by the set~$X$ (see~\cite[Lemma~8.1]{Pproperf}).
\end{rem}

\begin{rem}
 The same class of topological rings (up to switching the roles of
the left and right sides) as in
Theorem~\ref{topologically-semisimple-ring} was characterized by
a list of many equivalent conditions in
the paper~\cite[Theorem~3.10]{IMR}, with the proof of the equivalence
based on a preceding result in the book~\cite[Theorem~29.7]{War}.
 In particular, our condition~(4) of
Theorem~\ref{topologically-semisimple-ring} is the same as
condition~(d) of~\cite[Theorem~3.10]{IMR}.
\end{rem}

\begin{proof}[Proof of Theorem~\ref{topologically-semisimple-ring}]
 By Remark~\ref{discrete-modules-split-iff-semisimple}, the abelian
category $\discr\S$ is split if and only if it is semisimple.
 We will prove the implications
$$
\text{(1)~$\Longrightarrow$ (2) $\Longrightarrow$ (4)
$\Longrightarrow$ (3) $\Longrightarrow$~(1).}
$$

 (1)\,$\Longrightarrow$\,(2) The argument is based on
Lemma~\ref{discrete-modules-as-functors-on-contramodules}.
 By Theorem~\ref{semisimple-category}\,(4), the category $\S\contra$
being Ab5 and semisimple means a category equivalence
$\S\contra\cong\mathop{\text{\Large $\times$}}_{x\in X} D_x\modl$
for some set of indices $X$ and a family of skew-fields
$(D_x)_{x\in X}$.
 The category $\S\contra$ is split abelian, so the two full
subcategories $\Rex_\infty(\S\contra,\Ab)$ and
$\Add_\infty(\S\contra,\Ab)$ in $\Funct(\S\contra,\Ab)$ coincide.

 Furthermore, for any coproduct-preserving functor
$N\:\mathop{\text{\Large $\times$}}_{x\in X} D_x\modl\rarrow\Ab$
the image of the one-dimensional left vector space $D_x$ over $D_x$
is naturally a left module over the ring $\Hom_{D_x\modl}(D_x,D_x)
=D_x^\rop$, i.~e., a right $D_x$\+vector space.
 The functor $N$ is uniquely determined by the collection of right
$D_x$\+vector spaces $(N(D_x)\in\modr D_x)_{x\in X}$, which can be
arbitrary.
 So the assignment $N\longmapsto (N(D_x))_{x\in X}$ establishes
a category equivalence
$$
 \Add_\infty\left(\mathop{\text{\huge $\times$}}_{x\in X} D_x\modl,\>
 \Ab\right)\,\cong\,\mathop{\text{\huge $\times$}}_{x\in X}
 \modr D_x.
$$

 By Lemma~\ref{discrete-modules-as-functors-on-contramodules}, it
follows that $\discr\S$ is a full subcategory in a semisimple
abelian category $\mathop{\text{\Large $\times$}}_{x\in X}\modr D_x$.
 It remains to observe that any abelian category which can be
embedded as a full subcategory into a split abelian category
is split.

 (2)\,$\Longrightarrow$\,(4) The argument is based on
Proposition~\ref{endomorphisms-of-forgetful}.
 By Theorem~\ref{semisimple-category}\,(4), we have $\discr\S\cong
\mathop{\text{\Large $\times$}}_{x\in X}\modr D_x$ for some set of
indices $X$ and a family of skew-fields $(D_x)_{x\in X}$.
 The forgetful functor $F\:\discr\S\rarrow\Ab$ (assigning to every
discrete right $\S$\+module $\N$ is underlying abelian group~$\N$)
can be thus interpreted as a functor
$$
 F\:\mathop{\text{\huge $\times$}}_{x\in X}\modr D_x\lrarrow\Ab.
$$
 We know that the functor $F$ is faithful and preserves
colimits/coproducts.

 As above, the image of the one-dimensional right vector space $D_x$
over $D_x$ under the functor $F$ is naturally a left module over
the ring $\Hom_{\modr D_x}(D_x,D_x)=D_x$.
 Denote this left $D_x$\+module by $V_x=F(D_x)$, and let $Y_x$ be
a set such that $V_x\cong D_x^{(Y_x)}$ in $D_x\modl$.
 Since the functor $F$ is faithful, the set $Y_x$ is nonempty.
 
 Now an endomorphism $t\:F\rarrow F$ of the functor $F$ is uniquely 
determined by the collection of left $D_x$\+module morphisms
$t_{D_x}\:V_x\rarrow V_x$, which can be arbitrary.
 In view of Proposition~\ref{endomorphisms-of-forgetful}, this
provides the desired ring isomorphism 
$\S\cong\prod_{x\in X}\End_{D_x}\bigl(D_x^{(Y_x)}\bigr)^\rop$.
 Finally, the topology on the ring $\S$ is also described by
Proposition~\ref{endomorphisms-of-forgetful}, which allows to
identify it with the product of the finite topologies on
the rings of row-finite matrices.

 (4)\,$\Longrightarrow$\,(3) Set
$A=\prod_{x\in X}D_x$ and $M=\bigoplus_{x\in X}D_x^{(Y_x)}$.

 (3)\,$\Longrightarrow$\,(1) The argument is based on the generalized
tilting theory.
 By Theorem~\ref{generalized-tilting}(a,c),
we have a category equivalence $\Add(M)\cong\S\contra_\proj$.
 Since $M$ is a semisimple $A$\+module, the category $\Add(M)$ is
abelian, Grothendieck, and semisimple.

 An abelian category $\sB$ with enough projective objects is uniquely
determined by its full subcategory of projective objects $\sB_\proj$.
 In particular, if $\sB_\proj$ happens to be split abelian, then all
objects of $\sB$ are projective.

 In the situation at hand, we conclude that the category $\S\contra=
\S\contra_\proj\cong\Add(M)$ is Ab5 and semisimple.

 Alternatively, the implication (3)\,$\Longrightarrow$\,(4) is easy
to prove, and (4)\,$\Longrightarrow$\,(1) holds by
Proposition~\ref{matrix-contramodules} and~\cite[Lemma~8.1(b)]{Pproperf}
(while (4)\,$\Longrightarrow$\,(2) follows directly from
Proposition~\ref{matrix-discrete-modules}
and~\cite[Lemma~8.1(a)]{Pproperf}).

 Notice also that it is clear from condition~(4) that the functor
$\Theta\:\discr\S\rarrow\Rex_\infty(\S\contra,\Ab)$ from
Lemma~\ref{discrete-modules-as-functors-on-contramodules} (which
was used in the proof of the implication (1)\,$\Longrightarrow$\,(2))
is actually an equivalence of categories for a topologically
semisimple topological ring~$\S$.
\end{proof}

 Our next theorem shows that, similarly to
Theorem~\ref{topologically-semisimple-ring}\,(2),
the semisimplicity condition in
Theorem~\ref{topologically-semisimple-ring}\,(1) can be relaxed
to the splitness condition.

\begin{thm} \label{contra-split-iff-semisimple}
 Let\/ $\S$ be a complete, separated topological ring with a
right linear topology.
 Then the abelian category\/ $\S\contra$ is split if and only if
it is Grothendieck and semisimple.
\end{thm}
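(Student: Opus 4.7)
The easier direction is already covered: if $\S\contra$ is Grothendieck and semisimple, then by Lemma~2.1 it is split. So the content is the converse: assuming $\S\contra$ is a split abelian category, deduce that it is Ab5, has a generator, and every object is a coproduct of simple objects. My plan is to reduce this to the already-announced Theorem~\ref{topologizable-spectral-is-semisimple}, so the work is just to check that $\S\contra$ is topologizable under the splitness hypothesis.

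Step~1. I would first observe that a split abelian category with enough projective objects must in fact coincide with its full subcategory of projectives. Indeed, for any $\C\in\S\contra$, choose an epimorphism $\S[[X]]\twoheadrightarrow\C$ from a free $\S$\+contramodule (which exists because $\S\contra$ has enough projectives by the preliminaries). Splitness provides a section, exhibiting $\C$ as a direct summand of a projective, hence projective. Thus $\S\contra=\S\contra_\proj$.

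Step~2. Next I would invoke Remark~\ref{projective-contramodules-topologically-agreeable}: the additive category $\S\contra_\proj$ carries a natural topologically agreeable structure, obtained by first topologizing the full subcategory of free contramodules via the matrix topology of Section~\ref{matrix-topology-secn} (applied to rectangular row-zero-convergent matrices) and then extending to all direct summands as in Examples~\ref{idempotent-completion-top-agreeable}. Combining with Step~1, the whole abelian category $\S\contra$ admits a topologically agreeable structure, i.e.\ it is topologizable.

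Step~3. Now apply Theorem~\ref{topologizable-spectral-is-semisimple}, which asserts that any topologizable split abelian category is Ab5 and semisimple. This immediately gives that $\S\contra$ is Ab5 and semisimple. Since $\S\contra$ always has the free contramodule on one generator $\S=\S[[\ast]]$ as a (projective) generator, it is a Grothendieck abelian category, and the proof is complete. The only genuinely difficult input is Theorem~\ref{topologizable-spectral-is-semisimple}, whose proof is being deferred to Section~\ref{split-categories-secn}; once that is granted, the present theorem is essentially a one-line corollary, the small point being the verification in Step~1 that splitness together with enough projectives collapses $\S\contra$ onto $\S\contra_\proj$ (without which one could not directly import the topologically agreeable structure of Remark~\ref{projective-contramodules-topologically-agreeable}).
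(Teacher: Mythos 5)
Your reduction in Steps~1--2 is correct as far as it goes: splitness together with enough projectives does collapse $\S\contra$ onto $\S\contra_\proj$, and Remark~\ref{projective-contramodules-topologically-agreeable} does make that category topologizable; the paper records exactly this deduction in the remark following its proof of Theorem~\ref{topologizable-spectral-is-semisimple}. The problem is that your argument is circular relative to the paper's logical structure: the deferred proof of Theorem~\ref{topologizable-spectral-is-semisimple} in Section~\ref{split-categories-secn} is itself obtained by reducing to Theorem~\ref{contra-split-iff-semisimple} (for each object $G$ one passes to $\Add(G)\cong\R\contra_\proj$ with $\R=\Hom_\sA(G,G)^\rop$ and invokes the present theorem for~$\R$). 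So you cannot take Theorem~\ref{topologizable-spectral-is-semisimple} as granted input here; the two statements are essentially equivalent, and the genuine mathematical content must be supplied somewhere, which your proposal never does.

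The paper supplies it as follows. From $\S\contra=\S\contra_\proj$ one concludes that the class of projective contramodules is closed under direct limits; by Corollary~\ref{weak-iii-implies-iv} (which rests on realizing $\S$ as an endomorphism ring via Corollary~\ref{topological-rings-are-endomorphism-rings}, on Theorem~\ref{angeleri-saorin} about perfect decompositions, and on Theorem~\ref{perfect-decomposition-endomorphism-ring}), the ring $\S$ is then topologically left perfect. On the other hand, in a split abelian category the endomorphism ring of every object is von~Neumann regular (Lemma~\ref{split-endomorphisms-vNregular}), which forces every closed topologically left T\+nilpotent two-sided ideal of $\S$ to vanish (Corollary~\ref{no-top-T-nilpotent-ideals-cor}); hence $\HH(\S)=0$, the ring is topologically semisimple, and Theorem~\ref{topologically-semisimple-ring}\,(1) gives that $\S\contra$ is Ab5 and semisimple. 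If you wish to keep your route, you would need an independent proof of Theorem~\ref{topologizable-spectral-is-semisimple} that does not pass through the statement you are trying to prove, and the paper does not provide one.
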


\begin{rems}
 Notice that the abelian category of $\S$\+contramodules is rarely
Ab5 or Grothendieck, generally speaking.
 Theorem~\ref{contra-split-iff-semisimple} says that if it is split,
then it is both Grothendieck and semisimple.

 It would be interesting to know an example of a cocomplete split
abelian category with a generator which is not Grothendieck (i.~e.,
does not satisfy Ab5).
 We are not aware of any such examples.

 Theorem~\ref{contra-split-iff-semisimple} is essentially the same
result as the above Theorem~\ref{topologizable-spectral-is-semisimple}.
 Its proof is based on known results in the theory of direct sum
decompositions of modules~\cite{AS}.
 We present it below in Section~\ref{split-categories-secn}.
\end{rems}

\Section{Topologically Left T-Nilpotent Subsets}

 This section contains a technical lemma which will be
useful in Section~\ref{perfect-decompositions-secn}.

 Let $R$ be a separated topological ring with a right linear topology.
 A subset $E\subset R$ is said to be \emph{topologically nil} if for
every element $a\in E$ the sequence of elements $a^n$,
\,$n=1$, $2$,~\dots\ converges to zero in the topology of $R$
as $n\to\infty$.
 A subset $E\subset R$ is \emph{topologically left T\+nilpotent} if for
every sequence of elements $a_1$, $a_2$, $a_3$,~\dots~$\in E$
the sequence of elements $a_1$, $a_1a_2$, $a_1a_2a_3$,~\dots, 
$a_1a_2\dotsm a_n$,~\dots~$\in R$ converges to zero
as $n\to\infty$.
  In other words, this means that for every open right ideal
$I\subset R$ there exists $n\ge1$ such that $a_1a_2\dotsm a_n\in I$
(cf.~\cite[Section~6]{Pproperf}).

\begin{lem} \label{closed-subring-top-T-nilpotent}
 Let\/ $\R$ be a complete, separated topological ring with a right
linear topology and $E\subset\R$ be a topologically left T\+nilpotent
subset.
 Denote by\/ $\E$ the topological closure of the subring without unit
generated by\/ $E$ in\/~$\R$.
 Then\/ $\E$ is also a topologically left T\+nilpotent subset in\/~$\R$.
\end{lem}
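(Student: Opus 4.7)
The natural strategy is to split the proof into two reductions. First, I will show that the subring without unit $S \subset \R$ generated by $E$ is itself topologically left T\+nilpotent. Second, I will show that the topological closure of any topologically left T\+nilpotent subset of\/ $\R$ is again topologically left T\+nilpotent. Combining these will immediately give the conclusion for $\E = \overline{S}$.

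For the first step, fix a sequence $b_1, b_2, \ldots \in S$ and an open right ideal $\I \subset \R$. Each element of $S$ is a finite $\boZ$\+linear combination $b_i = \sum_{j=1}^{m_i} n_{i,j} c_{i,j}$ with $n_{i,j} \in \boZ$ and each $c_{i,j}$ a nonempty product of elements of~$E$. Expanding gives
$$
 b_1 \cdots b_n = \sum_{(j_1, \ldots, j_n)} n_{1,j_1} \cdots n_{n,j_n}\, c_{1,j_1} \cdots c_{n,j_n},
$$
so it suffices to find $N$ such that $c_{1,j_1} \cdots c_{N,j_N} \in \I$ for \emph{every} tuple $(j_1, \ldots, j_N)$ with $1 \le j_i \le m_i$. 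I would consider the tree whose nodes at level $\ell$ are those tuples $(j_1, \ldots, j_\ell)$ with $c_{1,j_1} \cdots c_{\ell,j_\ell} \notin \I$; because $\I$ is a right ideal this really is an initial subtree of the full tree of tuples, and it is finitely branching. An infinite branch would yield, by concatenating the factorizations of the $c_{i,j_i}$'s, an infinite sequence in $E$ whose partial products never land in $\I$, contradicting the topological left T\+nilpotency of~$E$; hence by K\"onig's lemma the tree is finite, producing the desired~$N$.

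For the second step, assume $F \subset \R$ is topologically left T\+nilpotent and take $b_1, b_2, \ldots \in \overline{F}$ together with an open right ideal $\I$. I approximate inductively: pick $s_1 \in F$ with $b_1 - s_1 \in \I$, and having picked $s_1, \ldots, s_k \in F$, set
$$
 \J_k = \{r \in \R \mid s_1 s_2 \cdots s_k r \in \I\},
$$
which is an open right ideal (the preimage of $\I$ under the continuous left\+multiplication map by $s_1 \cdots s_k$), and pick $s_{k+1} \in F$ with $b_{k+1} - s_{k+1} \in \J_k$. A short induction on $n$ via
$$
 b_1\cdots b_{n+1} - s_1\cdots s_{n+1} = (b_1\cdots b_n - s_1\cdots s_n)\,b_{n+1} + s_1\cdots s_n\,(b_{n+1} - s_{n+1})
$$
then gives $b_1 \cdots b_n \equiv s_1 \cdots s_n \pmod{\I}$ for every~$n$. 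Topological T\+nilpotency of $F$ (applied in the second step to $F=S$ after the first step) supplies $N$ with $s_1 \cdots s_N \in \I$, whence $b_1 \cdots b_N \in \I$.

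I expect the main obstacle to be the K\"onig's lemma argument in step one: one must carefully verify that the pruning produces a genuine subtree (using the right\+ideal property of~$\I$) and that an infinite branch, decoded through the $E$\+factorizations $c_{i,j_i} = a_{i,j_i,1} \cdots a_{i,j_i,k_{i,j_i}}$, yields an honest infinite sequence in $E$ whose partial products all avoid $\I$. The inductive approximation in step two is then comparatively routine once one notices that each $\J_k$ is an open right ideal.
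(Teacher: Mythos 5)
Your proposal is correct and follows essentially the same strategy as the paper: first reduce to the subring without unit via a finitely branching tree and K\"onig's lemma (the paper first passes to the multiplicative semigroup generated by $E\cup -E$ and then to the additive subgroup it generates, which amounts to your factoring out of the integer coefficients $n_{i,j}$), and then handle the topological closure by the same successive approximation with open right ideals $\J_k$ and the same telescoping identity. The two points you flag as needing care (the pruned tree being an initial, finitely branching subtree because $\I$ is a right ideal, and the decoding of an infinite branch into a genuine $E$\+sequence with no partial product in $\I$) are exactly the points the paper's argument also relies on, and both check out.
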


\begin{proof}
 Let $E'$ denote the multiplicative subsemigroup (without unit)
generated by $E\cup-E$ in~$\R$.
 Clearly, if $E$ is topologically left $T$\+nilpotent, then so is~$E'$.
 
 Let $E''$ denote the additive subgroup generated by $E'$ in~$\R$.
 Our next aim is to show that $E''$ is topologically left T\+nilpotent
in~$\R$.

 Indeed, $(b_n)_{n\ge1}$ be a sequence of elements in~$E''$.
 Then $b_n=a_{n,1}+\dotsb+a_{n,m_n}$, where $a_{n,j}\in E'$ and
$m_n$~are some nonnegative integers.
 We need to show that the sequence of products $b_1b_2\dotsm b_n$
converges to zero in~$\R$.

 Consider the following rooted tree~$A$.
 The root vertex (that is, the only vertex of depth~$0$) has
$m_1$~children, marked by the elements $a_{1,1}$,~\dots,
$a_{1,m_1}\in\R$.

 These are the vertices of depth~$1$.
 Each of them has $m_2$~children.
 The children of the vertex of depth~$1$ marked by the element
$a_{1,j_1}$ are marked by the elements $a_{1,j_1}a_{2,1}$,~\dots,
$a_{1,j_1}a_{2,m_2}\in\R$.

 Generally, every vertex of depth~$n-1$ has $m_n$~children.
 If a vertex of depth~$n-1$ is marked by an element $r\in\R$,
then its children are marked by the elements $ra_{n,1}$,~\dots,
$ra_{n,m_n}\in\R$.
 So every vertex of depth~$n\ge1$ is marked by a product of the form
$a_{1,j_1}a_{2,j_2}\dotsm a_{n,j_n}\in\R$, where $1\le j_i\le m_i$.

 Now let $\I\subset\R$ be an open right ideal.
 Notice that if a vertex~$v$ in our tree~$A$ is marked by
an element~$a_v$ belonging to~$\I$, then all the vertices below~$v$
in $A$ are also marked by elements belonging to~$\I$.
 So we can consider the reduced tree $A_\I$ obtained by deleting
from $A$ all the vertices~$v$ for which $a_v\in\I$.

 Since the subset $E'\subset\R$ is topologically left T\+nilpotent,
every branch of the tree $A$ eventually encounters a vertex marked
by an element from~$\I$.
 Hence the reduced tree $A_\I$ has no infinite branches.
 It is also locally finite by construction (i.~e., every vertex has
only a finite number of children).
 By the K\H onig lemma, it follows that the whole reduced tree $A_\I$
is finite.

 Thus there exists an integer $n\ge1$ such that $A_\I$ has no
vertices of depth greater than $n-1$.
 Then the product $b_1b_2\dotsm b_n$ belongs to the ideal~$\I$.

 The subset $E''\subset\R$ is exactly the subring without unit
generated by $E$ in~$\R$.
 We have shown that $E''$ is topologically left T\+nilpotent.
 It remains to check that the topological closure $\E$ of $E''$
in $\R$ is.
 
 Let $(c_n\in\E)_{n\ge1}$ be a sequence of elements and $\I\subset\R$
be an open right ideal.
 Since $E''$ is dense in $\E$, there exists an element $b_1\in E''$
such that $c_1-b_1\in\I$.

 Furthermore, there exists an open right ideal $\J_1\subset\R$ such
that $b_1\J_1\subset\I$.
 Let $b_2\in E''$ be an element such that $c_2-b_2\in\J_1$.
 Then we have $b_1(c_2-b_2)\in\I$.

 Proceeding by induction, for every $i\ge 2$ we choose an open right
ideal $\J_{i-1}\subset\R$ such that $b_1\dotsm b_{i-1}\J_{i-1}\subset\I$,
and an element $b_i\in E''$ such that $c_i-b_i\in\J_{i-1}$.
 Then we have $b_1\dotsm b_{i-1}(c_i-b_i)\in\I$.
 Now
\begin{multline*}
 c_1\dotsm c_n - b_1\dotsm b_n = (c_1-b_1)c_2\dotsm c_n +
 b_1(c_2-b_2)c_3\dotsm c_n + \dotsb \\
 + b_1\dotsm b_{n-2}(c_{n-1}-b_{n-1})c_n +
 b_1\dotsm b_{n-1}(c_n-b_n)\,\in\,\I
\end{multline*}
for every $n\ge1$.
 It remains to choose $n\ge1$ such that $b_1b_2\dotsm b_n\in\I$,
and conclude that $c_1c_2\dotsm c_n\in\I$.
\end{proof}

\begin{rem} \label{almost-perfect-remark}
 Topologically T\+nilpotent ideals have been considered in
the literature in the following context.
 Let $R$ be commutative domain.
 Endow $R$ with the $R$\+topology; so nonzero ideals form a base
of neighborhoods of zero in~$R$.
 An ideal in $R$ was called ``topologically T\+nilpotent''
in~\cite{Sm} if it is topologically T\+nilpotent (in the sense of
our definition above) in the $R$\+topology.
 A commutative local domain $R$ was called ``topologically
T\+nilpotent'' (TTN) in~\cite{Sm} if its maximal ideal is
topologically T\+nilpotent.
 A commutative local domain is topologically T\+nilpotent in this
sense if and only if it is \emph{almost perfect} in the sense of
the papers~\cite{BS1,BS2}.

 More generally, let $R$ be a prime ring (i.~e., an associative ring
in which the product of any two nonzero two-sided ideals is nonzero).
 Then nonzero two-sided ideals form a base of a topology on $R$,
making $R$ a topological ring.
 For a prime local ring $R$, the Jacobson radical of $R$ in
topologically left T\+nilpotent in the described topology if and only
if the ring $R$ is \emph{left almost perfect} in the sense of
the paper~\cite{FP}.

 To give another generalization, let $R$ be a commutative local ring
and $S\subset R$ be a multiplicative subset.
 Endow $R$ with the $S$\+topology.
 Then the maximal ideal of $R$ is topologically T\+nil in the sense
of our definition above if and only if the ring $R$ is
\emph{$S$\+h-nil} in the sense of~\cite[Section~6]{BP1}.
\end{rem}

\Section{Lifting Orthogonal Idempotents}

 In this section we show that an (infinite, zero-convergent)
complete family of orthogonal idempotents can be lifted modulo
a topologically nil strongly closed two-sided ideal.
 In order to do so, we have to fill (what we think is) a gap in
the proof of~\cite[Lemma~8]{MM2}.
 The results of this section will be used in
Section~\ref{perfect-decompositions-secn}.

 First, we recall a lemma from~\cite{Pproperf} concerning the lifting of
a single idempotent.
 Given a subgroup $K$ in a topological abelian group $\A$, we
denote by $\overline{K}\subset\A$ the topological closure of
$K$ in~$\A$.
 We also recall the notation $H(R)$ for the Jacobson radical of
an associative ring~$R$ (when $\R$ is a topological ring, $H(\R)$
denotes the Jacobson radical of the abstract ring $\R$, which
ignores the topology).

\begin{lem} \label{single-idempotent-lifted}
 Let\/ $\R$ be a complete, separated topological ring with a right
linear topology, and let\/ $\HH\subset\R$ be a topologically nil
closed two-sided ideal.
 Let $f\in\R$ be an element such that $f^2-f\in\HH$.
 Then there exists an element $e\in f+\HH\subset\R$ such that
$e^2=e$ in\/ $\R$ and $e\in \overline{f\R f}\subset\R$.
\end{lem}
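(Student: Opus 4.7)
The plan is a Newton-type iterative lifting. Set $f_0:=f$ and, given $f_n$ with $h_n:=f_n^2-f_n$, define
$$
 f_{n+1}\;:=\;f_n+(1-2f_n)h_n.
$$
Since $f_n$ and $h_n$ are integer polynomials in $f$ (inductively), they commute, and a short direct computation gives the key recursion
$$
 h_{n+1}\;=\;h_n^{\,2}(4h_n-3).
$$
By induction on $n$, each $h_n$ is an integer polynomial in $h_0$ all of whose terms have $h_0$-degree at least $2^n$; concretely $h_n\in h_0^{2^n}\,\boZ[h_0]$.

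To prove convergence I would pass to the commutative unital subring $R_0\subseteq\R$ generated by~$f$, which contains every $f_n$ and~$h_n$. For any open right ideal $\I\subseteq\R$ the intersection $\I\cap R_0$ is a two-sided ideal of $R_0$. Because $\HH$ is topologically nil and $h_0\in\HH$, there exists $k_0$ with $h_0^{k_0}\in\I$; for $2^n\ge k_0$ one then has $h_0^{2^n}\in h_0^{k_0}\R\subseteq\I$, so $h_n\in h_0^{2^n}R_0\subseteq\I$. The crucial commutativity identity $(1-2f_n)h_n=h_n(1-2f_n)$ puts $(1-2f_n)h_n$ into $h_n R_0\subseteq\I$ as well. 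Telescoping,
$$
 f_m-f_n\;=\;\sum_{k=n}^{m-1}(1-2f_k)h_k\;\in\;\I
$$
for all $m>n$ large, so $(f_n)$ is Cauchy. Completeness of $\R$ yields a limit $e:=\lim_n f_n$, and continuity of multiplication forces $e^2-e=\lim_n h_n=0$.

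For the two required memberships: each $h_k\in\HH$ (since $\HH$ is a closed two-sided ideal containing $h_0$ and $h_{k+1}=h_k^2(4h_k-3)$), hence $(1-2f_k)h_k\in\HH$, and summing yields $f_n-f\in\HH$; closedness of $\HH$ gives $e\in f+\HH$. For $e\in\overline{f\R f}$, observe that $f\R f$ is an abelian subgroup of $\R$ closed under the product $(fsf)(ftf)=f(sf^2t)f$. The base case $f_1=3f^2-2f^3=f(3-2f)f$ lies in $f\R f$, and if $f_n=frf$ then $h_n=f(rfr-r)f$ and $(1-2f_n)h_n=f\bigl((rfr-r)-2rf(rfr-r)\bigr)f$, so $f_{n+1}\in f\R f$. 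Passing to the limit produces $e\in\overline{f\R f}$.

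The delicate point is the convergence argument: in a merely right-linear (not two-sided) topology, multiplying $h_n$ by $(1-2f_n)$ on the left need not preserve a given open right ideal $\I$. This is precisely why one must confine the whole computation to the commutative subring $R_0$, where right ideals of $\R$ cut out two-sided ideals and left multiplications can be swapped to the right.
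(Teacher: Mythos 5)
Your argument is correct, and it is in substance the same proof the paper relies on: the lemma's proof in the paper simply defers to~\cite[Lemma~9.3]{Pproperf}, whose construction is exactly your Newton iteration $f_{n+1}=f_n+(1-2f_n)(f_n^2-f_n)=3f_n^2-2f_n^3$, and you correctly isolate and resolve the one delicate point (left multiplication need not preserve an open \emph{right} ideal) by confining everything to the commutative subring $\boZ[f]$ so that $(1-2f_n)h_n=h_n(1-2f_n)\in h_n\R\subset\I$. The only slip is cosmetic: with $f_n=frf$ one gets $h_n=f(rf^2r-r)f$ rather than $f(rfr-r)f$, which does not affect the conclusion that $f_{n+1}\in f\R f$ and hence $e\in\overline{f\R f}$.
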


\begin{proof}
 A particular natural choice of the element~$e$ is provided by
the construction in the proof of~\cite[Lemma~10.3]{Pproperf}.
 The first two desired properties of the element~$e$ are
established in the argument in~\cite{Pproperf}, and the last one
is clear from the construction.
\end{proof}

 As pointed out in~\cite{CN}, once every individual idempotent in
an orthogonal family has been lifted, orthogonalizing the lifted
idempotents becomes a task solvable under weaker assumptions.
 The next proposition is a restatement of~\cite[Lemma~8]{MM2} with
the left and right sides switched (as the authors of~\cite{MM2}
work with left linear topologies on rings and we prefer the right
linear ones).

\begin{prop} \label{mohamed-mueller-orthog}
 Let\/ $\R$ be a complete, separated topological ring with a right
linear topology, $Z$ be a linearly ordered set of indices, and
$(e_z\in\R)_{z\in Z}$ be a family of idempotent elements such that
$e_we_z\in H(\R)$ for every pair of indices $z<w$ in $Z$,
the family of elements~$e_z$ converges to zero in the topology of\/ $\R$, 
and the element $u=\sum_{z\in Z}e_z$ is invertible in\/~$\R$.
 Then $(u^{-1}e_z)_{z\in Z}$ and $(e_zu^{-1})_{z\in Z}$ are two
families of orthogonal idempotents, converging to zero in\/ $\R$
with\/ $\sum_{z\in Z}u^{-1}e_z=1=\sum_{z\in Z}e_zu^{-1}$.
\end{prop}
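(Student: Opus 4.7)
My plan is to verify, for each of the families $f_z := u^{-1}e_z$ and $g_z := e_zu^{-1}$, that they converge to zero, sum to~$1$, are idempotent, and are mutually orthogonal. The arguments for the two families are entirely parallel, so I focus on $(f_z)$.

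The convergence and summation properties follow directly from continuity of multiplication in the topological ring~$\R$: since $(e_z)\to 0$, left multiplication by the fixed element $u^{-1}$ yields $u^{-1}e_z\to 0$; and pulling $u^{-1}$ out of the convergent sum gives $\sum_z u^{-1}e_z = u^{-1}\sum_z e_z = u^{-1}u = 1$.

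Both idempotence and orthogonality of the $(f_z)$ reduce, after multiplying the target equation $f_zf_w = \delta_{zw}f_w$ by $u$ on the left, to the single key identity
\[
  e_z\, u^{-1}\, e_w \;=\; \delta_{zw}\,e_w, \qquad z,w\in Z.
\]
The same identity, multiplied by $u$ on the right, controls $(g_z)$. Summing over $z$ yields the automatic consistency check $uu^{-1}e_w=e_w$, so the substance of the claim is the off-diagonal vanishing $e_zu^{-1}e_w = 0$ for $z\ne w$.

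To prove the key identity I would first pass to the quotient ring $\bar\R = \R/H(\R)$, where the hypothesis becomes strict triangularity $\bar e_w\bar e_z = 0$ for $z<w$. Combined with $\bar e_z^2=\bar e_z$ and invertibility of $\bar u$, a direct Peirce-type manipulation---or an induction on the order type of $Z$---should force $\bar e_z\bar u^{-1}\bar e_w = \delta_{zw}\bar e_w$ in $\bar\R$, so that $e_zu^{-1}e_w - \delta_{zw}e_w\in H(\R)$ already. The main obstacle is then to promote this congruence modulo $H(\R)$ to an exact equality in $\R$, which is precisely the step at which the section preamble signals a gap in~\cite{MM2}. I would try to close the gap by combining Lemma~\ref{single-idempotent-lifted} (which locates each $e_z$ inside a corner of the form $\overline{f\R f}$) with a careful convergence analysis of the triangular expansion of $u^{-1}$, invoking Lemma~\ref{closed-subring-top-T-nilpotent} should the relevant radical subset turn out to be topologically left T\+nilpotent.
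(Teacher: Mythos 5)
Your reduction of the statement to the single identity $e_zu^{-1}e_w=\delta_{z,w}e_w$, and your treatment of the convergence and summation claims via continuity of multiplication, coincide with the paper's proof. But the proof of that key identity --- which is the entire substance of the proposition --- is left as a plan with an acknowledged hole, and the plan as stated does not work. Passing to $\bar\R=\R/H(\R)$ destroys the structure you need: $H(\R)$ is the Jacobson radical of the \emph{abstract} ring, with no stated relation to the topology, so the quotient need not carry a separated right linear topology in which the infinite sum $\bar u=\sum_z\bar e_z$ or a ``triangular expansion of $\bar u^{-1}$'' makes sense (and there is no nilpotency hypothesis to make such an expansion converge). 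Worse, even granting the congruence $e_zu^{-1}e_w\equiv\delta_{z,w}e_w\bmod H(\R)$, there is no mechanism for promoting it to an equality: elements of $H(\R)$ are not zero, and neither Lemma~\ref{single-idempotent-lifted} nor Lemma~\ref{closed-subring-top-T-nilpotent} applies, since $H(\R)$ is not assumed topologically nil or T\+nilpotent. This is not a gap that a ``careful convergence analysis'' fills; a different mechanism is needed.

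The paper's mechanism is to reduce modulo \emph{open right ideals} rather than modulo $H(\R)$. Fix $w$ and expand $e_w=e_wu^{-1}e_w+\sum_{z\ne w}e_wu^{-1}e_z$; for any open right ideal $\I$ all but finitely many summands lie in $\I$, so the identity becomes a \emph{finite} relation in the discrete right $\R$\+module $N=\R/\I$. The finite case is handled by Lemma~\ref{mm-lemma2}\,(2): for a finite linearly ordered family of idempotents with $e_we_z\in H(\R)$ for $z<w$, the sum $\sum_z Ne_z$ is direct in any right module $N$ (this rests on replacing the $e_z$ by genuinely orthogonal idempotents $f_z$ with $\R e_z=\R f_z$, which is where $H(\R)$ actually enters). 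Directness of the sum forces $e_wu^{-1}e_w\equiv e_w$ and $e_wu^{-1}e_z\equiv0$ modulo $\I$, and separatedness of $\R$ then gives exact equality. Until you either adopt this route or supply a genuine substitute for the lifting step, the proposal does not constitute a proof.
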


 Note that if in the context of the proposition one has $u=1$,
then it follows that $e_we_z=0$ for all $w\ne z$.
 (Cf.\ the discussion in~\cite[Corollary~4]{MM2}.)

 The proof of Proposition~\ref{mohamed-mueller-orthog} is based on
two lemmas.
 The following one is just~\cite[Lemma~1]{MM2} with the left and right
sides switched.

\begin{lem}
 Let $R$ be an associative ring, $e=e^2\in R$ be an idempotent element,
and $a\in R$ be an element such that $eae\equiv e \bmod H(R)$.
 Then there exists an element $f=f^2\in aRe$ such that $Re=Rf$. \qed
\end{lem}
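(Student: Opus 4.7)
The plan is to set $f := ac$, where $c \in eRe$ is a two-sided inverse of $b := eae$ computed \emph{inside the corner ring $eRe$}; everything hinges on producing this corner inverse from the hypothesis $eae \equiv e \bmod H(R)$.

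First I would construct the inverse. Set $h := e - eae$, so that $b = e - h$ with $h \in H(R)$ and $h \in eRe$; in particular $eh = he = h$. Since $h$ lies in the Jacobson radical of $R$, the element $1 - h$ has a two-sided inverse $w$ in $R$. The identities $(1-h)(1-e) = 1-e = (1-e)(1-h)$ (which follow directly from $eh = he = h$), after multiplication by $w$ on the appropriate side, yield $w(1-e) = 1-e$ and $(1-e)w = 1-e$; equivalently, $we = ew = w - 1 + e$. Setting $c := ewe \in eRe$, a direct computation using $wh = hw = w - 1$ (read off from $w(1-h) = (1-h)w = 1$) gives $cb = bc = e$, so $c$ is the desired two-sided inverse of $b$ in $eRe$.

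Next I would set $f := ac$ and verify the three required properties. Membership $f \in aRe$ is immediate since $c = ece \in Re$. The identity $bc = e$ combined with $ec = c$ (which holds because $c \in eRe$) yields the key intermediate equality $eac = eaec = bc = e$. From $c = ce$ and $eac = e$ one obtains $cac = (ce)(ac) = c(eac) = ce = c$, and hence
\[
 f^{2} = acac = a(cac) = ac = f.
\]
Finally, $fe = a(ce) = ac = f$ shows $f \in Re$, giving $Rf \subseteq Re$, while $ef = eac = e$ shows $e \in Rf$, giving the reverse inclusion; thus $Rf = Re$.

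The main obstacle is the first step: promoting the congruence $eae \equiv e \bmod H(R)$ to honest invertibility of $eae$ inside the corner ring $eRe$. This is essentially the classical inclusion $eH(R)e \subseteq H(eRe)$, but rather than invoking it as a black box, the explicit corner formula $c = ewe$ built from $w = (1-h)^{-1}$ keeps the proof self-contained; the remaining verifications are then short manipulations in $R$ using only $c = ec = ce$ and $eac = e$.
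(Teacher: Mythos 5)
Your proof is correct and complete: every step checks out (in particular $c=ewe=ew=we=w-1+e$, from which $cb=bc=e$ follows, and then $eac=e$, $cac=c$, $f^2=f$, $fe=f$, $ef=e$ give exactly what is needed). The paper itself offers no proof of this lemma — it is quoted from Mohamed–M\"uller with the sides switched — so your argument serves as a valid, self-contained verification along the standard lines (inverting $eae$ in the corner ring $eRe$ via the unit $1-h$).
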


 The next lemma is our (expanded) version of~\cite[Lemma~2]{MM2}.

\begin{lem} \label{mm-lemma2}
 Let $R$ be an associative ring, $Z$ be a finite, linearly ordered set
of indices, and let $(e_z\in R)_{z\in Z}$ be idempotent elements
such that $e_we_z\in H(R)$ for all pairs of indices $z<w$ in~$Z$.
 Then
\begin{enumerate}
\item the sum of left ideals\/ $\sum_{z\in Z}Re_z$ is direct, and
a direct summand in the left $R$\+module $R$; \par
\item for any right $R$\+module $N$, the sum of subgroups\/
$\sum_{z\in Z} Ne_z$ in $N$ is direct.
\end{enumerate}
\end{lem}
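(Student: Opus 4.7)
The plan is to induct on $n = |Z|$, removing the smallest index at each step and invoking the preceding lemma to lift $e_1$ to an idempotent orthogonal on one side to the ``upper'' part. Part~(2) will then follow from part~(1) by a tensor-product argument.

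For the inductive step of part~(1), I would assume the result for $\{2, \ldots, n\}$, obtaining $\sum_{z > 1} Re_z = Rg$ for an idempotent $g \in R$. Writing $g = \sum_{z > 1} r_z e_z$, I would compute
$$
 e_1 g e_1 \,=\, \sum_{z > 1} e_1 r_z (e_z e_1) \,\in\, H(R),
$$
since $e_z e_1 \in H(R)$ for every $z > 1$ and $H(R)$ is a two-sided ideal. Hence $e_1(1-g)e_1 \equiv e_1 \pmod{H(R)}$, so the preceding lemma applied with $a = 1 - g$ yields an idempotent $f_1 \in (1-g)Re_1$ with $Rf_1 = Re_1$. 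Since $f_1 \in (1-g)R$, one has $g f_1 = 0$.

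I would then set $h = f_1 + g - f_1 g$. A short expansion, using $g f_1 = 0$ and consequently $f_1 g f_1 = f_1 \cdot (g f_1) = 0$, gives $h^2 = h$. To check $Rh = Rf_1 + Rg$: the formula $h = f_1(1-g) + g$ shows the inclusion $\subseteq$, while $hg = g$ (using $g^2 = g$) and $f_1 = h - g + f_1 g$ give the reverse inclusion. For directness, if $r f_1 = r' g$, then right-multiplying by $f_1$ and using $g f_1 = 0$ gives $r f_1 = 0$, so $Rf_1 \cap Rg = 0$. Combined with the inductive direct-sum decomposition of $Rg$, this yields $Rh = Re_1 \oplus \bigoplus_{z > 1} Re_z$, a direct summand of $R$.

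For part~(2), the isomorphism $\bigoplus_z Re_z \simeq Rh$ of left $R$-modules, composed with the inclusion of $Rh$ as a direct summand of $R$, becomes upon applying the additive functor $N \otimes_R {-}$ a chain $\bigoplus_z Ne_z \simeq Nh \hookrightarrow N$ whose composite is the canonical summation map $(n_z)_z \mapsto \sum_z n_z e_z$. Injectivity of this composite says that the internal sum $\sum_z Ne_z$ inside $N$ is direct.

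The main subtlety is the one-sided, triangular nature of the hypothesis $e_w e_z \in H(R)$ for $z < w$: removing the \emph{smallest} index in the induction is critical for the key computation $e_1 g e_1 \in H(R)$ to succeed, since one needs $e_z e_1$ (not $e_1 e_z$) to lie in $H(R)$. The constructed idempotent $f_1$ is orthogonal to $g$ on only one side, but this partial orthogonality is exactly enough to combine them into a single idempotent via the standard formula $h = f_1 + g - f_1 g$.
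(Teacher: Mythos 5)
Your proof is correct. It follows the same overall strategy as the paper (which itself only sketches the argument, deferring to Mohamed--M\"uller): induction on $|Z|$ powered by the preceding idempotent-replacement lemma, exploiting exactly the triangular ``half-orthogonality'' $e_we_z\in H(R)$ for $z<w$ that the paper remarks is what makes the induction run. The details differ in two respects. First, the paper's route (following \cite{MM2}) establishes the case $|Z|=2$ by producing a \emph{fully} orthogonal pair $f_1,f_2$ with $Rf_i=Re_i$ (apply the lemma twice, once with $a=1-e_2$ and once with $a=1-f_1$), so that $f_1+f_2$ is automatically idempotent, and then makes the induction step out of the two-element case; you instead peel off the smallest index in one stroke, settle for the one-sided relation $gf_1=0$, and recombine via the idempotent $h=f_1+g-f_1g$ --- your computations ($e_1ge_1\in H(R)$, $h^2=h$, $Rh=Rf_1\oplus Rg$) are all correct, and it is worth noting that $h=f_1+(1-f_1)g$ makes the inclusion $Rh\subseteq Rf_1+Rg$ immediate. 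Second, for part~(2) the paper runs the argument for $N$ in parallel with the one for $R$, using $Ne_i=Nf_i$ (which holds since $Rf_i=Re_i$), whereas you deduce (2) from (1) by applying $N\otimes_R{-}$ to the split monomorphism $\coprod_zRe_z\cong Rh\hookrightarrow R$; this is a clean functorial derivation of (2) from (1) that the paper does not make explicit, and it works because $Re_z$ and $Rh$ are direct summands of $R$, so all the relevant maps stay (split) injective after tensoring. Both approaches are sound; yours has the merit of being fully self-contained.
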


\begin{proof}
 The assertion of~\cite[Lemma~2]{MM2} is essentially our part~(1);
the difference is that we have added part~(2).

 For $|Z|=2$, the argument in~\cite{MM2} uses the previous lemma
in order to find a pair of orthogonal idempotents $f_1$ and
$f_2\in R$ such that $Re_1=Rf_1$ and $Re_2=Rf_2$.
 It follows that $Ne_1=Nf_1$ and $Ne_2=Nf_2$.
 Then it is clear that $R=Rf_1\oplus Rf_2\oplus R(1-f_1-f_2)$,
and similarly $N=Nf_1\oplus Nf_2\oplus N(1-f_1-f_2)$.

 For $|Z|\ge3$, the argument in~\cite{MM2} proceeds by
induction, using the case $|Z|=2$ in order to make the induction step.
 This works for part~(2) exactly in the same way as for part~(1).

 The nature of the induction step is such that assuming
the original idempotents $e_z\in R$ to be only ``half-orthogonal
modulo~$H(R)$'' (for $z<w$) is more convenient than requiring
two-sided orthogonality modulo the Jacobson radical.
\end{proof}

\begin{proof}[Proof of Proposition~\ref{mohamed-mueller-orthog}]
 By continuity of multiplication in $\R$, both the families of
elements $(u^{-1}e_z)_{z\in Z}$ and $(e_zu^{-1})_{z\in Z}$ converge
to zero, and $\sum_{z\in Z}u^{-1}e_z=1=\sum_{z\in Z}e_zu^{-1}$.
 We have to prove that these are two families of orthogonal
idempotents.
 Here it suffices to check that $e_wu^{-1}e_z=\delta_{z,w}e_z$
for all $z$, $w\in Z$.

 For every fixed $w\in Z$, we have
\begin{equation} \label{mm-equation}
 e_w=e_w\left(\sum_{z\in Z}u^{-1}e_z\right)=
 e_wu^{-1}e_w+\sum_{z\in Z,\, z\ne w}e_wu^{-1}e_z,
\end{equation}
where the infinite sum is understood as the limit of finite partial
sums in the topology of~$\R$.
 Let $\I\subset\R$ be an open right ideal.
 Then there exists an open right ideal $\J\subset\R$ such that
$e_wu^{-1}\J\subset\I$.
 Since the family $(e_z)_{z\in Z}$ converges to zero in $\R$ by
assumption, for all but a finite subset of indices $z\in Z$ we
have $e_z\in\J$, hence $e_wu^{-1}e_z\in\I$.

 Considering the equation~\eqref{mm-equation} modulo~$\I$ (that is,
as an equation in $\R/\I$), the converging infinite sum in the right-hand
side reduces to a finite one.
 Let $Z'\subset Z$ denote a finite subset of indices such that
$w\in Z'$ and $e_wu^{-1}e_z\in\I$ for $z\in Z\setminus Z'$.
 Applying Lemma~\ref{mm-lemma2}\,(2) to the ring $R=\R$, the finite set
of idempotents $(e_z\in\R)_{z\in Z'}$, and the right $R$\+module
$N=\R/\I$, we obtain that the sum $\sum_{z\in Z'}Ne_z$ is direct.
 Hence it follows from the equation~\eqref{mm-equation} taken
modulo~$\I$ that
$$
 e_wu^{-1}e_w\equiv e_w \bmod \I
 \quad\text{and}\quad e_wu^{-1}e_z\equiv 0 \bmod \I
 \text{\ \ for $z\ne w$.}
$$
 Since this holds modulo every open right ideal $\I\subset\R$, and
the topological ring $\R$ is assumed to be separated, we can conclude
that $e_wu^{-1}e_w=e_w$ and $e_wu^{-1}e_z=0$ in $\R$ for all $z\ne w$,
as desired.
\end{proof}

 Combining Lemma~\ref{single-idempotent-lifted} with
Proposition~\ref{mohamed-mueller-orthog}, we obtain
the following theorem.

\begin{thm} \label{lifting-orthogonal-thm}
 Let\/ $\R$ be a complete, separated topological ring with a right
linear topology and\/ $\HH\subset\R$ be a topologically nil closed
two-sided ideal.
 Let $(f_z\in\R)_{z\in Z}$ be a family of elements such that
$f_z^2-f_z\in\HH$ for all $z\in Z$ and $f_wf_z\in\HH$ for all
$z\ne w$, \,$z,w\in Z$.
 Assume further that the family of elements $(f_z\in\R)_{z\in Z}$
converges to zero in the topology of\/ $\R$ and\/ $\sum_{z\in Z}f_z
\in 1+\HH$.
 Then there exist elements $e_z\in f_z+\HH\subset\R$ such that
$e_z^2=e_z$ for all $z\in Z$ and $e_we_z=0$ for all $z\ne w$,
\,$z,w\in Z$.
 Moreover, the family of elements $(e_z\in\R)_{z\in Z}$ converges to
zero in\/ $\R$ and\/ $\sum_{z\in Z}e_z=1$.
 In addition, one can choose the elements~$e_z$ in such a way that
$e_z\in\overline{\R f_z}$ for all $z\in Z$, or in such a way that
$e_z\in\overline{f_z\R}$ for all $z\in Z$, as one wishes.
\end{thm}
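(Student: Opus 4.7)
The plan is exactly as the statement suggests: first use Lemma~\ref{single-idempotent-lifted} to lift each $f_z$ to a genuine idempotent $e'_z$, then apply Proposition~\ref{mohamed-mueller-orthog} to orthogonalize after rescaling by $u^{-1}$, where $u = \sum_z e'_z$. Concretely, for each $z \in Z$ I apply Lemma~\ref{single-idempotent-lifted} to the element $f_z$ and the closed topologically nil ideal $\HH$, obtaining an idempotent $e'_z \in f_z + \HH$ with the additional containment $e'_z \in \overline{f_z\R f_z}$.

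Before invoking Proposition~\ref{mohamed-mueller-orthog}, I verify that the family $(e'_z)_{z \in Z}$ satisfies its hypotheses. First, $(e'_z)$ converges to zero: if $\I \subset \R$ is any open right ideal, then $\I$ is also closed (open subgroups are closed), and for the cofinitely many $z$ with $f_z \in \I$ one has $f_z\R f_z \subseteq \I$, hence $e'_z \in \overline{f_z\R f_z} \subseteq \I$. Completeness of $\R$ then ensures that $u := \sum_z e'_z$ exists, and since $e'_z - f_z \in \HH$ and $\HH$ is a closed subgroup, the limit $u - \sum_z f_z$ lies in $\HH$, so $u \in 1 + \HH$. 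Because $\HH$ is topologically nil, for any $h \in \HH$ the series $\sum_{n \ge 0}(-h)^n$ converges in the complete ring $\R$ to an inverse of $1+h$; in particular $\HH$ is contained in the Jacobson radical $H(\R)$, and $u$ is invertible with $u^{-1} \in 1 + \HH$. Finally, for $z \ne w$ the product $e'_w e'_z = (f_w + (e'_w - f_w))(f_z + (e'_z - f_z))$ expands as a sum of four terms each lying in $\HH$ (using that $\HH$ is a two-sided ideal and $f_w f_z \in \HH$ by hypothesis), so $e'_w e'_z \in \HH \subseteq H(\R)$.

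Fixing any linear ordering of $Z$, Proposition~\ref{mohamed-mueller-orthog} now applies to $(e'_z)_{z \in Z}$ and yields two families of orthogonal idempotents, $(u^{-1}e'_z)$ and $(e'_z u^{-1})$, both converging to zero and summing to $1$. Setting $e_z := u^{-1}e'_z$, I get $e_z - e'_z = (u^{-1} - 1)e'_z \in \HH$ since $u^{-1} \in 1 + \HH$, hence $e_z \in e'_z + \HH \subseteq f_z + \HH$. Moreover $e'_z \in \overline{f_z\R f_z} \subseteq \overline{\R f_z}$, and $\overline{\R f_z}$ is a closed left ideal, so left multiplication by $u^{-1}$ preserves it: $e_z \in \overline{\R f_z}$. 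Choosing instead $e_z := e'_z u^{-1}$ gives, by the symmetric argument, $e_z \in \overline{f_z\R}$.

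The potentially delicate step is verifying zero-convergence of $(e'_z)$: without the containment $e'_z \in \overline{f_z\R f_z}$ coming from Lemma~\ref{single-idempotent-lifted}, arbitrary lifts of the shape $f_z + h_z$ with $h_z \in \HH$ would have no reason to converge to zero, and the sum $u$ would not even be defined. Once this containment is in hand, everything else reduces to continuity, closedness of $\HH$, and the topological-nil geometric series trick; all genuine combinatorial work is carried out inside Proposition~\ref{mohamed-mueller-orthog}.
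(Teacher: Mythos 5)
Your proposal is correct and follows essentially the same route as the paper: lift each $f_z$ via Lemma~\ref{single-idempotent-lifted} using the containment $e'_z\in\overline{f_z\R f_z}$ to guarantee zero-convergence, check $u=\sum_z e'_z\in 1+\HH$ is invertible, and orthogonalize with Proposition~\ref{mohamed-mueller-orthog}. The only (harmless) divergence is that you prove $\HH\subseteq H(\R)$ directly by the geometric-series argument where the paper cites an external lemma, and you spell out the closure/continuity details for $e_z\in\overline{\R f_z}$ a bit more explicitly than the paper does.
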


\begin{proof}
 According to Lemma~\ref{single-idempotent-lifted}, there exist 
idempotent elements $e'_z\in f_z+\HH$ for which one also has
$e'_z\in\overline{f_z\R f_z}$.
 Hence for every open right ideal $\I\subset\R$ one has $e'_z\in\I$
whenever $f_z\in\I$; so the family of elements~$e'_z$ converges
to zero in $\R$ since the family of elements~$f_z$ does.
 Furthermore, one has $\sum_{z\in Z}e'_z\in\sum_{z\in Z}f_z+\HH=1+\HH$.
 Besides, $e'_we'_z\in f_wf_z+\HH=\HH$ for all $z\ne w$.

 By~\cite[Lemma~7.6(a)]{Pproperf}, any topologically nil left or right
ideal in $\R$ is contained in $H(\R)$; hence $\HH\subset H(\R)$.
 We also observe that the element $u=\sum_{z\in Z}e'_z$ is invertible
in $\R$, since $u\in 1+\HH$.
 Thus Proposition~\ref{mohamed-mueller-orthog} is applicable to
the family of elements $e'_z\in\R$; and one can set, at one's
choice, either $e_z=u^{-1}e'_z$ for all $z\in Z$, or
$e_z=e'_zu^{-1}$ for all $z\in Z$.
 Finally, in both cases $e_z\in e_z'+\HH=f_z+\HH$, since $u\in 1+\HH$.
\end{proof}

\begin{cor} \label{lifting-orthogonal-cor}
 Let\/ $\R$ be a complete, separated topological ring with a right
linear topology, let\/ $\HH\subset\R$ be a topologically nil, strongly
closed two-sided ideal, and let\/ $\S=\R/\HH$ be the quotient ring.
 Let $(\bar e_z\in\S)_{z\in Z}$ be a set of orthogonal idempotents
in\/~$\S$.
 Assume further that the family of elements~$\bar e_z$ converges to
zero in\/ $\S$ and\/ $\sum_{z\in Z}\bar e_z=1$ in\/~$\S$.
 Then there exists a set of orthogonal idempotents $(e_z\in\R)_{z\in Z}$
such that $\bar e_z=e_z+\HH$ for every $z\in Z$.
 Moreover, the family of elements~$e_z$ converges to zero in\/ $\R$
and\/ $\sum_{z\in Z}e_z=1$ in\/~$\R$.
\end{cor}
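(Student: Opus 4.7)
The plan is to reduce directly to Theorem~\ref{lifting-orthogonal-thm} by lifting the family $(\bar e_z)_{z\in Z}$ to a zero-convergent family $(f_z)_{z\in Z}\subset\R$ that satisfies the hypotheses of that theorem. The only non-routine point is producing the lift with preservation of zero-convergence; the remaining verifications are then immediate.

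First I would invoke the hypothesis that $\HH$ is strongly closed in $\R$. By the definition recalled in Section~1, this means that the induced map $\R[[X]]\rarrow\S[[X]]$ is surjective for every set $X$. Taking $X=Z$, the family $(\bar e_z)_{z\in Z}$ converges to zero in $\S$ by assumption, so $\sum_{z\in Z}\bar e_z\cdot z$ is a well-defined element of $\S[[Z]]$. Choose any preimage $\sum_{z\in Z}f_z\cdot z\in\R[[Z]]$. This yields a family $(f_z)_{z\in Z}$ of elements of $\R$ such that $f_z+\HH=\bar e_z$ for every $z\in Z$ and such that $(f_z)_{z\in Z}$ converges to zero in the topology of $\R$.

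Next I would verify the remaining hypotheses of Theorem~\ref{lifting-orthogonal-thm}. Since $\bar e_z^2=\bar e_z$ in $\S$ we have $f_z^2-f_z\in\HH$; since $\bar e_w\bar e_z=0$ for $z\neq w$ we have $f_wf_z\in\HH$. Because $(f_z)_{z\in Z}$ converges to zero and $\R$ is complete and separated, the sum $\sum_{z\in Z}f_z$ converges to a well-defined element of $\R$. Its image under the continuous quotient map $\R\rarrow\S$ equals $\sum_{z\in Z}\bar e_z=1$ in $\S$, so $\sum_{z\in Z}f_z\in 1+\HH$. Applying Theorem~\ref{lifting-orthogonal-thm} produces orthogonal idempotents $e_z\in f_z+\HH$ converging to zero in $\R$ with $\sum_{z\in Z}e_z=1$; since $f_z+\HH=\bar e_z$, we conclude $e_z+\HH=\bar e_z$, as required.

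The main (and essentially only) obstacle is the lifting step: strong closedness of $\HH$---as opposed to mere closedness---is exactly the input needed to guarantee that the family of cosets $\bar e_z$ admits simultaneous representatives $f_z\in\R$ forming a zero-convergent family. Once this is done, the algebraic verifications and the appeal to Theorem~\ref{lifting-orthogonal-thm} are straightforward.
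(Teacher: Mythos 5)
Your proof is correct and follows essentially the same route as the paper: the paper's own proof likewise uses strong closedness to lift the zero-convergent family $(\bar e_z)$ to a zero-convergent family $(f_z)$ in $\R$ and then observes that the hypotheses of Theorem~\ref{lifting-orthogonal-thm} are satisfied. You have merely spelled out the routine verifications that the paper leaves implicit.
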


\begin{proof}
 As the ideal $\HH\subset\R$ is strongly closed by assumption,
the zero-convergent family of elements $\bar e_z\in\S$ can be lifted
to \emph{some} zero-convergent family of elements $f_z\in\R$.
 This is enough to satisfy the assumptions of
Theorem~\ref{lifting-orthogonal-thm}.
\end{proof}

\Section{Split Direct Limits}  \label{split-direct-limits-secn}

 The aim of this section and the next one is to discuss
the contramodule-theoretic implications and categorical generalizations
of the characterization of modules with perfect decompositions
in~\cite[Theorem~1.4]{AS}.
 In particular, in this section we extend the result 
of~\cite[Theorem~1.4\,(2)\,$\Leftrightarrow$\,(3)]{AS}
to the context of additive categories.

 One difference with the approach in~\cite{AS} is that, given
an object $M$ in an additive or abelian category $\sB$, we want to
formulate the condition of split direct limits in $\Add(M)\subset\sB$
as \emph{intrinsic} to the category $\sA=\Add(M)$, so that it makes
sense independently of the ambient category~$\sB$.

 Let $X$ be a directed poset and $\sA$ be an additive category.
 We will say that $\sA$ has \emph{$X$\+sized coproducts} if coproducts
of families of objects of cardinality not exceeding the cardinality
of $X$ exist in~$\sA$.
 An additive category $\sA$ is said to have \emph{$X$\+shaped direct
limits} if the direct limit of any $X$\+shaped diagram $X\rarrow\sA$
exists in it.

 Recall that whenever $X$\+shaped direct limits exist, they always
preserve epimorphisms and cokernels.
 Assuming in addition that $X$\+sized coproducts exist, for any
$X$\+shaped diagram $(g_{yx}\:N_x\to N_y)_{x<y\in X}$ in $\sA$ there
is a natural right exact sequence
\begin{equation} \label{presentation-direct-limit}
 \coprod\nolimits_{x<y}N_x\lrarrow\coprod\nolimits_{x\in X}N_x
 \lrarrow\varinjlim\nolimits_{x\in X}N_x\lrarrow 0.
\end{equation}
 In other words, the righmost nontrivial morphism in this sequence is
the cokernel of the leftmost one.
 In particular, it is an epimorphism.
 (In fact, all these properties hold for colimits of $X$\+shaped
diagrams for an arbitrary small category~$X$.)

\begin{lem} \label{split-X-direct-limits-conditions}
 Let $X$ be a directed poset and\/ $\sA$ be an idempotent-complete
additive category with $X$\+shaped direct limits and $X$\+sized
coproducts.
 Consider the following properties: \par
\begin{enumerate}
\item the direct limit of any $X$\+shaped diagram of split monomorphisms
is a split monomorphism in\/~$\sA$;
\item the direct limit of any $X$\+shaped diagram of split epimorphisms
is a split epimorphism in\/~$\sA$; \par
\item for any $X$\+shaped diagram $(N_x\to N_y)_{x<y\in X}$ in\/ $\sA$,
the natural epimorphism\/ $\coprod_{x\in X} N_x\rarrow
\varinjlim_{x\in X} N_x$ is split.
\end{enumerate}
 Then the implications \textup{(1)~$\Longrightarrow$~(2)
$\Longleftrightarrow$~(3)} hold.
\end{lem}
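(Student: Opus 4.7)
The plan is to handle $(2)\Longleftrightarrow(3)$ by direct constructions, and then to derive $(1)\Longrightarrow(2)$ by exploiting idempotent completeness to extract a natural subdiagram of ``kernels'' to which (1) can be applied.

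For $(2)\Longrightarrow(3)$, given an arbitrary $X$-shaped diagram $(N_x, g_{yx})$, I introduce the auxiliary diagram $A_y = \coprod_{x\le y} N_x$ whose transitions for $y\le y'$ are the obvious coproduct inclusions, so that $\varinjlim_y A_y = \coprod_x N_x$ (using directedness of $X$), together with the natural transformation $\psi\colon (A_y)\to (N_y)$ whose restriction to each summand $N_x\subseteq A_y$ is~$g_{yx}$. Each $\psi_y$ is a split epimorphism, since $N_y$ is itself a direct summand of $A_y$ on which $\psi_y$ restricts to the identity, so (2) yields that $\varinjlim\psi$ is split epi; unwinding, $\varinjlim\psi$ is precisely the canonical map $\coprod_x N_x\to\varinjlim_x N_x$. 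For $(3)\Longrightarrow(2)$, given a natural transformation $\phi\colon (M_x)\to (N_x)$ with each $\phi_x$ split epi and a choice of sections $s_x\colon N_x\to M_x$, let $\sigma\colon\varinjlim N_x\to\coprod N_x$ be the section of $f$ provided by~(3). The map $\rho\colon\coprod_x N_x\to\varinjlim M_x$ defined by $\rho|_{N_x} = \bar\iota_x^M\circ s_x$ satisfies $(\varinjlim\phi)\circ\rho = f$ by inspection (using $\phi_x s_x = \id$), so $\rho\circ\sigma$ is a section of $\varinjlim\phi$.

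The heart of the argument is $(1)\Longrightarrow(2)$. Given $\phi\colon (M_x)\to (N_x)$ with each $\phi_x$ split epi, fix sections $s_x$, set $e_x = s_x\phi_x$, and (using idempotent completeness of $\sA$) split $M_x = s_x(N_x)\oplus K_x$ with $K_x = \im(\id - e_x)$, inclusion $\iota_x\colon K_x\to M_x$, and retraction $r_x = \id - e_x\colon M_x\to K_x$. The crucial observation is that the transitions of $(M_x)$ preserve the subobjects $K_x$: the computation $e_y\, g_{yx}^M(\id - e_x) = s_y\, g_{yx}^N\,\phi_x\,(\id - s_x\phi_x) = 0$ (using naturality $\phi_y g_{yx}^M = g_{yx}^N\phi_x$ together with $\phi_x s_x = \id$) shows that $g_{yx}^M(K_x)\subseteq K_y$. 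Hence $(K_x)$ is a subdiagram of $(M_x)$ and $\iota\colon (K_x)\to (M_x)$ is a natural transformation with split monomorphism components, so (1) yields that $\varinjlim\iota\colon\varinjlim K_x\to\varinjlim M_x$ is a split monomorphism with some retraction~$p$. Writing $\pi_C\colon\varinjlim M_x\to C$ for the complementary projection (so that $\varinjlim M_x = \varinjlim K_x\oplus C$), the vanishings $\phi_x\iota_x = 0$ imply $(\varinjlim\phi)\circ(\varinjlim\iota) = 0$, so $\varinjlim\phi$ factors as $\bar\phi\circ\pi_C$ for a unique $\bar\phi\colon C\to\varinjlim N_x$.

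To finish, I produce a section of~$\bar\phi$. Set $\tau_x = \pi_C\circ\bar\iota_x^M\circ s_x\colon N_x\to C$. For $x\le y$, the difference $\tau_y g_{yx}^N - \tau_x$ equals $\pi_C\circ\bar\iota_y^M\circ d_{yx}$, where $d_{yx} = s_y g_{yx}^N - g_{yx}^M s_x\colon N_x\to M_y$; from $\phi_y d_{yx} = 0$ one sees that $d_{yx}$ factors through $\iota_y$, and composing with $\pi_C\circ\bar\iota_y^M$ then annihilates it, since $\pi_C\circ\varinjlim\iota = 0$ by construction of~$\pi_C$. Thus $(\tau_x)$ is a cocone, and the induced map $\tau\colon\varinjlim N_x\to C$ satisfies $\bar\phi\circ\tau = \id$ because $\bar\phi\circ\tau_x = \varinjlim\phi\circ\bar\iota_x^M\circ s_x = \bar\iota_x^N\circ\phi_x s_x = \bar\iota_x^N$ for every~$x$. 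Hence $\bar\phi$ is split epi, and $\varinjlim\phi = \bar\phi\circ\pi_C$ is a composition of two split epis, thus split epi. The main technical obstacle is the verification that $(K_x)$ forms a subdiagram and that the obstruction cocycle $d_{yx}$ lands in $K_y$; both reduce to short computations that depend crucially on the naturality of~$\phi$ and the identity $\phi_x s_x = \id$.
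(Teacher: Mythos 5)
Your proof is correct. The equivalence (2)\,$\Leftrightarrow$\,(3) is argued exactly as in the paper: the same auxiliary diagram of partial coproducts $A_y=\coprod_{x\le y}N_x$ for one direction, and the same transport of sections through the commutative square for the other. For (1)\,$\Rightarrow$\,(2) you also use the paper's key idea --- pass to the kernels of the split epimorphisms (your $K_x=\im(\id-e_x)$ is precisely $\ker\phi_x$, since $s_x$ is monic), observe that they form a subdiagram of split monomorphisms, and apply~(1) --- but you finish differently. The paper concludes in one line: since $X$\+shaped direct limits preserve cokernels, $\varinjlim\phi$ \emph{is} the cokernel of the split monomorphism $\varinjlim\iota$, and the cokernel of a split monomorphism is always a split epimorphism (if $r$ retracts $\varinjlim\iota$, then $\id-(\varinjlim\iota)r$ factors through the cokernel and provides a section). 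You instead factor $\varinjlim\phi$ through the complementary summand $C$ and construct a section of $\bar\phi\colon C\rarrow\varinjlim N_x$ by hand, correcting the non-cocone $(\bar\iota^M_x s_x)_x$ by composing with $\pi_C$; in effect you reprove both of those abstract facts concretely. Both routes are valid: the paper's is shorter, while yours avoids explicitly invoking right-exactness of $\varinjlim$ and exhibits the splitting. Two of your verifications are more work than needed --- the stability of $(K_x)$ under the transition maps is automatic from the functoriality of kernels along a natural transformation, and the cocycle computation with $d_{yx}$ would disappear if you identified $C$ with $\varinjlim N_x$ via preservation of cokernels --- but nothing in your argument is wrong.
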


\begin{proof}
 (1)\,$\Longrightarrow$\,(2)
 Let $(M_x\to M_y)_{x<y}\rarrow (N_x\to N_y)_{x<y}$ be
an $X$\+indexed diagram of split epimorphisms $M_x\rarrow N_x$ in~$\sA$.
 Let $L_x$ be the kernel of the split epimorphism $M_x\rarrow N_x$; then
$(L_x\to L_y)_{x<y}\rarrow (M_x\to M_y)_{x<y}$ is an $X$\+indexed
diagram of split monomorphisms.
 By assumption, $f\:\varinjlim_{x\in X} L_x\rarrow
\varinjlim_{x\in X} M_x$ is a split monomorphism in~$\sA$.
 Since direct limits preserve cokernels, the morphism
$\varinjlim_{x\in X} M_x\rarrow\varinjlim_{x\in X} N_x$ is the cokernel
of~$f$, hence a split epimorphism.

 (2)\,$\Longrightarrow$\,(3)
 Given an $X$\+shaped diagram $(g_{yx}\:N_x\to N_y)_{x<y\in X}$,
we consider the following $X$\+shaped diagram $(M_x\to M_y)_{x<y}$
in the category~$\sA$.
 For every index $x\in X$, the object $M_x$ is the coproduct of $N_z$
over all the indices $z\le x$ in~$X$.
 For every pair of indices $x<y$, the morphism $M_x\rarrow M_y$
is the subcoproduct inclusion of the coproduct indexed by
$\{z\mid z\le x\}$ into the coproduct indexed by $\{z\mid z\le y\}$.
 Then one has $\varinjlim_{x\in X} M_x=\coprod_{x\in X}N_x$.
 Furthermore, for every $x\in X$ there is a split epimorphism
$M_x\rarrow N_x$ in $\sA$ with the components $g_{x,z}\:N_z\rarrow N_x$,
\ $z\le x$.
 Taken together, these morphisms form a natural morphism of diagrams
$(M_x\to M_y)_{x<y}\rarrow (N_x\to N_y)_{x<y}$, and the induced
morphism of direct limits $\varinjlim_{x\in X} M_x\rarrow
\varinjlim_{x\in X} N_x$ is the epimorphism $\coprod_{x\in X} N_x\rarrow
\varinjlim_{x\in X} N_x$ we are interested in.
 Thus the latter epimorphism is split whenever (2)~holds.

 (3)\,$\Longrightarrow$\,(2)
 Let $(M_x\to M_y)_{x<y}\rarrow (N_x\to N_y)_{x<y}$ be
an $X$\+indexed diagram of split epimorphisms.
 Then the induced morphism $\coprod_x M_x\rarrow\coprod_x N_x$ is
a split epimorphism, too.
 By assumption, so is the natural morphism $\coprod_x N_x\rarrow
\varinjlim_x N_x$.
 Now it follows from commutativity of the square diagram
$\coprod_x M_x\rarrow\coprod_x N_x\rarrow\varinjlim_x N_x$, \
$\coprod_x M_x\rarrow\varinjlim_x M_x\rarrow\varinjlim_x N_x$ that
the morphism $\varinjlim_x M_x\rarrow\varinjlim_x N_x$ is also
a split epimorphism (cf.~\cite[proof of
Theorem~1.4\,(2)\,$\Rightarrow$\,(3)]{AS}).
\end{proof}

 We will say that an additive category $\sA$ with $X$\+sized coproducts
and $X$\+shaped direct limits has \emph{split $X$\+direct limits} if
the condition of Lemma~\ref{split-X-direct-limits-conditions}\,(1)
is satisfied, that is, the direct limit of $X$\+shaped diagrams in $\sA$
preserves split monomorphisms.

\begin{lem} \label{subcategory-split-direct-limits-lemma}
 Let\/ $\sB$ be an idempotent-complete additive category with $X$\+sized
coproducts and\/ $\sA\subset\sB$ be a full subcategory closed under
direct summands and $X$\+sized coproducts.
 In this setting \par
\textup{(a)} if\/ $\sB$ has split $X$\+direct limits, then so
does\/~$\sA$; \par
\textup{(b)} if\/ $\sA$ has split $X$\+direct limits, then\/ $\sA$
is closed under $X$\+shaped direct limits in\/~$\sB$.
\end{lem}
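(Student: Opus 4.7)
For part~(a), let $(N_x)_{x\in X}$ be an $X$\+shaped diagram in $\sA$.  The coproduct $P:=\coprod_x N_x$ lies in $\sA$ by hypothesis, and the direct limit $L:=\varinjlim_\sB N_x$ exists in $\sB$ since $\sB$ has split $X$\+direct limits.  By the equivalent condition~(3) of Lemma~\ref{split-X-direct-limits-conditions} applied in $\sB$, the canonical epimorphism $\pi_\sB\:P\to L$ is split; hence $L$ is a direct summand of $P\in\sA$ and lies in $\sA$ by closure under direct summands.  Fullness of $\sA$ in $\sB$ then promotes $L$ to the direct limit in $\sA$ as well (the universal property restricts cleanly).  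To verify condition~(1) of Lemma~\ref{split-X-direct-limits-conditions} (the definition of split $X$\+direct limits) in $\sA$, the same retract argument applies to morphisms of diagrams: a morphism of diagrams of split monomorphisms in $\sA$ induces, via direct limits in $\sB$, a split monomorphism between objects of $\sA$, whose section lies in $\sA$ by fullness.

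For part~(b), let $L':=\varinjlim_\sA N_x$ with canonical epimorphism $\pi\:P\to L'$ in $\sA$, split by some section $s\:L'\to P$ (Lemma~\ref{split-X-direct-limits-conditions}\,(3) in $\sA$).  Since $\sB$ is idempotent complete, the idempotent $s\pi\in\End_\sB(P)$ splits there as well, yielding a decomposition $P\cong L'\oplus K$ in $\sB$; the structural map $f\:\coprod_{x<y}N_x\to P$ of the directed system satisfies $\pi f=0$ and hence factors as $f=\iota_K\circ f_K$ through the $K$\+summand.  My plan is to show that $(L',(i_x))$ with $i_x=\pi\circ j_x$ also satisfies the universal property of the direct limit of $(N_x)$ in $\sB$, from which the closure statement follows immediately.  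For $M\in\sB$ and a compatible family corresponding to $g\:P\to M$ with $gf=0$, the candidate factorization is $\tilde g:=g\circ s\:L'\to M$; uniqueness of $\tilde g$ is automatic from $\pi$ being an epimorphism, and existence reduces to checking $\tilde g\circ\pi=g$, i.e., $g\circ(s\pi-\id_P)=0$, which (via the splitting) is equivalent to $g\circ\iota_K=0$.

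The main obstacle is thus to verify that $f_K$ is an epimorphism in $\sB$, for then $g\iota_K f_K=gf=0$ immediately forces $g\iota_K=0$.  In $\sA$ this is straightforward from the $\sA$\+cokernel characterization of $\pi$: given $h_K\:K\to M$ with $M\in\sA$ and $h_Kf_K=0$, the composite $h:=h_K\circ p_K\:P\to M$ satisfies $hf=0$, so factors through $\pi$ by the universal property, and reading off the $K$\+component forces $h_K=0$.  The delicate step is promoting this cancellation from $\sA$\+targets to arbitrary $\sB$\+targets; I expect this to be handled by exploiting that $K\in\sA$ (as a direct summand of $P$) together with the directed-system origin of $f$, invoking the full strength of condition~(1) of Lemma~\ref{split-X-direct-limits-conditions} applied to auxiliary diagrams of split monomorphisms in $\sA$ such as the truncated coproducts $(\coprod_{z\le x}N_z)_{x\in X}$ with their canonical inclusions.
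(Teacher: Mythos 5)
Part~(a) is correct and is essentially the paper's own argument: the direct limit computed in $\sB$ is a direct summand of the coproduct $\coprod_{x\in X}N_x$, hence lies in $\sA$ and serves as the direct limit there, and the splitness of direct limits of split monomorphisms transfers because the section is a morphism between objects of $\sA$.

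Part~(b), however, has a genuine gap at the decisive step. Your reduction is correct: the universal property of $L'=\varinjlim^{\sA}_x N_x$ in $\sB$ is equivalent to $f_K\:\coprod_{x<y}N_x\rarrow K$ being an epimorphism in $\sB$. But you only verify this against test objects of $\sA$, and that is precisely where the entire difficulty of the lemma lives: an epimorphism in a full subcategory need not remain an epimorphism in the ambient category, equivalently a cokernel computed in $\sA$ need not be a cokernel in $\sB$. Up to that point your argument has used the split-direct-limits hypothesis only through the single splitting $P\cong L'\oplus K$ coming from condition~(3) of Lemma~\ref{split-X-direct-limits-conditions}, which is strictly weaker than condition~(1) (cf.\ the $k\vect^\sop$ example later in the paper). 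The closing sentence beginning ``I expect this to be handled by\dots'' names the right ingredients but is not an argument, so the proof of~(b) is incomplete as written.

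The paper closes this gap by changing the presentation of the direct limit rather than upgrading~$f_K$. Setting $M_x=\coprod_{z\le x}N_z$, \ $K_x=\ker(M_x\to N_x)$ and $L_x=\coprod_{z\le x}K_z$, one obtains termwise split exact sequences exhibiting the diagram $N$ as the cokernel of $L\to M$, with $\varinjlim_x L_x=\coprod_x K_x$ and $\varinjlim_x M_x=\coprod_x N_x$ both lying in~$\sA$. Conditions~(1) and~(2) of Lemma~\ref{split-X-direct-limits-conditions} then make the induced map $\coprod_x K_x\rarrow\coprod_x N_x$ the composite of a split epimorphism and a split monomorphism in~$\sA$; the cokernel of such a composite is \emph{absolute} (it is the complementary direct summand of the split monomorphism, preserved by the full embedding into any additive category), so it is also the cokernel in $\sB$, giving $\varinjlim^{\sB}_x N_x=L'$. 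This is also the mechanism that would rescue your route: it identifies your $K$ with $\varinjlim_x K_x$ and supplies the split epimorphism onto $K$ that your argument needs.
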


\begin{proof}
 Part~(a): let $N=(N_x\to N_y)_{x<y}$ be an $X$\+shaped diagram in~$\sA$.
 By assumption, the direct limit of the diagram $N$ exists in~$\sB$ and
is isomorphic to a direct summand of the coproduct $\coprod_{x\in X}N_x$.
 Since $\sA$ is closed under direct summands and $X$\+sized coproducts
in $\sB$, the direct limit of the diagram $N$ computed in $\sB$ belongs
to~$\sA$.
 It follows that the direct limit of the diagram $N$ exists in $\sA$ and
coincides with the direct limit of the same diagram in~$\sB$.

 Part~(b): once again, let $N=(N_x\to N_y)_{x<y}$ be an $X$\+shaped
diagram in~$\sA$.
 Consider the diagram $M=(M_x\to M_y)_{x<y}$ constructed in the proof
of Lemma~\ref{split-X-direct-limits-conditions}\,(2)\,$\Rightarrow$\,(3)
and denote by $K=(K_x\to K_y)_{x<y}$ the kernel of the natural termwise
split epimorphism of diagrams $M\rarrow N$. It follows from the construction
that $K_z\cong\coprod_{x<z}N_x$ for each $z\in X$.
 Applying the same construction to the diagram $K$, we obtain a termwise
split epimorphism of diagrams $L\rarrow K$, where
$$
 L_y=\coprod\nolimits_{z\le y}K_z\cong
     \coprod\nolimits_{z\le y}\coprod\nolimits_{x<z}N_x=
		 \coprod\nolimits_{x<z\le y}N_x.
$$
 Now in both the categories $\sA$ and $\sB$, the diagram $N$ is
the cokernel of the composition of morphisms of diagrams $L\rarrow K
\rarrow M$.
 Furthermore, in both the categories $\sA$ and $\sB$ we have
$\varinjlim_x M_x=\coprod_x N_x$ and
$\varinjlim_y L_y=\coprod_y K_y\cong\coprod_{x<y} N_x$.
 The morphism of diagrams $L\rarrow M$ induces a morphism of their direct
limits
$$
 f\:\coprod\nolimits_{x<y} N_x\cong\varinjlim_x L_x\rarrow
 \varinjlim_x M_x\cong\coprod\nolimits_x N_x,
$$
which coincides with the leftmost morphism
in the sequence~\eqref{presentation-direct-limit}.

 Now, for any of our two categories $\sC=\sA$ or $\sC=\sB$, the direct
limit of the diagram $N$ in $\sC$ exists if and only if the cokernel of 
the morphism~$f$ exists in $\sC$, and if they do exist then they
coincide, $\varinjlim^\sC_xN_x = \coker^\sC(f)$.
 In particular, $X$\+shaped direct limits exist in $\sA$ by assumption,
so we have an object $\coker^\sA(f)=\varinjlim^\sA_x N_x$.
 Moreover, since we are assuming that $\sA$ has split $X$\+direct
limits, in view of
Lemma~\ref{split-X-direct-limits-conditions}\,(1)\,$\Rightarrow$\,(2)
the morphism~$f$ is the composition of a split epimorphism
$\varinjlim_x L_x\rarrow \varinjlim_x K_x$ and a split monomorphism
$\varinjlim_x K_x\rarrow\varinjlim_x M_x$ in~$\sA$.
 Therefore, the cokernel of~$f$ also exists in $\sB$ and coincides with
the cokernel of~$f$ in~$\sA$, that is $\coker^\sB(f)=\coker^\sA(f)$.
 Finally, we have $\varinjlim_x^\sB N_x=\coker^\sB(f)$.
\end{proof}

 As in the preceding proof, we will use the notation
$\varinjlim_{x\in X}^\sA$ for the direct limit in a category $\sA$
(when the category is not clear from the context).
 Now we can formulate our categorical version 
of~\cite[Theorem~1.4\,(2)\,$\Leftrightarrow$\,(3)]{AS}
in a way which resembles the original result.

\begin{cor} \label{ab5-split-direct-limits-cor}
 Let $X$ be a directed poset, $\sB$ be an abelian category with
$X$\+sized coproducts and exact functors of $X$\+shaped direct limits,
and\/ $\sA\subset\sB$ be a full subcategory closed under direct
summands and $X$\+sized coproducts.
 Then the following conditions are equivalent:
\begin{enumerate}
\item the additive category\/ $\sA$ has $X$\+split direct limits;
\item the direct limit in\/ $\sB$ of any $X$\+shaped diagram of split
monomorphisms in\/ $\sA$ is a split monomorphism;
\item the direct limit in\/ $\sB$  of any $X$\+shaped diagram of split
epimorphisms in\/ $\sA$ is a split epimorphism; \par
\item for any $X$\+shaped diagram $(N_x\to N_y)_{x<y\in X}$ in\/ $\sA$,
the natural epimorphism\/ $\coprod_{x\in X} N_x\rarrow
\varinjlim_{x\in X}^\sB N_x$ in the category\/ $\sB$ is split.
\end{enumerate}
 If any one of these conditions holds, then\/ $\sA$ is closed under
$X$\+shaped direct limits in\/~$\sB$ (so the $X$\+shaped direct limits
in\/ $\sA$ and\/ $\sB$ agree).
\end{cor}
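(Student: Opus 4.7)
My plan is to first show that condition~(3) alone forces $\sA$ to be closed under $X$\+shaped direct limits in $\sB$, which collapses the distinction between the intrinsic viewpoint in $\sA$ and the external one in $\sB$. The closure step is immediate: under~(3), $\varinjlim^\sB_{x\in X}N_x$ is a direct summand of $\coprod_{x\in X}N_x\in\sA$, and since $\sA$ is closed under direct summands it lies in~$\sA$; full-faithfulness of $\sA\subset\sB$ then promotes its universal property in $\sB$ to one in~$\sA$.

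I would then close the external cycle (1)~$\Rightarrow$~(2)~$\Leftrightarrow$~(3)~$\Rightarrow$~(1). The equivalence (2)~$\Leftrightarrow$~(3) is a verbatim transcription of the argument in the proof of Lemma~\ref{split-X-direct-limits-conditions}\,(2)\,$\Leftrightarrow$\,(3), since that argument used only coproducts and direct limits, which I now compute in~$\sB$. For (1)~$\Rightarrow$~(2), given split epimorphisms $M_x\rarrow N_x$ in $\sA$, the kernels $L_x$ exist in $\sA$ as direct summands; applying~(1) to the split monomorphisms $L_x\rarrow M_x$ yields a split monomorphism $\varinjlim^\sB L_x\rarrow\varinjlim^\sB M_x$, and exactness of direct limits in $\sB$ turns this into a split short exact sequence of direct limits, so that $\varinjlim^\sB M_x\rarrow\varinjlim^\sB N_x$ is a split epimorphism.

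The genuinely nontrivial step is (3)~$\Rightarrow$~(1), where the abelian structure of $\sB$ is essential. Given split monomorphisms $f_x\:M_x\rarrow N_x$ in $\sA$, I would form the cokernels $K_x$ in $\sA$ (existing as direct summands, with transition maps uniquely induced by the cokernel universal property). This produces a pointwise split short exact sequence $0\rarrow M_x\rarrow N_x\rarrow K_x\rarrow 0$. Applying $\coprod$ yields a split short exact sequence $0\rarrow\coprod M_x\rarrow\coprod N_x\rarrow\coprod K_x\rarrow 0$ in $\sB$, and applying $\varinjlim^\sB$ yields the \emph{a priori} unsplit exact sequence $0\rarrow\varinjlim M_x\rarrow\varinjlim N_x\rarrow\varinjlim K_x\rarrow 0$. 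To split it, I would pick a section $\sigma$ of the epimorphism $p_K\:\coprod K_x\rarrow\varinjlim K_x$ supplied by~(3) and a section $\tilde s$ of the coproduct-level split epimorphism $\coprod N_x\rarrow\coprod K_x$; a short diagram chase using the naturality square $\bar\pi\circ p_N=p_K\circ\coprod\pi_x$ will show that $p_N\circ\tilde s\circ\sigma\:\varinjlim K_x\rarrow\varinjlim N_x$ is a section of the induced morphism $\varinjlim N_x\rarrow\varinjlim K_x$, whence the sequence of direct limits splits and $\varinjlim f_x$ is a split monomorphism.

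Finally, to link~(0) with the external cycle: assuming~(0), Lemma~\ref{split-X-direct-limits-conditions} yields intrinsic~(3), and then Lemma~\ref{subcategory-split-direct-limits-lemma}\,(b) yields closure of $\sA$ under direct limits in $\sB$, whence external~(3); conversely,~(3) yields closure as in the first step, so that $\varinjlim^\sA=\varinjlim^\sB$ on $X$\+shaped diagrams, and the already-proven external~(1) transcribes to the intrinsic condition~(0). The concluding assertion of the corollary about closure is precisely the first step. The main obstacle is the implication (3)~$\Rightarrow$~(1): the direct-limit sequence of split short exact sequences is exact but need not be split termwise, and its splitting has to be manufactured by mixing the pointwise splittings available in $\sA$ with the coproduct-to-limit sections guaranteed by~(3)—this is the one place where both the abelianness of $\sB$ and the exactness of direct limits are used in an essential way.
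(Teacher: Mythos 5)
Your proof is correct and follows essentially the same route as the paper: closure of $\sA$ under $X$\+shaped direct limits is extracted from condition~(3) (resp.\ from~(0) via Lemma~\ref{subcategory-split-direct-limits-lemma}(b)), the equivalence (2)\,$\Leftrightarrow$\,(3) is transcribed from Lemma~\ref{split-X-direct-limits-conditions}, and the passage between split monomorphisms and split epimorphisms uses exactness of $X$\+shaped direct limits in~$\sB$. Your direct argument for (3)\,$\Rightarrow$\,(1) via the section $p_N\circ\tilde s\circ\sigma$ is just the paper's chain (3)\,$\Rightarrow$\,(2)\,$\Rightarrow$\,(1) folded into one step, so the difference is only one of packaging.
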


\begin{proof}
 (1)\,$\Longrightarrow$\,(2)
 Assume that $\sA$ has $X$\+split direct limits.
 Then, by Lemma~\ref{subcategory-split-direct-limits-lemma}(b),
$\sA$ is closed under $X$\+shaped direct limits in~$\sB$.
 Hence (2)~follows by the definition of $\sA$ having $X$\+split
direct limits.

 (2)\,$\Longleftrightarrow$\,(3) holds since the direct limit in $\sB$
of any $X$\+shaped diagram of split short exact sequences in $\sA$ is
a short exact sequence.
 Only here in the implication (3)\,$\Longrightarrow$\,(2) we are
using the assumption that $X$\+shaped direct limits in $\sB$ are exact.

 (3)\,$\Longleftrightarrow$\,(4) is provable in the same way as
Lemma~\ref{split-X-direct-limits-conditions}\,(2)\,$\Leftrightarrow$\,(3)
(with the direct limits of $X$\+shaped diagrams in $\sA$ computed
in $\sA$ replaced by the direct limits of $X$\+shaped diagrams in $\sA$
computed in~$\sB$).

 (2)$\,+\,$(4)\,$\Longrightarrow$\,(1) In the context of~(4),
since $\sA$ is closed under $X$\+sized coproducts in $\sB$ by
assumption, the coproduct $\coprod_{x\in X}N_x$ is the same in $\sA$
and in~$\sB$.
 Being a direct summand of this coproduct, the object
$\varinjlim_{x\in X}^\sB N_x$ consequently also belongs to~$\sA$.
 Hence $\sA$ is closed under $X$\+shaped direct limits in~$\sB$.
 Now (2)~tells that $\sA$ has $X$\+split direct limits.
\end{proof}

 Given an additive category $\sA$ with set-indexed coproducts, we will
say that $\sA$ has \emph{split direct limits} if it has $X$\+split
direct limits for every directed poset~$X$.
 It is clear from~\cite[Lemma~1.6]{AR} that a category $\sA$ has split
direct limits whenever it has $X$\+split direct limits for every
\emph{linearly ordered} (or even well-ordered) index set~$X$.

\begin{ex} \label{locally-Noetherian-example}
 Let $\sA$ be a locally Noetherian Grothendieck abelian category and
$\sA_\inj\subset\sA$ be the full subcategory of injective objects
in~$\sA$.
 Then the category $\sA_\inj$ has split direct limits.
 Indeed, the full subcategory $\sA_\inj$ is closed under direct limits
in $\sA$, hence all (coproducts and) direct limits exist in~$\sA_\inj$.
 Furthermore, the direct limit of any directed diagram of split
monomorphisms in $\sA_\inj$ is a monomorphism in $\sA$ between two
objects of~$\sA_\inj$.
 Clearly, any such monomorpism is split.

 In view of Lemma~\ref{subcategory-split-direct-limits-lemma}(a), it
follows that, for any injective object $J\in\sA$, the full subcategory
$\Add(J)\subset\sA$ has split direct limits.
\end{ex}

 In the rest of this section, we discuss split direct limits in
the full subcategory $\Add(M)$ of direct summands of coproducts of
copies of a fixed object $M$ in a topologically agreeable abelian
category~$\sA$.
 The special case in which $\sA=A\modl$ is the category of modules
over an associative ring plays an important role.
 The aim is to interpret the condition of split direct limits in
$\Add(M)$ as a ``naturally sounding'' property of the related
contramodule category $\Hom_\sA(M,M)^\rop\contra$.

\begin{lem} \label{phi-colimits}
 Let\/ $\sA$ be a cocomplete abelian category,
$M\in\sA$ be an object, and\/ $\sB$ be
the abelian category with enough projective objects for which\/
$\sB_\proj\cong\Add(M)\subset\sA$, as in
Theorem~\textup{\ref{generalized-tilting}(a)}.
 Let $X$ be a small category.
 Assume that the full subcategory\/ $\sB_\proj$ is closed under
$X$\+shaped colimits in\/~$\sB$.
 Then the full subcategory\/ $\Add(M)$ is closed under $X$\+shaped
colimits in\/ $\sA$, and for any $X$\+shaped diagram
 $(N_x\to N_y)_{x\to y\in X}$
in\/ $\Add(M)$ the natural epimorphism\/
$\coprod_{x\in X} N_x\rarrow \varinjlim_{x\in X}^\sA N_x$ in
the category\/ $\sA$ is split.
\end{lem}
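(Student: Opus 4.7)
The plan is to transfer the given diagram across the equivalence $\Add(M)\cong\sB_\proj$ from Theorem~\ref{generalized-tilting}(a) and then invoke the adjoint pair described immediately after that theorem: the right-exact left adjoint $\Phi\:\sB\rarrow\sA$ and its right adjoint $\Psi\:\sA\rarrow\sB$. Being a left adjoint, $\Phi$ preserves all colimits that exist in $\sB$; and its restriction to $\sB_\proj$ is the equivalence $\sB_\proj\cong\Add(M)$, so in particular $\Phi$ preserves set-indexed coproducts and sends the chosen diagram in $\sB_\proj$ back to the original diagram in $\Add(M)$.

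Given an $X$\+shaped diagram $(N_x)_{x\in X}$ in $\Add(M)$, I would view it via the equivalence as a diagram in $\sB_\proj\subset\sB$. Its colimit $P=\varinjlim_{x\in X}^\sB N_x$ computed in $\sB$ lies in $\sB_\proj$ by hypothesis. Since $\Phi$ preserves this colimit, the colimit $\varinjlim_{x\in X}^\sA N_x$ exists in $\sA$ and is canonically isomorphic to $\Phi(P)\in\Add(M)$; this establishes the closedness of $\Add(M)$ under $X$\+shaped colimits in~$\sA$.

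For the splitting claim, I would consider the canonical morphism $\pi\:\coprod_{x\in X}^\sB N_x\rarrow P$ in $\sB$. This is an epimorphism in the abelian category $\sB$, because $P$ arises as the cokernel of the standard pair of morphisms between the coproducts indexed by the objects and the morphisms of~$X$. Since $P\in\sB_\proj$ is projective, $\pi$ is a split epimorphism in $\sB$. Applying the coproduct-preserving functor $\Phi$ then produces a split epimorphism $\coprod_{x\in X}^\sA N_x\rarrow\Phi(P)=\varinjlim_{x\in X}^\sA N_x$ in $\sA$, which is precisely the natural morphism we wanted to split.

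The only point to verify with any care is the compatibility of coproducts: one needs that $\coprod_{x\in X}^\sB N_x\in\sB_\proj$ (which holds because projectives are closed under coproducts in $\sB$) and that $\Phi(\coprod_{x\in X}^\sB N_x)=\coprod_{x\in X}^\sA N_x$ (which holds since $\Phi$, being a left adjoint, preserves coproducts, and its restriction to $\sB_\proj$ matches the equivalence with $\Add(M)$). No genuine obstacle arises; the lemma is essentially a direct consequence of Theorem~\ref{generalized-tilting} combined with the standard adjoint-functor formalism.
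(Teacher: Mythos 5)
Your proof is correct and follows essentially the same route as the paper's: transfer the diagram to $\sB_\proj$ via the equivalence, use the hypothesis to see that the colimit computed in $\sB$ is projective, observe that the canonical epimorphism from the coproduct onto a projective object splits, and push everything back to $\sA$ with the colimit-preserving left adjoint $\Phi$. No gaps.
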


\begin{proof}
 According to the discussion near the end of
Section~\ref{topologically-agreeable-secn},
the equivalence of categories $\sB_\proj\cong\Add(M)$ can be extended
to a pair of adjoint functors $\Psi\:\sA\rarrow\sB$ and $\Phi\:\sB
\rarrow\sA$.
 Given an $X$\+shaped diagram $N\in\Add(M)^X$, consider the diagram
$P=\Psi(N)\in(\sB_\proj)^X$.
 Then we have $N=\Phi(P)$.
 Since the functor $\Phi$ is a left adjoint, it preserves colimits,
and follows that
$$
 \varinjlim\nolimits_{x\in X}N_x=
 \Phi\left(\varinjlim\nolimits_{x\in X} P_x\right)
 \quad\text{and}\quad
 \coprod\nolimits_{x\in X}N_x=
 \Phi\left(\coprod\nolimits_{x\in X}P_x\right).
$$
 Now we have $\varinjlim_{x\in X}P_x\in\sB_\proj$ by assumption,
hence $\varinjlim_{x\in X}N_x\in\Add(M)$.
 Moreover, the natural morphism $f\:\coprod_{x\in X}P_x\rarrow
\varinjlim_{x\in X}P_x$ is an epimorphism between projective
objects in $\sB$, hence a split epimorphism.
 Thus the morphism $\Phi(f)\:\coprod_{x\in X}N_x\rarrow
\varinjlim_{x\in X}N_x$ is a split epimorphism in~$\sA$
(and in~$\Add(M)$).
\end{proof}

\begin{prop} \label{split-direct-limits-of-projective-contramodules}
 Let\/ $\R$ be a complete, separated topological ring with a right
linear topology, and let $X$ be a directed poset.
 Then the category of projective left\/ $\R$\+contramodules\/
$\R\contra_\proj$ has $X$\+split direct limits if and only if the full
subcategory\/ $\R\contra_\proj$ is closed under $X$\+shaped
direct limits in\/ $\R\contra$.
\end{prop}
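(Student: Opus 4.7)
The ``only if'' half should be immediate. The abelian category $\R\contra$ is idempotent-complete with set-indexed coproducts, and $\R\contra_\proj$ is closed under both direct summands and set-indexed coproducts inside $\R\contra$ (coproducts of free contramodules are free, and direct summands of such are projective). So Lemma~\ref{subcategory-split-direct-limits-lemma}(b) applied to $\sA=\R\contra_\proj\subset\sB=\R\contra$ gives the conclusion directly.

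The ``if'' direction is the substantial one. The chief obstacle is that $\R\contra$ is not in general an Ab5 category, so Corollary~\ref{ab5-split-direct-limits-cor} cannot be invoked directly with $\sB=\R\contra$. My plan is to sidestep this by realising $\R\contra_\proj$ inside a genuinely Grothendieck ambient category. Concretely, using Corollary~\ref{topological-rings-are-endomorphism-rings} I pick an associative ring $A$ and a left $A$-module $M$ such that $\R\cong\Hom_A(M,M)^\rop$ as topological rings (finite topology on the right-hand side). Theorem~\ref{generalized-tilting}(c) then furnishes an equivalence of additive categories $\Add_{A\modl}(M)\cong\R\contra_\proj$, sending $M$ to the free rank-one contramodule~$\R$.

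Next I apply Lemma~\ref{phi-colimits} in the setup $\sA=A\modl$, the object $M$, and $\sB=\R\contra$ (so that $\sB_\proj\cong\Add_{A\modl}(M)$ under the identification above). The hypothesis of that lemma, namely that $\sB_\proj=\R\contra_\proj$ is closed under $X$-shaped direct limits in $\sB=\R\contra$, is precisely our standing assumption. The lemma then delivers two facts at once: the subcategory $\Add_{A\modl}(M)$ is closed under $X$-shaped direct limits in $A\modl$, and for every $X$-shaped diagram $(N_x)_{x\in X}$ in $\Add_{A\modl}(M)$, the canonical epimorphism $\coprod_{x\in X}N_x\rarrow\varinjlim^{A\modl}_{x\in X}N_x$ in $A\modl$ is split.

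To finish, I invoke Corollary~\ref{ab5-split-direct-limits-cor} with $\sB=A\modl$ (Grothendieck, hence Ab5 with exact direct limits) and $\sA=\Add_{A\modl}(M)$ (closed under direct summands and $X$-sized coproducts in $A\modl$). The previous step supplies condition~(3) of that corollary, and hence also condition~(0): the category $\Add_{A\modl}(M)$ has $X$-split direct limits. Transporting this along the equivalence $\Add_{A\modl}(M)\cong\R\contra_\proj$ gives the desired $X$-split direct limits in $\R\contra_\proj$ and finishes the argument. The only genuinely non-routine idea is the very first step of the ``if'' direction: recognising that the possible failure of Ab5 in $\R\contra$ can be circumvented by the module-theoretic realisation of $\R$, after which the already-established lemmas assemble into the result mechanically.
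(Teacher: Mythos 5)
Your proposal is correct and follows essentially the same route as the paper's proof: the ``only if'' direction via Lemma~\ref{subcategory-split-direct-limits-lemma}(b), and the ``if'' direction by realising $\R$ as $\Hom_A(M,M)^\rop$ via Corollary~\ref{topological-rings-are-endomorphism-rings}, transporting to $\Add(M)\subset A\modl$ through Theorem~\ref{generalized-tilting}, applying Lemma~\ref{phi-colimits}, and concluding with Corollary~\ref{ab5-split-direct-limits-cor}\,(3)\,$\Rightarrow$\,(0). Nothing to add.
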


\begin{proof}
 ``Only if'' holds by
Lemma~\ref{subcategory-split-direct-limits-lemma}(b).
 To prove ``if'', we use
Corollary~\ref{topological-rings-are-endomorphism-rings}.
 Let $A$ be an associative ring and $M$ a left $A$\+module such that
$\R$ is isomorphic to $\Hom_A(M,M)^\rop$ as a topological ring.
 By Theorem~\ref{generalized-tilting}(a,c), we have
$\Add(M)\cong\R\contra_\proj$.
 By Lemma~\ref{phi-colimits}, it follows that for any $X$\+shaped
diagram $N$ in $\Add(M)$ the natural epimorphism of left $A$\+modules
$\coprod_{x\in X} N_x\rarrow\varinjlim_{x\in X}^{A\modl} N_x$ is split.
 Applying
Corollary~\ref{ab5-split-direct-limits-cor}\,(4)\,$\Rightarrow$\,(1)
to the full subcategory $\Add(M)$ in the Ab5-category $A\modl$, we
conclude that the category $\Add(M)$ has split direct limits.
 Hence so does the category $\R\contra_\proj\cong\Add(M)$.
\end{proof}

\begin{rem}
 The above proof is quite indirect.
 The problem is that direct limits are not exact in the abelian category
$\R\contra$, so Corollary~\ref{ab5-split-direct-limits-cor} is not
applicable to $\sA=\R\contra_\proj$ and $\sB=\R\contra$.
 It would be interesting to know whether the assertion of
Proposition~\ref{split-direct-limits-of-projective-contramodules} holds
for abelian categories of more general nature than the categories of
contramodules over topological rings---such as, e.~g., cocomplete
abelian categories $\sB$ with a projective generator in which
the additive category of projective objects $\sB_\proj$ is agreeable.
(Cf.\ Example~\ref{ring-summation-structure-counterex} below.)
\end{rem}

\begin{ex}
 The following simple counterexample purports to show why
Proposition~\ref{split-direct-limits-of-projective-contramodules}
is nontrivial.
 Let $k$ be a field and $\sA=\sB=k\vect^\sop$ be the opposite
category to the category of $k$\+vector spaces.
 This category is abelian, all its objects are projective, and
all short exact sequences split.
 So $\sA$ satisfies the conditions of
Lemma~\ref{split-X-direct-limits-conditions}\,(2\<3) and
$\sA\subset\sB$ satisfies the conditions of
Corollary~\ref{ab5-split-direct-limits-cor}\,(3\<4).
 But direct limits are not exact in $\sA=\sB$, so the conditions
in Lemma~\ref{split-X-direct-limits-conditions}\,(1) and
Corollary~\ref{ab5-split-direct-limits-cor}\,(1\<2) are \emph{not}
satisfied and $\sA$ does not have split direct limits.
\end{ex}

\begin{cor} \label{split-direct-limits-contramodule-interpretation}
 Let\/ $\sA$ be an idempotent-complete topologically agreeable additive
category, $M\in\sA$ be an object, and\/ $\R=\Hom_\sA(M,M)^\rop$ be
the ring of endomorphisms of $M$, endowed with its complete, separated,
right linear topology.
 Let $X$ be a directed poset.
 Then the full subcategory\/ $\Add(M)\subset\sA$ has $X$\+split
direct limits if and only if the full subcategory\/
$\R\contra_\proj$ is closed under $X$\+shaped direct limits in\/
$\R\contra$.
\end{cor}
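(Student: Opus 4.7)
The plan is to reduce the statement to Proposition~\ref{split-direct-limits-of-projective-contramodules} via the equivalence of additive categories $\Add(M)\cong\R\contra_\proj$ provided by generalized tilting theory.

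First I would invoke Theorem~\ref{generalized-tilting}(c), which under the standing hypotheses gives an equivalence of additive categories $\Add(M)\cong\R\contra_\proj$. The key observation is that the property of an additive category ``having $X$\+split direct limits'' was defined intrinsically in Section~\ref{split-direct-limits-secn}, in terms only of morphisms, coproducts, and the notion of split monomorphism (or, equivalently by Lemma~\ref{split-X-direct-limits-conditions}, in terms of the natural epimorphism $\coprod_{x\in X}N_x\to\varinjlim_{x\in X}N_x$ being split). Since any equivalence of additive categories preserves all of these structures, $\Add(M)$ has $X$\+split direct limits if and only if $\R\contra_\proj$ does.

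Next I would apply Proposition~\ref{split-direct-limits-of-projective-contramodules}, which states that the additive category $\R\contra_\proj$ has $X$\+split direct limits if and only if the full subcategory $\R\contra_\proj\subset\R\contra$ is closed under $X$\+shaped direct limits computed in~$\R\contra$. Concatenating these two equivalences yields the desired biconditional.

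There is no real obstacle here; the argument is essentially a two-step composition. The one point requiring a sentence of care is that in transporting ``having $X$\+split direct limits'' across the equivalence $\Add(M)\cong\R\contra_\proj$, one must know that the statement itself does not secretly depend on the ambient category on either side. But Lemma~\ref{split-X-direct-limits-conditions} together with the definition preceding it makes this property intrinsic to the idempotent-complete additive category under consideration, so the reduction is immediate.
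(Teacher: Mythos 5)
Your proposal is correct and follows exactly the paper's own argument: the published proof likewise invokes Theorem~\ref{generalized-tilting}(a,c) for the equivalence $\Add(M)\cong\R\contra_\proj$ and then cites Proposition~\ref{split-direct-limits-of-projective-contramodules}. Your extra remark that the $X$-split direct limit property is intrinsic to the additive category (via Lemma~\ref{split-X-direct-limits-conditions}) is a sound and welcome clarification of why the transport across the equivalence is legitimate.
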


\begin{proof}
 By Theorem~\ref{generalized-tilting}(a,c), we have an equivalence
of additive categories $\Add(M)\cong\R\contra_\proj$.
 So the assertion follows from
Proposition~\ref{split-direct-limits-of-projective-contramodules}.
\hbadness=1575
\end{proof}

\Section{Objects with Perfect Decompositions}
\label{perfect-decompositions-secn}

 Let $\sA$ be an agreeable category.
 A family of nonzero objects $(M_z\in\sA)_{z\in Z}$ is said to be
\emph{locally T\+nilpotent} if for any sequence of indices $z_1$,
$z_2$, $z_3$,~\dots~$\in Z$ and any sequence of nonisomorphisms
$f_i\in\Hom_\sA(M_{z_i},M_{z_{i+1}})$ the family of morphisms
$f_nf_{n-1}\dotsm f_1\:M_{z_1}\rarrow M_{z_{n+1}}$, \
$n=1$, $2$, $3$,~\dots, is summable (i.~e., corresponds to
a morphism $f\:M_{z_1}\rarrow\coprod_{n=1}^\infty M_{z_{n+1}}$;
see Section~\ref{topologically-agreeable-secn}).

 Notice that the ring of endomorphisms of every object in
a locally T\+nilpotent family is necessarily local, as for any
noninvertible endomorphism $h\in\Hom_\sA(M_z,M_z)$ a morphism
$(1-h)^{-1}=\sum_{n=0}^\infty h^n\:M_z\rarrow M_z$ exists.
 So nonautomorphisms form a two-sided ideal in $\Hom_\sA(M_z,M_z)$.
 It follows that the object $M_z$ is indecomposable.

 An object $M\in\sA$ is said to have a \emph{perfect decomposition}
if there exists a locally T\+nilpotent family of objects
$(M_z)_{z\in Z}$ in $\sA$ such that $M\cong\coprod_{z\in Z} M_z$.

 In view of the discussion in the first half of
Section~\ref{split-direct-limits-secn}, the result
of~\cite[Theorem~1.4]{AS} can be formulated in our language as follows.

\begin{thm}[{\cite[Theorem~1.4]{AS}}] \label{angeleri-saorin}
 Let $A$ be an associative ring and $M$ be a left $A$\+module.
 Then $M$ has a perfect decomposition if and only if the full
subcategory\/ $\Add(M)\subset A\modl$ has split direct limits.
\end{thm}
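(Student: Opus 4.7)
The plan is to reduce Theorem~\ref{angeleri-saorin} to a direct translation of the original result of Angeleri H\"ugel and Saor\'\i n. In the form recalled just before Question~\ref{as-main-question}, \cite[Theorem~1.4]{AS} asserts that a left $A$\+module $M$ has a perfect decomposition if and only if, for every direct system $(M_i)_{i\in I}$ of modules $M_i\in\Add(M)$ indexed by a \emph{linearly ordered} set $I$, the canonical surjection $\bigoplus_i M_i\rarrow\varinjlim_i M_i$ computed in $A\modl$ is split. So the task reduces to matching the hypothesis that $\Add(M)$ has split direct limits with this splitting condition.

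First I would apply Corollary~\ref{ab5-split-direct-limits-cor} with $\sB=A\modl$, noting that $A\modl$ is a Grothendieck category (hence abelian with set-indexed coproducts and exact functors of directed colimits) and that by definition $\sA=\Add(M)$ is closed under direct summands and coproducts in $\sB$. For any directed poset $X$, the corollary then yields that $\Add(M)$ has $X$\+split direct limits if and only if, for every $X$\+shaped diagram $(N_x)$ in $\Add(M)$, the canonical epimorphism $\coprod_{x\in X}N_x\rarrow\varinjlim^{A\modl}_{x\in X}N_x$ is split.

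Second, I would invoke the remark made just before Example~\ref{locally-Noetherian-example} that, by \cite[Lemma~1.6]{AR}, $\Add(M)$ has split direct limits if and only if it has $X$\+split direct limits for every linearly ordered (equivalently, well-ordered) index set~$X$. Combining this with the previous step, the condition that $\Add(M)$ has split direct limits is equivalent to the splitting of $\bigoplus_i N_i\rarrow\varinjlim_i N_i$ for every linearly ordered direct system $(N_i)$ in $\Add(M)$; and by \cite[Theorem~1.4]{AS} this in turn is equivalent to $M$ admitting a perfect decomposition.

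Since every step is a direct invocation of results already stated (\cite[Theorem~1.4]{AS}, \cite[Lemma~1.6]{AR}, and Corollary~\ref{ab5-split-direct-limits-cor}), there is no substantive obstacle to overcome here: the genuine difficulty of establishing the equivalence between perfect decompositions and the splitting of colimits for linearly ordered systems is encapsulated in \cite[Theorem~1.4]{AS} and inherited as a black box. The only real content contributed is the reformulation of the splitting condition as the intrinsic ``split direct limits'' property of $\Add(M)$ carried out in Section~\ref{split-direct-limits-secn}, together with the observation that reducing to linearly ordered index sets via \cite[Lemma~1.6]{AR} lets one mesh the two formulations.
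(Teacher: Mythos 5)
Your proposal is correct and takes essentially the same route as the paper: the paper's proof simply cites \cite[Theorem~1.4]{AS} and relies on the reformulation work of the first half of Section~\ref{split-direct-limits-secn} (namely Corollary~\ref{ab5-split-direct-limits-cor} together with the reduction to linearly ordered index sets via \cite[Lemma~1.6]{AR}) to match the two formulations, which is exactly the matching you spell out.
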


\begin{proof}
 A proof of this theorem can be found in~\cite{AS}, based on
preceding results from the books~\cite{Har,MM} and particularly
from the paper~\cite{GG}.
 An alternative proof of the (easy) implication ``only if'' based
on contramodule methods is suggested below in
Remark~\ref{contramodule-proof-of-a-s-remark}.
\end{proof}

 In this section we prove the following categorical extension
of Theorem~\ref{angeleri-saorin}.

\begin{thm} \label{top-agreeable-angeleri-saorin}
 Let\/ $\sA$ be an idempotent-complete topologically agreeable
additive category and $M\in\sA$ be an object.
 Then $M$ has a perfect decomposition if and only if the full
subcategory\/ $\Add(M)\subset\sA$ has split direct limits.
\end{thm}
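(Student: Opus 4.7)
The plan is to reduce the categorical statement to the classical module-theoretic result of Angeleri~H\"ugel and Saor\'\i n (Theorem~\ref{angeleri-saorin}) by realizing the endomorphism ring $\R=\Hom_\sA(M,M)^\rop$ as the endomorphism ring of a module.

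First, I would set $\R=\Hom_\sA(M,M)^\rop$, which is a complete, separated right linear topological ring. By Corollary~\ref{topological-rings-are-endomorphism-rings}, there exist an associative ring $A$ and a left $A$\+module $M'$ such that $\R\cong\Hom_A(M',M')^\rop$ as topological rings (with the finite topology on the right-hand side). By Theorem~\ref{generalized-tilting}(c), applied once to $M\in\sA$ and once to $M'\in A\modl$ (which is topologically agreeable by Example~\ref{modules-topologically-agreeable-example}), both additive categories $\Add(M)$ and $\Add(M')$ are naturally equivalent to $\R\contra_\proj$. In particular, there is an equivalence of additive categories $\Xi\:\Add(M)\cong\Add(M')$ taking $M$ to~$M'$.

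Next I would observe that this equivalence is compatible with the agreeable structures on both sides. Indeed, summability of a family of morphisms in either $\Add(M)$ or $\Add(M')$ is witnessed by a morphism into a coproduct, and coproducts correspond under~$\Xi$. Moreover, by Lemma~\ref{agreeable-agreement-lemma} summability in a topologically agreeable category coincides with convergence to zero in the Hom-group topology, and the topologies on $\Hom_\sA(N_1,N_2)$ and $\Hom_A(\Xi(N_1),\Xi(N_2))$ agree, being both identified with the natural topology on the Hom-groups between the corresponding projective $\R$\+contramodules (see Remark~\ref{projective-contramodules-topologically-agreeable}). Consequently a family of morphisms $(f_i\:M_{z_i}\to M_{z_{i+1}})$ in $\Add(M)$ gives rise to a summable family of composites $f_n\dotsm f_1$ if and only if the image family $(\Xi(f_i))$ in $\Add(M')$ does, and $f_i$ is an isomorphism iff $\Xi(f_i)$~is. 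Therefore $M$ admits a perfect decomposition in $\sA$ if and only if $M'$ admits a perfect decomposition in $A\modl$.

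For the split direct limits condition, Corollary~\ref{split-direct-limits-contramodule-interpretation} tells us that $\Add(M)\subset\sA$ has split $X$\+direct limits for a given directed poset~$X$ if and only if $\R\contra_\proj$ is closed under $X$\+shaped direct limits in $\R\contra$, and the same equivalence applied to $M'\in A\modl$ shows that this in turn is equivalent to $\Add(M')\subset A\modl$ having split $X$\+direct limits. Passing over all directed posets~$X$, we conclude that $\Add(M)$ has split direct limits if and only if $\Add(M')$ does. Combining the two equivalences with Theorem~\ref{angeleri-saorin} applied to the $A$\+module $M'$ completes the proof.

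The main obstacle is the second step: verifying that the equivalence $\Xi\:\Add(M)\cong\Add(M')$ preserves the notion of locally T-nilpotent family. Preservation of composition, of being an iso vs.\ non-iso, and of coproducts is automatic; the delicate point is that summability is not a priori intrinsic to the additive structure alone but depends on the topologically agreeable enrichment. This is resolved by the identification of both sides with $\R\contra_\proj$ (where the summability of morphisms $P\to P^{(\omega)}$ is intrinsic to the contramodule structure, namely admissibility in the sense of~\cite{PR}), combined with Lemma~\ref{agreeable-agreement-lemma} which ties topological convergence to agreeable summability.
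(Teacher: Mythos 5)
Your proposal is correct, and for the key step it takes a genuinely shorter route than the paper. The paper transports the perfect decomposition property from $\sA$ to the module category through a structural invariant of the endomorphism ring: Theorem~\ref{perfect-decomposition-endomorphism-ring} shows that $M$ has a perfect decomposition if and only if $\R$ is topologically left perfect, and this theorem is applied twice (once to $M\in\sA$ and once to the module $M'$ realizing $\R$) inside Corollary~\ref{weak-iii-implies-iv}; the split-direct-limits side is then handled by Corollary~\ref{split-direct-limits-contramodule-interpretation}, exactly as you do. You instead transport the perfect decomposition property directly across the coproduct-preserving equivalence $\Add(M)\cong\R\contra_\proj\cong\Add(M')$, which bypasses the long matrix-theoretic proof of Theorem~\ref{perfect-decomposition-endomorphism-ring} entirely. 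This does work, and your ``delicate point'' is in fact less delicate than you fear: summability of a family $(f_x\:N\to N_x)_{x\in X}$ is by definition the existence of a factorization through $\coprod_{x\in X}N_x$ compatible with the projections~$\pi_y$, and both the coproduct and the projections are determined by the additive structure alone, so any coproduct-preserving additive equivalence preserves summability, non-isomorphisms, and hence local T\+nilpotency; no appeal to Lemma~\ref{agreeable-agreement-lemma} or to the topological enrichment is needed at that point (though your resolution via $\R\contra_\proj$ is also valid). What your shortcut gives up is the structural description of $\R$ in terms of its topological Jacobson radical and topologically semisimple quotient, which the paper needs independently (for Remark~\ref{semiregular-remark}, Theorem~\ref{i=ii=iii=iv}, and the arguments of Section~\ref{countably-generated-secn}); for the statement at hand, however, your argument is a clean simplification.
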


 Let us explain the plan of our proof of
Theorem~\ref{top-agreeable-angeleri-saorin}.
 In the second half of Section~\ref{split-direct-limits-secn} we
have already interpreted the condition of split direct limits in
$\Add(M)$ in terms of the topological ring $\R=\Hom_\sA(M,M)^\rop$.
 In this section we similarly interpret the condition of existence
of a perfect decomposition $M\cong\coprod_{z\in Z}M_z$ of an object~$M$.
 Then we translate Theorem~\ref{angeleri-saorin} into the topological
ring language, and finally go back to the greater generality of
Theorem~\ref{top-agreeable-angeleri-saorin}.

 Let $\R$ be a complete, separated topological ring with a right linear
topology.
 The notion of the \emph{topological Jacobson radical} $\HH(\R)$ of
a topological ring $\R$ was discussed in
the papers~\cite[Section~3.B]{IMR} and~\cite[Section~7]{Pproperf}.
 We recall that, by the definition, $\HH(\R)$ is the intersection of
all open maximal right ideals in~$\R$.
 The topological Jacobson radical $\HH(\R)$ is a closed two-sided
ideal in~$\R$ \,\cite[Lemma~7.1]{Pproperf}.

\begin{lem} \label{H-is-topological-Jacobson}
 Suppose that\/ $\HH\subset\R$ is a topologically nil closed two-sided
ideal in\/ $\R$ such that the quotient ring\/ $\R/\HH$ with its quotient
topology is (complete and) topologically semisimple.
 Then the ideal\/ $\HH\subset\R$ coincides with the topological
Jacobson radical of the ring\/ $\R$ and with the Jacobson radical of
the abstract ring $\R$ (with the topology ignored), that is\/
$\HH=\HH(\R)=H(\R)$.
\end{lem}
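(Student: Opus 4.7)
The plan is to prove the chain of inclusions $\HH\subseteq H(\R)\subseteq\HH(\R)\subseteq\HH$, which collapses to equality throughout. The middle inclusion is automatic: every open maximal right ideal of $\R$ is in particular a maximal right ideal, so the family of ideals defining $\HH(\R)$ is a subfamily of that defining $H(\R)$, and intersecting fewer ideals produces a larger intersection.

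The left inclusion $\HH\subseteq H(\R)$ is a direct application of an already cited result, namely \cite[Lemma~6.6(a)]{Pproperf} (which was invoked, for instance, in the proof of Theorem~\ref{lifting-orthogonal-thm}): any topologically nil one-sided ideal of a complete, separated right linear topological ring is contained in the abstract Jacobson radical. Since $\HH$ is topologically nil by hypothesis, this gives the desired containment.

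The substantive step is $\HH(\R)\subseteq\HH$. I would prove the equivalent assertion that in the topologically semisimple ring $\S=\R/\HH$, the intersection of all open maximal right ideals is zero; pulling back open maximal right ideals of $\S$ to open maximal right ideals of $\R$ containing $\HH$ then yields $\HH(\R)\subseteq\HH$. To see this vanishing, fix a nonzero element $\bar r\in\S$ and, using the fact that $\S$ is separated, choose an open right ideal $\bar\J\subset\S$ with $\bar r\notin\bar\J$. The cyclic discrete right $\S$\+module $\S/\bar\J$ is nonzero, so by Theorem~\ref{topologically-semisimple-ring} it is a coproduct of simple discrete right $\S$\+modules. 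The coset $\bar r+\bar\J$ has nonzero image in some simple summand $T$; the annihilator of that image is open (because the action on $T$ is discrete) and maximal (because $T$ is simple cyclic), hence an open maximal right ideal of $\S$ not containing $\bar r$.

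There is essentially no serious obstacle, so the main task is to spell out the final step cleanly. As an alternative one could bypass the abstract simple-subquotient argument by using the explicit structural description $\S\cong\prod_{x\in X}\Mat_{Y_x}(D_x)$ from Theorem~\ref{topologically-semisimple-ring}\,(4) together with Remark~\ref{simple-objects-made-explicit}, and exhibit the required open maximal right ideals of $\S$ as annihilators of standard basis vectors of $D_x^{(Y_x)}$ inside the factor $\Mat_{Y_x}(D_x)$; their intersection is manifestly zero.
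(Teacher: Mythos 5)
Your proof is correct and follows essentially the same route as the paper's: the two easy inclusions $\HH\subseteq H(\R)\subseteq\HH(\R)$ are obtained exactly as you do (via \cite[Lemma~6.6(a)]{Pproperf} and the trivial comparison of intersections), and the paper's stated key observation for $\HH(\R)\subseteq\HH$ is precisely your substantive step, namely that every nonzero element of $\S=\R/\HH$ acts nontrivially on a simple discrete right $\S$\+module, so the open maximal right ideals of $\S$ intersect to zero. One small slip in your final sentence: the open maximal right ideal you want is the annihilator of the image of the \emph{generator} $1+\bar\J$ in the simple summand $T$ --- it excludes $\bar r$ precisely because that image multiplied by $\bar r$, i.e.\ the image of $\bar r+\bar\J$, is nonzero --- whereas the annihilator of the image of $\bar r+\bar\J$ itself is indeed an open maximal right ideal but need not exclude~$\bar r$.
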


\begin{proof}
 This is a straightforward generalization of~\cite[Lemma~9.1]{Pproperf}.
 One observes that every nonzero element of the quotient ring
$\S=\R/\HH$ acts nontrivially on a simple discrete right $\S$\+module
(see Remark~\ref{simple-objects-made-explicit}), and then argues as
in \emph{loc.\ cit.}
\end{proof}

 The following definition, containing a list of structural conditions on
a topological ring $\R$, plays the key role.
 We say that a complete, separated topological ring $\R$ with right
linear topology is \emph{topologically left perfect} if the next
three conditions hold:
\begin{enumerate}
\item the topological Jacobson radical\/ $\HH(\R)$ of the topological
ring\/ $\R$ is topologically left T\+nilpotent;
\item $\HH(\R)$ is a strongly closed subgroup/ideal in\/~$\R$; and
\item the quotient ring\/ $\S=\R/\HH(\R)$ with its quotient topology is
topologically semisimple.
\end{enumerate}

\begin{thm} \label{perfect-decomposition-endomorphism-ring}
 Let\/ $\sA$ be an idempotent-complete topologically agreeable
additive category, $M\in\sA$ be an object, and\/ $\R=\Hom_\sA(M,M)^\rop$
be the ring of endomorphisms of $M$, endowed with its complete,
separated, right linear topology.
 Then the object $M\in\sA$ has a perfect decomposition if and only if
the topological ring\/ $\R$ is topologically left perfect.
\end{thm}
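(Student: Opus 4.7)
By Theorem~\ref{top-agreeable-angeleri-saorin}, the object $M$ has a perfect decomposition if and only if $\Add(M)$ has split direct limits, and by Corollary~\ref{split-direct-limits-contramodule-interpretation} this is equivalent to the full subcategory $\R\contra_\proj$ being closed under direct limits in $\R\contra$. The theorem therefore reduces to an equivalence intrinsic to the topological ring $\R$: the class $\R\contra_\proj$ is closed under direct limits in $\R\contra$ if and only if $\R$ is topologically left perfect. The category $\sA$ thus disappears from the formulation, although it will re-enter implicitly when comparing summation in $\sA$ with convergence in~$\R$.

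For the direction in which $\R$ is assumed topologically left perfect, I would proceed as follows. Theorem~\ref{topologically-semisimple-ring}\,(4) supplies an isomorphism $\S = \R/\HH(\R) \cong \prod_{x \in X}\Mat_{Y_x}(D_x)$; inside this product I select the family of rank-one matrix idempotents $(\bar e_z)_{z \in Z}$ with $Z = \coprod_x Y_x$, which is a zero-convergent family of pairwise orthogonal idempotents summing to~$1$ in~$\S$. Since $\HH(\R)$ is topologically nil (a consequence of topological left T-nilpotency) and strongly closed by hypothesis, Corollary~\ref{lifting-orthogonal-cor} lifts $(\bar e_z)$ to a zero-convergent family of orthogonal idempotents $(e_z)$ in $\R$ with $\sum_z e_z = 1$. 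Via the equivalence $\R\contra_\proj \cong \Add(M)$ of Theorem~\ref{generalized-tilting}(c), this produces a decomposition $M \cong \coprod_{z\in Z} M_z$; each corner $e_z\R e_z$ is a local ring (its Jacobson radical $e_z\HH(\R)e_z$ is topologically nil, with division-ring quotient), so every $M_z$ is indecomposable with local endomorphism ring. The locally T-nilpotent property of $(M_z)$ then follows from the topological left T-nilpotency of $\HH(\R)$: a composition of non-isomorphisms between the $M_{z_i}$'s corresponds to a product of elements of $\HH(\R)$ (after correctly tracking the opposite-ring convention), and convergence of this product to zero in $\R$ translates, via Lemma~\ref{closed-subring-top-T-nilpotent} and the agreement-of-summation Lemma~\ref{agreeable-agreement-lemma}, into summability in the topologically agreeable category~$\sA$.

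For the converse, start from a perfect decomposition $M = \coprod_z M_z$, producing a zero-convergent family of orthogonal idempotents $(e_z)$ in $\R$ with $\sum_z e_z = 1$ and each $e_z\R e_z$ local with Jacobson radical~$H_z$. Define $\HH \subset \R$ to be the set of those $r \in \R$ whose ``matrix entries'' $e_w r e_z$ lie in $H_z$ for $w = z$, and are non-isomorphisms $M_z \to M_w$ for $w \ne z$ (automatic when $M_z \not\cong M_w$). One then checks that $\HH$ is a closed two-sided ideal; that the quotient $\R/\HH$ is topologically isomorphic to $\prod_{x}\Mat_{Y_x}(D_x)$, where $X$ indexes isomorphism classes of the $M_z$'s and $D_x$ is the residue division ring of a representative corner $e_z\R e_z$, hence topologically semisimple by Theorem~\ref{topologically-semisimple-ring}; that $\HH$ is topologically left T-nilpotent by combining the local T-nilpotency of $(M_z)$ with Lemma~\ref{closed-subring-top-T-nilpotent}; and finally that $\HH = \HH(\R)$ via Lemma~\ref{H-is-topological-Jacobson}. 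The main obstacle is proving that $\HH$ is strongly closed, i.e., that every zero-convergent family in $\R/\HH$ admits a zero-convergent lift in~$\R$. This is exactly the point where the topologically agreeable structure on $\sA$ is essential: it allows one to reassemble zero-convergent families of matrix entries in the various $\Hom_\sA(M_z, M_w)$ into a single zero-convergent family in~$\R$, by invoking Lemma~\ref{agreeable-agreement-lemma} together with the description of the matrix topology from Section~\ref{matrix-topology-secn}.
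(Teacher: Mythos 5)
Your opening reduction is circular as stated: Theorem~\ref{top-agreeable-angeleri-saorin} is not available at this point, because in the paper's logical order it is \emph{deduced} from the present theorem (via Corollary~\ref{weak-iii-implies-iv}, whose proof also invokes the present theorem). The ``equivalence intrinsic to $\R$'' you reduce to is precisely Corollary~\ref{weak-iii-implies-iv}, which sits downstream of this statement, not upstream; only Corollary~\ref{split-direct-limits-contramodule-interpretation} is independently available. Fortunately your actual argument in the remaining two paragraphs never uses this reduction --- it works directly between decompositions of $M$ in $\sA$ and the structure of $\R$ --- so the circularity is a removable framing error rather than a fatal one: the first paragraph should simply be deleted.

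The substance of what remains coincides with the paper's own proof. For ``if'': lift the rank-one idempotents of $\S\cong\prod_{x}\Mat_{Y_x}(D_x)$ through the topologically nil, strongly closed ideal $\HH(\R)$ by Corollary~\ref{lifting-orthogonal-cor}, decompose $M$ accordingly (the paper uses Lemma~\ref{orthogonal-idempotents-decompose} where you invoke Theorem~\ref{generalized-tilting}(c); either works), and exploit locality of the corner rings. For ``only if'': form the closed two-sided ideal of matrices of nonisomorphisms and check the three defining conditions, with strong closedness obtained by lifting matrix entries one at a time. Two small points of detail: in the ``if'' direction you still owe the argument that a composition of nonisomorphisms actually lies in $\HH(\R)$, i.e.\ that every morphism $M_w\rarrow M_z$ \emph{not} in $\HH_{w,z}\subset\R_{w,z}=\Hom_\sA(M_w,M_z)$ is an isomorphism; this follows from surjectivity of $p_{z,w}\:\R_{z,w}\rarrow D_x$ together with locality of the corners (lift $d^{-1}$ to some $g$ and check that $fg$ and $gf$ are units), and is the real content hidden in your phrase about tracking the opposite-ring convention. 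Also, Lemma~\ref{closed-subring-top-T-nilpotent} plays no role in that direction (it is needed only in the converse, to pass from the single-entry nonisomorphism matrices to the whole closed ideal); what you need there is just Lemma~\ref{agreeable-agreement-lemma}, converting zero-convergence of the partial products in $\R$ into summability in~$\sA$.
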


\begin{rem} \label{semiregular-remark}
 A relevant concept in the abstract (nontopological) ring theory is
that of a semiregular ring.
 An associative ring $R$ is said to be \emph{semiregular} if its
quotient ring by the Jacobson radical $S=R/H(R)$ is von~Neumann regular 
and idempotents can be lifted modulo~$H(R)$.

 Any topologically left perfect topological ring is semiregular as
an abstract ring.
 Indeed, one has $H(\R)=\HH(\R)$ by
Lemma~\ref{H-is-topological-Jacobson}; the topologically semisimple
quotient ring $\S=\R/\HH(\R)$ is von~Neumann regular, as it was already
mentioned in Remark~\ref{top-semisimple-von-Neumann-regular}
(cf.\ Lemma~\ref{split-endomorphisms-vNregular} below);
and idempotents can be lifted modulo a topologically nil closed
two-sided ideal by Lemma~\ref{single-idempotent-lifted}
(see also Corollary~\ref{lifting-orthogonal-cor}).

 Semiregular rings appear in connection with modules with perfect
decompositions and in related contexts
in~\cite[Theorem~7.3.15\,(6)]{Har},
\cite[Propositions~4.1 and~4.2\,(1)]{Ang},
and~\cite[Theorem~1.1\,(4)]{AS}.
 The above theorem provides a more precise description of the class
of rings appearing as the endomorphism rings of modules/objects with
perfect decompositions in terms of their topological ring structures.
\end{rem}

 Before proceeding to prove
Theorem~\ref{perfect-decomposition-endomorphism-ring}, let us
formulate and prove three lemmas about right topological/topologically
agreeable additive categories in the spirit of
Lemmas~\ref{right-topological-coproduct-lemma}\<%
\ref{agreeable-agreement-lemma}.

\begin{lem} \label{right-topological-subgroup-lemma}
 Let\/ $\sA$ be a right topological additive category, $K$, $L$,
and $M\in\sA$ be three objects, and $(f_x\:K\to L)_{x\in X}$
be a family of morphisms converging to zero in the group\/
$\Hom_\sA(K,L)$.
 Let\/ $\U\subset\Hom_\sA(K,M)$ be an open subgroup.
 Then one has\/ $\Hom_\sA(L,M)f_x\subset\U$ for all but a finite
subset of indices $x\in X$.
\end{lem}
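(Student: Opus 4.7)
The strategy is to exhibit an open subgroup $V \subset \Hom_\sA(K, L)$ containing $0$ such that $\Hom_\sA(L, M) \cdot V \subset \U$; then $f_x \to 0$ forces $f_x \in V$ for all but finitely many~$x$, which completes the argument.

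By axiom~(ii) applied to $\Hom_\sA(K, M)$, I may assume $\U$ is itself an open left $\Hom_\sA(M, M)$-submodule, since any open subgroup contains such a submodule and it suffices to treat the smaller case. Joint continuity of composition (axiom~(i)) at $(0, 0) \in \Hom_\sA(L, M) \times \Hom_\sA(K, L)$ then yields open subgroups $W \ni 0$ in $\Hom_\sA(L, M)$ and $V \ni 0$ in $\Hom_\sA(K, L)$ with $W \cdot V \subset \U$, and by axiom~(ii) I may further take $W$ to be an open $\Hom_\sA(M, M)$-submodule. To promote this to $\Hom_\sA(L, M) \cdot V \subset \U$, I consider, for each $v \in V$, the map $\phi_v\colon \Hom_\sA(L, M) \to \Hom_\sA(K, M)/\U$, $g \mapsto gv + \U$; this is a continuous $\Hom_\sA(M, M)$-linear homomorphism into a discrete module (continuity by axiom~(i); linearity because $\U$ is an $\Hom_\sA(M, M)$-submodule). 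Since $\phi_v$ vanishes on $W$, it factors through the discrete $\Hom_\sA(M, M)$-module $\Hom_\sA(L, M)/W$; choosing $\Hom_\sA(M, M)$-module generators $\bar g_1, \ldots, \bar g_n$ of this quotient, the condition $\phi_v = 0$ reduces to the finitely many conditions $g_i v \in \U$. Applying joint continuity at each $(g_i, 0)$ produces open neighborhoods $V_i \ni 0$ in $\Hom_\sA(K, L)$ with $g_i V_i \subset \U$, and replacing $V$ by the open subgroup $V \cap V_1 \cap \cdots \cap V_n$ yields the uniform inclusion $\Hom_\sA(L, M) \cdot V \subset \U$.

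\textbf{Main obstacle.} The delicate step is passing from the pointwise control provided by joint continuity to a single open $V$ working uniformly in $g \in \Hom_\sA(L, M)$. The argument above rests on the finite generation of $\Hom_\sA(L, M)/W$ as an $\Hom_\sA(M, M)$-module, which is transparent in the principal examples (module categories equipped with the finite topology, cf.~Example~\ref{modules-topologically-agreeable-example}) but may require additional structural input, or a distinct argument exploiting the interplay of axioms~(i) and~(ii) more carefully, in the general abstract right topological setting.
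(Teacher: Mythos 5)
Your overall plan is sound---there does indeed exist an open subgroup $V\subset\Hom_\sA(K,L)$ with $\Hom_\sA(L,M)\cdot V\subset\U$, and producing it would prove the lemma---but the route you take has a genuine gap, which you yourself flag: the uniformization step needs $\Hom_\sA(L,M)/W$ to be a finitely generated left $\Hom_\sA(M,M)$\+module, and nothing in axioms~(i)--(ii) provides this. It is not even ``transparent in the principal examples'': take $\sA=\boZ\modl$ with the finite topology, $K=L=\boZ$, and $M=\bigoplus_p\boZ/p\boZ$. Then $\Hom_\sA(L,M)=M$ carries the discrete topology, so the quotient $\Hom_\sA(L,M)/W$ you must control is a quotient of $M$; but $M$ is not finitely generated over $\End_\boZ(M)=\prod_p\boZ/p\boZ$, since any finite set of elements of $M$ generates a submodule supported on finitely many primes. (The lemma holds there for trivial reasons, but the intermediate finite-generation claim fails, so the proposal as written does not close.)

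The missing idea---and this is how the paper argues---is to package $\Hom_\sA(L,M)$ inside an endomorphism ring so that axiom~(ii) does the absorption for you, with no finiteness needed. Set $N=L\oplus M$, with injection $j\:L\rarrow N$ and projection $q\:N\rarrow M$; then $\Hom_\sA(L,M)=q\Hom_\sA(N,N)j$. By axiom~(i) the family $(jf_x)$ converges to zero in $\Hom_\sA(K,N)$, and again by~(i) one can choose an open subgroup $\V\subset\Hom_\sA(K,N)$ with $q\V\subset\U$; by axiom~(ii), $\V$ contains an open left $\Hom_\sA(N,N)$\+submodule $\V'$. All but finitely many of the $jf_x$ lie in $\V'$, whence $\Hom_\sA(N,N)jf_x\subset\V'$ for those~$x$ automatically, and composing with~$q$ gives $\Hom_\sA(L,M)f_x=q\Hom_\sA(N,N)jf_x\subset q\V'\subset\U$. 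This is precisely the step your argument is missing.
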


\begin{proof}
 Set $N=L\oplus M\in\sA$.
 Let $j\:L\rarrow N$ be the coproduct injection and $q\:N\rarrow M$
be the product projection.
 By axiom~(i), the family of morphisms $(jf_x\:K\to N)_{x\in X}$
converges to zero in the group $\Hom_\sA(K,N)$.
 By axiom~(ii), it follows that for any open subgroup $\V\subset
\Hom_\sA(K,N)$ we have $\Hom_\sA(N,N)jf_x\subset\V$ for all but
a finite subset of indices $x\in X$.
 By axiom~(i), for every open subgroup $\U\subset\Hom_\sA(K,M)$
there exists an open subgroup $\V\subset\Hom_\sA(K,N)$ such that
$q\V\subset\U$.
 Thus we have $q\Hom_\sA(N,N)jf_x\subset\U$ for all but a finite
subset of indices $x\in X$; and it remains to observe that
$q\Hom_\sA(N,N)j=\Hom_\sA(L,M)$.
\end{proof}

\begin{lem} \label{hom-from-coproduct-topology}
 Let\/ $\sA$ be a topologically agreeable category,
$(L_x\in\sA)_{x\in X}$ be a family of objects in\/ $\sA$,
and $M\in\sA$ be an object.
 Then the natural isomorphism\/ $\Hom_\sA(\coprod_{x\in X}L_x,M)
\cong\prod_{x\in X}\Hom_\sA(L_x,M)$ is an isomorphism of topological
abelian groups (where the right-hand side is endowed with
the product topology).
\end{lem}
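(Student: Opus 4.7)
The plan is to verify that the canonical bijection
\[
\Phi\colon\Hom_\sA\Bigl(\coprod_{x\in X}L_x,\;M\Bigr)\;\longrightarrow\;\prod_{x\in X}\Hom_\sA(L_x,M),\qquad f\longmapsto(f\iota_x)_{x\in X},
\]
is a homeomorphism. Forward continuity is immediate: each coordinate $f\mapsto f\iota_x$ is pre-composition with $\iota_x$, hence continuous by axiom~(i), so $\Phi$ is continuous into the product. For openness, I set $L=\coprod_xL_x$ and must show that every open neighbourhood of zero $\W\subset\Hom_\sA(L,M)$---which I may assume to be an open subgroup, hence also closed---contains the $\Phi$-preimage of some basic product-open neighbourhood of zero.

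The key move uses that in any additive category with coproducts, alongside the inclusions $\iota_x\colon L_x\to L$ there are canonical projections $\pi_x\colon L\to L_x$ determined by $\pi_x\iota_y=\delta_{x,y}\id_{L_x}$. Define $\tilde e\colon L\to L^{(X)}$ by setting its restriction to $L_y$ equal to $\iota_y^L\iota_y$, where $\iota_y^L\colon L\to L^{(X)}$ is the $y$-th coproduct inclusion. A direct check shows that the family associated to $\tilde e$ (by applying the projections $L^{(X)}\to L$) is exactly $(\iota_x\pi_x)_{x\in X}$, so this family is summable in the agreeable category $\sA$; by the topologically agreeable hypothesis it converges to zero in $\Hom_\sA(L,L)$. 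Applying Lemma~\ref{right-topological-subgroup-lemma} to this family (with both $K$ and $L$ of that lemma equal to our $L$) and to the open subgroup $\W\subset\Hom_\sA(L,M)$, one obtains a finite set $Y_0\subset X$ such that $f\iota_x\pi_x\in\W$ for \emph{every} $f\in\Hom_\sA(L,M)$ and every $x\notin Y_0$.

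To conclude, the same construction applied with $M$ in place of $L$ shows that for any $f\colon L\to M$ the family $(f\iota_x\pi_x)_{x\in X}$ of morphisms $L\to M$ is summable, witnessed by $\tilde f\colon L\to M^{(X)}$ with $\tilde f|_{L_y}=\iota_y^M(f\iota_y)$; a short calculation gives that the agreeable sum $\Sigma_M\tilde f$ equals $f$. By Lemma~\ref{agreeable-agreement-lemma}, $f$ is also the topological limit of the finite partial sums $s_Y=\sum_{y\in Y}f\iota_y\pi_y$. Taking $Y=Y_0$, the tail $f-s_{Y_0}=\sum_{x\notin Y_0}f\iota_x\pi_x$ is a topological limit of finite partial sums each of which lies in $\W$; since $\W$ is closed, $f-s_{Y_0}\in\W$, i.e.\ $f\equiv s_{Y_0}\pmod\W$. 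For each $y\in Y_0$ set $\V_y=\{g\in\Hom_\sA(L_y,M):g\pi_y\in\W\}$, which is open by continuity of pre-composition with~$\pi_y$. If $f\iota_y\in\V_y$ for every $y\in Y_0$, then $f\iota_y\pi_y\in\W$ for each such $y$, hence $s_{Y_0}\in\W$ (finite sum in a subgroup), and therefore $f\in\W$. Thus the $\Phi$-preimage of the basic product-open set $\prod_{y\in Y_0}\V_y\times\prod_{x\notin Y_0}\Hom_\sA(L_x,M)$ is contained in $\W$.

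The main obstacle is obtaining a single finite exceptional set $Y_0\subset X$ that works uniformly in $f$: zero-convergence of the family $(f\iota_x\pi_x)_x$ holds for each $f$ separately by topological agreeability, but only yields an exceptional set depending on~$f$. The device of treating $(\iota_x\pi_x)_x$ itself as a summable family of endomorphisms of $L$ and feeding it into Lemma~\ref{right-topological-subgroup-lemma} is precisely what converts this family-wise statement into the uniform one needed to match the product topology.
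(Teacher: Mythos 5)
Your proposal is correct and follows essentially the same route as the paper: both prove forward continuity coordinatewise via axiom~(i), and both obtain the reverse direction by observing that the family of projectors $(\iota_x\pi_x)_{x\in X}$ is summable, hence zero-convergent in $\Hom_\sA(L,L)$ by topological agreeability, and then feeding it into Lemma~\ref{right-topological-coproduct-lemma}'s companion Lemma~\ref{right-topological-subgroup-lemma} with $K=L$ to get the uniform finite exceptional set. The only cosmetic difference is that you verify openness of the forward bijection at zero in detail, whereas the paper phrases the same conclusion as continuity of the inverse map $(f_x)\mapsto\sum_x f_x\pi_x$.
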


\begin{proof}
 By axiom~(i), in any right topological additive category $\sA$
where the coproduct $\coprod_{x\in X}L_x$ exists, the natural map
$\Hom_\sA(\coprod_{x\in X}L_x,M)\rarrow\Hom_\sA(L_y,M)$ is
continuous for every $y\in X$.
 Hence so is the map to the product $\Hom_\sA(\coprod_{x\in X}L_x,M)
\rarrow\prod_{x\in X}\Hom_\sA(L_x,M)$.
 It remains to check continuity in the opposite direction, and
that is where we will need the assumption that $\sA$ is
topologically agreeable.

 Set $L=\coprod_{x\in X}L_x$.
 For every $x\in X$, let $\iota_x\in\Hom_\sA(L_x,L)$ be the coproduct
injection and $\pi_x\in\Hom_\sA(L,L_x)$ be the natural projection.
 Then the family of morphisms $(\pi_x\:L\to L_x)_{x\in X}$ is
summable in the agreeable category~$\sA$ (as these are the components
of the identity morphism $\id_L\:L\rarrow L$).
 Hence the family of morphisms $(\iota_x\pi_x\:L\to L)_{x\in X}$ is
summable, too.

 Since $\sA$ is topologically agreeable, it follows that the family of
projectors $(\iota_x\pi_x)_{x\in X}$ converges to zero in the topology
of the group $\Hom_\sA(L,L)$.
 By Lemma~\ref{right-topological-subgroup-lemma} (applied to
the objects $K=L$ and~$M$), every open subgroup $\U$ in the topological
group $\Hom_\sA(L,M)$ contains the subgroup $\Hom_\sA(L,M)\iota_x\pi_x=
\Hom_\sA(L_x,M)\pi_x$ for all but a finite subset of indices $x\in X$.
 In other words, one can say that the whole family of subgroups
$\Hom_\sA(L_x,M)\pi_x$ converges to zero in $\Hom_\sA(L,M)$.
 It follows that the map $\prod_{x\in X}\Hom_\sA(L_x,M)\rarrow
\Hom_\sA(L,M)$ assigning to a family of morphisms
$(f_x\:L_x\to M)_{x\in X}$ the morphism $\sum_{x\in X}f_x\pi_x$
is continuous.
\end{proof}

\begin{lem} \label{hom-to-copower-topology}
 Let\/ $\sA$ be a topologically agreeable category, $M$ and $N\in\sA$
be two objects, and $X$ be a set.
 Then the topological group\/ $\Hom_\sA(M,N^{(X)})$ is isomorphic to
the topological group\/ $\Hom_\sA(M,N)[[X]]$ (where for any
complete, separated topological abelian group\/ $\A$, the group\/
$\A[[X]]$ is endowed with the projective limit topology of\/
$\A[[X]]=\varprojlim_{\U\subset\A}(\A/\U)[X]$, with\/ $\U$ ranging
over the open subgroups of\/~$\A$).
\end{lem}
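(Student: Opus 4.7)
The plan is to construct a continuous bijection $\eta\colon \Hom_\sA(M, N^{(X)}) \rarrow \Hom_\sA(M,N)[[X]]$, $f\mapsto (\pi_x\circ f)_{x\in X}$ (with $\pi_x\colon N^{(X)}\rarrow N$ the canonical projections), and to verify that its inverse is also continuous. Bijectivity is essentially contained in Lemma~\ref{agreeable-agreement-lemma}: since $\sA$ is topologically agreeable, the summable family $(\pi_x f)_{x\in X}$ converges to zero in $\Hom_\sA(M,N)$, so $\eta(f)$ indeed lies in $\Hom_\sA(M,N)[[X]]$; conversely, given a zero-convergent family $(f_x)_{x\in X}$, Lemma~\ref{right-topological-coproduct-lemma} makes $(\iota_x f_x)_{x\in X}$ zero-convergent in the complete, separated group $\Hom_\sA(M, N^{(X)})$, so its topological sum $\sum_{x\in X}^{\mathrm{top}} \iota_x f_x$ exists, and a short computation using $\pi_y\iota_x=\delta_{x,y}\id_N$ shows it is the $\eta$-preimage of $(f_x)$.

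To prove $\eta$ is continuous, fix an open subgroup $\U\subset\Hom_\sA(M,N)$ and pick any $x_0\in X$. By axiom~(i) (continuity of composition with $\pi_{x_0}$) together with axiom~(ii), there exists an open $\Hom_\sA(N^{(X)}, N^{(X)})$\+submodule $\V\subset\Hom_\sA(M, N^{(X)})$ with $\pi_{x_0}\V\subset\U$. For arbitrary $x\in X$, the coordinate-swapping automorphism $\sigma_{x,x_0}$ of $N^{(X)}$ (as in the proof of Lemma~\ref{right-topological-coproduct-lemma}) satisfies $\pi_{x_0}\sigma_{x,x_0}=\pi_x$ and preserves $\V$, so $\pi_x f=\pi_{x_0}(\sigma_{x,x_0}f)\in\U$ for every $f\in\V$. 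Hence $\eta(\V)$ lies in the basic neighborhood $\{(g_x)_{x\in X} \mid g_x\in\U \text{ for all } x\in X\}$ of zero in $\Hom_\sA(M,N)[[X]]$.

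For continuity of $\eta^{-1}$, fix a basic open neighborhood $\V\subset\Hom_\sA(M, N^{(X)})$, which is an open $\Hom_\sA(N^{(X)}, N^{(X)})$\+submodule, and an element $x_0\in X$. The preimage $\U$ of $\V$ under the continuous additive map $\Hom_\sA(M,N)\rarrow\Hom_\sA(M, N^{(X)})$, $g\mapsto\iota_{x_0}g$, is an open subgroup of $\Hom_\sA(M,N)$. For each $x\in X$ and $g\in\U$, one has $\iota_x g=\sigma_{x,x_0}(\iota_{x_0}g)\in\V$, so for any zero-convergent family $(f_x)$ with all $f_x\in\U$, every finite partial sum $\sum_{x\in Z}\iota_x f_x$ lies in the subgroup $\V$. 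Since $\V$ is an open, and hence closed, subgroup of the complete, separated group $\Hom_\sA(M, N^{(X)})$, the topological limit $\eta^{-1}((f_x))=\sum^{\mathrm{top}}_{x\in X}\iota_x f_x$ lies in $\V$. The main obstacle throughout is achieving \emph{uniformity in $x$}, which axiom~(i) alone does not deliver; the combination of axiom~(ii) with the permutation automorphisms $\sigma_{x,x_0}$ supplies it in both directions.
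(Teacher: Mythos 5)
Your proposal is correct and follows essentially the same route as the paper: bijectivity from the definition of topological agreeability together with Lemma~\ref{agreeable-agreement-lemma}, and continuity in both directions via axioms~(i) and~(ii) combined with the coordinate-permuting automorphisms $\sigma_{x,x_0}$ to get uniformity in~$x$. The only cosmetic difference is that for the direction $\Hom_\sA(M,N)[[X]]\rarrow\Hom_\sA(M,N^{(X)})$ the paper factors the map through $\Hom_\sA(M,N^{(X)})[[X]]$ and invokes continuity of the summation map on a complete, separated group, whereas you unwind that composition into a single direct check using that an open subgroup is closed; the content is the same.
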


\begin{proof}
 For any right topological additive category $\sA$, the abelian
group homomorphism $\Hom_\sA(M,N)[[X]]\rarrow\Hom_\sA(M,N^{(X)})$
assigns to every zero-convergent family of morphisms
$(f_x\:M\to N)_{x\in X}$ the morphism $\sum_{x\in X}\iota_xf_x\:
M\rarrow N^{(X)}$ (where $\iota_x\:N\rarrow N^{(X)}$ are
the coproduct injections).
 The property of this map to be an isomorphism of abelian groups is
a restatement of the definition of a topologically agreeable
category~$\sA$.
 The lemma claims that this is a topological isomorphism; so we have
to prove continuity in both directions.

 Our map decomposes as $\Hom_\sA(M,N)[[X]]\rarrow
\Hom_\sA(M,N^{(X)})[[X]]\rarrow\Hom_\sA(M,N^{(X)})$.
 Here the first map assigns to a zero-convergent family of morphisms
$(f_x\:M\to N)_{x\in X}$ the family of morphisms
$(\iota_xf_x\:M\to N^{(X)})_{x\in X}$, which is zero-convergent by
Lemma~\ref{right-topological-coproduct-lemma}.
 One can see from the proof of
Lemma~\ref{right-topological-coproduct-lemma} that this map is
continuous.
 The second map assigns to a zero-convergent family of elements
$(g_x\in\A)_{x\in X}$ of the topological group
$\A=\Hom_\sA(M,N^{(X)})$ their sum $\sum_{x\in X}g_x$ in~$\A$.
 This map is continuous for every complete, separated topological
abelian group~$\A$.
 Hence the composition is continuous, too. {\hbadness=1675\par}

 To check that the inverse map $\Hom_\sA(M,N^{(X)})\rarrow
\Hom_\sA(M,N)[[X]]$ is continuous, suppose we are given an open
subgroup $\U\subset\Hom_\sA(M,N)$.
 Choose a fixed element $x_0\in X$, and consider the subgroup
$\V_{\U,x_0}\subset\Hom_\sA(M,N^{(X)})$ consisting of all
the morphisms $f\:M\rarrow N^{(X)}$ for which the composition
$\pi_{x_0}f$ (where $\pi_x\:N^{(X)}\rarrow N$ is the natural
projection) belongs to~$\U$.
 By the continuity axiom~(i), $\V_{\U,x_0}$ is an open subgroup in
$\Hom_\sA(M,N^{(X)})$.
 By axiom~(ii), there exists an open
$\Hom_\sA(N^{(X)},N^{(X)})$\+submodule
$\V\subset\Hom_\sA(N,N^{(X)})$ such that $\V\subset\V_{\U,x_0}$.

 Now for every morphism $f\in\V$ we have $\sigma_{x_0,x}f\in\V$
(where $\sigma_{x,y}\:N^{(X)}\rarrow N^{(X)}$ is the automorphism
permuting the coordinates $x$ and~$y$).
 Hence $\pi_xf=\pi_{x_0}\sigma_{x_0,x}f\in\U$.
 We have shown that the full preimage of the open subgroup
$\U[[X]]\subset\Hom_\sA(M,N)[[X]]$ under the map $\Hom_\sA(M,N^{(X)})
\rarrow\Hom_\sA(M,N)[[X]]$ contains an open subgroup
$\V\subset\Hom_\sA(M,N^{(X)})$; so this map is continuous.
\end{proof}

\begin{cor} \label{endomorphisms-of-copower}
 Let\/ $\sA$ be a topologically agreeable category, $M\in\sA$ be
an object, and $Y$ be a set.
 Let\/ $\R=\Hom_\sA(M,M)^\rop$ be the topological ring of endomorphisms
of~$M$.
 Then the topological ring\/ $\Hom_\sA(M^{(Y)},M^{(Y)})^\rop$ is
naturally isomorphic to the ring of row-zero-convergent matrices\/
$\Mat_Y(\R)$ with the topology defined in
Section~\ref{matrix-topology-secn}.
\end{cor}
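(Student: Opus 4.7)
The plan is to obtain the underlying topological group isomorphism by chaining Lemmas \ref{hom-from-coproduct-topology} and \ref{hom-to-copower-topology}, and then to verify separately that this bijection intertwines the ring structures after taking the opposite on the endomorphism side.

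Specifically, applying first Lemma~\ref{hom-from-coproduct-topology} (with $L_x=M$ for every $x\in Y$) and then Lemma~\ref{hom-to-copower-topology} to each factor of the resulting product, I obtain natural isomorphisms of topological abelian groups
$$
 \Hom_\sA\bigl(M^{(Y)},M^{(Y)}\bigr)\,\cong\,\prod_{x\in Y}\Hom_\sA\bigl(M,M^{(Y)}\bigr)
 \,\cong\,\prod_{x\in Y}\R[[Y]].
$$
The composite sends a morphism $f\:M^{(Y)}\rarrow M^{(Y)}$ to the family of entries $f_{x,y}=\pi_y\circ f\circ\iota_x\in\R$, where $\iota_x$ and~$\pi_y$ are the coproduct injections and product projections of $M^{(Y)}$. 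For each fixed~$x$ the row $(f_{x,y})_{y\in Y}$ lies in $\R[[Y]]$, so the target is by definition the underlying group of $\Mat_Y(\R)$. To match this with the topology defined in Section~\ref{matrix-topology-secn}, I would note that a base of open subgroups of $\R[[Y]]$ is given by the subgroups $\{(r_y)_{y\in Y}\mid r_y\in\I\text{ for all }y\}$ for~$\I\subset\R$ an open right ideal; consequently a base of neighbourhoods of zero in the product topology on $\prod_{x\in Y}\R[[Y]]$ is given, after intersecting the finitely many chosen constraints to a common open right ideal, by the subgroups $\K_{X,\I}$ from Section~\ref{matrix-topology-secn}.

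It remains to check that the resulting bijection is an \emph{anti}-homomorphism of rings, so that it becomes a ring isomorphism after applying $(-)^\rop$ on the endomorphism side. Given $f$, $g\:M^{(Y)}\rarrow M^{(Y)}$ and indices $x$, $z\in Y$, the summable decomposition $f\circ\iota_x=\sum_{y\in Y}\iota_y\circ f_{x,y}$ (valid by the agreeable structure; cf.\ the proof of Lemma~\ref{hom-to-copower-topology}) together with continuity of composition gives
$$
 (g\circ f)_{x,z}\,=\,\pi_z\circ g\circ f\circ\iota_x
 \,=\,\sum_{y\in Y}g_{y,z}\circ f_{x,y}\quad\text{in }\End(M).
$$
Rewriting this equation in $\R=\End(M)^\rop$ reverses the order of each composition, yielding $\sum_{y\in Y}f_{x,y}\cdot g_{y,z}$ in $\R$, which is precisely the $(x,z)$-entry of the matrix product of $(f_{x,y})$ and $(g_{y,z})$ in $\Mat_Y(\R)$. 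Thus the opposite composition on endomorphisms is converted into matrix multiplication, as required.

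The step that I expect to need the most care is the interchange of the infinite sum with post-composition by $\pi_z\circ g$ in the displayed computation. This uses axiom~(i) of a right topological additive category to pass the summation through $\pi_z\circ g$, and also needs Lemma~\ref{right-topological-subgroup-lemma} to confirm that the resulting family $(g_{y,z}\circ f_{x,y})_{y\in Y}$ is still zero-convergent in $\R$, so that it sums in the projective-limit topology of $\R$ to the claimed element. Both points are mild but must be recorded carefully; once they are in place, the rest of the verification of the ring axioms (unitality and associativity on the matrix side) follows formally from the corresponding properties of composition in~$\sA$.
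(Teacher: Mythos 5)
Your proposal is correct and follows essentially the same route as the paper: the paper's proof of this corollary is a two-line sketch that likewise identifies the underlying topological group via Lemmas~\ref{hom-from-coproduct-topology} and~\ref{hom-to-copower-topology} and declares the ring isomorphism ``easily established,'' which is exactly the computation you carry out in detail. Your verification of the entrywise formula $(g\circ f)_{x,z}=\sum_y g_{y,z}\circ f_{x,y}$, including the appeal to axiom~(i) and Lemma~\ref{right-topological-subgroup-lemma} for the zero-convergence of the summed family, correctly fills in the part the paper leaves implicit.
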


\begin{proof}
 The ring isomorphism can be easily established; and the description
of the topology is provided
Lemmas~\ref{hom-from-coproduct-topology}\<\ref{hom-to-copower-topology}.
\end{proof}

\begin{proof}[Proof of the implication ``only if'' in
Theorem~\ref{perfect-decomposition-endomorphism-ring}]
 More generally, for any family of objects $(M_z)_{z\in Z}$
in an agreeable category $\sA$, the ring of endomorphisms
$R=\Hom_\sA(M,\allowbreak M)^\rop$ of their coproduct
$M=\coprod_{z\in Z} M_z$ can be described as the ring of row-summable 
matrices of morphisms $r=(r_{w,z}\:M_w\to M_z)_{w,z\in Z}$.
 Here the words ``row-summable'' mean that for every fixed index
$w\in Z$ the family of morphisms $(r_{w,z}\:M_w\to M_z)_{z\in Z}$
must be summable in the agreeable category~$\sA$.

 Assuming that the family of objects $(M_z)_{z\in Z}$ is locally
T\+nilpotent, we will denote by $X$ the set of isomorphism classes
of the objects~$M_z$.
 So we have a natural surjective map $Z\rarrow X$ assigning to every
module in the family its isomorphism class.
 For every element $x\in X$, let $Y_x\subset Z$ denote the full
preimage of the element~$x$ under this map; so the set $Z$ is
the disjoint union of nonempty sets~$Y_x$.

 Assume further that the category $\sA$ is topologically agreeable.
 Denote by $\HH\subset\R$ the subset of all \emph{matrices of
nonisomorphisms} in the ring $\R$; that is, an element
$h=(h_{w,z})_{w,z\in Z}\in\R$ belongs to $\HH$ if and only if,
for every pair of indices $w$, $z\in Z$, the morphism~$h_{w,z}$
is not an isomorphism.
 Our next aim is to show that $\HH$ is a closed two-sided ideal in~$\R$.

 Denote by $\HH_{w,z}\subset\R_{w,z}$ the subset of nonisomorphisms
in the topological group of morphisms $\R_{w,z}=\Hom_\sA(M_w,M_z)$.
 Following the discussion in the beginning of this section,
$\HH_{z,z}$ is a two-sided ideal in~$\R_{z,z}$.
 It follows that, for all $w$ and $z\in Z$, the subset $\HH_{w,z}
\subset\R_{w,z}$ is an open subgroup.
 Indeed, if the elements $w$ and $z\in Z$ do \emph{not} belong to
the same equivalence class $Y_x$, \,$x\in X$, then we have
$\HH_{w,z}=\R_{w,z}$.
 If there is $x\in X$ such that both the elements $w$
and~$z$ belong to $Y_x$, then choosing an isomorphism $M_w\cong M_z$
identifies $\R_{w,z}$ with $\R_{z,z}$ and $\HH_{w,z}$ with $\HH_{z,z}$.
 Any proper open right ideal in the topological ring $\R_{z,z}$
consists of nonisomorphisms.
 As the topology on $\R_{z,z}$ is separated, it follows that
the ideal of nonisomorphisms $\HH_{z,z}\subset\R_{z,z}$ is open.
 Hence $\HH_{w,z}\subset\R_{w,z}$ is an open subgroup in both cases.

 The projection map $\R\rarrow\R_{w,z}$, assigning to a matrix
$(a_{w,z})_{w,z\in Z}\in\R$ its matrix entry at the position
$(w,z)\in Z\times Z$, is a continuous group homomorphism (by
the continuity axiom~(i) of a right topological additive category).
 The subset $\HH\subset\R$ is the intersection of the full preimages
of the open sugroups $\HH_{w,z}\subset\R_{w,z}$ over all the pair of
indices $(w,z)$.
 It follows that $\HH$ is a closed subgroup in~$\R$.

 The multiplication in the ring $\R$ is computable as follows.
 Let $a=(a_{v,w})_{v,w\in Z}$ and $b=(b_{w,z})_{w,z\in Z}$ be two
matrices representing some elements of~$\R$.
 Then, for every fixed $v$ and $z\in Z$, the family of morphisms
$(a_{v,w}\:M_v\to M_w)_{w\in Z}$ is summable, and it follows that
the family of morphisms $(b_{w,z}a_{v,w}\:M_v\to M_z)_{w\in Z}$
is summable, too.
 In view of Lemma~\ref{agreeable-agreement-lemma}, the product of
two elements $c=ab\in\R$ is represented by the matrix
$(c_{v,z})_{v,z\in Z}$ with the entries
$$
 c_{v,z}=\sum\nolimits_{w\in Z}(b_{w,z}a_{v,w})_{v,z},
$$
where the infinite summation sign means the limit of finite partial
sums in the topology of $\R_{v,z}$.
 Furthermore, the composition of any two nonisomorphisms between
objects $M_z$, \,$z\in Z$ is not an isomorphism, since these objects
are indecomposable; while the composition of an isomorphism and
a nonisomorphism (in any order) is obviously a nonisomorphism.
 Therefore, $\HH\subset\R$ is a two-sided ideal.

 Let us show that $\HH$ is topologically left T\+nilpotent.
 Let $E\subset\R$ be the set of all matrices with a single nonzero
entry which is not an isomorphism.
 Using Lemmas~\ref{hom-from-coproduct-topology}
and~\ref{hom-to-copower-topology}, one can see that the subgroup
generated by $E$ is dense in~$\HH$.
 The condition of local T\+nilpotency of the family of objects
$(M_z)_{z\in Z}$ can be equivalently restated by saying that
the subset of elements $E\subset\R$ is topologically left T\+nilpotent.
 Applying Lemma~\ref{closed-subring-top-T-nilpotent}, we conclude
that the whole ideal $\HH\subset\R$ is topologically left
$T$\+nilpotent.
 
 Our next aim is to describe the quotient ring $\S=\R/\HH$.
 The elements of $\S$ are represented by matrices with entries in
the quotient groups of the groups $\R_{w,z}=\Hom_\sA(M_w,M_z)$ by
the subgroups of nonisomorphisms $\HH_{w,z}\subset\R_{w,z}$.
 These are block matrices supported in the subset
$\coprod_{x\in X}Y_x\times Y_x\subset Z\times Z$.
 For every $x\in X$, the related block $\S_x$ can be described as
a similar quotient ring of the topological ring of endomorphisms
of the object $\coprod_{y\in Y_x}M_y$.

 According to the above discussion, the ideal $\HH_{y,y}\subset\R_{y,y}$
is open and the topology on the quotient ring $D_y=\R_{y,y}/\HH_{y,y}$ 
is discrete.
 All the rings $\R_{y,y}$, \,$y\in Y_x$ are isomorphic to each other,
since the objects $M_y$ are; hence so are all the rings~$D_y$
for $y\in Y_x$.
 Choosing isomorphisms between the objects $M_y$ in a compatible way,
we can put $\R_x=\R_{y,y}$ and $D_x=D_y$.
 So $D_x$, \,$x\in X$, are some discrete skew-fields.
 
 By Corollary~\ref{endomorphisms-of-copower}, the topological ring
of endomorphisms of the object $\coprod_{y\in Y_x}M_y$ is isomorphic
to the topological ring of row-zero-convergent matrices
$\Mat_{Y_x}(\R_x)$.
 It follows that the topological ring $\S_x$ is isomorphic to
the topological ring of row-finite matrices $\Mat_{Y_x}(D_x)$.
 By Lemma~\ref{hom-from-coproduct-topology}, the topology on
the ring $\S=\prod_{x\in X}\S_x$ is the product topology.
 It remains to recall Theorem~\ref{topologically-semisimple-ring}\,(4)
and conclude that the topological ring $\S$ is topologically
semisimple.

 We also observe that the topological ring $\S$ is complete, as it is
clear from the explicit description of its topology that we have
obtained.
 We still have to show that the ideal $\HH$ is strongly closed in~$\R$.
 
 For this purpose, choose for every $x\in X$ and all $y$, $z\in Y_x$
some (automatically continuous) section $s_{y,z}\:D_{y,z}\rarrow
\R_{y,z}$ of the natural surjective map $p_{y,z}\:\R_{y,z}\rarrow
\R_{y,z}/\HH_{y,z}$ onto the discrete group $D_{y,z}=\R_{y,z}/\HH_{y,z}$,
satisfying only the condition that $s_{y,z}(0)=0$.
 Define a section $s\:\S\rarrow\R$ of the continuous surjective ring
homomorphism $p\:\S\rarrow\R$ by the rule that every zero matrix
entry is lifted to a zero matrix entry, while the maps~$s_{y,z}$ are
applied to nonzero matrix entries.
 Then the map~$s$ (does not respect either the matrix addition or
the matrix multiplication, of course; but it) is continuous.
 Applying the map~$s$, one can lift any zero-converging family of
elements in $\S$ to a zero-converging family of elements in~$\R$.

 Finally, we have $\HH(\R)=\HH$ by Lemma~\ref{H-is-topological-Jacobson}.
 This proves the implication ``only~if''.
\end{proof}

 In order to prove the inverse implication, we will need another lemma.

\begin{lem} \label{orthogonal-idempotents-decompose}
 Let\/ $\sA$ be an idempotent-complete agreeable category and\/
$M\in\sA$ be an object.
 Let $(e_z\in\Hom_\sA(M,M))_{z\in Z}$ be a summable family of
orthogonal idempotents such that\/ $\sum_{e\in Z}e_z=\id_M$.
 Then there exists a unique direct sum decomposition
$M\cong\coprod_{z\in Z}M_z$ of the object $M$ such that the projector
$\iota_z\pi_z\:M\rarrow M_z\rarrow M$ on the direct summand $M_z$
in $M$ is equal to~$e_z$ for every $z\in Z$.
\end{lem}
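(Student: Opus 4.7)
I plan to use idempotent-completeness to split each $e_z$ as $\iota_z\pi_z$ with $\iota_z\:M_z\rarrow M$, $\pi_z\:M\rarrow M_z$, and $\pi_z\iota_z=\id_{M_z}$, and then construct mutually inverse morphisms between $M$ and $\coprod_{z\in Z}M_z$. Orthogonality $e_we_z=\delta_{wz}e_z$, together with $\iota_w$ being a split monomorphism (retract $\pi_w$) and $\pi_z$ being a split epimorphism (section $\iota_z$), yields $\pi_w\iota_z=\delta_{wz}\id_{M_z}$. The coproduct universal property then produces $\phi\:\coprod_{z\in Z}M_z\rarrow M$ with $\phi\bar\iota_z=\iota_z$, where $\bar\iota_z\:M_z\rarrow\coprod_{w\in Z}M_w$ denote the coproduct inclusions and $\bar\pi_w$ the additive-category projections defined by $\bar\pi_w\bar\iota_z=\delta_{wz}\id_{M_z}$.

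To build the inverse $\psi$, let $e\:M\rarrow M^{(Z)}$ be the morphism given by summability of $(e_z)_{z\in Z}$, and write $\alpha_z\:M\rarrow M^{(Z)}$, $\beta_w\:M^{(Z)}\rarrow M$ for the coproduct inclusions/projections of $M^{(Z)}$, so $\beta_we=e_w$; let $\Sigma\:M^{(Z)}\rarrow M$ be the codiagonal determined by $\Sigma\alpha_z=\id_M$, so that $\Sigma e=\sum_{z\in Z}e_z=\id_M$ by hypothesis. Using the coproduct universal property of $M^{(Z)}$, define $q\:M^{(Z)}\rarrow\coprod_{w\in Z}M_w$ by $q\alpha_z=\bar\iota_z\pi_z$, and set $\psi:=q\circ e$. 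Since $\bar\pi_wq\alpha_z=\delta_{wz}\pi_w=\pi_w\beta_w\alpha_z$ on each coproduct generator, the morphisms $\bar\pi_wq$ and $\pi_w\beta_w$ agree on every summand of $M^{(Z)}$, hence $\bar\pi_wq=\pi_w\beta_w$, and consequently $\bar\pi_w\psi=\pi_we_w=\pi_w\iota_w\pi_w=\pi_w$, realizing the prescribed projections.

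For $\phi\psi=\id_M$, introduce the auxiliary morphism $q'\:M^{(Z)}\rarrow M^{(Z)}$ with $q'\alpha_z=\alpha_ze_z$. From $\beta_wq'\alpha_z=\delta_{wz}e_z=e_w\beta_w\alpha_z$ one obtains $\beta_wq'=e_w\beta_w$; hence $\beta_w(q'e)=e_we_w=e_w=\beta_we$ for every $w$, and injectivity of $\eta$ (the defining property of an agreeable category) forces $q'e=e$. Moreover $\phi q\alpha_z=\iota_z\pi_z=e_z=\Sigma\alpha_ze_z=\Sigma q'\alpha_z$ on each generator, so $\phi q=\Sigma q'$, whence $\phi\psi=\phi qe=\Sigma q'e=\Sigma e=\id_M$. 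For $\psi\phi=\id_{\coprod_zM_z}$, test on each $\bar\iota_w$: since $\beta_z(e\iota_w)=e_z\iota_w=\iota_z(\pi_z\iota_w)=\delta_{zw}\iota_z=\beta_z(\alpha_w\iota_w)$, injectivity of $\eta$ yields $e\iota_w=\alpha_w\iota_w$, and therefore $\psi\phi\bar\iota_w=qe\iota_w=q\alpha_w\iota_w=\bar\iota_w\pi_w\iota_w=\bar\iota_w$.

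Uniqueness follows from uniqueness of splittings of idempotents in an idempotent-complete category: any second decomposition realizing the same projectors $e_z$ would furnish a second splitting of each $e_z$, and splittings are unique up to unique compatible isomorphism, which assembles into an isomorphism of the two coproduct decompositions. The principal technical subtlety throughout is that for infinite $Z$ the coproduct $M^{(Z)}$ is not a product, so equalities of morphisms \emph{into} $M^{(Z)}$ or $\coprod_zM_z$ must be verified through injectivity of $\eta$, while morphisms \emph{out} of $M^{(Z)}$ are pinned down through the coproduct universal property; the calculations above are organized so as to alternate between these two perspectives.
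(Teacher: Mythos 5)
Your proof is correct and follows essentially the same route as the paper's: split each idempotent using idempotent-completeness, build $\coprod_z M_z\rarrow M$ from the $\iota_z$ via the coproduct universal property, build the inverse by composing the morphism $M\rarrow M^{(Z)}$ supplied by summability of $(e_z)$ with $\coprod_z\pi_z$, and verify the two composites using orthogonality and $\sum_z e_z=\id_M$. Your write-up merely makes explicit (via the auxiliary morphisms $q$, $q'$ and the injectivity of~$\eta$) some verifications that the paper leaves as one-line remarks.
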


\begin{proof}
 For the uniqueness, one just observes that a direct summand $L$ of
an object $M$ in additive category is determined by the projector
$M\rarrow L\rarrow M$ onto it.
 Conversely, since $\sA$ is assumed to be idempotent-complete,
every idempotent~$e_z$, \,$z\in Z$, determines a direct summand $M_z$
in $M$ such that $e_z$~is the composition of the projection
$\pi_z\:M\rarrow M_z$ and the injection $\iota_z\:M_z\rarrow M$.

 It remains to construct an isomorphism $M\cong\coprod_{z\in Z}M_z$.
 The collection of morphisms $\iota_z\:M_z\rarrow M$ corresponds
to a uniquely defined morphism $f\:\coprod_{z\in Z}M_z\rarrow M$.
 Constructing the desired morphism in the opposite direction
$g\:M\rarrow\coprod_{z\in Z}M_z$ is equivalent to showing that
the collection of morphisms $\pi_z\:M\rarrow M_z$ is summable.

 Now the family of idempotents $(e_z\:M\to M)_{z\in Z}$ is summable
by assumption, so we have a morphism $h\:M\rarrow M^{(Z)}$ whose
components are the idempotents~$e_z$.
 Let $\pi\:M^{(Z)}\rarrow\coprod_{z\in Z}M_z$ be the coproduct of
the morphisms~$\pi_z$, that is $\pi=\coprod_{z\in Z}\pi_z$.
 Then the desired morphism~$g$ can be obtained as $g=\pi\circ h$.
 This is based on the observation that $\pi_ze_z=\pi_z$ for
every $z\in Z$.

 The composition $gf\:\coprod_{z\in Z}M_z\rarrow M\rarrow
\coprod_{z\in Z}M_z$ is the identity morphism, since $\pi_z\iota_w
=\id_{M_z}$ when $z=w$ and~$0$ otherwise (the latter property being
a consequence of the assumption of orthogonality, $e_ze_w=0$).
 Finally, the assertion that the composition $fg\:M\rarrow
\coprod_{z\in Z}M_z\rarrow M$ is the identity endomorphism of $M$
is, by the definition, a restatement of the equation
$\sum_{z\in Z}e_z=\id_M$.
\end{proof}

\begin{proof}[Proof of the implication ``if'' in
Theorem~\ref{perfect-decomposition-endomorphism-ring}]
 By Theorem~\ref{topologically-semisimple-ring}\,(4), we have
an isomorphism of topological rings $\S\cong\prod_{x\in X}\S_x$,
where $\S_x=\Mat_{Y_x}(D_x)$ are the rings of row-finite matrices
over some discrete skew-fields~$D_x$.

 For every $y\in Y_x$, let $\bar e_y\in\S_x$ be the idempotent element
represented by the matrix whose only nonzero entry is
the element $1\in D_x$ sitting in the position
$(y,y)\in Y_x\times Y_x$.
 Clearly, $(\bar e_y)_{y\in Y_x}$ is a family of orthogonal idempotents
in the ring $\S_x$, converging to zero in its matrix topology
with $\sum_{y\in Y_x}\bar e_y=1$.

 Let $Z=\coprod_{x\in X}Y_x$ be the disjoint union of the nonempty
sets~$Y_x$.
 For every $y\in Y_x$, we will consider~$\bar e_y$ as an element of
the ring~$\S$ (which we can do, as $\S_x$ is naturally a subring
in~$\S$, though with a different unit).
 Then $(\bar e_z)_{z\in Z}$ is a family of orthogonal idempotents in $\S$,
converging to zero in the product topology of $\S$ with
$\sum_{z\in Z}\bar e_z=1$.

 Set $\HH=\HH(\R)$.
 By Corollary~\ref{lifting-orthogonal-cor}, there exists a set
of orthogonal idempotents $e_z\in\R$, \,$z\in Z$, such that
$\bar e_z=e_z+\HH$, the family of elements $(e_z)_{z\in Z}$ converges
to zero in the topology of $\R$, and $\sum_{z\in Z}e_z=1$ in~$\R$.
 By Lemma~\ref{orthogonal-idempotents-decompose} (together with
Lemma~\ref{agreeable-agreement-lemma}), there exists a direct sum
decomposition $M=\coprod_{z\in Z}M_z$ of the object $M$ such that
$e_z=\iota_z\pi_z$ is the composition of the coproduct injection
and the product projection $M\rarrow M_z\rarrow M$.

 The ring $\R$ can be now viewed as the ring of (row-summable) matrices
$r=(r_{w,z})_{w,z\in Z}$ with entries in the topological groups
$\R_{w,z}=\Hom_\sA(M_w,M_z)$.
 The topology of $\R_{w,z}$ can be recovered from that of $\R$
as the topology on a direct summand with a continuous idempotent
projector (see Example~\ref{idempotent-completion-top-agreeable}\,(2)).

 The continuous ring homomorphism $p\:\R\rarrow\S$ assigns to
a matrix $r=(r_{w,z})$ the block matrix $p(r)=
(p_{w,z}(r_{w,z}))_{w,z\in Z}$.
 Here, when there exists $x\in X$ such that both the indices $w$
and~$z$ belong to the subset $Y_x\subset Z$, we have a continuous
surjective abelian group homomorphism (with a discrete codomain)
$p_{w,z}\:\R_{w,z}\rarrow D_x$.
 Otherwise, $p_{w,z}=0$.
 The ideal $\HH\subset\R$ consists of all the matrices
$h=(h_{w,z})_{w,z\in Z}\in\R$ such that $h_{w,z}\in\HH_{w,z}$
for all $w$, $z\in Z$, where $\HH_{w,z}\subset\R_{w,z}$ is
the kernel of the map~$p_{w,z}$.

 In particular, for every $z\in Z$ the ring $\R_{z,z}=e_z\R e_z$
is topologically isomorphic to the topological endomorphism ring
$\Hom_\sA(M_z,M_z)$.
 The map $p_{z,z}\:\R_{z,z}\rarrow D_{z,z}=D_x$ is a continuous
surjective ring homomorphism with the kernel $\HH_{z,z}\subset\HH$.
 Since the ideal $\HH\subset\R$ is topologically left T\+nilpotent
by assumption, so is the open ideal $\HH_{z,z}\subset\R_{z,z}$.
 It follows that $\R_{z,z}$ is a local ring and $\HH_{z,z}$ is its
maximal ideal.

 Let $f\:M_w\rarrow M_z$ be a morphism that does not belong to
the subgroup $\HH_{w,z}\subset\R_{w,z}$.
 Our next aim is to show that $f$~is an isomorphism.
 Indeed, there exists $x\in X$ such that both $w$ and $z$ belong
to $Y_x$, for otherwise $\HH_{w,z}=\R_{w,z}$.
 Consider the element $d=p_{w,z}(f)\in D_x$.
 Since $d\ne0$, there exists an inverse element $d^{-1}\in D_x$.

 The map $p_{z,w}\:\R_{z,w}\rarrow D_x$ is surjective, so we can
choose a preimage $g\:M_z\rarrow M_w$ of the element~$d^{-1}$.
 Consider the compositions $fg\in\R_{z,z}$ and $gf\in\R_{w,w}$.
 We have $p_{z,z}(fg)=p_{w,z}(f)p_{z,w}(g)=1\in D_x$ and
$p_{w,w}(gf)=p_{z,w}(g)p_{w,z}(f)=1\in D_x$.
 Since the rings $R_{z,z}$ and $R_{w,w}$ are local with the residue
skew-fields $D_x$, it follows that the elements $fg$ and~$gf$
are invertible in these rings.
 Hence our morphism~$f$ is  an isomorphism.

 Finally, let $E\subset\R$ denote the subset of all elements
represented by matrices with a single nonzero entry which is not
an isomorphism.
 We have shown that $E\subset\HH$.
 Since the ideal $\HH$ is topologically left $T$\+nilpotent in $\R$ by
assumption, it follows that so is the set~$E$.
 The latter observation is equivalent to saying that the family of
objects $(M_z\in\sA)_{z\in Z}$ is locally T\+nilpotent.
\end{proof}

 Having proved Theorem~\ref{perfect-decomposition-endomorphism-ring},
we can now deduce the results promised in the beginning of this section.

\begin{cor} \label{weak-iii-implies-iv}
 Let\/ $\R$ be a complete, separated topological ring with a right
linear topology.
 Then the full subcategory of projective\/ $\R$\+contramodules
$\R\contra_\proj$ is closed under direct limits in $\R\contra$ if and
only if the topological ring\/ $\R$ is topologically left perfect.
\end{cor}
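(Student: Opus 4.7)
The plan is to chain together three earlier equivalences after first realizing $\R$ as an endomorphism ring in a module category. By Corollary~\ref{topological-rings-are-endomorphism-rings}, I can choose an associative ring $A$ and a left $A$\+module $M$ such that $\R\cong\Hom_A(M,M)^\rop$ as topological rings, where $\Hom_A(M,M)^\rop$ carries the finite topology. The ambient category $\sA=A\modl$ is topologically agreeable by Example~\ref{modules-topologically-agreeable-example}(1), and it is both cocomplete and idempotent-complete, so the hypotheses of the relevant earlier results are all satisfied.

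With this setup, I translate the structural condition on $\R$ into a condition on $M$, and then into a condition on $\Add(M)$, and finally back into a condition on $\R\contra$. First, Theorem~\ref{perfect-decomposition-endomorphism-ring} (applied to $\sA=A\modl$ and the object $M$) converts ``$\R$ is topologically left perfect'' into the statement ``$M$ has a perfect decomposition in $\sA$''. Next, Theorem~\ref{top-agreeable-angeleri-saorin} (or, since we are inside a module category, the classical Theorem~\ref{angeleri-saorin}) converts ``$M$ has a perfect decomposition'' into ``the full subcategory $\Add(M)\subset\sA$ has split direct limits''. Finally, Corollary~\ref{split-direct-limits-contramodule-interpretation}, applied for each directed poset $X$ and then quantified over all such $X$, converts the last statement into ``$\R\contra_\proj$ is closed under $X$\+shaped direct limits in $\R\contra$ for every directed poset $X$'', which is exactly the assertion that $\R\contra_\proj$ is closed under direct limits in $\R\contra$.

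No genuine obstacle arises, since all the heavy machinery (the expression of a topological ring as an endomorphism ring in Section~\ref{as-endomorphism-rings-secn}; the structural characterization of topological left perfectness via perfect decompositions in Theorem~\ref{perfect-decomposition-endomorphism-ring}; the topologically agreeable form of the Angeleri H\"ugel--Saor\'\i n characterization in Theorem~\ref{top-agreeable-angeleri-saorin}; and the contramodule interpretation of split direct limits in Corollary~\ref{split-direct-limits-contramodule-interpretation}) has been put in place in the preceding sections. The only bookkeeping point to watch is the uniformity in the shape of the direct limit: since Corollary~\ref{split-direct-limits-contramodule-interpretation} is an $X$\+by\-$X$ equivalence and ``split direct limits'' means having the $X$\+version for every directed poset $X$ (equivalently, by the remark preceding Example~\ref{locally-Noetherian-example}, for every linearly ordered~$X$), assembling the three equivalences over all such $X$ yields the claimed biconditional.
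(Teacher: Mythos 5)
Your proof is correct and follows essentially the same chain as the paper's own argument: realize $\R$ as $\Hom_A(M,M)^\rop$ via Corollary~\ref{topological-rings-are-endomorphism-rings}, then pass through ``$M$ has a perfect decomposition'' (Theorem~\ref{perfect-decomposition-endomorphism-ring}), ``$\Add(M)$ has split direct limits'' (Theorem~\ref{angeleri-saorin}), and ``$\R\contra_\proj$ is closed under direct limits'' (Corollary~\ref{split-direct-limits-contramodule-interpretation}). One caution: you must rely on the classical Theorem~\ref{angeleri-saorin} here and not on Theorem~\ref{top-agreeable-angeleri-saorin}, since the paper deduces the latter \emph{from} the present corollary, so invoking it would make the argument circular --- your parenthetical fallback to the classical version is the load-bearing one.
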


\begin{proof}
 By Corollary~\ref{topological-rings-are-endomorphism-rings}, there
exists an associative ring $A$ and a left $A$\+module $M$ such that
the topological ring $\R$ is isomorphic to the topological ring
of endomorphisms of the $A$\+module $M$ endowed with the finite
topology, that is $\R\cong\Hom_A(M,M)^\rop$.

 Assume that the full subcategory $\R\contra_\proj$ is closed under
direct limits in $\R\contra$.
 By Corollary~\ref{split-direct-limits-contramodule-interpretation},
it means that the full subcategory $\Add(M)\subset A\modl$ 
has split direct limits.
 According to Theorem~\ref{angeleri-saorin}, it follows that
the left $A$\+module $M$ has a perfect decomposition.
 Using Theorem~\ref{perfect-decomposition-endomorphism-ring}, we can
conclude that the topological ring $\R$ is topologically left perfect.

 Conversely, assume that $\R$ is topologically left perfect.
 By Theorem~\ref{perfect-decomposition-endomorphism-ring}, it means
that the left $A$\+module $M$ has a perfect decomposition.
 According to Theorem~\ref{angeleri-saorin}, it follows that
the additive category $\Add(M)$ has split direct limits.
 From Corollary~\ref{split-direct-limits-contramodule-interpretation}
we conclude that the class of all projective left $\R$\+contramodules
is closed under direct limits in $\R\contra$.
\end{proof}

\begin{proof}[Proof of Theorem~\ref{top-agreeable-angeleri-saorin}]
 Let\/ $\R=\Hom_\sA(M,M)^\rop$ be the topological ring of endomorphisms
of an object $M$ in a topologically agreeable category~$\sA$.

 Assume that the object $M\in\sA$ has a perfect decomposition.
 By Theorem~\ref{perfect-decomposition-endomorphism-ring}, it follows
that the topological ring $\R$ is topologically left perfect.
 According to Corollary~\ref{weak-iii-implies-iv}, this means that
the full subcategory $\R\contra_\proj$ is closed under direct limits in
$\R\contra$.
 From Corollary~\ref{split-direct-limits-contramodule-interpretation}
we can conclude that the full subcategory $\Add(M)\subset\sA$
has split direct limits.

 Conversely, assume that the category $\Add(M)$ has split direct limits.
 By Corollary~\ref{split-direct-limits-contramodule-interpretation},
it means that the class of all projective left $\R$\+contramodules is
closed under direct limits in $\R\contra$.
 According to Corollary~\ref{weak-iii-implies-iv}, it follows that
the topological ring~$\R$ is topologically left perfect.
 By Theorem~\ref{perfect-decomposition-endomorphism-ring}, we can
conclude that the object $M\in\sA$ has a perfect decomposition.
\end{proof}

\Section{Split Contramodule Categories are Semisimple}
\label{split-categories-secn}

 In this section we prove
Theorems~\ref{topologizable-spectral-is-semisimple}
and~\ref{contra-split-iff-semisimple}.
 We also give a negative answer (present a counterexample) to
a question posed in~\cite[Section~1.2]{PR}.

 The following lemma is a straightforward generalization
of~\cite[Proposition~V.6.1]{St}.

\begin{lem} \label{split-endomorphisms-vNregular}
 Let\/ $\sA$ be a split abelian category and $M\in\sA$ be an object.
 Then the endomorphism ring $R=\Hom_\sA(M,M)^\rop$ is von~Neumann
regular. \qed
\end{lem}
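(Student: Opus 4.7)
The plan is to imitate the classical proof for module categories (as in Proposition~V.6.1 of Stenstr\"om), using only the universal factorization available in any abelian category together with the splitness hypothesis.

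First I would observe that a ring $R$ is von~Neumann regular if and only if its opposite $R^\rop$ is, since the relation $aba=a$ is symmetric under reversing multiplication order. So it suffices to show that the ring $S=\Hom_\sA(M,M)$ (with composition as multiplication) is von~Neumann regular: given $f\in S$, produce $g\in S$ such that $fgf=f$.

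Next, for any $f\in S$ I would invoke the standard epi-mono factorization in the abelian category~$\sA$: write $f=\iota\pi$, where $\pi\:M\rarrow\im(f)$ is an epimorphism and $\iota\:\im(f)\rarrow M$ is a monomorphism. Because $\sA$ is split, every short exact sequence splits, so in particular every epimorphism admits a section and every monomorphism admits a retraction. Thus I can choose morphisms $\sigma\:\im(f)\rarrow M$ with $\pi\sigma=\id_{\im(f)}$ and $\rho\:M\rarrow\im(f)$ with $\rho\iota=\id_{\im(f)}$. Setting $g=\sigma\rho\in S$, the computation
$$
 fgf=\iota\pi\sigma\rho\iota\pi=\iota\cdot\id_{\im(f)}\cdot\id_{\im(f)}\cdot\pi=\iota\pi=f
$$
shows that $g$ is a quasi-inverse of~$f$ in~$S$. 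Hence $S$ is von~Neumann regular, and therefore so is $R=S^\rop$.

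There is essentially no obstacle here: both splittings are guaranteed directly by the hypothesis that all short exact sequences in~$\sA$ split (applied to $0\to\ker f\to M\to\im(f)\to 0$ and $0\to\im(f)\to M\to\coker f\to 0$), and the verification $fgf=f$ is a one-line composition check. The only minor point worth flagging is the passage from $S$ to $S^\rop$, which is trivial but needs to be mentioned because the ring in the statement is the opposite of the endomorphism ring.
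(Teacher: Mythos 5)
Your proof is correct and is exactly the argument the paper has in mind: the lemma is stated with a bare \qed as "a straightforward generalization of~\cite[Proposition~V.6.1]{St}", and your epi--mono factorization plus section/retraction computation $fgf=\iota(\pi\sigma)(\rho\iota)\pi=f$ is precisely that classical argument transported to a general split abelian category. The remark about passing between $R$ and $R^\rop$ is the right minor point to flag and is handled correctly.
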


\begin{cor} \label{no-top-T-nilpotent-ideals-cor}
 Let\/ $\R$ be a complete, separated topological ring with a right
linear topology.
 Assume that the abelian category\/ $\R\contra$ is split.
 Then any closed topologically left T\+nilpotent two-sided ideal in\/
$\R$ is zero.
\end{cor}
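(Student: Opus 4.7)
The plan is to show that the splitness of $\R\contra$ forces the abstract ring $\R$ itself to be von~Neumann regular, and then to invoke the topological left T\+nilpotence of $\HH$ against a nonzero idempotent extracted from any nonzero element of $\HH$.

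First, I would apply Lemma~\ref{split-endomorphisms-vNregular} to the split abelian category $\sA=\R\contra$ and the object $M=\R$, namely the free left $\R$\+contramodule with one generator. The unique $\R$\+contralinear morphism $\R\rarrow\R$ sending $1$ to $r$ is right multiplication by~$r$, and a direct computation of composition shows that the assignment $r\mapsto(s\mapsto sr)$ is a ring isomorphism $\R\cong\End_{\R\contra}(\R)^\rop$. Hence Lemma~\ref{split-endomorphisms-vNregular} tells us that the abstract ring $\R$ is von~Neumann regular.

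Next, I would carry out the standard idempotent extraction. Fix any $a\in\HH$; by von~Neumann regularity choose $b\in\R$ with $a=aba$. Replacing $b$ by $bab$, one checks that $aba=a$ and $bab=b$ hold simultaneously, and then $e:=ab$ satisfies $e^2=abab=a(bab)=ab=e$ and $ea=aba=a$. Since $\HH$ is a right ideal of $\R$, we have $e=ab\in\HH$.

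The final step applies the topological left T\+nilpotence of $\HH$ to the constant sequence $a_1=a_2=\dotsb=e\in\HH$: the partial products are $e^n=e$ for every $n\ge1$, and by T\+nilpotence these must converge to zero in~$\R$. Since $\R$ is separated, this forces $e=0$, whence $a=ea=0$. As $a\in\HH$ was arbitrary, $\HH=0$, proving the corollary. There is no real obstacle in the argument; the whole proof is a short consequence of Lemma~\ref{split-endomorphisms-vNregular} together with the elementary observation that a nonzero idempotent in a separated topological ring cannot be topologically nil.
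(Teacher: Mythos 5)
Your proof is correct and follows the same route as the paper's first proof of this corollary: both deduce von~Neumann regularity of the abstract ring $\R\cong\End_{\R\contra}(\R)^\rop$ by applying Lemma~\ref{split-endomorphisms-vNregular} to the free left $\R$\+contramodule with one generator. The only difference is in the last step, where the paper cites the facts that a topologically nil two\+sided ideal is contained in the Jacobson radical $H(\R)$ and that $H(\R)=0$ for von~Neumann regular rings, whereas you replace these citations by a short self-contained argument (extracting an idempotent $e=ab\in\HH$ with $ea=a$ and observing that a nonzero idempotent cannot be topologically nil in a separated ring); both versions, like the paper's, do not actually need the closedness of the ideal, and the paper also records an independent second proof via the contramodule Nakayama lemma.
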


\begin{proof}[First proof]
 We will even prove that any topologically nil two-sided ideal
$J\subset\R$ is zero.
 Indeed, by~\cite[Lemma~7.6(a)]{Pproperf}, $J$ is contained in
the Jacobson radical $H(\R)$ of the ring~$\R$.
 On the other hand, the ring $\R$ (viewed as an abstract ring with
the topology ignored) is the opposite ring to the ring of
endomorphisms $\Hom^\R(\R,\R)$ of the free left $\R$\+contramodule $\R$
with one generator.
 By Lemma~\ref{split-endomorphisms-vNregular}, the ring $\R$ is
von~Neumann regular; hence $H(\R)=0$ \,\cite[Corollary~1.2(c)]{Goo}.
\end{proof}

\begin{proof}[Second proof]
 Let $\J\subset\R$ be a closed topologically left T\+nilpotent
two-sided ideal.
 For any left $\R$\+contramodule $\D$, consider the subcontramodule
$\C=\J\tim\D\subset\D$ (see~\cite[Section~2.10]{Pproperf}).
 Since the category $\R\contra$ is split by assumption, $\C$ is
a direct summand in~$\D$.
 Hence $\J\tim\C=\C$.
 By the contramodule Nakayama lemma~\cite[Lemma~6.2]{Pproperf},
it follows that $\C=0$.
 In particular, taking $\D=\R$ to be the free left $\R$\+contramodule
with one generator, one obtains $0=\C=\J\tim\R=\J\subset\R$.
\end{proof}

\begin{proof}[Proof of Theorem~\ref{contra-split-iff-semisimple}]
 Let $\R$ be a complete, separated topological ring with a right linear
topology for which the abelian category $\R\contra$ is split.
 Then all left $\R$\+contramodules are projective, so the class of all
projective left $\R$\+contramodules is closed under direct limits in
$\R\contra$.
 By Corollary~\ref{weak-iii-implies-iv}, it follows that the topological
ring $\R$ is topologically left perfect.

 So the topological Jacobson radical $\HH(\R)$ is a closed topologically
left T\+nilpotent two-sided ideal in~$\R$.
 According to Corollary~\ref{no-top-T-nilpotent-ideals-cor}, we can
conclude that $\HH(\R)=0$.
 Thus $\R=\S$ is a topologically semisimple topological ring, and
the abelian category $\R\contra$ is Ab5 and semisimple by
Theorem~\ref{topologically-semisimple-ring}\,(1).
\end{proof}

\begin{proof}[Proof of
Theorem~\ref{topologizable-spectral-is-semisimple}]
 Let $\sA$ be a topologically agreeable split abelian category.
 Given an object $G\in\sA$, consider the full subcategory
$\Add(G)\subset\sA$.
 Since $\sA$ is split abelian, the category $\Add(G)$ is split
abelian, too.
 Furthermore, the full subcategory $\Add(G)\subset\sA$ is closed
under kernels and colimits.
 Since $\sA$ is topologically agreeable,
Theorem~\ref{generalized-tilting}(a,c) tells that
$\Add(G)\cong\R\contra_\proj$, where $\R=\Hom_\sA(G,G)^\rop$
is the topological ring of endomorphisms of the object~$G$.
 So the additive category of projective left $\R$\+contramodules
is split abelian.
 It follows that all left $\R$\+contramodules are projective,
that is $\R\contra=\R\contra_\proj$ (cf.\ the discussion in the proof of
Theorem~\ref{topologically-semisimple-ring}\,(3)\,$\Rightarrow$\,(1)).
 Hence the abelian category $\R\contra$ is split.
 By Theorem~\ref{contra-split-iff-semisimple}, the category
$\R\contra$ is Grothendieck and semisimple.
 Thus so is the category $\Add(G)\cong\R\contra_\proj=\R\contra$.
 As this holds for every object $G\in\sA$, it follows easily that
the category $\sA$ is Ab5 and semisimple.
\end{proof}

\begin{rem}
 Conversely, it is easy to deduce
Theorem~\ref{contra-split-iff-semisimple} back from
Theorem~\ref{topologizable-spectral-is-semisimple}.
 Indeed, for any complete, separated topological ring $\R$ with
right linear topology, the additive category $\R\contra_\proj$ is
topologizable by
Remark~\ref{projective-contramodules-topologically-agreeable}
(or by Corollary~\ref{topological-rings-are-endomorphism-rings}
and Theorem~\ref{generalized-tilting}(a,c)).
 If $\R\contra=\R\contra_\proj$ is split abelian and all topologizable
split abelian categories are Ab5 and semisimple, then
$\R\contra$ is Ab5 and semisimple, so a Grothendieck category.
\end{rem}

\begin{ex} \label{ring-summation-structure-counterex}
 It was observed in~\cite[Section~1.2]{PR} that the category
$\R\contra_\proj$ is always agreeable, and the question was asked
whether the converse holds, in the following sense.
 Let $\sB$ be a locally presentable abelian category with a projective
generator such that the full subcategory of projective objects
$\sB_\proj\subset\sB$ is agreeable (cf.\
Lemma~\ref{agreeable-projectives-locally-presentable}).
 Does there exist a complete, separated topological ring $\R$ with
a right linear topology such that $\sB$ is equivalent to the category
of left $\R$\+contramodules?

 Now we can show that the answer is negative.
 Let $\sB$ be a spectral category.
 By the definition, $\sB$ is a Grothendieck abelian category;
in particular, it is locally presentable and has a generator~$G$.
 Furthermore, all objects of $\sB$ are projective, so $G\in\sB_\proj
=\sB$.
 Assume that there exists a topological ring $\R$ such that
$\sB=\R\contra$.
 By Theorem~\ref{contra-split-iff-semisimple}, it would then follow
that $\R\contra$ is a semisimple Grothendieck category, i.~e., it is
discrete spectral.
 Thus any nonzero continuous spectral category $\sB$ (such as, e.~g.,
the one described in Example~\ref{complete-boolean}) is
a counterexample.

 The above argument is quite involved and indirect, as it is based on
the results of the theory of direct sum decompositions of modules
(the ``if'' assertion of Theorem~\ref{angeleri-saorin}).
 A somewhat simpler and more direct alternative argument, producing
a more narrow class of counterexamples, is discussed below in
Remark~\ref{ring-summation-easy-counterex}.

 On the other hand, if $\sB$ is a cocomplete abelian category with
a projective generator $P$ such that the category $\sB_\proj$ is
\emph{topologically} agreeable, and $\R=\Hom_{\sB_\proj}(P,P)^\rop$ is
the topological ring of endomorphisms of the object $P$, then
the category $\sB$ is equivalent to $\R\contra$.
 This equivalence can be constructed by applying the result of
Theorem~\ref{generalized-tilting}(c) to the additive category
$\sA=\sB_\proj$ and the object $M=P$.
 Indeed, one has $\Add(P)=\sB_\proj\subset\sB$, so it remains to
take into account the uniqueness assertion in
Theorem~\ref{generalized-tilting}(a).
\end{ex}

\Section{Countable Topologies and Countably Generated Modules}
\label{countably-generated-secn}

 Let $R$ be an associative ring.
 A right $R$\+module $N$ is said to be \emph{coperfect} if every
descending chain of cyclic $R$\+submodules in $N$ terminates,
 or equivalently if  every descending chain of finitely
generated $R$\+submodules in $N$ terminates~\cite[Theorem~2]{Bj}.
 Clearly, any submodule and any quotient module of a coperfect
module is coperfect.
 The class of coperfect right $R$\+modules is also closed under
direct limits.

 A right $R$\+module $N$ is said to be \emph{$\Sigma$\+coperfect}
if the right $R$\+module $N^{(\omega)}=\bigoplus_{n=0}^\infty N$
is coperfect, or equivalently, the right $R$\+module $N^n$ is
coperfect for every $n\ge1$.
 Clearly, if an $R$\+module $N$ is $\Sigma$\+coperfect, then
the $R$\+module $N^{(X)}$ is $\Sigma$\+coperfect for any set~$X$.

 Let $A$ be an associative ring, $M$ be a left $A$\+module and
$R=\Hom_A(M,M)^\rop$ be its ring of endomorphisms.
 An $A$\+module $M$ is said to be \emph{endocoperfect} if $M$ is
a coperfect right $R$\+module.
 We will say that an $A$\+module $M$ is \emph{endo-$\Sigma$-coperfect}
if $M$ is a $\Sigma$\+coperfect module over its endomorphism ring,
that is a $\Sigma$\+coperfect $R$\+module.

 The following theorem was proved in the paper~\cite{AS}.

\begin{thm}[{\cite[Corollary~2.3]{AS}}] \label{angeleri-saorin-endo}
 Let $A$ be an associative ring and $M$ be a left $A$\+module.
 Assume that, for every sequence of left $A$\+module endomorphisms
$M\rarrow M\rarrow M\rarrow\dotsb$, the induced morphism of left
$A$\+modules\/ $\bigoplus_{n=0}^\infty M\rarrow\varinjlim_{n\ge0}M$
is split.
 Then the left $A$\+module $M$ is endo-$\Sigma$-coperfect.
 In particular, any left $A$\+module $M$ with a perfect decomposition
is endo-$\Sigma$-coperfect. \qed
\end{thm}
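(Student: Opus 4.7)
The plan is to prove the main assertion; the ``in particular'' clause then follows because, by Theorem~\ref{angeleri-saorin}, an $A$\+module with a perfect decomposition has split direct limits in $\Add(M)$, so the hypothesis above is a special case (applied to direct systems of the form $M\to M\to M\to\cdots$ in $\Add(M)$).

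Consider an arbitrary descending chain $m_0 R\supseteq m_1 R\supseteq m_2 R\supseteq\cdots$ of cyclic right $R$\+submodules of $M^{(\omega)}$, where $m_{i+1}=m_ir_i$ for some $r_i\in R$. The supports of the $m_i$ form a decreasing sequence of finite subsets of $\omega$, hence stabilize, and one may assume all $m_i$ are supported in a common finite $K\subseteq\omega$. Let $f_i\in\End_A(M)$ be the endomorphism corresponding to $r_i$, so $m_{i+1,k}=f_i(m_{i,k})$ for $k\in K$. The hypothesis applied to the sequence $(f_i)_{i\ge 0}$ yields a section $s\colon L\to\bigoplus_n M_n$ of the canonical epimorphism $\phi\colon\bigoplus_n M_n\to L:=\varinjlim(M,f_i)$, with structure maps $\sigma_i\colon M_i\to L$ (where $M_i=M$).

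Introduce the auxiliary endomorphisms $\theta_{n,i}:=p_n\circ s\circ\sigma_i\in\End_A(M)$, where $p_n\colon\bigoplus_n M_n\to M_n$ is projection. These satisfy the coherence identity $\theta_{n,i+1}\circ f_i=\theta_{n,i}$ (obtained from $\sigma_{i+1}\circ f_i=\sigma_i$) and the reconstruction formula $\sigma_i=\sum_n\sigma_n\circ\theta_{n,i}$ in $L$ (a finite pointwise sum). Setting $\eta_{n,k}:=\theta_{n,0}(m_{0,k})$, coherence gives $\eta_{n,k}=\theta_{n,i}(m_{i,k})$ for every $i$. Because $s(\sigma_0(m_{0,k}))$ has finite support in $\bigoplus_n M_n$ and $K$ is finite, there is a common integer $J$ beyond which $\eta_{n,k}=0$ for all $k\in K$. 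Pushing the identity $\sigma_i(m_{i,k})=\sum_{n\le J}\sigma_n(\eta_{n,k})$ from $L$ down to $M$ yields a threshold $N_0\ge J$ such that
\[
 m_{N,k}\;=\;\sum_{n\le J} F_{n\to N}(\eta_{n,k}) \qquad (k\in K,\ N\ge N_0),
\]
where $F_{n\to N}:=f_{N-1}\circ\cdots\circ f_n$.

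The crucial step is to introduce, for each $i\ge N_0$, the $A$\+module endomorphism
\[
 H_i(z)\;:=\;\sum_{n\le J} F_{n\to i}\bigl(\theta_{n,i+1}(z)\bigr),
\]
which is a well-defined element of $\End_A(M)$ as a finite sum of $A$\+linear maps. By coherence, $\theta_{n,i+1}(m_{i+1,k})=\theta_{n,i+1}(f_i(m_{i,k}))=\theta_{n,i}(m_{i,k})=\eta_{n,k}$, so evaluating at $z=m_{i+1,k}$ and using the displayed equation with $N=i$ gives $H_i(m_{i+1,k})=\sum_{n\le J}F_{n\to i}(\eta_{n,k})=m_{i,k}$ for every $k\in K$. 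Letting $g_i\in R$ correspond to $H_i$, one concludes $m_{i+1}\cdot g_i=m_i$ componentwise, so $m_i\in m_{i+1}R$ and therefore $m_iR=m_{i+1}R$ for all $i\ge N_0$; the chain terminates. The main obstacle is finding the right form for $H_i$: the section data immediately produces the decomposition $m_{i,k}=\sum_{n\le J} F_{n\to i}(\eta_{n,k})$ in $M$, but turning this into a single $A$\+linear endomorphism of $M$ that simultaneously pulls every $m_{i+1,k}$ back to the corresponding $m_{i,k}$ requires substituting the variable endomorphism $\theta_{n,i+1}(\cdot)$ in place of the constants $\eta_{n,k}$.
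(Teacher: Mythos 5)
The paper offers no proof of this statement: it is quoted from Angeleri H\"ugel and Saor\'\i n (\cite[Corollary~2.3]{AS}) with the \qed attached directly to the theorem, so there is nothing internal to compare your argument against. Your blind proof is, however, correct and self-contained, and it is worth recording why. Given a descending chain $m_0R\supseteq m_1R\supseteq\dotsb$ in $M^{(\omega)}$ with $m_{i+1}=m_ir_i$, all the $m_i$ are automatically supported in the finite set $K=\mathrm{supp}(m_0)$ (the ``stabilization'' remark is not even needed), and the right $R$\+action is componentwise application of the corresponding endomorphisms $f_i$. The section $s$ of $\bigoplus_n M_n\rarrow\varinjlim_n M_n$ produces the maps $\theta_{n,i}=p_n s\sigma_i$, and the two identities you isolate --- the coherence $\theta_{n,i+1}f_i=\theta_{n,i}$ and the pointwise-finite reconstruction $\sigma_i=\sum_n\sigma_n\theta_{n,i}$ --- do all the work: they force $\theta_{n,i}(m_{i,k})=\eta_{n,k}$ to be independent of~$i$, supported in $n\le J$, and they give $\sigma_N(m_{N,k})=\sigma_N\bigl(\sum_{n\le J}F_{n\to N}(\eta_{n,k})\bigr)$ in the colimit; since the differences $m_{N,k}-\sum_{n\le J}F_{n\to N}(\eta_{n,k})$ form a coherent sequence under the transition maps and one of them dies in the colimit, they all vanish from some $N_0$ on. The decisive step is indeed the one you flag: replacing the constants $\eta_{n,k}$ by the single $A$\+linear map $H_i=\sum_{n\le J}F_{n\to i}\circ\theta_{n,i+1}$, which pulls every $m_{i+1,k}$ back to $m_{i,k}$ simultaneously and hence shows $m_iR=m_{i+1}R$ for $i\ge N_0$. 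This is essentially a reconstruction of the original Angeleri H\"ugel--Saor\'\i n argument; the derivation of the ``in particular'' clause from Theorem~\ref{angeleri-saorin} (via the split-direct-limits characterization of perfect decompositions) is also the intended one.
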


 The aim of this section is to prove the following theorem providing
a partial converse assertion to Theorem~\ref{angeleri-saorin-endo}.

\begin{thm} \label{countably-generated-endocoperfect-theorem}
 Let $A$ be an associative ring and $M$ be a countably generated
endo-$\Sigma$-coperfect left $A$\+module.
 Then the left $A$\+module $M$ has a perfect decomposition.
\end{thm}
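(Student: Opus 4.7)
The plan is to reduce the theorem to the statement that the topological ring $\R = \Hom_A(M,M)^\rop$, equipped with its finite topology, is topologically left perfect. Once this is known, Theorem~\ref{perfect-decomposition-endomorphism-ring}, applied to the topologically agreeable category $\sA = A\modl$ (Example~\ref{modules-topologically-agreeable-example}(1)) and the object $M$, immediately produces the desired perfect decomposition of $M$.

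First I would observe that the countable generation hypothesis forces $\R$ to have a countable base of neighborhoods of zero. Indeed, choosing countable generators $m_1, m_2, \ldots$ of $M$ and setting $E_n = Am_1 + \cdots + Am_n$, the right ideals $\operatorname{Ann}_\R(E_n) \subset \R$ form a countable descending chain of open right ideals, and every finitely generated submodule $E \subset M$ lies in some $E_n$, so this family is in fact a base of neighborhoods of zero for the finite topology on~$\R$.

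Next I would rephrase the endo-$\Sigma$-coperfectness of $M$ as the descending chain condition on cyclic discrete right $\R$-modules. Any such module has the form $\R/\I$ for an open right ideal $\I \subset \R$, and $\I$ necessarily contains $\operatorname{Ann}_\R(E)$ for some finitely generated submodule $E = Am_1 + \cdots + Am_n \subset M$. The $\R$-linear map $r \mapsto (m_i r)_{i=1}^n$ induces an embedding $\R/\operatorname{Ann}_\R(E) \hookrightarrow M^n$, so $\R/\I$ is realized as a quotient of a submodule of $M^n$. By the $\Sigma$-coperfect hypothesis, $M^{(\omega)}$, and hence each $M^n \subseteq M^{(\omega)}$, is coperfect as a right $\R$-module; since coperfectness is inherited by submodules and by quotients, every cyclic discrete right $\R$-module $\R/\I$ is itself coperfect, which is precisely the descending chain condition for such modules.

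The final step is to combine these two properties via the ``countable base'' case~(1) announced in the introduction and proved elsewhere in the paper: for a complete, separated topological ring $\R$ with right linear topology admitting a countable base of neighborhoods of zero, the descending chain condition on cyclic discrete right $\R$-modules implies that $\R$ is topologically left perfect. Applying Theorem~\ref{perfect-decomposition-endomorphism-ring} then concludes the proof. The substantive content thus lies entirely in the imported countable-topology result, which in turn rests on the lifting of orthogonal idempotents (Corollary~\ref{lifting-orthogonal-cor}) and the structural analysis of topologically semisimple quotients via Theorem~\ref{topologically-semisimple-ring}; the two translation steps above, from countable generation of $M$ to a countable topological base on~$\R$ and from $\Sigma$-coperfectness to the DCC on cyclic discrete right $\R$-modules, are the easier part, and I do not expect either to present a serious obstacle.
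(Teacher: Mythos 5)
Your proof is correct and follows essentially the same route as the paper: the paper likewise deduces a countable base of neighborhoods of zero from countable generation, converts endo-$\Sigma$-coperfectness into coperfectness of all discrete right $\R$-modules via Lemma~\ref{endocoperfect-discrete-coperfect} (by exactly the same ``quotient of a submodule of $M^n$'' argument you give), invokes Theorem~\ref{countable-discrete-coperfect-implies-top-perfect}, and concludes with Theorem~\ref{perfect-decomposition-endomorphism-ring}. The only slight inaccuracy is in your aside on what the countable-base result rests on: its proof in the paper goes through the radical/socle analysis of Lemmas~\ref{semisimple-lemma}--\ref{jacobson-radical-lemma} rather than through the idempotent-lifting Corollary~\ref{lifting-orthogonal-cor}, which enters only in Theorem~\ref{perfect-decomposition-endomorphism-ring}.
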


 We start with the following simple lemma interpreting
endo-$\Sigma$-coperfectness as a property of the topological ring
of endomorphisms.

\begin{lem}[{cf.~\cite[Proposition~2.2%
\,(1)\,$\Leftrightarrow$\,(2)]{AS}}]
\label{endocoperfect-discrete-coperfect}
 Let $A$ be an associative ring, $M$ be a left $A$\+module, and\/
$\R=\Hom_A(M,M)^\rop$ be the topological ring of endomorphisms of $M$
(with the finite topology).
 Then the right\/ $\R$\+module $M$ is\/ $\Sigma$\+coperfect if and
only if all discrete right\/ $\R$\+modules are coperfect.
\end{lem}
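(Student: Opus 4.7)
\emph{Proof proposal.} The plan is to prove the two implications separately, working directly from the definitions. The ``if'' direction is immediate: the right $\R$-module $M$ is itself discrete (the annihilator of each $m \in M$ contains the open right ideal $\mathrm{Ann}_\R(Am)$, which is a basic neighborhood of zero in the finite topology), and $M^{(\omega)} = \bigoplus_{n=0}^\infty M$ is discrete as well since each of its elements has finite support and hence an open annihilator. So if every discrete right $\R$-module is coperfect, then so is $M^{(\omega)}$, which is precisely the $\Sigma$-coperfectness of~$M$.

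For the ``only if'' direction, assume $M$ is $\Sigma$-coperfect. First I would reduce to cyclic modules: any right $\R$-module $N$ whose cyclic submodules are all coperfect is itself coperfect, since any descending chain $C_1 \supseteq C_2 \supseteq \cdots$ of cyclic submodules of $N$ is contained in its initial term $C_1$, which is cyclic and coperfect by hypothesis. It therefore suffices to show that every cyclic discrete right $\R$-module is coperfect; such modules are exactly the quotients $\R/I$ for open right ideals $I \subset \R$.

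The main step is an explicit embedding. By definition of the finite topology on $\R = \Hom_A(M,M)^\rop$, any open right ideal $I$ contains the annihilator $\mathrm{Ann}_\R(E)$ of some finitely generated $A$-submodule $E = Am_1 + \cdots + Am_n \subset M$, and this annihilator coincides with $\bigcap_{i=1}^n \mathrm{Ann}_\R(m_i)$ by $A$-linearity. The right $\R$-linear evaluation map
$$
\varphi\: \R \longrightarrow M^n, \qquad r \longmapsto (m_1 r, \ldots, m_n r),
$$
then has kernel exactly $\mathrm{Ann}_\R(E)$, yielding an injection of right $\R$-modules $\R/\mathrm{Ann}_\R(E) \hookrightarrow M^n \subset M^{(\omega)}$. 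Since $M^{(\omega)}$ is coperfect by hypothesis and coperfectness is inherited by submodules and quotients, $\R/I$, being a quotient of $\R/\mathrm{Ann}_\R(E)$, is coperfect. I do not anticipate any serious obstacle here; the lemma is essentially a definitional manipulation once one spots the embedding $\R/\mathrm{Ann}_\R(E) \hookrightarrow M^n$ into a finite power of~$M$.
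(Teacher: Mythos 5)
Your proof is correct and follows essentially the same route as the paper's: the ``if'' direction uses that $M$ and hence $M^{(\omega)}$ are discrete, and the ``only if'' direction realizes $\R/\mathrm{Ann}_\R(E)$ as the cyclic submodule $(m_1,\dots,m_n,0,\dots)\R$ of $M^n\subset M^{(\omega)}$ and passes to quotients, exactly as in the paper. Your explicit reduction from arbitrary discrete modules to cyclic ones is left implicit in the paper but is the intended step.
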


\begin{proof}
 ``If'': It is clear from the definition of the finite topology in
Example~\ref{modules-topologically-agreeable-example}\,(1) that $M$ is
a discrete right $\R$\+module.
 Hence so is $M^{(\omega)}$.

 ``Only if'': assuming that $M^{(\omega)}$ is a coperfect right
$\R$\+module, we have to show that the cyclic right $\R$\+module
$\R/\I$ is coperfect for every open right ideal $\I\subset\R$.
 Indeed, by the definition of the finite topology, there exists
a finite set of elements $m_1$,~\dots, $m_n\in M$ such that $\I$
contains the intersection of the annihilators of the elements~$m_j$
in~$\R$.
 Let $m=(m_1,\dotsc,m_n,0,0,\dotsc)\in M^n\subset M^{(\omega)}$ be
the related element.
 Then the cyclic right $\R$\+module $\R/\I$ is a quotient module of
the cyclic right $\R$\+module $m\R\subset M^{(\omega)}$.
 It remains to recall that the class of all coperfect right
$\R$\+modules is closed under the passages to submodules and quotients.
\end{proof}

 Let $\R$ be a complete, separated topological ring with a right linear
topology.
 We say that $\R$ is \emph{topologically right coperfect} if all
discrete right $\R$\+modules are coperfect.
 We will see below in Section~\ref{topologically-perfect-secn} that
all topologically left perfect rings (in the sense of
Section~\ref{perfect-decompositions-secn}) are topologically
right coperfect.

 The proof of
Theorem~\ref{countably-generated-endocoperfect-theorem} is based on
the following theorem, which is the key technical result of
this section. 

\begin{thm} \label{countable-discrete-coperfect-implies-top-perfect}
 Let\/ $\R$ be a complete, separated topological ring with
a \emph{countable} base of neighborhoods of zero consisting of open
right ideals.
 Assume that the topological ring\/ $\R$ is topologically right
coperfect.
 Then\/ $\R$ is topologically left perfect.
\end{thm}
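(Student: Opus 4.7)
The plan is to verify each of the three defining conditions of topological left perfectness for $\R$: topological left T\+nilpotency of $\HH := \HH(\R)$, topological semisimplicity of $\R/\HH$, and strong closedness of $\HH$ in $\R$. I fix throughout a countable decreasing base of open right ideals $\I_1 \supseteq \I_2 \supseteq \cdots$. The hypothesis of topological right coperfectness supplies DCC on cyclic (equivalently, finitely generated) submodules in every discrete right $\R$\+module.

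I first establish the structural fact that every nonzero coperfect discrete right $\R$\+module has a nonzero socle: DCC yields a minimal nonzero cyclic submodule, which admits no proper nonzero cyclic submodule and is therefore simple. Simple discrete right $\R$\+modules are killed on the right by $\HH$ by the definition of $\HH$ as the intersection of open maximal right ideals. Combined with the semiprimitivity identity $\HH(\R/\HH) = 0$, an induction up the ascending socle filtration shows every cyclic discrete right $\R/\HH$\+module coincides with its socle and is thus semisimple, so by Theorem~\ref{topologically-semisimple-ring}, $\R/\HH$ is topologically semisimple.

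Next I address topological left T\+nilpotency. Given $(a_i) \subset \HH$, for each $k$ I need $a_1 \cdots a_n \in \I_k$ eventually in~$n$. In the coperfect module $\R/\I_k$, DCC makes the chain $\overline{a_1 \cdots a_n}\R$ stabilize at some~$n$, yielding the Bass-style relation $a_1 \cdots a_n (1 - a_{n+1} r) \in \I_k$ for some $r \in \R$. Bass's classical argument next invokes invertibility of $1 - a_{n+1} r$, which is not automatic in the topological setting. The substitute is a diagonal construction across the countable base: the topological semisimplicity of $\R/\HH$ (just established) places $a_{n+1} r$ in $\HH$, and one successively improves a would-be inverse to $1 - a_{n+1}r$ modulo each $\I_\ell$ to conclude $a_1 \cdots a_n \in \I_k$; a further diagonalization over~$k$ finishes. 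Strong closedness of $\HH$ then follows: completeness of $\R/\HH$ in the quotient topology is a consequence of topological semisimplicity (since such rings are closed products of row-finite matrix rings whose product topology coincides with the quotient topology), and surjectivity of $\R[[X]] \to (\R/\HH)[[X]]$ is arranged by lifting a zero-convergent family term by term, using $\{\I_n\}$ to ensure the lift remains zero-convergent, in the spirit of Corollary~\ref{lifting-orthogonal-cor}.

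The main obstacle is the T\+nilpotency step: in the absence of invertibility of $1 - a_{n+1}r$ in $\R$ for $a_{n+1} \in \HH$, Bass's classical algebraic argument does not transfer directly. Overcoming this requires a coordinated use of both the countable base (for iterative refinement of an approximate inverse) and the topological semisimplicity of $\R/\HH$ (to guarantee inversion modulo~$\HH$), reflecting why neither hypothesis alone would suffice for the theorem.
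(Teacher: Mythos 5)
Your three-part plan matches the structure of what has to be proved, but two of the three verifications have genuine gaps.

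The most serious one is the semisimplicity of $\S=\R/\HH$. You deduce it from the identity $\HH(\S)=0$ together with ``an induction up the ascending socle filtration,'' but this induction does not go through. The problem is that $\HH(\S)=0$ only says that the intersection of \emph{all} open maximal right ideals of $\S$ is zero; it does not imply that $\rad(\S/\I)=0$ for a given open right ideal $\I$, i.e.\ that the intersection of the open maximal right ideals \emph{containing} $\I$ equals~$\I$. Already at the first step of your induction you face an extension $0\rarrow S_1\rarrow T\rarrow T/S_1\rarrow 0$ of a simple module by the socle, and nothing you have established forces it to split: the only information available, $T\cdot\HH(\S)=0$, is vacuous. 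A telltale sign is that your argument for this step nowhere uses the countable base, whereas without countability this implication is precisely (a piece of) the open Conjecture~\ref{topologically-perfect-ring-conjecture}. The paper closes exactly this gap with Lemma~\ref{jacobson-radical-lemma}: coperfectness makes the transition maps $\rad(\R/\J)\rarrow\rad(\R/\I)$ surjective (Lemma~\ref{rad-top-exactness}), and the \emph{countable} cofinal chain of open right ideals then gives, by a Mittag--Leffler argument, that $\varprojlim_\J\rad(\R/\J)=\HH$ surjects onto each $\rad(\R/\I)$, whence $\rad(\R/\I)=(\I+\HH)/\I$; only then does Lemma~\ref{semisimple-lemma} apply.

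The T\+nilpotency step is also not salvaged by your proposed fix. You correctly identify that $1-a_{n+1}r$ need not be invertible (for the topological Jacobson radical one only has $H(\R)\subseteq\HH(\R)$, not the reverse), but ``successively improving a would-be inverse modulo each $\I_\ell$'' is circular: producing an inverse of $1-h$ for $h\in\HH$ amounts to summing $\sum_n h^n$, i.e.\ to knowing that $h$ is topologically nilpotent, which is a consequence of the very T\+nilpotency you are proving. The standard repair avoids inversion altogether: once the chain $\overline{a_1\cdots a_n}\R$ stabilizes in $\R/\I$, the cyclic discrete module $M=\overline{a_1\cdots a_n}\R$ satisfies $M=\overline{a_1\cdots a_n}a_{n+1}\R\subseteq M\HH$, and since every nonzero cyclic discrete module has a simple quotient annihilated by $\HH$, Nakayama gives $M=0$, i.e.\ $a_1\cdots a_n\in\I$. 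This is the content of \cite[Corollary~6.7]{Pproperf}, which the paper simply cites; note it requires neither countability nor the semisimplicity of $\S$, so your ordering (semisimplicity first, then T\+nilpotency) inverts the actual logical dependencies. Your treatment of strong closedness is essentially right --- it is the standard term-by-term lifting along a countable base, \cite[Lemma~1.3]{Pproperf} --- but as written the proof as a whole does not stand.
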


 The proof of
Theorem~\ref{countable-discrete-coperfect-implies-top-perfect} follows
below in the form of a sequence of lemmas.
 Given an associative ring $R$ and a right $R$\+module $M$,
the \emph{radical of $M$}, denoted by $\rad(M)\subset M$, is defined
as the intersection of all the maximal (proper) $R$\+submodules in~$M$.

\begin{lem} \label{rad-quot-rad}
 For any $R$\+module $M$, one has\/ $\rad(M/\rad(M))=0$.
\end{lem}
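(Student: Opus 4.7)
The plan is to apply the lattice correspondence between submodules of $M/\rad(M)$ and submodules of $M$ containing $\rad(M)$. First I would recall that, by the very definition of $\rad(M)$ as the intersection of all maximal submodules of $M$, every maximal submodule $N\subset M$ automatically contains $\rad(M)$. Hence the correspondence theorem identifies the poset of maximal submodules of $M/\rad(M)$ with the poset of \emph{all} maximal submodules of $M$, via $N\longmapsto N/\rad(M)$.

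Next I would intersect. Under this bijection, the intersection of all maximal submodules on the quotient side,
$$
\rad(M/\rad(M))\;=\;\bigcap_{\bar N\text{ maximal in }M/\rad(M)}\bar N,
$$
corresponds to $\bigl(\bigcap_{N\text{ maximal in }M}N\bigr)/\rad(M)$. By definition of the radical, the numerator equals $\rad(M)$, so the quotient is zero. Thus $\rad(M/\rad(M))=0$.

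The only subtlety is the edge case in which $M$ has no maximal submodules. In that case the intersection defining $\rad(M)$ is empty, hence equals $M$ itself; then $M/\rad(M)=0$, and trivially $\rad(0)=0$. There is no real obstacle here beyond bookkeeping this degenerate case, so the proof is short and essentially formal.
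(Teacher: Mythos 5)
Your proposal is correct and takes essentially the same route as the paper: maximal submodules of $M$ all contain $\rad(M)$, so the correspondence theorem identifies them with the maximal submodules of $M/\rad(M)$, and intersecting gives $\rad(M)/\rad(M)=0$. Your explicit handling of the degenerate case with no maximal submodules is a harmless extra that the paper leaves implicit.
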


\begin{proof}
 All the maximal submodules of $M$ contain $\rad(M)$, so maximal
submodules of $M$ correspond bijectively to maximal submodules
of $M/\rad(M)$.
\end{proof} 

\begin{lem} \label{simple-submodules-are-summands}
 Let $M$ be an $R$\+module with\/ $\rad(M)=0$.
 Then any simple submodule of $M$ is a direct summand in~$M$.
\end{lem}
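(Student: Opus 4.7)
The plan is the textbook argument that extracts a complement from a suitable maximal submodule avoiding a fixed nonzero element of $S$. Let $S \subset M$ be the simple submodule in question, and pick any nonzero element $s \in S$.

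First I would use the hypothesis $\rad(M) = 0$ to locate a maximal submodule $N \subset M$ with $s \notin N$. This is precisely the content of the assumption: since $\rad(M)$ is defined as the intersection of all maximal submodules of $M$, the element $s \neq 0$ cannot lie in every maximal submodule, so at least one maximal $N$ misses it.

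Next I would check that $N$ is a direct complement to $S$ in $M$. Since $N$ is maximal, the sum $N + S$ is either $N$ or $M$; because $s \in S$ and $s \notin N$, the sum strictly contains $N$, so $N + S = M$. For the intersection, $N \cap S$ is a submodule of the simple module $S$, so it is either $0$ or all of $S$; the second alternative would force $s \in N$, contradicting the choice of $N$. Hence $N \cap S = 0$, and we conclude $M = S \oplus N$.

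There is essentially no obstacle here beyond invoking the hypothesis correctly; the argument uses only the definition of $\rad(M)$, the simplicity of $S$, and the maximality of $N$, with no appeal to Zorn's lemma or any of the topological/contramodule machinery developed earlier in the paper.
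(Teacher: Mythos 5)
Your argument is correct and is essentially identical to the paper's proof: both use $\rad(M)=0$ to find a maximal submodule $N$ not containing $S$ (you phrase this via a nonzero element $s\in S$), then deduce $S\cap N=0$ from simplicity and $S+N=M$ from maximality. No gap; nothing further is needed.
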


\begin{proof}
 Suppose $S\subset M$ is a simple submodule.
 Since $\rad(M)=0$, there exists a maximal $R$\+submodule
$N\subset M$ with $S\not\subset N$.
 Then we have $S\cap N=0$ since $S$ is simple, and $S+N=M$
since $N$ is maximal; hence $M=S\oplus N$.
\end{proof}

\begin{lem} \label{nonzero-socle}
 Any nonzero coperfect $R$\+module has a simple submodule.
\end{lem}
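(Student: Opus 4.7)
The plan is to produce a simple submodule of a nonzero coperfect $R$-module $N$ by exhibiting a minimal nonzero cyclic submodule and showing that any such is automatically simple.

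First, since $N \ne 0$, choose any nonzero element $x \in N$ and consider the nonzero cyclic submodule $xR \subset N$. By definition, $N$ being coperfect means that every descending chain of cyclic submodules of $N$ terminates; this is a property inherited by $xR$. Since DCC on the class of cyclic submodules is equivalent (by standard order-theoretic reasoning) to the existence of a minimal element in every nonempty subcollection, the nonempty collection of nonzero cyclic submodules of $xR$ has a minimal member $C = yR$ for some $y \ne 0$. If one prefers to argue more concretely: if $xR$ is not minimal among nonzero cyclic submodules of $N$, then there is a strictly smaller nonzero cyclic submodule, to which we apply the same alternative; iterating would yield an infinite strictly descending chain, contradicting coperfectness, so the process terminates at a minimal nonzero cyclic submodule~$C$.

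It remains to verify that $C = yR$ is simple. Let $L \subseteq C$ be any nonzero $R$-submodule. Pick $0 \ne z \in L$; then $zR$ is a nonzero cyclic submodule of $C$, so by the minimality of $C$ among nonzero cyclic submodules we must have $zR = C$. Consequently $L \supseteq zR = C$, so $L = C$. Therefore $C$ has no nonzero proper submodules, i.e., $C$ is simple, and it is a submodule of~$N$.

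There is no real obstacle here; the only point one might want to spell out is the passage from ``every descending chain of cyclic submodules terminates'' to ``there exists a minimal nonzero cyclic submodule,'' which is immediate by the descent argument given above. Notably, the argument does not invoke Zorn's lemma or any maximality principle, only the literal DCC assumption on cyclic submodules built into the definition of coperfectness.
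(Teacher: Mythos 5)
Your proof is correct and is essentially the argument the paper intends: the paper's one-line proof (``any nonzero module is either simple or contains a nonzero proper cyclic submodule'') is precisely the descent on cyclic submodules that you spell out, terminated by the coperfectness hypothesis. You have merely made explicit the passage from DCC on cyclic submodules to the existence of a minimal nonzero cyclic submodule and its simplicity, which is a faithful expansion of the paper's argument rather than a different route.
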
 

\begin{proof}
 Any nonzero $R$\+module is either simple, or it contains
a nonzero proper cyclic submodule.
\end{proof}

\begin{lem} \label{semisimple-lemma}
 Let $M$ be a coperfect $R$\+module with\/ $\rad(M)=0$.
 Then $M$ is a direct sum of simple $R$\+modules.
\end{lem}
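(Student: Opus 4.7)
The plan is to show that every cyclic submodule $mR\subseteq M$ is a finite direct sum of simple submodules of~$M$; once this holds, $M$ is the sum of its simple submodules, and Corollary~\ref{simple-Zorn} (applied with $L=0$ in the Ab5 category of $R$\+modules) yields the desired direct sum decomposition.

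The first step is to observe that $\rad(N)\subseteq\rad(M)$ for \emph{every} submodule $N\subseteq M$. Given $y\in\rad(N)$ and a maximal submodule $K\subset M$, either $N\subseteq K$ (so that $y\in K$ trivially), or $N\not\subseteq K$, in which case $N+K=M$ and the isomorphism $N/(N\cap K)\cong M/K$ exhibits $N\cap K$ as a maximal submodule of~$N$; then $y\in\rad(N)\subseteq N\cap K\subseteq K$. Thus $y$ lies in every maximal submodule of~$M$, so $y\in\rad(M)$. The hypothesis $\rad(M)=0$ then gives $\rad(N)=0$ for every submodule $N\subseteq M$.

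The second step is to prove that any finitely generated, coperfect $R$\+module $N$ with $\rad(N)=0$ decomposes as a finite direct sum of simples. Using Lemma~\ref{nonzero-socle} to extract a simple submodule and Lemma~\ref{simple-submodules-are-summands} to split it off as a direct summand, write $N=S_1\oplus N_1$. The complement $N_1$ is a finitely generated, coperfect submodule of~$N$, and by the first step $\rad(N_1)=0$, so the procedure iterates. This yields a strictly descending chain $N\supsetneq N_1\supsetneq N_2\supsetneq\cdots$ of finitely generated submodules of~$N$, which must terminate by coperfectness; hence $N=S_1\oplus\cdots\oplus S_k$ for some finite~$k$.

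Applying the second step to each cyclic submodule $mR\subseteq M$ (which is finitely generated, coperfect, and has $\rad(mR)=0$ by the first step) writes $mR$ as a sum of simple submodules of~$M$. Consequently $M$ itself is the sum of its simple submodules, and Corollary~\ref{simple-Zorn} with $L=0$ completes the argument. The main subtlety is the monotonicity of the radical established in the first step: in general the radical need not respect inclusion of submodules, but the abundance of maximal submodules of~$M$ coming from $\rad(M)=0$ is exactly what lets us trace them down to maximal submodules of arbitrary $N\subseteq M$, and thereby propagate the $\rad=0$ hypothesis through the iterative splitting.
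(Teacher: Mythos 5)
Your proof is correct and follows essentially the same route as the paper: iteratively split off simple submodules of a cyclic (resp.\ finitely generated) submodule using Lemmas~\ref{nonzero-socle} and~\ref{simple-submodules-are-summands}, and use coperfectness to terminate the resulting strictly descending chain. The only difference is a small detour: you prove $\rad(N)\subseteq\rad(M)$ for submodules $N\subseteq M$ so as to apply Lemma~\ref{simple-submodules-are-summands} inside $N$ itself, whereas the paper applies that lemma to $M$ directly (whose radical vanishes by hypothesis) and then observes that a direct summand of $M$ contained in $N$ is automatically a direct summand of~$N$; both are valid, and your monotonicity argument is sound.
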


\begin{proof}
 It suffices to show that all cyclic $R$\+submodules of $M$ are
semisimple (since a sum of semisimple modules is always semisimple).
 Let $N\subset M$ be a cyclic submodule.
 By Lemma~\ref{nonzero-socle}, if $N\ne 0$, then $N$ has a simple
$R$\+submodule~$S_1$.
 By Lemma~\ref{simple-submodules-are-summands}, any simple submodule
of $N$ is a direct summand in $M$, hence also in~$N$.
 So we have $N=S_1\oplus N_1$ for some $R$\+submodule $N_1\subset N$.

 The $\R$\+module $N_1$ is also cyclic, as a direct summand of a cyclic
$\R$\+module~$N$.
 If $N_1\ne0$, then $N_1$ has a simple $R$\+submodule~$S_2$, and
the same argument shows that $N_1=S_2\oplus N_2$.
 Proceeding in this way, we get a descending chain of cyclic submodules
$N\varsupsetneq N_1\varsupsetneq N_2 \varsupsetneq\dotsb$ in
the $R$\+module~$M$.
 By the assumption of coperfectness of $M$, this chain must
terminate; so there exists $k\ge1$ such that $N_k=0$.
 Hence $N=\bigoplus_{i=1}^k S_k$, and we are done.
\end{proof}
 
 The following assertion (as well as some other properties of coperfect
modules) can be found in the book~\cite[Section~31.8]{Wis}.
 
\begin{lem} \label{rad-quot-semisimple}
 Let $M$ be a coperfect $R$\+module.
 Then the $R$\+module $M/\rad(M)$ is semisimple.
\end{lem}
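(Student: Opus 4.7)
The plan is to combine the two preceding lemmas directly, using the fact that quotients of coperfect modules are coperfect. This is observed in the opening paragraph of the section: ``any submodule and any quotient module of a coperfect module is coperfect.''

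First, I would set $N = M/\rad(M)$ and observe that $N$ is coperfect, being a quotient of the coperfect $R$-module~$M$. Next, by Lemma~\ref{rad-quot-rad}, we have $\rad(N) = \rad(M/\rad(M)) = 0$. Finally, Lemma~\ref{semisimple-lemma} applies to $N$: a coperfect $R$-module with zero radical is a direct sum of simple $R$-modules. Therefore $M/\rad(M)$ is semisimple, as required.

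There is no real obstacle here; the lemma is a straightforward composition of the two facts already established. The only point worth being careful about is the stability of the coperfectness property under quotients, but this has already been recorded in the opening of the section and requires no further justification.
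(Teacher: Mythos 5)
Your proof is correct and is essentially the paper's own argument: the paper's proof reads ``Follows from Lemmas~\ref{rad-quot-rad} and~\ref{semisimple-lemma},'' and you have simply spelled out the (correct) intermediate step that $M/\rad(M)$ is coperfect as a quotient of a coperfect module. Nothing further is needed.
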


\begin{proof}
 Follows from Lemmas~\ref{rad-quot-rad} and~\ref{semisimple-lemma}.
\end{proof} 

 Given an $R$\+module $M$, we denote by $\tp(M)$ the quotient
$R$\+module $M/\rad(M)$.

\begin{lem} \label{rad-top-exactness}
 Assume that a topological ring\/ $\R$ is topologically right coperfect,
and let\/ $0\rarrow K\rarrow L\rarrow M\rarrow0$ be a short exact
sequence of discrete right\/ $\R$\+modules.  Then \par
\textup{(a)} the short sequence\/ $\tp(K)\rarrow\tp(L)\rarrow\tp(M)
\rarrow 0$ is right exact; \par
\textup{(b)} the map\/ $\rad(L)\rarrow\rad(M)$ is surjective.
\end{lem}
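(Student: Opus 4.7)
The plan is to prove (b) first, since (a) will then follow from a short diagram chase. For (b), given $r\in\rad(M)$ I need to produce a lift inside $\rad(L)$, which amounts to showing that the image of $\rad(L)$ under $L\to M$ is all of $\rad(M)$. Let me denote this image by $J\subseteq M$. Any module homomorphism sends the radical of the domain into the radical of the codomain (if $P\subset M$ is maximal, then its preimage in $L$ is maximal, hence contains $\rad(L)$), so the inclusion $J\subseteq\rad(M)$ is automatic. The content is therefore the reverse inclusion $\rad(M)\subseteq J$.

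The key step is to exploit Lemma~\ref{rad-quot-semisimple}. Since $L$ is a discrete right $\R$-module and $\R$ is topologically right coperfect by hypothesis, $L$ is coperfect, so $\tp(L)=L/\rad(L)$ is semisimple. Now there is a canonical isomorphism
\[
 M/J \,=\, (L/K)\big/\bigl((\rad(L)+K)/K\bigr) \,\cong\, L/(\rad(L)+K),
\]
which exhibits $M/J$ as a quotient of $\tp(L)$. Consequently $M/J$ is semisimple, so $\rad(M/J)=0$. Because $J\subseteq\rad(M)$, the standard identification of maximal submodules of $M/J$ with the maximal submodules of $M$ that contain $J$ gives the inclusion $\rad(M)/J\subseteq\rad(M/J)=0$; hence $\rad(M)=J$, which is (b).

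For (a), surjectivity of $\tp(L)\to\tp(M)$ is immediate from surjectivity of $L\to M$. The image of $\tp(K)\to\tp(L)$ is $(K+\rad(L))/\rad(L)$, while the kernel of $\tp(L)\to\tp(M)$ is the image in $\tp(L)$ of the preimage of $\rad(M)$ under $L\to M$. By (b), every element of $\rad(M)$ lifts to an element of $\rad(L)$, so that preimage equals $K+\rad(L)$; hence the kernel coincides with the image, giving exactness at $\tp(L)$.

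I do not anticipate a serious obstacle: once Lemma~\ref{rad-quot-semisimple} is in hand the argument is essentially formal, and the reduction of (a) to (b) is a one-line diagram chase. The only point requiring a moment's care is the elementary radical computation $\rad(M)/J\subseteq\rad(M/J)$ when $J\subseteq\rad(M)$, but this is a standard property of radicals and poses no real difficulty.
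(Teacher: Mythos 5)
Your proof is correct, and it reverses the logical order of the paper's argument. The paper proves part~(a) first, by observing that Lemma~\ref{rad-quot-semisimple} makes $\tp$ a functor landing in the full subcategory $(\discr\R)^\sss$ of semisimple discrete modules and that any morphism to a semisimple module kills the radical, so that $\tp$ is left adjoint to the inclusion $(\discr\R)^\sss\rarrow\discr\R$ and hence right exact; part~(b) is then extracted from~(a) by the snake lemma applied to the exact sequence of functors $0\rarrow\rad\rarrow\Id\rarrow\tp\rarrow0$. You instead prove~(b) directly: writing $J$ for the image of $\rad(L)$ in $M$, you note $J\subseteq\rad(M)$ for formal reasons, identify $M/J\cong L/(\rad(L)+K)$ as a quotient of the semisimple module $\tp(L)$ (this is where Lemma~\ref{rad-quot-semisimple} and the coperfectness of $L$ enter), conclude $\rad(M/J)=0$, and hence $\rad(M)=J$; part~(a) then follows by the elementary kernel--image computation you give. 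Both arguments hinge on the same key input---semisimplicity of $\tp$ of a coperfect module---but yours trades the adjoint-functor formalism for a direct submodule computation, which is arguably more elementary and makes visible that only the coperfectness of $L$ is actually used. All the individual steps (preimages of maximal submodules under the surjection $L\rarrow M$ are maximal, the third isomorphism theorem identification, $\rad(M)/J\subseteq\rad(M/J)$, and the determination of the kernel of $\tp(L)\rarrow\tp(M)$ as $(\rad(L)+K)/\rad(L)$) check out.
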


\begin{proof}
 First of all, one needs to notice that both $\rad$ and $\tp$
are functors $\discr\R\rarrow\discr\R$, and there is a short exact
sequence of functors $0\rarrow\rad\rarrow\Id_{\discr\R}\rarrow
\tp\rarrow0$, where $\Id_{\discr\R}$ denotes the identity functor.
 These observations do not depend on the assumption of topological
coperfectness of $\R$ yet.

 Part~(a): denote by $(\discr\R)^\sss$ the full subcategory of
semisimple discrete right $\R$\+modules in $\discr\R$.
 Clearly, $(\discr\R)^\sss$ is an abelian category and the inclusion
functor $(\discr\R)^\sss\rarrow\discr\R$ is exact.

 By Lemma~\ref{rad-quot-semisimple}, one has $\tp(M)\in
(\discr\R)^\sss$ for any $M\in\discr\R$.
 On the other hand, for any module $N\in(\discr\R)^\sss$, any
$\R$\+module morphism $f\:M\rarrow N$ annihilates the submodule
$\rad(M)\subset M$, since $\rad(N)=0$.
 So $f$~factorizes (uniquely) as $M\rarrow\tp(M)\rarrow N$.
 In other words, this means that $\tp\:\discr\R\rarrow(\discr\R)^\sss$
is a left adjoint functor to the inclusion $(\discr\R)^\sss\rarrow
\discr\R$.
 It follows that $\tp$ is right exact as a functor $\discr\R\rarrow
(\discr\R)^\sss$, and consequently also as a functor $\discr\R\rarrow
\discr\R$.

 In part~(b), the short exact sequence of functors $0\rarrow\rad
\rarrow\Id_{\discr\R}\rarrow\tp\rarrow0$ applied to the short sequence
$0\rarrow K\rarrow L\rarrow M\rarrow0$ produces a short exact
sequence of $3$\+term complexes.
 A simple diagram chase (or an application of the snake lemma) shows
that (a) implies~(b).
\end{proof}

\begin{lem} \label{jacobson-radical-lemma}
 Assume that a topological ring\/ $\R$ is topologically right coperfect
and has a countable base of neighborhoods of zero.
 Then, for any open right ideal\/ $\I\subset\R$, one has\/
$\tp(\R/\I)=\R/(\I+\HH)$, where\/ $\HH\subset\R$ is the topological
Jacobson radical.
\end{lem}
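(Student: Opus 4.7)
The plan is as follows. One inclusion is immediate: since $\HH$ is contained in every open maximal right ideal of $\R$, we have $\I + \HH \subset \M$ for every open maximal right ideal $\M \supset \I$, giving $(\I+\HH)/\I \subset \rad(\R/\I)$ and therefore a natural surjection $\R/(\I+\HH) \twoheadrightarrow \tp(\R/\I)$.

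For the nontrivial direction I would exploit the countable base of neighborhoods of zero together with Lemma~\ref{rad-top-exactness}(b). Fix a decreasing countable base $\R = \J_0 \supset \J_1 \supset \J_2 \supset \cdots$ of open right ideals with $\bigcap_n \J_n = 0$, and set $\I_n = \I \cap \J_n$, so that $\I_0 = \I$, each $\I_n$ is open, and $\bigcap_n \I_n = 0$. Let $K_n \subset \R$ denote the preimage of $\rad(\R/\I_n)$; equivalently, $K_n$ is the intersection of all open maximal right ideals containing $\I_n$. It suffices to show that $K_0 \subset \I + \HH$. Given $r \in K_0$, I apply Lemma~\ref{rad-top-exactness}(b) to the short exact sequence
\[
0 \rarrow \I_n/\I_{n+1} \rarrow \R/\I_{n+1} \rarrow \R/\I_n \rarrow 0
\]
of discrete right $\R$\+modules to conclude that $\rad(\R/\I_{n+1}) \rarrow \rad(\R/\I_n)$ is surjective. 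Starting from $r_0 = r$, this allows me to inductively choose $r_n \in K_n$ with $r_{n+1} - r_n \in \I_n$ for every $n \ge 0$.

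The sequence $(r_n)$ is Cauchy in $\R$: telescoping gives $r_m - r_n \in \I_n$ for all $m \ge n$, and every open neighborhood of zero contains some $\J_n$, hence some $\I_n$. By completeness and separatedness of $\R$, the sequence converges to an element $h \in \R$ with $h - r_n \in \I_n$ for every $n$; in particular $r - h = r_0 - h \in \I_0 = \I$. I claim $h \in \HH$: given any open maximal right ideal $\N \subset \R$, openness yields an index $n$ with $\I_n \subset \N$, so $\N$ occurs in the intersection defining $K_n$; hence $r_m \in K_m \subset K_n \subset \N$ for all $m \ge n$, and since $\N$ is closed in $\R$ (as an open subgroup) the limit $h$ lies in $\N$. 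Thus $r = h + (r-h) \in \HH + \I$, as required.

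The only step with genuine content is the inductive lifting, which must simultaneously yield an element of $K_{n+1}$ (needed to control the limit modulo every open maximal right ideal) and a small correction lying in $\I_n$ (needed for Cauchy convergence in $\R$). Lemma~\ref{rad-top-exactness}(b), which is where topological right coperfectness enters, packages exactly this data, and the countability of the base at zero is what allows an $\omega$\+indexed construction instead of a transfinite one.
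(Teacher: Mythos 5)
Your proof is correct and is essentially the paper's argument unwound: the paper phrases the construction as surjectivity of the projection from the countable inverse limit $\varprojlim_{\J}\rad(\R/\J)$ (which it identifies with $\HH\subset\R$), using Lemma~\ref{rad-top-exactness}(b) for the surjectivity of the transition maps and countability of a cofinal subposet, while you carry out the same inductive lifting and passage to the limit explicitly as a Cauchy sequence. Both arguments rest on the same key lemma and use the countable base in the same way.
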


\begin{proof}
 Let us show that $\rad(\R/\I)=(\I+\HH)/\I$.
 Consider the projective system of discrete right $\R$\+modules $\R/\I$,
where $\I$ ranges over the open right ideals in $\R$, and the projective
subsystem formed by the submodules $\rad(\R/\I)\subset\R/\I$.
 By Lemma~\ref{rad-top-exactness}(b), for any open right ideals
$\J\subset\I\subset\R$, the map $\rad(\R/\J)\rarrow\rad(\R/\I)$ is
surjective.
 Since the poset of all open right ideals $\I\subset\R$ has a countable
cofinal subposet, it follows that the projection map
$$
 \varprojlim\nolimits_{\J\subset\R}\rad(\R/\J)
 \lrarrow\rad(\R/\I)
$$
is surjective.

 The projective limit $\varprojlim_{\J\subset\R}\rad(\R/\J)$ is
a subgroup in $\varprojlim_{\J\subset\R}\R/\J=\R$.
 Let us compute this subgroup.
 We have
$$
 \rad(\R/\J)=\frac{\bigcap_{\MM\supset\J}\MM}{\J},
$$
where the intersection is taken over all the (necessarily open)
maximal right ideals $\MM\subset\R$ containing~$\J$.
 Consequently,
$$
 \varprojlim\nolimits_{\J\subset\R}\rad(\R/\J)\,=\,
 \bigcap\nolimits_{\J\subset\R}\bigcap\nolimits_{\MM\supset\J}\MM\,=\,
 \bigcap\nolimits_\MM\MM\,=\,\HH.
$$

 We have shown that the natural surjective map $\R\rarrow\R/\I$
restricts to a surjective map $\HH\rarrow\rad(\R/\I)$.
 Therefore, $\rad(\R/\I)=(\I+\HH)/\I$, as desired.
\end{proof}

 Now at last we can prove the promised theorems.

\begin{proof}[Proof of
Theorem~\ref{countable-discrete-coperfect-implies-top-perfect}]
 By~\cite[Lemma~2.3]{Pproperf}, the ideal $\HH$ is strongly closed
in~$\R$.
 In particular, the quotient ring $\S=\R/\HH$ is complete in its
quotient topology.

 By~\cite[Corollary~7.7]{Pproperf}, the topological Jacobson radical
$\HH$ of the ring $\R$ is topologically left T\+nilpotent.
 It remains to prove that the topological ring $\S$ is topologically
semisimple.

 For any topological ring $\R$ with a closed two-sided ideal
$\HH\subset\R$, the category of discrete modules over the topological
quotient ring $\R/\HH$ is equivalent to the full subcategory in
$\discr\R$ consisting of all the discrete right $\R$\+modules
annihilated by~$\HH$.
 In the situation at hand, in view of
Theorem~\ref{topologically-semisimple-ring}\,(2), it remains to show
that every discrete right $\R$\+module $N$ annihilated by $\HH$
is semisimple. 

 Since a sum of semisimple modules is semisimple, it suffices
to consider the case when $N$ is a cyclic discrete right $\R$\+module
annihilated by~$\HH$.
 So we have $N\cong\R/\I$, where $\I\subset\R$ is an open right ideal
containing~$\HH$.
 Now by Lemma~\ref{jacobson-radical-lemma} we have $\rad(\R/\I)=0$,
and it remains to invoke Lemma~\ref{semisimple-lemma}.
\end{proof}

\begin{proof}[Proof of
Theorem~\ref{countably-generated-endocoperfect-theorem}]
 Given a set of generators $\{m_y\in M\mid y\in Y\}$ of a left
$A$\+module $M$, the annihilators of finite subsets of $\{m_y\}$ form
a base of neighborhoods of zero in the topological ring
$\R=\Hom_A(M,M)^\rop$.
 Hence $\R$ has a countable base of neighborhoods of zero whenever
$M$ is countably generated.

 Now let $M$ be a countably generated endo-$\Sigma$-coperfect left
$A$\+module.
 Then all discrete right $\R$\+modules are coperfect
by Lemma~\ref{endocoperfect-discrete-coperfect}.
 According to
Theorem~\ref{countable-discrete-coperfect-implies-top-perfect}, it
follows that the topological ring $\R$ is topologically left perfect.
 Applying Theorem~\ref{perfect-decomposition-endomorphism-ring} to
the object $M\in\sA=A\modl$, we conclude that the left $A$\+module $M$
has a perfect decomposition.
\end{proof}

\Section{Topological Coherence and Coperfectness}
\label{topologically-coherent-secn}

 In this section we discuss some results of the paper~\cite{Ro},
which imply that a topologically right coperfect topological ring
$\R$ is topologically left perfect under the additional assumption of
\emph{topological right coherence} of~$\R$.

 We refer to the book~\cite[Sections~1.A and~2.A]{AR} for
the definitions of \emph{finitely accessible} and \emph{locally finitely
presentable} categories.
 In this section, we are interested in additive categories.
 Let us point out that the terminology in the literature is not
consistent: what we, following~\cite{AR}, call finitely accessible
categories, are called ``locally finitely presented categories''
in the papers~\cite{CB,Kra}.

 Let $\sA$ be a finitely accessible additive category and $\sA_\fp
\subset\sA$ be the full subcategory of finitely presentable objects.
 Then $\sA_\fp$ is an essentially small idempotent-complete additive
category.
 Conversely, any small idempotent-complete additive category $\sD$
can be realized as the category $\sA_\fp$ for a finitely accessible
additive category~$\sA$.
 The category $\sA$ can be recovered as the category of ind-objects
in~$\sD$ \,\cite[Theorem~2.26]{AR}.
 Alternatively, $\sA$ is the category of flat functors
$\sD^\sop\rarrow\Ab$ (where, given a small preadditive category $\sD$,
a contravariant additive functor $F\:\sD^\sop\rarrow\Ab$
is called \emph{flat} if the tensor product $G\longmapsto F\ot_\sD G$
is an exact functor on the abelian category of covariant additive
functors $G\:\sD\rarrow\Ab$) \,\cite[Theorem~1.4]{CB},
\cite[Proposition~5.1]{Kra}.

 A finitely accessible additive category $\sA$ is locally finitely
presentable if and only if $\sA$ has cokernels.
 In this case, the full subcategory $\sA_\fp$ is closed under
cokernels in~$\sA$.
 Conversely, if the category $\sA_\fp$ has cokernels, then so
does~$\sA$ \,\cite[Section~2.2]{CB}, \cite[Lemma~5.7]{Kra}.
 In this case, an additive functor $(\sA_\fp)^\sop\rarrow\Ab$ is flat
if and only if it is left exact, i.~e., takes cokernels in $\sA_\fp$ to
kernels in~$\Ab$.
 So the category $\sA$ can be recovered as the category of left exact
functors $(\sA_\fp)^\sop\rarrow\Ab$.

 The definition of a \emph{locally finitely generated} category
can be found in~\cite[Section~1.E]{AR}.
 Any locally finitely generated abelian category (hence, in particular,
any locally finitely presentable abelian category) is Grothendieck.
 Moreover, any locally finitely presentable abelian category is locally
finitely generated and an object of it is
finitely generated if and only if it is a quotient object of a finitely
presentable object~\cite[Proposition~1.69]{AR}.

 Let $\sC$ be a locally finitely generated abelian category.
 An object $C\in\sC$ is said to be \emph{coherent} if $C$ is finitely
generated and, for every finitely generated object $D\in\sC$,
the kernel of any morphism $D\rarrow C$ in $\sC$ is also finitely
generated.
 In any locally finitely generated abelian category, the class of all
coherent objects is closed under kernels, cokernels, and extensions;
so coherent objects form an abelian subcategory.
 A locally finitely generated abelian category $\sC$ is called
\emph{locally coherent} if has a set of coherent
generators~\cite[Section~2]{Ro}.

 An abelian category $\sC$ is locally coherent if and only if it is
locally finitely presentable and the full subcategory $\sC_\fp$ is
closed under kernels in $\sC$.
 Equivalently, a locally finitely presentable abelian category $\sC$
locally coherent if and only if any finitely generated subobject of
a finitely presentable object in $\sC$ is finitely presentable, and
if and only if the category $\sC_\fp$, viewed as an abstract category,
is abelian (in this case, the inclusion functor $\sC_\fp\rarrow\sC$
is exact).
 In a locally coherent abelian category, an object is coherent if and
only if it is finitely presentable.
 Conversely, for any small abelian category $\sD$ there exists a unique
locally coherent abelian category $\sC$ such that
the category $\sD$ is equivalent to~$\sC_\fp$.
 The category $\sC$ can be recovered as the category of ind-objects
in~$\sD$,  or which is the same, the category of flat (equivalently,
left exact) functors $\sD^\sop\rarrow\Ab$.

 A locally finitely generated abelian category $\sC$ is said to be
\emph{locally coperfect}~\cite[Section~3]{Ro} if it has a set of
(finitely generated) generators $(C_\alpha)$ with the property that
any descending chain of finitely generated subobjects in any one of
the objects $C_\alpha$ terminates.
 Equivalently, this means that any descending chain of finitely
generated subobjects in any object of $\sC$ terminates.
 A locally coherent abelian category $\sC$ is locally coperfect if and
only if the abelian category $\sC_\fp$ is Artinian, that is, any
descending chain of (sub)objects in the category $\sC_\fp$ terminates.

 Let $\R$ be a complete, separated topological ring with right linear
topology.
 Then the abelian category of discrete right $\R$\+modules $\discr\R$
is locally finitely generated.
 Moreover, an object $N\in\discr\R$ is finitely generated in the sense
of the definition in~\cite[Section~1.E]{AR} if and only if it is
a finitely generated right $\R$\+module (so our terminology
is consistent).

 The category $\discr\R$ is locally coperfect if and only if all
discrete right $\R$\+modules are coperfect (in the sense of
the discussion in Section~\ref{countably-generated-secn}).
 We recall that in this case the topological ring $\R$ is said to be
\emph{topologically right coperfect}.

 Following~\cite[Definition~4.3]{Ro}, we say that the topological ring
$\R$ is \emph{topologically right coherent} if the category $\discr\R$
is locally coherent.

\begin{lem}[{\cite[Remark~2 in Section~4]{Ro}}]
 The topological ring\/ $\R$ is topologically right coherent if and
only if there exists a set of open right ideals $B$ forming a base
of neighborhoods of zero in\/ $\R$ such that, for every pair of
open right ideals\/ $\I$ and\/ $\J\in B$ and any integer $n\ge1$,
the kernel of any right\/ $\R$\+module morphism $(\R/\I)^n\rarrow\R/\J$
is a finitely generated right\/ $\R$\+module.
\end{lem}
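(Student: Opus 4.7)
The plan is to translate local coherence of $\discr\R$ (which is the definition of topological right coherence) into the explicit matrix-style condition on open right ideals, using the fact that $\discr\R$ is generated by the cyclic discrete modules $\R/\I$ for open right ideals~$\I$, that a finitely generated object of $\discr\R$ is the same as a finitely generated right $\R$\+module, and that in a locally coherent category the finitely presentable objects coincide with the coherent objects and form an abelian subcategory closed under finitely generated subobjects, quotients, and kernels.

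For the ``if'' direction, I would assume that a base $B$ satisfying the stated condition is given and show that $\R/\J$ is coherent in $\discr\R$ for every $\J\in B$. Let $f\:H\to\R/\J$ be a morphism with $H$ finitely generated in $\discr\R$. Then $H$ is a quotient of some $\bigoplus_{i=1}^n\R/\K_i$ with $\K_i$ open. Since $B$ is a base, there is $\K\in B$ with $\K\subseteq\bigcap_i\K_i$; then each $\R/\K_i$ is a quotient of $\R/\K$, so $H$ is a quotient of $(\R/\K)^n$. Lifting $f$ to $\tilde f\:(\R/\K)^n\to\R/\J$, the hypothesis gives $\ker(\tilde f)$ finitely generated, and since $\ker f$ is the image of $\ker(\tilde f)$ under the surjection $(\R/\K)^n\twoheadrightarrow H$, it is also finitely generated. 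Hence every finitely generated subobject of $\R/\J$ is finitely presentable, which makes $\R/\J$ coherent. As the $\R/\J$ for $\J\in B$ form a generating set of $\discr\R$, this category is locally coherent.

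For the ``only if'' direction, I would define $B$ to be the set of open right ideals $\J$ such that $\R/\J$ is coherent in $\discr\R$. The condition of the lemma is then immediate: for $\I,\J\in B$ and $n\ge1$, both $(\R/\I)^n$ and $\R/\J$ are coherent (the coherent objects form an abelian subcategory closed under finite direct sums), so the kernel of any morphism $(\R/\I)^n\to\R/\J$ is coherent and in particular finitely generated. It remains to show that $B$ is a base of neighborhoods of zero. Given an open right ideal $\I$, the object $\R/\I$ is finitely generated in the locally finitely presentable category $\discr\R$, so it is a quotient of some finitely presentable (hence coherent) object $C$; choose $c\in C$ lifting the generator $1+\I\in\R/\I$, and let $\J_c\subset\R$ be its annihilator. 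Since $c$ belongs to a discrete module, $\J_c$ is an open right ideal, and the map $\R\to C$, $r\mapsto cr$, induces an isomorphism between $\R/\J_c$ and the cyclic submodule $c\R\subseteq C$. As $c\R$ is a finitely generated subobject of a coherent object, it is itself coherent, so $\R/\J_c$ is coherent, placing $\J_c$ in $B$. Finally, the composition $\R/\J_c\cong c\R\hookrightarrow C\twoheadrightarrow\R/\I$ sends $1+\J_c$ to $1+\I$, forcing $\J_c\subseteq\I$.

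The main potential obstacle is the careful identification of the various notions of ``finite generation'' between $\discr\R$ and $\modr\R$ and the observation that cyclic submodules of coherent discrete $\R$\+modules have open annihilators; once this is handled, both directions reduce to routine manipulations with the defining properties of locally coherent Grothendieck categories. Only the ``only if'' direction uses the nontrivial content of local coherence, namely the stability of the subcategory of coherent objects under passage to finitely generated subobjects.
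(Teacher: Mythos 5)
Your proof is correct and takes essentially the same route as the paper: in the ``only if'' direction both define $B$ as the set of open right ideals $\J$ with $\R/\J$ coherent and verify cofinality by lifting $1+\I$ to a coherent object and taking the annihilator, and in the ``if'' direction both factor a morphism from a finitely generated object through a surjection from some $(\R/\K)^n$ with $\K\in B$. The only cosmetic difference is that the paper additionally checks $B$ is closed under finite intersections, which your cofinality argument already renders unnecessary.
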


\begin{proof}
 Assume that the category $\discr\R$ is locally coherent.
 Then let $B$ denote the set of all open right ideals $\J\subset\R$
such that the right $\R$\+module $\R/\J$ is a coherent object
of $\discr\R$.
 Let us show that $B$ is a base of neighborhoods of zero in~$\R$.
 Indeed, let $\I\subset\R$ be an open right ideal, and let $M$
be a coherent object of $\discr\R$ admitting an epimorphism
$M\rarrow\R/\I$.
 Let $m\in M$ be any preimage of the element $1+\I\in\R/\I$, and
let $\J$ be the annihilator of~$m$ in~$\R$.
 Then $\R/\J$ is a finitely generated subobject of a coherent object
$M$, hence the object $\R/\J\in\discr\R$ is coherent and $\J\in B$.
 By construction, we have $\J\subset\I$.
 
 To show that $B$ is closed under finite intersections, one observes
that, for any open right ideals $\J'$, $\J''\in B$, the object
$\R/(\J'\cap\J'')\in\discr\R$ is a finitely generated subobject of
the coherent object $\R/\J'\oplus\R/\J''$.
 Finally, for any open right ideal $\I\subset\R$ and any $\J\in B$,
the kernel of a morphism from the finitely generated object
$(\R/\I)^n$ to the coherent object $\R/\J$ is finitely generated.

 Conversely, let $B$ be a base of neighborhoods of zero in $\R$
satisfying the condition of the lemma.
 Then the object $\R/\J\in\discr\R$ is coherent for all $\J\in B$.
 Indeed, let $M$ be a finitely generated discrete right $\R$\+module
and $M\rarrow\R/\J$ be a morphism.
 Then there exists an open right ideal $\I\in B$ and an integer
$n\ge1$ such that there is a surjective $\R$\+module morphism
$(\R/\I)^n\rarrow M$.
 The kernel of the composition $(\R/\I)^n\rarrow M\rarrow\R/\J$ is
finitely generated by assumption, hence the kernel of the morphism
$M\rarrow\R/\J$ is finitely generated, too.
 Now the discrete right $\R$\+modules $\R/\J$, \ $\J\in B$ form
a set of coherent generators of the locally finitely generated
abelian category $\discr\R$.
\end{proof}

\begin{prop}[{\cite[Theorem~6]{Ro}}] \label{can-be-realized}
 Any locally coperfect locally coherent abelian category\/ $\sC$ can be
realized as the category of discrete right modules over a (topologically
right coperfect and right coherent) topological ring,
$\sC\cong\discr\R$.
\end{prop}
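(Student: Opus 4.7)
The plan is to reverse the constructions of Section~\ref{as-endomorphism-rings-secn}: starting from~$\sC$, we will build a topological ring~$\R$ whose category of discrete right modules recovers~$\sC$.

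First, since $\sC$ is locally coherent, the full subcategory $\sC_\fp\subset\sC$ of coherent (equivalently, finitely presentable) objects is an essentially small abelian subcategory closed under kernels and cokernels, and $\sC$ is equivalent to the category of left exact functors $(\sC_\fp)^\sop\rarrow\Ab$. The assumption of local coperfectness amounts to saying that the abelian category $\sC_\fp$ is Artinian. I would then fix a small skeleton $\sD\subset\sC_\fp$ whose objects form a generating set of~$\sC$, and consider the faithful additive functor $F\:\sC\rarrow\Ab$ defined by $F(X)=\bigoplus_{D\in\sD}\Hom_\sC(D,X)$, which preserves filtered colimits and detects isomorphisms.

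Second, I would define $\R=\Hom(F,F)^\rop$, the opposite of the ring of natural endomorphisms of $F$, and topologize it by declaring a base of neighborhoods of zero to consist of the annihilators of finite subsets of $\bigsqcup_{D\in\sD}F(D)$. By the same reasoning as in the proofs of Propositions~\ref{endomorphisms-of-forgetful} and~\ref{endomorphisms-of-functor=of-module}, this makes $\R$ a complete, separated topological ring with right linear topology; indeed, $\R$ is the inverse limit of the finite-dimensional residue rings obtained by cutting down to the annihilators of tuples of elements in the various $F(D)$.

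Third, for every $X\in\sC$, the abelian group $F(X)$ carries a natural right $\R$\+action that is discrete, because any element belongs to $\Hom_\sC(D,X)$ for some $D\in\sD$ and is therefore annihilated by an open right ideal. This yields a functor $\Phi\:\sC\rarrow\discr\R$. I would verify it is an equivalence in three substeps: faithfulness follows since $\sD$ generates; fullness comes from reconstructing a morphism $X\rarrow Y$ from any $\R$\+linear map $F(X)\rarrow F(Y)$ using naturality in $\sD$, in the spirit of Lemma~\ref{restriction-closed}; and essential surjectivity rests on identifying each cyclic module $\R/\I$ with an object of $\sC_\fp$ and then writing every discrete $\R$\+module as the filtered colimit of its cyclic submodules, which corresponds to expressing an object of $\sC$ as the filtered colimit of its coherent subobjects. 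The topological right coherence and right coperfectness of $\R$ then follow automatically from the equivalence $\discr\R\simeq\sC$ together with the hypotheses on $\sC$.

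The main obstacle will be the essential surjectivity step: to match cyclic discrete $\R$\+modules with coherent objects of $\sC$, one must show that every open right ideal $\I\subset\R$ is precisely the annihilator of some element inside some $F(D)$ with $D\in\sD$, so that $\R/\I$ is canonically the image under $F$ of a quotient of~$D$. This is where both hypotheses are crucially combined: local coherence ensures that such quotients remain in $\sC_\fp$, and the Artinian structure coming from local coperfectness is needed to control the topology of~$\R$ tightly enough that no extraneous open ideals appear beyond those cut out by finitely many elements of objects of $\sD$.
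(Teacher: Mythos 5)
There is a genuine gap at the very first step, and it propagates through the whole argument. You take $F(X)=\bigoplus_{D\in\sD}\Hom_\sC(D,X)$ with $\sD$ a skeleton of $\sC_\fp$. This functor is only \emph{left} exact: coherent objects of a locally coherent category are not projective in general (e.g.\ in the locally coperfect locally coherent category of torsion abelian $p$\+groups, $\boZ/p$ is coherent but $\Hom(\boZ/p,{-})$ kills the epimorphism $\boZ/p^2\rarrow\boZ/p$). Since the forgetful functor $\discr\R\rarrow\Ab$ is exact and faithful, any equivalence $\sC\simeq\discr\R$ must carry an exact faithful colimit-preserving functor $\sC\rarrow\Ab$ as its underlying-group functor; your $F$ fails this, so your $\Phi$ cannot be an equivalence (it does not preserve cokernels). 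Concretely, $\Phi$ embeds $\sC$ into the category of additive functors $\sD^\sop\rarrow\Ab$ as the full subcategory of \emph{flat} (left exact) functors, whereas the discrete modules over your $\R$ will include the non-left-exact finitely presented functors as well; in particular the cyclic modules $\R/\I$ are quotients of representables and typically do not lie in the image of $\Phi$, so essential surjectivity fails, not merely for topological reasons as you anticipate in your last paragraph.

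The paper circumvents exactly this obstruction by not working with covariant Homs out of coherent objects of $\sC$ at all. Instead it passes to the \emph{conjugate} locally coherent category $\sA$ (left exact, direct-limit-preserving functors $\sC\rarrow\Ab$), for which $\sA_\fp\simeq(\sC_\fp)^\sop$; local coperfectness of $\sC$ (Artinianness of $\sC_\fp$) translates into $\sA$ being locally Noetherian. One then chooses a \emph{big injective} object $J\in\sA$, i.e.\ one with $\Add(J)=\sA_\inj$, and sets $\R=\Hom_\sA(J,J)^\rop$ with the finite topology. The point is that $\Hom_\sA({-},J)$ \emph{is} exact precisely because $J$ is injective, and it is faithful and hits every coherent object because $J$ is big; this is what makes the functor $G\:\sC_\fp=(\sA_\fp)^\sop\rarrow\discr\R$ exact and fully faithful with coherent image, so that its direct-limit extension is an equivalence. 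If you want to repair your argument you would need to replace your $F$ by an exact faithful functor of this kind, which essentially forces you through the same injective-object construction.
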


\begin{proof}[Proof]
 For any locally coherent abelian category $\sC$, one considers
the category $\sA$ of all left exact, direct limit-preserving
covariant functors $\sC\rarrow\Ab$.
 Then $\sA$ is also a locally coherent abelian category.
 In the terminology of~\cite{Ro}, \,$\sA$ is called
the \emph{conjugate} locally coherent abelian category to~$\sC$.
 Then $\sC$ is also conjugate to $\sA$.
 In fact, $\sC$ is the category of left exact functors
$(\sC_\fp)^\sop\rarrow\Ab$ and $\sA$ is the category of left exact
functors $\sC_\fp\rarrow\Ab$; so the small abelian categories $\sA_\fp$
and $(\sC_\fp)^\sop$ are naturally equivalent.

 Hence the category $\sC_\fp$ is Artinian if and only if the category
$\sA_\fp$ is Noetherian.
 It follows that the category $\sC$ is locally coperfect if and only if
the category $\sA$ is locally Noetherian.
 Assuming that this is the case, the paper~\cite{Ro} suggests to choose
a \emph{big} injective object $J\in\sA$, which means
an object such that the full subcategory of injective objects
$\sA_\inj\subset\sA$ coincides with $\Add(J)$.
 Equivalently, an injective object $J\in\sA$ is big if and only if
it contains a representative of every isomorphism class of indecomposable
injectives in~$\sA$.
 Any big injective object is an injective cogenerator, but the converse
is not true, in general.

 Now let $F\:\sC\rarrow\Ab$ be the functor corresponding to the chosen
big injective object $J\in\sA$.
 Then there is a complete, separated topological ring $\R$ with
a right linear topology and a category equivalence $\sC\cong\discr\R$
transforming the functor $F\:\sC\rarrow\Ab$ into the forgetful functor
$\discr\R\rarrow\Ab$.
 This is one of the results of~\cite[Theorem~6]{Ro}.

 Essentially, $\R=\Hom_\sA(J,J)^\rop$ is the opposite ring to the ring
of endomorphisms of the object~$J$, endowed with the finite topology,
as in Example~\ref{modules-topologically-agreeable-example}\,(2).
 This means that annihilators of finitely generated (\,$=$~Noetherian)
subobjects $E\subset J$ form a base of neighborhoods of zero in~$\R$.
 Notice that the functor $F|_{\sC_\fp}\:\sC_\fp\rarrow\Ab$ can be
computed as the functor $\Hom_\sA({-},J)|_{\sA_\fp}\:(\sA_\fp)^\sop
\rarrow\Ab$.
 Now, for any object $N\in\sA$, the abelian group $\Hom_\sA(N,J)$
has a natural structure of right $\R$\+module, and this right
$\R$\+module is discrete for $N\in\sA_\fp$.
 This defines an exact functor $\sC_\fp=(\sA_\fp)^\sop\rarrow\discr\R$,
which can be uniquely extended to a direct limit-preserving exact
functor $G\:\sC\rarrow\discr\R$.

 It is explained in~\cite[first part of proof of Theorem~4 in
Section~4]{Ro} that the functor $G$ takes the objects of $\sC_\fp$
to coherent objects of $\discr\R$.
 In particular, since $G(E^\sop)=\Hom_\sA(E,J)\cong\R/\I$ for any
Noetherian subobject $E\subset J$ and its annihilator ideal
$\I\subset\R$, the discrete right $\R$\+module $\R/\I$ is coherent.
 So the category $\discr\R$ has a set of coherent generators and
the topological ring $\R$ is topologically right coherent.
 Furthermore, by~\cite[Lemma~4.1]{Ro}, the functor $G|_{\sC_\fp}\:
\sC_\fp\rarrow\discr\R$ is fully faithful.

 Since coherent objects in a locally coherent category
(here the category $\discr\R$)
are finitely presentable, it follows that the functor $G\:\sC\rarrow
\discr\R$ is fully faithful, too.
 Hence the essential image of $G$ is a full subcategory closed under
kernels, cokernels, and coproducts in $\discr\R$.
 Since the essential image of $G$ contains a set of generators of
$\discr\R$, as we have seen, we can conclude that $G$ is an equivalence
of categories.
\end{proof}

\begin{thm} \label{top-coherent-top-coperfect}
 Let\/ $\R$ be a topologically right coherent, topologically right
coperfect, complete and separated topological ring with a right
linear topology.
 Then the topological ring $\R$ is topologically left perfect.
\end{thm}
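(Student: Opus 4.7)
The plan is to reduce to the geometric realization of $\discr\R$ provided by Roos's theorem and then invoke the general perfect-decomposition machinery already developed in Sections~\ref{split-direct-limits-secn} and~\ref{perfect-decompositions-secn}. First I would note that, by hypothesis, the Grothendieck category $\sC=\discr\R$ is both locally coherent and locally coperfect. Applying Proposition~\ref{can-be-realized} (Roos), there exists a locally Noetherian Grothendieck category $\sA$ (the conjugate category of $\sC$) and a big injective object $J\in\sA$ such that the topological ring $\Hom_\sA(J,J)^\rop$, endowed with the finite topology of the endomorphism ring of $J$ in $\sA$ (i.~e., with the annihilators of the Noetherian subobjects of $J$ forming a base of neighborhoods of zero), is isomorphic as a topological ring to $\R$, and the equivalence $\sC\simeq\discr\R$ is compatible with the forgetful functors.

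Next I would observe that, as a locally Noetherian category, $\sA$ is in particular locally finitely generated Grothendieck, and so by Example~\ref{modules-topologically-agreeable-example}\,(2) it carries a canonical topologically agreeable additive category structure, whose topology on the endomorphism groups is precisely the finite topology. In this structure, the induced topology on $\R=\Hom_\sA(J,J)^\rop$ coincides with the one furnished by Roos. This is the crucial compatibility check: the two a priori different topologies on the endomorphism ring---the one coming from Roos's realization and the one coming from the topologically agreeable structure on $\sA$---are the same.

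Having this, the combination of Example~\ref{locally-Noetherian-example} and Theorem~\ref{top-agreeable-angeleri-saorin} does the rest. By Example~\ref{locally-Noetherian-example}, since $J$ is an injective object in the locally Noetherian category $\sA$, the full subcategory $\Add(J)\subset\sA$ has split direct limits. By Theorem~\ref{top-agreeable-angeleri-saorin} applied to the idempotent-complete topologically agreeable category $\sA$ and its object $J$, it follows that $J$ has a perfect decomposition in $\sA$. Finally, Theorem~\ref{perfect-decomposition-endomorphism-ring} applied to the same data yields that the topological ring $\R\cong\Hom_\sA(J,J)^\rop$ is topologically left perfect, which is what was to be proved.

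The main obstacle I anticipate is not really a conceptual one but the bookkeeping for the previous paragraph: making sure that Roos's $\R$ (with its topology coming from annihilators of Noetherian subobjects of the big injective $J$) really is the \emph{same} topological ring as our given $\R$, and that the resulting structure on $J$ as an object in the topologically agreeable category $\sA$ gives back this topology on its endomorphism ring. Once that identification is in place, the rest of the argument is a strict combination of already-established results and involves no further estimates. Note that the intervention of perfect decompositions here is indirect: we use them only as a convenient vehicle, provided by Theorem~\ref{perfect-decomposition-endomorphism-ring}, to convert the categorical splitting property of direct limits in $\Add(J)$ (guaranteed by injectivity of $J$ in the locally Noetherian category $\sA$) into the topological-ring-theoretic statement of topological left perfectness of $\R$.
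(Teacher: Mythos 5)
Your proposal is correct and follows essentially the same route as the paper: realize $\R$ as the topological endomorphism ring of the injective object $J$ of the conjugate locally Noetherian category $\sA$ corresponding to the forgetful functor on $\discr\R$, observe via Example~\ref{locally-Noetherian-example} that $\Add(J)=\sA_\inj$ has split direct limits, and conclude topological left perfectness. The only (immaterial) difference is that you finish through Theorem~\ref{top-agreeable-angeleri-saorin} and Theorem~\ref{perfect-decomposition-endomorphism-ring}, while the paper's main line goes through Corollaries~\ref{split-direct-limits-contramodule-interpretation} and~\ref{weak-iii-implies-iv} (mentioning your perfect-decomposition variant as an alternative), and the compatibility check you flag is exactly what the paper settles by combining Proposition~\ref{endomorphisms-of-forgetful} with the identification of $F$ with $\Hom_\sA({-},J)$.
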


\begin{proof}
 The category of discrete right $\R$\+modules $\sC=\discr\R$ is
a locally coperfect, locally coherent abelian category.
 Left-exact, direct limit preserving functors $\sC\rarrow\Ab$ form
a locally Noetherian abelian category $\sA$ conjugate to $\sC$
(see the discussion in the previous proof).
 In particular, the forgetful functor $F\:\discr\R\rarrow\Ab$
corresponds to an object $J\in\sA$.
 The functor $F|_{\sC_\fp}$ can be computed as the functor
$\Hom_\sA({-},J)|_{\sA_\fp}$.
 Since the functor $F|_{\sC_\fp}$ is exact, it follows that the functor
$\Hom_\sA({-},J)$ is exact in restriction to $\sA_\fp$; thus
the object $J\in\sA$ is injective.
 Following~\cite[Corollary~5.1]{Ro}, \,$J$ is actually
a big injective object in $\sA$ (but we do not need to use this fact).

 The topological ring $\R$ can be uniquely recovered as the opposite
ring to the ring of endomorphisms of the functor $F$, endowed with
the finite topology, as explained in
Section~\ref{as-endomorphism-rings-secn} above.
 This is the same thing as the ring of endomorphisms of the object
$J\in\sA$, endowed with the finite topology, as in the proof of
Proposition~\ref{can-be-realized}.

 According to Example~\ref{locally-Noetherian-example}, the category
$\Add(J)=\sA_\inj\subset\sA$ has split direct limits.
 By Corollary~\ref{split-direct-limits-contramodule-interpretation},
it follows that the full subcategory $\R\contra_\proj\subset\R\contra$
is closed under direct limits.
 It remains to apply Corollary~\ref{weak-iii-implies-iv} in order to
conclude that $\R$ is topologically left perfect.

 Alternatively, one can prove directly from the definition that,
for any injective object $J$ in a locally Noetherian category $\sA$,
the decomposition of $J$ into a direct sum of indecomposable injectives
is a perfect decomposition.
 By Theorem~\ref{perfect-decomposition-endomorphism-ring}, it follows
that the topological ring $\R=\Hom_\sA(J,J)^\rop$ is topologically
left perfect.
\end{proof}

\Section{Topologically Perfect Topological Rings}
\label{topologically-perfect-secn}

 Recall the definition given in
Section~\ref{perfect-decompositions-secn}: a complete, separated
topological ring $\R$ with right linear topology is \emph{topologically
left perfect} if its topological Jacobson radical $\HH=\HH(\R)$ is
topologically left T\+nilpotent and strongly closed in $\R$,
and the quotient ring $\S=\R/\HH$ in its quotient topology is
topologically semisimple.
 In this section we discuss equivalent conditions characterizing
topologically perfect topological rings.

 There is a number of such conditions which we consider.
 Some of them are indeed equivalent to topological perfectness, as
we prove.
 The equivalence of the rest of the conditions is a conjecture.
 This conjecture is equivalent to a positive answer to
Question~\ref{as-main-question}, as we explain.
 In addition to results obtained above in this paper, we make use of
some results from the papers~\cite{Pproperf} and~\cite{BPS}.

 Given a full subcategory $\sC$ in a category $\sA$, a morphism
$c\:C\rarrow A$ in $\sA$ is said to be a \emph{$\sC$\+precover} (of
the object~$A$) if $C\in\sC$ and for every morphism $c'\:C'\rarrow A$
in $\sA$ with $C'\in\sC$ there exists a morphism $f:C'\rarrow C$
such that $c'=cf$.
 A $\sC$\+precover $c\:C\rarrow A$ is called a \emph{$\sC$\+cover}
(of~$A$) if the equation $cf=c$ for a morphism $f\:C\rarrow C$
implies that $f$~is an isomorphism.
 We refer to the papers~\cite[Sections~4 and~10]{Pproperf}
and~\cite[Sections~3 and~7]{BPS} for a discussion of projective covers
in contramodule categories.

 Let $\R$ be a complete, separated topological ring with a right linear
topology.
 A left $\R$\+contramodule $\F$ is said to be \emph{flat} if the functor
of contratensor product \mbox{${-}\ocn_\R\F\:$}$\discr\R\rarrow\Ab$ is exact.
 All projective left $\R$\+contramodules are flat, and the class of
all flat left $\R$\+contramodules is closed under direct limits in
$\R\contra$; so all the direct limits of projective left
$\R$\+contramodules are flat~\cite[Section~3]{Pproperf}.

 It is \emph{not} known whether the converse is true (i.~e., whether
an analogue of the Govorov--Lazard description of flat modules holds
for contramodules).%
\footnote{A counterexample is available now: \cite[Example~10.2]{PPT}.}
 Nevertheless, we can prove the following theorem, which is the main
result of this section.

\begin{thm} \label{i=ii=iii=iv}
 Let\/ $\R$ be a complete, separated topological ring with a right
linear topology.
 Then the following conditions are equivalent:
\begin{itemize}
\item[(i)] all flat left\/ $\R$\+contramodules have projective covers;
\item[(i$'$)] all the direct limits of projective
left\/ $\R$\+contramodules have projective covers in\/ $\R\contra$;
\item[(ii)] all left\/ $\R$\+contramodules have projective covers;
\item[(iii)] all flat left\/ $\R$\+contramodules are projective;
\item[(iii$'$)] the class of all projective left\/ $\R$\+contramodules\/
$\R\contra_\proj$ is closed under direct limits in\/ $\R\contra$;
\item[(iv)] the topological ring\/ $\R$ is topologically left perfect.
\end{itemize} 
\end{thm}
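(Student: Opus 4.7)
I organize the equivalences into a cycle with the backbone
$$\text{(iv)} \Rightarrow \text{(ii)} \Rightarrow \text{(i)} \Rightarrow \text{(i$'$)} \Rightarrow \text{(iii$'$)} \Rightarrow \text{(iv)}$$
together with the side equivalence $\text{(iii)} \Leftrightarrow \text{(iii$'$)}$. The trivial steps are (ii)$\Rightarrow$(i) and (iii)$\Rightarrow$(i) (projective objects are their own projective covers), (i)$\Rightarrow$(i$'$) (direct limits of projective contramodules are flat, hence a fortiori flat), and (iii)$\Rightarrow$(iii$'$) (the class of flat contramodules is closed under direct limits, so if it coincides with $\R\contra_\proj$, the latter is). The equivalence (iii$'$)$\Leftrightarrow$(iv) is nothing but Corollary~\ref{weak-iii-implies-iv}, proved above. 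For (i$'$)$\Rightarrow$(iii$'$), I invoke the main result of~\cite{BPS}, quoted in the introduction: a direct limit of projective contramodules that admits a projective cover is automatically projective. Applying this to an arbitrary diagram $(P_x)_{x\in X}$ of projective left $\R$\+contramodules shows that $\varinjlim_x P_x$ lies in $\R\contra_\proj$, so $\R\contra_\proj$ is closed under direct limits.

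The substantial step is (iv)$\Rightarrow$(ii), which also yields (iv)$\Rightarrow$(iii) once (ii)$\Rightarrow$(i) and the trivial observation that a projective cover of a flat contramodule must be the contramodule itself (since the kernel, being a direct summand by flatness, is killed by Nakayama; see below) are in place. So assume $\R$ is topologically left perfect with $\HH = \HH(\R)$, $\S = \R/\HH$. Given $\C \in \R\contra$, consider the quotient $\bar\C = \C/(\HH\tim\C)$. This is naturally a left $\S$\+contramodule, and by Theorem~\ref{topologically-semisimple-ring}\,(1) the category $\S\contra$ is semisimple, so $\bar\C$ is projective in $\S\contra$; pick a free $\S$\+contramodule $\S[[X]]$ mapping onto $\bar\C$ by a split surjection. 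Using that $\HH$ is strongly closed in $\R$, the continuous ring homomorphism $\R \rarrow \S$ induces a surjection $\R[[X]] \rarrow \S[[X]]$, so the projective $\S$\+contramodule cover of $\bar\C$ lifts to the free left $\R$\+contramodule $\P = \R[[X]]$ together with a morphism $\P \rarrow \C$ whose reduction modulo $\HH$ is surjective. By the contramodule Nakayama lemma (\cite[Lemma~5.2]{Pproperf}), applied to the cokernel, $\P \rarrow \C$ is itself surjective. To verify that it is a projective cover, I must show any endomorphism $f\:\P\rarrow\P$ with $p\circ f = p$ is an automorphism; this is where topological left T\+nilpotency of $\HH$ enters, via the standard argument turning T\+nilpotency of $\HH$ into the statement that $1-h$ is invertible whenever $h$ acts on $\P$ through $\HH$ in a suitable sense (an argument of Bass type, phrased contramodule-theoretically as in~\cite[Sections~9\<10]{Pproperf}).

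For (iv)$\Rightarrow$(iii): let $\F$ be a flat left $\R$\+contramodule. Build its projective cover $p\:\P\rarrow\F$ by the construction of the preceding paragraph. The kernel $\K$ of~$p$ satisfies $\K\subset\HH\tim\P$ (because $p$ is an isomorphism modulo $\HH\tim$), while flatness of $\F$ implies that the short exact sequence $0\to\K\to\P\to\F\to0$ remains exact after applying $(\R/\HH)\ocn_\R({-})$; hence $\K = \HH\tim\K$. Another application of the contramodule Nakayama lemma then forces $\K = 0$, so $\F = \P$ is projective.

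\textbf{Main obstacle.} The only genuinely delicate point is the projective cover construction in (iv)$\Rightarrow$(ii): the top reduction must be carried out in $\S\contra$ and then lifted to $\R\contra$, which requires \emph{both} strong closedness of $\HH$ (to lift zero-convergent families, and thereby free contramodules along $\R\rarrow\S$) and topological left T\+nilpotency of $\HH$ (to verify that the surjection obtained is actually a cover and to make contramodule Nakayama available). The rest of the equivalences are either formal or delegated to Corollary~\ref{weak-iii-implies-iv} and the theorem of~\cite{BPS}.
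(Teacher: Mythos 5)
Your logical skeleton is sound and, for the formal implications, coincides with the paper's: the trivial arrows, the appeal to Corollary~\ref{weak-iii-implies-iv} for (iii$'$)$\Leftrightarrow$(iv), and the appeal to the main result of~\cite{BPS} for (i$'$)$\Rightarrow$(iii$'$) are exactly what the paper does. The divergence is that the paper delegates the remaining substantial implications to~\cite{Pproperf} (namely (iv)$\Rightarrow$(iii) to Theorem~8.3 there, (iii)$\Rightarrow$(ii) to Theorem~9.1, and (iv)$\Rightarrow$(ii) to the second proof of Theorem~9.4), whereas you try to prove (iv)$\Rightarrow$(ii) and (iv)$\Rightarrow$(iii) from scratch, and both sketches have genuine gaps.

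In (iv)$\Rightarrow$(ii), lifting a \emph{split surjection} $\S[[X]]\rarrow\overline{\C}$ to a morphism $\R[[X]]\rarrow\C$ does not produce a projective cover unless the surjection is an isomorphism: the reduction functor is right exact, so the kernel of $\R[[X]]\rarrow\C$ surjects onto the (nonzero) complement of $\overline{\C}$ in $\S[[X]]$ and is therefore not superfluous. Already for $\R=k$ a discrete field (so $\HH=0$, $\S=\R$, $\C=k$) your recipe would declare $k^{(X)}\rarrow k$ a projective cover for every nonempty $X$, contradicting uniqueness of covers. Nor can you always arrange $\S[[X]]\cong\overline{\C}$, since projective $\S$\+contramodules over a topologically semisimple $\S$ need not be free; the correct route is to realize $\overline{\C}$ as the image of an idempotent endomorphism of $\S[[X]]$ and lift that idempotent along $\R[[X]]\rarrow\S[[X]]$, which is exactly what the topological nilpotency and strong closedness of $\HH$ and the idempotent-lifting machinery (Lemma~\ref{single-idempotent-lifted} and its relatives) are for --- and which your sketch never invokes. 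In (iv)$\Rightarrow$(iii), the expression $(\R/\HH)\ocn_\R({-})$ is not defined: contratensor products are taken with \emph{discrete} right $\R$\+modules, and $\HH$ is closed but in general not open, so $\R/\HH$ is not one. The statement you actually need --- that the reduction functor $\C\longmapsto\C/(\HH\tim\C)$ carries the exact sequence $0\rarrow\K\rarrow\R[[X]]\rarrow\F\rarrow0$ to an exact sequence when $\F$ is flat, whence $\K=\HH\tim\K$ --- is precisely the nontrivial content of the cited Theorem~8.3 of~\cite{Pproperf}; presenting it as an immediate consequence of flatness begs the question. (The same confusion appears in your plan, where you assert that the kernel of a projective cover of a flat contramodule is ``a direct summand by flatness''; it is not.)
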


\begin{proof}
 (ii)\,$\Longrightarrow$\,(i), (iii)\,$\Longrightarrow$\,(i),
and (iii$'$)\,$\Longrightarrow$\,(i$'$) Obvious.

 (i)\,$\Longrightarrow$\,(i$'$) and (iii)\,$\Longrightarrow$\,(iii$'$)
These implications hold because all the direct limits of projective
contramodules are flat (see the above discussion).
 
 (iv)\,$\Longrightarrow$\,(iii)
 The argument uses the construction of the reduction functor
$\R\contra\allowbreak\rarrow\S\contra$ taking a left
$\R$\+contramodule $\C$ to the left $\S$\+contramodule $\C/\HH\tim\C$,
where $\HH=\HH(\R)$ (see~\cite[Sections~2.10 and~2.12]{Pproperf}).
 By~\cite[Theorem~9.3]{Pproperf}, a flat left $\R$\+contramodule $\F$ is
projective if and only if the left $\S$\+contramodule $\F/\HH\tim\F$
is projective.
 By Theorem~\ref{topologically-semisimple-ring}\,(1), all
left $\S$\+contramodules are projective.

 (iii$'$)\,$\Longleftrightarrow$\,(iv) This is
Corollary~\ref{weak-iii-implies-iv}.

 (i$'$)\,$\Longrightarrow$\,(iii$'$) This
is~\cite[Corollary~7.5]{BPS}.

 (iii) or (iii$'$) $\Longrightarrow$ (ii) This
is~\cite[Theorem~10.1 or Corollary~10.2]{Pproperf}.

 (iv)\,$\Longrightarrow$\,(ii) Provable along the lines
of~\cite[second proof of Theorem~10.4]{Pproperf}.
\end{proof}

\begin{rem} \label{contramodule-proof-of-a-s-remark}
 The above proofs of the implications (iv)~$\Longrightarrow$
(iii)~$\Longrightarrow$~(iii$'$) in Theorem~\ref{i=ii=iii=iv} allow
to obtain a contramodule-based proof of the implication ``only if'' in
Theorem~\ref{angeleri-saorin}.
 Indeed, let $A$ be an associative ring and $M$ be a left $A$\+module
with a perfect decomposition.
 Consider the ring of endomorphisms $\R=\Hom_A(M,M)^\rop$,
and endow it with the finite topology (see
Example~\ref{modules-topologically-agreeable-example}\,(1)).

 By Theorem~\ref{perfect-decomposition-endomorphism-ring},
the topological ring $\R$ is topologically left perfect.
 So we conclude from the above argument based
on~\cite[Theorem~9.3]{Pproperf} that the class of all projective left
$\R$\+contramodules $\R\contra_\proj$ is closed under direct limits
in $\R\contra$.
 By Corollary~\ref{split-direct-limits-contramodule-interpretation},
it follows that the full subcategory $\Add(M)\subset A\modl$ has
split direct limits.
\end{rem}

 Let $\R$ be a complete, separated topological ring with a right linear
topology.
 A \emph{Bass flat left\/ $\R$\+contramodule}~\cite[Section~5]{Pproperf}
is, by definition, the direct limit in the category of left
$\R$\+contramodules $\R\contra$ of a sequence of free left
$\R$\+contramodules $\R$ with one generator and left
$\R$\+contramodule morphisms between them,
$$
 \B=\varinjlim\,(\R\rarrow\R\rarrow\R\rarrow\dotsb).
$$

 Now we can formulate our conjecture.

\begin{conj} \label{topologically-perfect-ring-conjecture}
 Let\/ $\R$ be a complete, separated topological associative ring with
a right linear topology.
 Then the following conditions are equivalent to each other and to
the conditions listed in Theorem~\textup{\ref{i=ii=iii=iv}}:
\begin{itemize}
\item[(i$^\flat$)] all Bass flat left\/ $\R$\+contramodules have
projective covers;
\item[(iii$^\flat$)] all Bass flat left\/ $\R$\+contramodules
are projective;
\item[(v)] all discrete right\/ $\R$\+modules are coperfect.
\end{itemize} 
\end{conj}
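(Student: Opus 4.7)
The plan is to integrate the three new conditions into the web of equivalences of Theorem~\ref{i=ii=iii=iv}. The easy segments are automatic: a Bass flat contramodule is flat by definition, so (iii)~$\Longrightarrow$~(iii$^\flat$); any projective contramodule serves as its own projective cover, so (iii$^\flat$)~$\Longrightarrow$~(i$^\flat$); and (ii)~$\Longrightarrow$~(i)~$\Longrightarrow$~(i$^\flat$) is tautological. It therefore suffices to route (v), (i$^\flat$), and (iii$^\flat$) back to one of the conditions of Theorem~\ref{i=ii=iii=iv}.

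I would first establish (iv)~$\Longrightarrow$~(v) by chaining results already in place. Using Corollary~\ref{topological-rings-are-endomorphism-rings}, write $\R\cong\Hom_A(M,M)^\rop$ in the finite topology for a suitable associative ring $A$ and left $A$-module $M$. By Theorem~\ref{perfect-decomposition-endomorphism-ring}, condition (iv) is equivalent to $M$ admitting a perfect decomposition; Theorem~\ref{angeleri-saorin-endo} then yields that $M$ is endo-$\Sigma$-coperfect, which via Lemma~\ref{endocoperfect-discrete-coperfect} translates into every discrete right $\R$-module being coperfect, i.e.\ (v).

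Second, for the reverse direction I would close the loop using the endomorphism ring realization together with the contramodule machinery. The implication (v)~$\Longrightarrow$~(iv) translates, via the same realization combined with Lemma~\ref{endocoperfect-discrete-coperfect} and Theorem~\ref{perfect-decomposition-endomorphism-ring}, literally into Question~\ref{as-main-question}: whether a $\Sigma$-coperfect module over its endomorphism ring always admits a perfect decomposition. For (i$^\flat$) and (iii$^\flat$), I would try to adapt the argument of~\cite[Corollary~6.5]{BPS} (which produced (i$'$)~$\Longrightarrow$~(iii$'$)) to the $\omega$-sequential case, the crucial step being that a Bass flat contramodule possessing a projective cover is itself projective; after which the sequence-based construction of~\cite[Section~4]{Pproperf} would allow one to manufacture, from any strictly descending chain of cyclic discrete right $\R$-modules, a non-projective Bass flat contramodule, thereby producing (iii$^\flat$)~$\Longrightarrow$~(v) and (i$^\flat$)~$\Longrightarrow$~(v).

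The main obstacle is the general implication (v)~$\Longrightarrow$~(iv). As noted in the introduction, this is precisely Question~\ref{as-main-question} of Angeleri H\"ugel and Saor\'\i n, and it is resolved only under the supplementary hypotheses of a countable base of neighborhoods of zero (Theorem~\ref{countable-discrete-coperfect-implies-top-perfect}), topological right coherence (Theorem~\ref{top-coherent-top-coperfect}), or commutativity of the underlying ring of $\R$. Without a resolution of that open problem the full equivalence asserted cannot be established unconditionally; accordingly, the statement must be recorded as a conjecture, with the unconditional equivalences among (i$^\flat$), (iii$^\flat$), and the conditions of Theorem~\ref{i=ii=iii=iv} forming the genuine theorems and (v) joining them in precisely those cases where Question~\ref{as-main-question} is already known to have a positive answer.
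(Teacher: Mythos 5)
Your proposal is correct and matches the paper's treatment of this statement: the paper likewise records it as a conjecture rather than a theorem, proves the unconditional implications in Theorem~\ref{unconditional} (relying on~\cite{BPS} for (i$^\flat$)\,$\Leftrightarrow$\,(iii$^\flat$) and on~\cite{Pproperf} for (iii$^\flat$)\,$\Rightarrow$\,(v)), and identifies the remaining gap (v)\,$\Rightarrow$\,(iv) as precisely equivalent to Question~\ref{as-main-question} (Remark~\ref{v-implies-iv=as-question}), settled only in the special cases you name. Your routing of (iv)\,$\Rightarrow$\,(v) through Theorems~\ref{perfect-decomposition-endomorphism-ring} and~\ref{angeleri-saorin-endo} is a valid minor variation on the paper's path through (iii)\,$\Rightarrow$\,(iii$^\flat$)\,$\Rightarrow$\,(v).
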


 A partial generalization of
Conjecture~\ref{topologically-perfect-ring-conjecture} to
locally presentable abelian categories with a projective generator
is formulated in~\cite[Main Conjecture~4.6]{BP2}.

 The following additional property will be useful in the discussion
below.
 We do not expect it to be equivalent to the other conditions in
Conjecture~\ref{topologically-perfect-ring-conjecture} in general,
but sometimes it is, as we will see:
\begin{itemize}
\item[(vi)] all the discrete quotient rings of\/ $\R$ (i.~e.,
the quotient rings of\/ $\R$ by its open two-sided ideals) are
left perfect.
\end{itemize}
 
 The next theorem lists those implications in the above conjecture
that we can prove unconditionally.

\begin{thm} \label{unconditional}
 The following implications between the properties in
Theorem~\textup{\ref{i=ii=iii=iv}},
in Conjecture~\textup{\ref{topologically-perfect-ring-conjecture}},
and the additional property~\textup{(vi)} hold true:
$$
\begin{tikzcd}
\text{\textup{(i)}} \arrow[r, Leftrightarrow]
\arrow[d, Leftrightarrow]
&\text{\textup{(ii)}} \arrow[r, Leftrightarrow]
&\text{\textup{(iii)}} \arrow[r, Leftrightarrow]
\arrow[d, Leftrightarrow]
&\text{\textup{(iv)}} \\
\text{\textup{(i$'$)}} \arrow[rr, Leftrightarrow]
\arrow[d, Rightarrow]
&&\text{\textup{(iii$'$)}} \arrow[d, Rightarrow] \\
\text{\textup{(i$^\flat$)}} \arrow[rr, Leftrightarrow]
&&\text{\textup{(iii$^\flat$)}} \arrow[r, Rightarrow]
&\text{\textup{(v)}} \arrow[r, Rightarrow] &\text{\textup{(vi)}}
\end{tikzcd}
$$
\end{thm}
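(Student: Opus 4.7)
My plan is to dispatch the implications in the diagram by region, as the upper portion is essentially a restatement of Theorem~\ref{i=ii=iii=iv}. Specifically, the equivalences (i)~$\Leftrightarrow$~(ii)~$\Leftrightarrow$~(iii)~$\Leftrightarrow$~(iv) are exactly those of Theorem~\ref{i=ii=iii=iv}. The vertical equivalence (i)~$\Leftrightarrow$~(i$'$) follows from the chain (i)~$\Rightarrow$~(i$'$) (obvious, since Bass flats are a subclass), together with (i$'$)~$\Rightarrow$~(iii$'$)~$\Leftrightarrow$~(iii)~$\Rightarrow$~(i) established in Theorem~\ref{i=ii=iii=iv}; (iii)~$\Leftrightarrow$~(iii$'$) and (i$'$)~$\Leftrightarrow$~(iii$'$) are handled analogously. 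The two vertical downward implications (i$'$)~$\Rightarrow$~(i$^\flat$) and (iii$'$)~$\Rightarrow$~(iii$^\flat$) are tautological, since a Bass flat left $\R$\+contramodule is by definition a countable direct limit of copies of the free contramodule $\R$ and so is a fortiori a direct limit of projective $\R$\+contramodules.

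The first new content is the bottom equivalence (i$^\flat$)~$\Leftrightarrow$~(iii$^\flat$). The direction (iii$^\flat$)~$\Rightarrow$~(i$^\flat$) is trivial, as a projective object is its own projective cover. For (i$^\flat$)~$\Rightarrow$~(iii$^\flat$), I would simply apply \cite[Corollary~6.5]{BPS}, specialized to countable ($\omega$\+indexed) sequences of projective contramodules: that result asserts that the existence of a projective cover of a direct limit of projectives forces the direct limit itself to be projective, and the proof goes through verbatim when the directed poset is~$\omega$.

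The main obstacle is (iii$^\flat$)~$\Rightarrow$~(v), for which I would adapt the classical argument of Bass. Starting from a non-terminating descending chain of cyclic discrete right $\R$\+modules, produce a sequence of elements $a_1,a_2,\dotsc\in\R$ such that the chain of cyclic right $\R$\+modules $a_1\R\supsetneq a_1a_2\R\supsetneq a_1a_2a_3\R\supsetneq\dotsb$ is strictly descending inside some discrete right $\R$\+module. Form the Bass flat left $\R$\+contramodule
\[
 \B\,=\,\varinjlim\bigl(\R\overset{a_1}\rarrow\R\overset{a_2}\rarrow\R\overset{a_3}\rarrow\dotsb\bigr),
\]
where each transition map is left multiplication by~$a_i$ (viewed as a morphism of free left $\R$\+contramodules with one generator). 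Assuming~$\B$ is projective by~(iii$^\flat$), one computes the contratensor products $\N\ocn_\R\B$ for suitable discrete right $\R$\+modules $\N$ encoding the chain; projectivity of~$\B$ combined with the natural isomorphism $\N\ocn_\R\R[[X]]\cong\N[X]$ forces the chain to stabilize, producing the desired contradiction. The subtlety lies in transporting the classical ``free module $\bigoplus\R$ as summand of $\B$ forces termination'' argument into the contramodule framework, where the Govorov--Lazard theorem is unavailable; the right setup uses the splitting of an appropriate short exact sequence of flat contramodules obtained from the telescope presentation of~$\B$.

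Finally, (v)~$\Rightarrow$~(vi) is a short reduction to Bass's classical Theorem~P. For any open two-sided ideal $\J\subset\R$, the discrete quotient ring $\R/\J$ acts discretely on its own right modules, so every right $\R/\J$\+module is a discrete right $\R$\+module. Condition~(v) then yields that every descending chain of cyclic right $\R/\J$\+modules terminates, equivalently every descending chain of finitely generated right ideals in $\R/\J$ terminates, and Bass's theorem identifies this with left perfectness of the abstract ring $\R/\J$.
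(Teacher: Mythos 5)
Your decomposition matches the paper's: the whole upper block is Theorem~\ref{i=ii=iii=iv}, the downward arrows are tautological, and the genuinely new content is (i$^\flat$)\,$\Leftrightarrow$\,(iii$^\flat$), (iii$^\flat$)\,$\Rightarrow$\,(v), and (v)\,$\Rightarrow$\,(vi). The paper disposes of all three by citation (\cite[Corollary~2.10]{BPS} for the first, \cite[Proposition~4.3 and Lemma~6.3]{Pproperf} for the second, and the proof of \cite[Theorem~10.1]{Pproperf} for the third), so where you cite the same sources you are on safe ground; your direct argument for (v)\,$\Rightarrow$\,(vi) via Bass's Theorem~P is correct and is exactly what the cited proof does. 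Two small slips in the easy parts: the reason (i)\,$\Rightarrow$\,(i$'$) holds is that direct limits of projective contramodules are \emph{flat}, not that ``Bass flats are a subclass'' (that phrase justifies (i$'$)\,$\Rightarrow$\,(i$^\flat$) instead); and the transition maps of a Bass flat \emph{left} $\R$\+contramodule are \emph{right} multiplications by the elements~$a_i$, since an endomorphism of the free left contramodule $\R$ is determined by the image of the generator and acts by $r\mapsto ra_i$ --- left multiplication is not a morphism of left contramodules unless $a_i$~is central.

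The one place where you attempt to do real work rather than cite is (iii$^\flat$)\,$\Rightarrow$\,(v), and there your sketch stops exactly at the decisive step. Knowing that a projective $\B=\varinjlim(\R\rarrow\R\rarrow\dotsb)$ is a direct summand of some $\R[[X]]$ and that $\N\ocn_\R\R[[X]]\cong\N[X]$ does not by itself force the chain $n\R\supseteq na_1\R\supseteq na_1a_2\R\supseteq\dotsb$ inside a discrete module $\N$ to stabilize: the classical Bass argument analyzes the splitting $\B\rarrow\R[[\omega]]$ of the telescope presentation and extracts from its matrix coefficients the relation $a_1\dotsm a_n\in a_1\dotsm a_{n+k}\R+{}$(something small), and in the contramodule setting this requires controlling infinite zero\+convergent sums of coefficients --- precisely the content of \cite[Proposition~4.3]{Pproperf} --- followed by the passage from a T\+nilpotence\+type condition on sequences in $\R$ to coperfectness of arbitrary discrete right modules, which is \cite[Lemma~6.3]{Pproperf}. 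You correctly flag that ``the subtlety lies in transporting the classical argument,'' but that sentence is the proof, not a remark about it; as written, this implication is asserted rather than established. The fix is either to carry out that analysis or to cite the two results from~\cite{Pproperf} as the paper does.
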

 
\begin{proof}
 The equivalence of all the conditions in the first two lines is
the result of Theorem~\ref{i=ii=iii=iv}.
 The equivalence (i$^\flat$)\,$\Longleftrightarrow$\,(iii$^\flat$)
is provided by~\cite[Corollary~3.10]{BPS}.
 
 The rest is essentially explained in~\cite[proofs of Theorems~11.1
and~13.4]{Pproperf} (though the generality level of the exposition
in~\cite{Pproperf} is somewhat more restricted than in the present
paper; cf.\ Remark~\ref{infinite-vs-finite-matrices} below).
 Specifically, the implication (iii$^\flat$)\,$\Longrightarrow$\,(v)
is~\cite[Proposition~5.3 and Lemma~7.3]{Pproperf}.
 The implication (i$^\flat$)\,$\Longrightarrow$\,(vi)
is~\cite[Corollary~5.7]{Pproperf}.
 The implication (v)\,$\Longrightarrow\,$(vi) is explained
in~\cite[proof of Theorem~11.1]{Pproperf}.
\end{proof}

 In the rest of this section we discuss some special cases when one can
prove Conjecture~\ref{topologically-perfect-ring-conjecture} and/or some
partial/related results in its direction.

\begin{thm} \label{under-abcd}
 The implication \textup{(vi)} $\Longrightarrow$ \textup{(iv)} holds
under any one of the following three additional assumptions
of\/~\cite[Section~11]{Pproperf}: either \par
\textup{(a)} the ring\/ $\R$ is commutative, or \par
\textup{(b)} the topological ring\/ $\R$ has a countable base of
neighborhoods of zero consisting of two-sided ideals, or \par
\textup{(c)} the topological ring\/ $\R$ has a base of neighborhoods
of zero consisting of two-sided ideals and only a finite number of
semisimple Artinian discrete quotient rings, \par
as well as under the further assumption~\textup{(d)}
formulated in\/~\cite[Section~13]{Pproperf}.
\end{thm}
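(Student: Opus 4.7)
My plan is to follow the strategy of \cite[Sections~10 and~12]{Pproperf}, adapting the arguments there to the present setting. In each of the cases~(a)\<(d), the key feature is that $\R$ admits a base of neighborhoods of zero consisting of (two-sided or otherwise structurally nice) ideals, so that $\R$ can be recovered as the projective limit of its discrete quotient rings $R_\U=\R/\U$ indexed by such ideals $\U$. The data of~(vi) then provides, for every~$\U$, a classical Bass-perfect ring $R_\U$, whose Jacobson radical $H(R_\U)$ is left T-nilpotent with $R_\U/H(R_\U)$ classically semisimple. The overall task is to transfer each of the three structural conditions defining topological left perfectness from these discrete quotients to~$\R$ itself.

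First I would identify the topological Jacobson radical $\HH(\R)$ with the preimage of the Jacobson radicals: $\HH(\R) = \varprojlim_\U H(R_\U)$, using that each $H(R_\U)$ coincides with the intersection of maximal (open, since $R_\U$ is discrete) right ideals of~$\R$ containing~$\U$. Second, I would deduce that the quotient ring $\S=\R/\HH(\R)$ is the projective limit of the classically semisimple rings $R_\U/H(R_\U)$, hence is pro-(classically semisimple) in its quotient topology, and apply the characterization of topologically semisimple topological rings from Theorem~\ref{topologically-semisimple-ring} (most naturally condition~(4), by assembling a compatible system of factorizations through products of matrix rings over skew-fields). Third, I would verify topological left T-nilpotency of $\HH(\R)$: given $h_1,h_2,\dots\in\HH(\R)$ and an open right ideal $\I\subset\R$, pick an open two-sided ideal $\U\subset\I$; then the reductions $\bar h_i\in H(R_\U)$ form a sequence in a left T-nilpotent ideal of $R_\U$, so $\bar h_1\cdots\bar h_n=0$ in $R_\U$ for some~$n$, which gives $h_1\cdots h_n\in\U\subset\I$ as required. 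Finally, I would handle the strong closedness of $\HH(\R)$ in~$\R$, using Lemma~\ref{closed-subring-top-T-nilpotent} and the assumptions on~$\U$ to lift zero-convergent families in $\S$ to zero-convergent families in~$\R$.

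The main obstacle will be precisely this last step: proving strong closedness, which requires surjectivity of $\R[[X]]\rarrow\S[[X]]$ for every set~$X$. This is where the three hypotheses split off and must be treated separately. In case~(b), countability of a base of two-sided neighborhoods of zero yields a Mittag-Leffler argument producing the lift inductively. In case~(c), having only finitely many semisimple Artinian discrete quotients effectively reduces the limit computation to a finite-dimensional problem at each stage. In case~(a), commutativity ensures every open right ideal is two-sided and that the topologically semisimple quotient $\S$ is forced (by commutativity and Theorem~\ref{topologically-semisimple-ring}\,(4)) to be a product of discrete fields, which sharply constrains the lifting problem. In each case the detailed verification is essentially the one carried out in \cite[Section~10]{Pproperf}, and I would reproduce those computations in the present framework, taking care that nothing relied on features of~\cite{Pproperf} more restrictive than our setup.

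For the additional assumption~(d), I would separately invoke \cite[Section~12]{Pproperf}: there the hypothesis is tailored so that the obstructions to strong closedness encountered above vanish for a different structural reason, and the proof of \cite[Theorem~12.4]{Pproperf} transfers verbatim once one identifies the discrete quotients $R_\U$ with those appearing there. In all four cases, combining the three verified properties yields~(iv), completing the implication.
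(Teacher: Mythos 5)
Your proposal takes essentially the same route as the paper: the proof given there is simply a citation of \cite[Propositions~10.2 and~12.5]{Pproperf} together with \cite[Lemma~8.1]{Pproperf} (cf.\ Lemma~\ref{H-is-topological-Jacobson}), and your sketch is an accurate reconstruction of what those cited arguments do --- recovering $\HH(\R)$ as the projective limit of the Jacobson radicals of the discrete quotients, transferring T\+nilpotency from Bass's theorem for each $\R/\U$, and isolating strong closedness as the step where hypotheses (a)--(d) are actually used. The only point worth making explicit is the one recorded in Remark~\ref{infinite-vs-finite-matrices}: under any of these hypotheses the quotient $\S=\R/\HH(\R)$ comes out as a product of \emph{discrete} simple Artinian rings, so the results of~\cite{Pproperf}, which are stated for that narrower class rather than for general topologically semisimple rings, apply without modification.
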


\begin{proof}
 This is the result of~\cite[Propositions~11.2 and~13.5]{Pproperf}
together with~\cite[Lemma~9.1]{Pproperf} (cf.\ our
Lemma~\ref{H-is-topological-Jacobson}).
\end{proof}

\begin{rem} \label{infinite-vs-finite-matrices}
 The main difference between the generality levels of the expositions
in~\cite{Pproperf} and in the present paper is that the topological
quotient ring $\S=\R/\HH(\R)$ is assumed to be the topological product
of some discrete simple Artinian rings in the paper~\cite{Pproperf}
(see~\cite[Section~9]{Pproperf}).
 This, of course, means rings of \emph{finite-sized} matrices over
skew-fields.
 In the present paper, the same role is played by \emph{topologically}
semisimple topological rings, which are topological products of
the topological rings of \emph{infinite-sized}, row-finite matrices
over skew-fields (see Theorem~\ref{topologically-semisimple-ring}).

 The explanation is that the idea of the proof of the main results
in~\cite{Pproperf} is to deduce the condition~(iv) from~(vi), as in
Theorem~\ref{under-abcd}.
 This is doable under one of the assumptions~(a), (b), (c), or~(d),
which are designed to make this approach work.
 Any one of these four assumptions, under which the main results of
the paper~\cite{Pproperf} are obtained, implies that $\S$ is
the topological product of discrete simple Artinian rings.
 
 It is instructive to consider the particular case when
$\R=\Mat_Y(D)$ is the topological ring of row-finite matrices of some
infinite size $Y$ over a skew-field~$D$.
 Then $\R$ has \emph{no} nonzero proper closed two-sided ideals.
 As $\R$ itself is not discrete, it follows that $\R$ has no nonzero
discrete quotient rings.
 Though not a counterexample to a possible implication
(vi)\,$\Longrightarrow$\,(iv), this simple example seems to suggest
that topological rings with right linear topologies may have too few
discrete quotient rings for the condition~(vi) to be interesting in
the general case.
 That is why we doubt the general validity of 
(vi)\,$\Longrightarrow$\,(iv), and therefore do not include
the condition~(vi) in the list of conditions in
Conjecture~\ref{topologically-perfect-ring-conjecture}.
 (Cf.\ Proposition~\ref{under-base-of-two-sided-ideals} below, which
concerns the case of a topological ring $\R$ with a topology base of
two-sided ideals.)
\end{rem}

\begin{cor} \label{all-equivalent}
 Conjecture~\textup{\ref{topologically-perfect-ring-conjecture}} holds
true under any one of the additional assumptions \textup{(a)},
\textup{(b)}, \textup{(c)}, or~\textup{(d)} of
the paper~\cite{Pproperf}.
 In particular, the conjecture is true for commutative rings\/~$\R$.
\end{cor}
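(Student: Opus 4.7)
The plan is to close the equivalence loop in Conjecture~\ref{topologically-perfect-ring-conjecture} by splicing together the unconditional implications of Theorem~\ref{unconditional} with the conditional implication of Theorem~\ref{under-abcd}. By Theorem~\ref{i=ii=iii=iv}, the conditions (i), (i$'$), (ii), (iii), (iii$'$), and~(iv) are already known to be equivalent for any complete, separated right linear topological ring~$\R$, so it suffices to insert the three remaining conditions (i$^\flat$), (iii$^\flat$), and~(v) into this cluster.

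First, the diagram in Theorem~\ref{unconditional} furnishes the chain of implications
$$
\text{(iii)}\;\Longrightarrow\;\text{(iii$'$)}\;\Longrightarrow\;
\text{(i$^\flat$)}\;\Longleftrightarrow\;\text{(iii$^\flat$)}\;
\Longrightarrow\;\text{(v)}\;\Longrightarrow\;\text{(vi)},
$$
all of which are valid with no extra hypothesis on~$\R$. Thus, under any topological ring $\R$ satisfying the equivalent conditions of Theorem~\ref{i=ii=iii=iv}, conditions (i$^\flat$), (iii$^\flat$), (v) and~(vi) automatically hold. It remains to prove the converse: given~(v) (or only~(i$^\flat$)), one must recover~(iv). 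The natural route is to pass through~(vi) and then invoke Theorem~\ref{under-abcd}, which asserts precisely the implication (vi)~$\Longrightarrow$~(iv) under any one of the hypotheses~(a)--(d) of~\cite{Pproperf}.

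Concretely, assuming one of~(a)--(d), I would argue in a single cycle: $(\text{iv})\Rightarrow(\text{iii})\Rightarrow(\text{iii$'$})\Rightarrow(\text{i$^\flat$})\Leftrightarrow(\text{iii$^\flat$})\Rightarrow(\text{v})\Rightarrow(\text{vi})\Rightarrow(\text{iv})$, where the first three arrows come from Theorem~\ref{i=ii=iii=iv} combined with Theorem~\ref{unconditional}, the middle equivalence and the two subsequent implications are the unconditional content of Theorem~\ref{unconditional}, and the final closing arrow (vi)~$\Longrightarrow$~(iv) is Theorem~\ref{under-abcd}. This shows all nine listed conditions are mutually equivalent. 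For the final sentence, commutativity of $\R$ is precisely hypothesis~(a), so the commutative case is a direct specialization.

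There is essentially no obstacle here beyond bookkeeping: the substantive work has been done in Theorems~\ref{i=ii=iii=iv},~\ref{unconditional}, and~\ref{under-abcd}, and the role of Corollary~\ref{all-equivalent} is merely to assemble these pieces. The only subtle point worth highlighting in the write-up is the observation (as noted in Remark~\ref{infinite-vs-finite-matrices}) that the conditional implication (vi)~$\Longrightarrow$~(iv) cannot be expected to hold in full generality, which is exactly why one has to restrict to the cases~(a)--(d), in each of which the topological semisimplicity of $\S=\R/\HH(\R)$ forces $\S$ to be a topological product of \emph{discrete} simple Artinian rings and hence makes the discrete quotient rings of~$\R$ plentiful enough to detect left perfectness.
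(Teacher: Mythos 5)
Your proposal is correct and follows exactly the paper's route: the paper's proof of this corollary is simply ``Follows from Theorems~\ref{unconditional} and~\ref{under-abcd},'' and your cycle $(\text{iv})\Rightarrow\dotsb\Rightarrow(\text{v})\Rightarrow(\text{vi})\Rightarrow(\text{iv})$ is precisely the intended assembly of those two results, with commutativity being case~(a) for the last sentence.
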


\begin{proof}
 Follows from Theorems~\ref{unconditional} and~\ref{under-abcd}.
\end{proof}

\begin{thm} \label{under-countable-base}
 Conjecture~\textup{\ref{topologically-perfect-ring-conjecture}}
holds true for any complete, separated topological ring\/ $\R$ with
a \emph{countable} base of neighborhoods of zero consisting of
open right ideals.
\end{thm}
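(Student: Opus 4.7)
The plan is to assemble equivalences that have already been established elsewhere in the paper, with the countable base hypothesis providing only the single missing implication needed to close the loop. By Theorem~\ref{i=ii=iii=iv}, the six conditions (i), (i$'$), (ii), (iii), (iii$'$), (iv) are already known to be equivalent for an arbitrary complete, separated topological ring $\R$ with right linear topology, without any restriction on the topology. Furthermore, Theorem~\ref{unconditional} gives the unconditional implications
\[
\text{(iii$'$)}\ \Longrightarrow\ \text{(iii$^\flat$)}\ \Longleftrightarrow\ \text{(i$^\flat$)},\qquad \text{(iii$^\flat$)}\ \Longrightarrow\ \text{(v)},
\]
together with (i$'$)~$\Longrightarrow$~(i$^\flat$). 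Consequently, starting from (iv) one can already reach all of (i$^\flat$), (iii$^\flat$), and (v) without any hypothesis. To establish the full equivalence asserted in Conjecture~\ref{topologically-perfect-ring-conjecture} it therefore suffices to verify the single implication (v)~$\Longrightarrow$~(iv) under the countable base assumption.

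However, this is exactly the statement of Theorem~\ref{countable-discrete-coperfect-implies-top-perfect}, which was proved above in Section~\ref{countably-generated-secn}: if $\R$ has a countable base of open right ideals and is topologically right coperfect (condition~(v)), then $\HH(\R)$ is topologically left T\+nilpotent by~\cite[Corollary~6.7]{Pproperf}, strongly closed in $\R$ by~\cite[Lemma~1.3]{Pproperf}, and the quotient $\S=\R/\HH(\R)$ is topologically semisimple by Lemma~\ref{jacobson-radical-lemma} combined with Theorem~\ref{topologically-semisimple-ring}\,(2) (every cyclic discrete right $\S$\+module $\R/\I$ with $\HH\subset\I$ satisfies $\rad(\R/\I)=0$ and hence is semisimple by Lemma~\ref{semisimple-lemma}). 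This means precisely that $\R$ is topologically left perfect, i.e.\ condition~(iv) holds.

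Thus the proof reduces to a short diagram chase combining Theorems~\ref{i=ii=iii=iv}, \ref{unconditional}, and~\ref{countable-discrete-coperfect-implies-top-perfect}, with no new obstacle to overcome; all the substantive technical work concerning the countable base hypothesis has already been carried out in Section~\ref{countably-generated-secn}.
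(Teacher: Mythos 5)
Your proposal is correct and follows exactly the paper's own argument: by Theorem~\ref{unconditional} the only missing link in the cycle of implications is (v)~$\Longrightarrow$~(iv), and under the countability hypothesis this is precisely Theorem~\ref{countable-discrete-coperfect-implies-top-perfect}. The additional recap of how that theorem's proof establishes the three defining properties of topological left perfectness is accurate but not needed here, since the theorem is invoked as a black box.
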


\begin{proof}
 In view of Theorem~\ref{unconditional}, it suffices to prove
the implication (v)\,$\Longrightarrow$\,(iv).
 For topological rings $\R$ with a countable base of neighborhoods of
zero, it is provided by
Theorem~\ref{countable-discrete-coperfect-implies-top-perfect}.
\end{proof}

\begin{rem} \label{v-implies-iv=as-question}
 The implication (v)\,$\Longrightarrow$\,(iv) in
Conjecture~\ref{topologically-perfect-ring-conjecture} is true
\emph{if and only if} the answer to Question~\ref{as-main-question} 
is positive.

 Indeed, given a left $A$\+module $M$, one considers its ring of
endomorphisms $\R=\Hom_A(M,M)^\rop$, endowed with the finite topology.
 Conversely, given a complete, separated topological ring $\R$ with
right linear topology, by
Corollary~\ref{topological-rings-are-endomorphism-rings} one can find
an associative ring $A$ and a left $A$\+module $M$ such that
$\R$ is isomorphic to $\Hom_A(M,M)^\rop$ as a topological ring.

 Now, according to Lemma~\ref{endocoperfect-discrete-coperfect},
the right $\R$\+module $M$ is $\Sigma$\+coperfect if and only if
the topological ring $\R$ satisfies condition~(v).
 On the other hand, by
Theorem~\ref{perfect-decomposition-endomorphism-ring}, the left
$A$\+module $M$ has a perfect decomposition if and only if
the topological ring $\R$ satisfies condition~(iv).
 (Cf.\ the proof of
Theorem~\ref{countably-generated-endocoperfect-theorem}.)

 It follows that the whole
Conjecture~\ref{topologically-perfect-ring-conjecture} is
equivalent to a positive answer to Question~\ref{as-main-question}.
\end{rem}

\begin{cor}
 Any endo-$\Sigma$-coperfect left module $M$ over an associative ring
$A$ with a commutative endomorphism ring\/ $\R=\Hom_A(M,M)^\rop$ has
a perfect decomposition.
\end{cor}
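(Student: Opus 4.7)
The proof should follow directly by chaining together three results already established in the paper, via the dictionary between properties of $M$ and topological properties of its endomorphism ring $\R=\Hom_A(M,M)^\rop$ endowed with the finite topology of Example~\ref{modules-topologically-agreeable-example}\,(1).

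The plan is as follows. First, I would translate the hypothesis that $M$ is endo-$\Sigma$-coperfect into a topological property of $\R$: by Lemma~\ref{endocoperfect-discrete-coperfect}, the condition that the right $\R$-module $M$ is $\Sigma$-coperfect is equivalent to the condition that every discrete right $\R$-module is coperfect, which is precisely condition~(v) of Conjecture~\ref{topologically-perfect-ring-conjecture} (topological right coperfectness of~$\R$).

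Second, I would invoke the commutativity of $\R$ in order to upgrade condition~(v) to condition~(iv). This is exactly the content of Corollary~\ref{all-equivalent}, which asserts that Conjecture~\ref{topologically-perfect-ring-conjecture} holds under hypothesis~(a) of~\cite{Pproperf}, i.e., when $\R$ is commutative. Hence the topological ring $\R$ is topologically left perfect.

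Finally, I would apply Theorem~\ref{perfect-decomposition-endomorphism-ring} to the object $M$ in the topologically agreeable additive category $\sA=A\modl$ (which is idempotent-complete and topologically agreeable by Example~\ref{modules-topologically-agreeable-example}\,(1)): since the ring of endomorphisms $\R=\Hom_\sA(M,M)^\rop$ is topologically left perfect, the object $M$ admits a perfect decomposition. There is no real obstacle; the only substantive step is the appeal to Corollary~\ref{all-equivalent} in the commutative case, and the rest is bookkeeping via the correspondence between modules and their endomorphism rings, exactly as outlined in Remark~\ref{v-implies-iv=as-question}.
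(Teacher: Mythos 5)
Your proposal is correct and follows exactly the paper's own route: translate endo-$\Sigma$-coperfectness into condition~(v) via Lemma~\ref{endocoperfect-discrete-coperfect}, use Corollary~\ref{all-equivalent} (the commutative case of the conjecture) to pass from~(v) to~(iv), and conclude by Theorem~\ref{perfect-decomposition-endomorphism-ring}. This is precisely the chain of references the paper cites, as laid out in Remark~\ref{v-implies-iv=as-question}.
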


\begin{proof}
 Follows from Corollary~\ref{all-equivalent},
Lemma~\ref{endocoperfect-discrete-coperfect}, and
Theorem~\ref{perfect-decomposition-endomorphism-ring},
as explained the preceding remark.
\end{proof}

\begin{rem} \label{ring-summation-easy-counterex}
 Now we can present an alternative argument for the negative answer
to the question posed in~\cite[Section~1.2]{PR}, which allows to avoid
using the theory of direct sum decompositions of modules, as discussed
above in Example~\ref{ring-summation-structure-counterex}.
 Let $\sB$ be a nondiscrete spectral category with a generator $G$
such that the ring of endomorphisms $R=\Hom_\sB(G,G)^\rop$ is
commutative.
 It is explained in Example~\ref{complete-boolean} how such
categories $\sB$ can be produced.
 We would like to show that there does \emph{not} exist a complete,
separated topological ring $\T$ with right linear topology such
that the abelian category $\sB$ is equivalent to $\T\contra$.

 Indeed, suppose that such a topological ring~$\T$ exists.
 Then the category $\sB_\proj$ is topologically agreeable by
Remark~\ref{projective-contramodules-topologically-agreeable}.
 Hence the ring $\R=R=\Hom_\sB(G,G)^\rop$ also acquires a topology
such that there is an equivalence of categories $\sB\cong\R\contra$
taking the projective generator $G\in\sB$ to the free
$\R$\+contramodule with one generator $\R\in\R\contra$
(see the last paragraph of
Example~\ref{ring-summation-structure-counterex}).

 Now if the category $\R\contra$ is split abelian, then all
left $\R$\+contramodules are projective, and in particular, all
flat left $\R$\+contramodules are projective, so condition~(iii)
is satisfied.
 As $\R$ is commutative, Theorem~\ref{under-abcd} applies, providing
the implication (vi)\,$\Longrightarrow$\,(iv).
 The implication (iii)\,$\Longrightarrow$\,(i$^\flat$) is obvious,
and the argument from Theorem~\ref{unconditional} for the implication
(i$^\flat$)\,$\Longrightarrow$\,(vi) can be used.
 All of this is covered by~\cite[Theorem~11.1]{Pproperf}.
 Hence we can conclude that the topological ring $\R$ satisfies~(iv),
i.~e., $\R$ is topologically left perfect.

 Taking into account Corollary~\ref{no-top-T-nilpotent-ideals-cor},
it follows that the topological ring $\R$ is topologically semisimple;
so it is a topological product of discrete fields.
 In particular, $R$ is isomorphic to a product of fields as an abstract
ring, which implies that the category $\sB$ is semisimple.
 The contradiction proves our claim.
\end{rem}

\begin{thm} \label{for-top-coherent}
 Conjecture~\textup{\ref{topologically-perfect-ring-conjecture}}
holds true for any topologically right coherent topological
ring\/~$\R$.
\end{thm}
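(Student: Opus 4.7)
The plan is to close the cycle of implications assembled in Theorem~\ref{unconditional} by using the coherence hypothesis exactly where it is needed, namely to obtain the implication \textup{(v)}~$\Longrightarrow$~\textup{(iv)}.

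First, by Theorem~\ref{i=ii=iii=iv} the six conditions \textup{(i)}, \textup{(i$'$)}, \textup{(ii)}, \textup{(iii)}, \textup{(iii$'$)}, \textup{(iv)} are equivalent for an arbitrary complete, separated right linear topological ring~$\R$, with no coherence assumption required. Next, from Theorem~\ref{unconditional} we have the chain of (one-sided) implications
\[
 \text{(iv)}\Longleftrightarrow\text{(iii$'$)}\Longrightarrow\text{(iii$^\flat$)}\Longleftrightarrow\text{(i$^\flat$)}\Longrightarrow\text{(v)},
\]
again valid without any coherence hypothesis. Thus for any $\R$, the conditions \textup{(iv)} and \textup{(i$^\flat$)}, \textup{(iii$^\flat$)}, \textup{(v)} satisfy (iv)~$\Rightarrow$~(i$^\flat$)~$\Leftrightarrow$~(iii$^\flat$)~$\Rightarrow$~(v).

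To complete the equivalence of all nine conditions, it therefore suffices to prove \textup{(v)}~$\Longrightarrow$~\textup{(iv)} under the additional assumption that $\R$ is topologically right coherent. But this is precisely the content of Theorem~\ref{top-coherent-top-coperfect}: a topologically right coherent, topologically right coperfect, complete, separated topological ring with right linear topology is topologically left perfect. Splicing this implication into the diagram of Theorem~\ref{unconditional} closes the cycle
\[
 \text{(iv)}\Longrightarrow\text{(iii$^\flat$)}\Longrightarrow\text{(v)}\Longrightarrow\text{(iv)},
\]
and combined with the already-established equivalences it shows that all of \textup{(i)}, \textup{(i$'$)}, \textup{(i$^\flat$)}, \textup{(ii)}, \textup{(iii)}, \textup{(iii$'$)}, \textup{(iii$^\flat$)}, \textup{(iv)}, \textup{(v)} are equivalent for topologically right coherent~$\R$, as claimed in Conjecture~\ref{topologically-perfect-ring-conjecture}.

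Since both ingredients --- Theorems~\ref{i=ii=iii=iv}, \ref{unconditional} and the coherent-case implication Theorem~\ref{top-coherent-top-coperfect} --- are already at our disposal, no genuine obstacle remains; the only point that needed the coherence hypothesis is the single implication \textup{(v)}~$\Longrightarrow$~\textup{(iv)}, which (by Remark~\ref{v-implies-iv=as-question}) is equivalent in general to a positive answer to Question~\ref{as-main-question}, and which we have now secured in the topologically right coherent case via Roos' theorem through Theorem~\ref{top-coherent-top-coperfect}.
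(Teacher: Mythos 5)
Your proposal is correct and follows essentially the same route as the paper: both reduce the theorem, via Theorem~\ref{unconditional}, to the single implication \textup{(v)}~$\Longrightarrow$~\textup{(iv)}, which Theorem~\ref{top-coherent-top-coperfect} supplies under the coherence hypothesis. Your version merely spells out the cycle-closing bookkeeping more explicitly than the paper does.
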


\begin{proof}
 In view of Theorem~\ref{unconditional}, it suffices to prove any one
of the implications (v)\,$\Longrightarrow$\,(iii$'$) or
(v)\,$\Longrightarrow$\,(iv).
 For topologically right coherent topological rings $\R$, these are
provided by Theorem~\ref{top-coherent-top-coperfect}.
\end{proof}

\begin{cor}
 Let $M$ be an endo-$\Sigma$-coperfect left module over an associative
ring $A$, and let\/ $\R=\Hom_A(M,M)^\rop$ be the (opposite ring to)
the endomorphism ring of $M$, viewed as a topological ring in
the finite topology.
 Assume that the topological ring\/ $\R$ is topologically right coherent.
 Then the $A$\+module $M$ has a perfect decomposition.
\end{cor}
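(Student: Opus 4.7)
The plan is to chain together three results already established in the paper to deduce the corollary directly from the hypotheses, without needing any new technical work. First, I would translate the hypothesis that $M$ is endo-$\Sigma$-coperfect into a statement about the topological ring $\R$: by Lemma~\ref{endocoperfect-discrete-coperfect}, the right $\R$\+module $M$ is $\Sigma$\+coperfect if and only if every discrete right $\R$\+module is coperfect, i.e., the topological ring $\R$ satisfies condition~(v) of Conjecture~\ref{topologically-perfect-ring-conjecture}. Thus the assumption on $M$ is exactly that $\R$ is topologically right coperfect.

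Next, I would invoke topological right coherence of $\R$: Theorem~\ref{for-top-coherent} asserts that Conjecture~\ref{topologically-perfect-ring-conjecture} holds unconditionally for topologically right coherent rings, and in particular the implication (v)\,$\Longrightarrow$\,(iv) holds in this setting. Therefore $\R$ is topologically left perfect. (Under the hood, this rests on Theorem~\ref{top-coherent-top-coperfect}, which goes through the conjugate locally Noetherian Grothendieck category and uses the splitness of direct limits of injectives from Example~\ref{locally-Noetherian-example}.)

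Finally, I would transport this conclusion back to $\sA=A\modl$. The category $A\modl$ is idempotent-complete and carries its natural topologically agreeable structure by Example~\ref{modules-topologically-agreeable-example}\,(1), and the topological endomorphism ring $\Hom_A(M,M)^\rop$ with the finite topology is precisely the $\R$ considered above. Hence Theorem~\ref{perfect-decomposition-endomorphism-ring}, applied to the object $M\in\sA$, yields that $M$ has a perfect decomposition, which is the desired conclusion.

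There is no real obstacle here, since each step is a direct citation; the only mild point to verify is that the topological ring structure on $\R$ used in Lemma~\ref{endocoperfect-discrete-coperfect} (finite topology on $\Hom_A(M,M)^\rop$) coincides with the one used in Theorem~\ref{perfect-decomposition-endomorphism-ring} (the topologically agreeable structure on $A\modl$ induces exactly the finite topology on endomorphism rings), which is precisely the content of Example~\ref{modules-topologically-agreeable-example}\,(1). Once this alignment of topologies is noted, the proof is a one-line composition of the three cited results.
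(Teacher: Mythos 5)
Your proposal is correct and follows exactly the paper's own first route: translate endo-$\Sigma$-coperfectness into condition~(v) via Lemma~\ref{endocoperfect-discrete-coperfect}, apply Theorem~\ref{for-top-coherent} to get topological left perfectness, and conclude via Theorem~\ref{perfect-decomposition-endomorphism-ring}. The paper also notes an alternative chain through Theorem~\ref{top-coherent-top-coperfect}, Corollary~\ref{split-direct-limits-contramodule-interpretation}, and Theorem~\ref{angeleri-saorin}, but your version matches the primary argument.
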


\begin{proof}
 Follows from Theorem~\ref{for-top-coherent},
Lemma~\ref{endocoperfect-discrete-coperfect}, and
Theorem~\ref{perfect-decomposition-endomorphism-ring},
as explained in Remark~\ref{v-implies-iv=as-question}.
 Alternatively, one can apply directly
Theorem~\ref{top-coherent-top-coperfect},
Corollary~\ref{split-direct-limits-contramodule-interpretation}
and Theorem~\ref{angeleri-saorin}.
\end{proof}

 Before we finish this section, let us list two partial or conditional
results in the direction of the main conjecture.

\begin{prop}
 Let\/ $\R$ be a complete, separated topological ring with a right
linear topology.
 Suppose that\/ $\R$ satisfies condition~\textup{(v)}, i.~e., all
discrete right\/ $\R$\+modules are coperfect.
 Then the topological Jacobson radical\/ $\HH(\R)$ of the topological
ring\/ $\R$ is topologically left T\+nilpotent (which is a part of
condition~\textup{(iv)}).
\end{prop}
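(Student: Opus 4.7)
\medskip

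The plan is to translate topological left T-nilpotence of $\HH=\HH(\R)$ into a descending chain condition on explicit cyclic submodules of the coperfect discrete right $\R$-module $\R/\I$, and then use the defining property of $\HH$ (annihilation of simple discrete modules) to drive a contradiction from non-termination.

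Concretely, fix a sequence $(a_n)_{n\ge 1}$ in $\HH$ and an open right ideal $\I\subset\R$; the goal is to find $N\ge 1$ with $a_1a_2\cdots a_N\in\I$. Set $\N=\R/\I$, let $m=1+\I\in\N$, and consider the descending chain of cyclic right $\R$-submodules
\[
 C_n \;=\; (m\,a_1a_2\cdots a_n)\cdot\R \;\subseteq\; \N,
 \qquad n\ge 1,
\]
which satisfies $C_{n+1}\subseteq C_n$ since $m\,a_1\cdots a_{n+1}=(m\,a_1\cdots a_n)a_{n+1}$. By the coperfectness hypothesis applied to $\N$, there exists $N\ge 1$ such that $C_N=C_{N+1}$. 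If $C_N=0$, then $m\,a_1\cdots a_N=0$, i.e.\ $a_1\cdots a_N\in\I$, and we are done.

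Assume instead that $C_N\ne 0$. Since $C_N$ is a nonzero cyclic discrete right $\R$-module, a standard Zorn argument produces a maximal proper submodule $M\subsetneq C_N$, and $S=C_N/M$ is a simple discrete right $\R$-module. The crucial point is that $\HH$ annihilates every simple discrete right $\R$-module: such an $S$ has the form $\R/\MM$ for an open maximal right ideal $\MM$, and by the definition of the topological Jacobson radical $\HH\subseteq\MM$; since $\HH$ is moreover a two-sided ideal (by Lemma 6.1 of~\cite{Pproperf}, recalled in Section~\ref{perfect-decompositions-secn}), we have $S\cdot\HH=0$. In particular, $C_N\cdot a_{N+1}\subseteq M$, so the cyclic submodule $C_{N+1}=(m\,a_1\cdots a_N\,a_{N+1})\R$ is contained in $M$, whence $C_{N+1}\subsetneq C_N$. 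This contradicts stabilization of the chain, forcing $C_N=0$ and hence $a_1\cdots a_N\in\I$.

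Since $\I$ was an arbitrary open right ideal, this shows that the products $a_1a_2\cdots a_n$ converge to zero in~$\R$, which is exactly the statement that $\HH(\R)$ is topologically left T-nilpotent. There is no serious obstacle here: the only subtlety is identifying the correct cyclic chain (built from successive left products, generating $C_{n+1}$ from $C_n$ by right multiplication by $a_{n+1}$) so that stabilization of the chain together with the annihilation-of-simples property of $\HH$ forces triviality rather than merely stability.
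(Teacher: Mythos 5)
Your argument is correct. The paper itself gives no proof here beyond the citation ``This is \cite[Corollary~6.7]{Pproperf}'', so what you have written is a self-contained reconstruction of that external result, carried out by the classical Bass-type argument transplanted to the topological setting: the descending chain $C_n=(m\,a_1\cdots a_n)\R$ inside the coperfect cyclic module $\R/\I$ must stabilize, and stabilization at a nonzero stage is impossible because $\HH$ kills every simple discrete right $\R$\+module, so right multiplication by $a_{N+1}$ pushes $C_N$ into a maximal proper submodule. All the individual steps check out: the chain is genuinely descending; a nonzero cyclic module has a maximal proper submodule by Zorn (the union of a chain of proper submodules of $c\R$ cannot contain~$c$); and $S\cdot\HH=0$ for simple discrete $S$ holds because the annihilator of any nonzero element of $S$ is an open maximal right ideal, hence contains $\HH$ --- so in fact you do not even need to invoke the two-sidedness of $\HH$ at this point, although citing it is also a valid route. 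It is also worth noting that your proof only uses coperfectness of the cyclic discrete modules $\R/\I$, a formally weaker hypothesis than condition~(v), which is a small bonus of making the argument explicit.
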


\begin{proof}
 This is~\cite[Corollary~7.7]{Pproperf}.
\end{proof}

\begin{prop} \label{under-base-of-two-sided-ideals}
 Let\/ $\R$ be a complete, separated topological ring with a base
of neighborhoods of zero consisting of open \emph{two-sided} ideals.
 Then the equivalence \textup{(v)\,$\Longleftrightarrow$\,(vi)} in
Conjecture~\ref{topologically-perfect-ring-conjecture} holds true. 
\end{prop}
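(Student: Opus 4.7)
The plan is to prove both implications of (v)\,$\Longleftrightarrow$\,(vi) under the hypothesis that $\R$ admits a base $B$ of neighborhoods of zero consisting of open two-sided ideals. The direction (v)\,$\Longrightarrow$\,(vi) is already recorded in Theorem~\ref{unconditional} and holds without any restriction on the topology; in our setting it also admits a transparent direct argument, since for each $\I\in B$ the cyclic discrete right $\R$\+module $\R/\I$ is coperfect by~(v), and its cyclic $\R$\+submodules coincide with the principal right ideals of the abstract quotient ring $\R/\I$. Hence $\R/\I$ satisfies the descending chain condition on principal right ideals and is left perfect by Bass's Theorem~P.

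For the less routine direction (vi)\,$\Longrightarrow$\,(v), given a discrete right $\R$\+module $N$ and a descending chain $n_1\R\supseteq n_2\R\supseteq\dotsb$ of cyclic submodules of $N$, I would first select $\I\in B$ contained in the annihilator of~$n_1$. The key observation is that, because $\I$ is a \emph{two-sided} ideal, the whole submodule $n_1\R\subset N$ is annihilated by $\I$: indeed, for any $r\in\R$ and $i\in\I$ one has $ri\in\R\I\subset\I\subset\mathrm{ann}(n_1)$, whence $(n_1r)i=n_1(ri)=0$. Consequently the entire chain lies in $n_1\R$, which carries a natural structure of a right module over the discrete quotient ring $\R/\I$, and its cyclic right $\R$\+submodules coincide with its cyclic right $\R/\I$\+submodules. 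By hypothesis~(vi) the ring $\R/\I$ is left perfect; by Bass's Theorem~P it satisfies the descending chain condition on principal right ideals; and by the classical theorem of Bj\"ork (the ring-level strengthening of the module-level equivalence cited at the beginning of Section~\ref{countably-generated-secn} from~\cite[Theorem~2]{Bj}), every right module over such a ring is coperfect. Applied to the right $\R/\I$\+module $n_1\R$, this gives termination of the chain.

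The only step where the argument genuinely uses the two-sided-base assumption is the upgrade of $n_1\I=0$ to $(n_1\R)\I=0$; without two-sidedness one cannot confine an arbitrary descending chain of cyclic submodules of a discrete right $\R$\+module to a single discrete quotient $\R/\I$, and the reduction to the classical abstract-ring theorems of Bass and Bj\"ork is lost. So there is no serious obstacle: once a base of two-sided ideals is available, both directions of the equivalence reduce immediately to the standard ring-theoretic characterizations of left perfect rings.
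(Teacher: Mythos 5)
Your argument is correct, and it is the argument that the paper's proof (which consists only of the citation to~\cite[proof of Theorem~10.1]{Pproperf}) delegates to that reference: confine a descending chain of cyclic submodules of a discrete right module to the part annihilated by a single open two-sided ideal $\I$, so that it becomes a chain of cyclic submodules of a right module over the discrete quotient ring $\R/\I$, and then invoke the classical Bass--Bj\"ork fact that a ring is left perfect if and only if all its right modules are coperfect. One cosmetic remark on your direct argument for (v)\,$\Longrightarrow$\,(vi): condition~(vi) quantifies over \emph{all} open two-sided ideals, not only those in the chosen base $B$, but the same reasoning applies verbatim to any open two-sided ideal $\J$ (the cyclic discrete right $\R$\+module $\R/\J$ is coperfect by~(v), and its cyclic $\R$\+submodules are exactly the principal right ideals of the ring $\R/\J$); in any case this direction is already covered by Theorem~\ref{unconditional}.
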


\begin{proof}
 This is explained in~\cite[proof of Theorem~11.1]{Pproperf}.
\end{proof}

\bigskip


\begin{thebibliography}{99}
\smallskip

\bibitem{AR}
 J.~Ad\'amek, J.~Rosick\'y.
   Locally presentable  and accessible categories.
London Math.\ Society Lecture Note Series~189,
Cambridge University Press, 1994.

\bibitem{Ang}
 L.~Angeleri H\"ugel.
   Covers and envelopes via endoproperties of modules.
\textit{Proc.\ London Math.\ Soc.}\ \textbf{86}, \#3, p.~649--665, 2003.

\bibitem{AS}
 L.~Angeleri H\"ugel, M.~Saor\'\i n.
   Modules with perfect decompositions.
\textit{Math.\ Scand.}\ \textbf{98}, \#1, p.~19--43, 2006.

\bibitem{SGA4}
 M.~Artin, A.~Grothendieck, J.~L.~Verdier.
   Th\'{e}orie des topos et cohomologie \'{e}tale des sch\'{e}mas.
	 Tome 1: Théorie des topos.
S\'{e}minaire de G\'{e}om\'{e}trie Alg\'{e}brique du Bois-Marie 1963--1964 (SGA 4).
Avec la collaboration de N.~Bourbaki, P.~Deligne et B.~Saint-Donat.
Lecture Notes in Mathematics~269,
Springer-Verlag, Berlin-New York, 1972.

\bibitem{BaSch}
 P.~Balmer, M.~Schlichting,
   Idempotent completion of triangulated categories.
\textit{J.\ Algebra}\ \textbf{236}, \#2, p.~819--834, 2001.

\bibitem{Bas}
 H.~Bass.
   Finitistic dimension and a homological generalization of
semi-primary rings.
\textit{Trans.\ of the Amer.\ Math.\ Soc.}\ \textbf{95}, \#3,
p.~466--488, 1960. 

%\bibitem{BH}
% S.~Bazzoni, D.~Herbera.
%   One-dimensional tilting modules are of finite type.
%\textit{Algebras and Represent.\ Theory} \textbf{11}, \#1, p.~43--61,
%2008.

\bibitem{BP1}
 S.~Bazzoni, L.~Positselski.
   $S$\+almost perfect commutative rings.
\textit{Journ.\ of Algebra} \textbf{532}, p.~323--356, 2019.
\texttt{arXiv:1801.04820 [math.AC]}

\bibitem{BP2}
 S.~Bazzoni, L.~Positselski.
   Covers and direct limits: a contramodule-based approach.
\textit{Math.\ Zeitschrift} \textbf{299}, \#1--2, p.~1--52, 2021.
\texttt{arXiv:1907.05537 [math.CT]}

\bibitem{BPS}
 S.~Bazzoni, L.~Positselski, J.~{\v S}{\v{t}}ov{\'{\i}}{\v{c}}ek.
   Projective covers of flat contramodules.
\textit{Internat.\ Math.\ Research Notices}, published online at
\texttt{https://doi.org/10.1093/imrn/rnab202} in September~2021,
38~pp.  \texttt{arXiv:1911.11720 [math.RA]}

\bibitem{BS1}
 S.~Bazzoni, L.~Salce.
   Strongly flat covers.
\textit{Journ.\ of the London Math.\ Soc.}\ \textbf{66}, \#2,
p.~276--294, 2002.

\bibitem{BS2}
 S.~Bazzoni, L.~Salce.
   Almost perfect domains.
\textit{Colloquium Math.}\ \textbf{95}, \#2, p.~285--301, 2003.

\bibitem{Bj}
 J.-E.~Bj\"ork.
   Rings satisfying a minimum condition on principal ideals.
\textit{Journ.\ f\"ur die Reine und Angewandte Math.}\ \textbf{236},
p.~112--119, 1969.

\bibitem{CN}
 V.~P.~Camillo, P.~P.~Nielsen.
   Half-orthogonal sets of idempotents.
\textit{Trans.\ of the Amer.\ Math.\ Soc.}\ \textbf{368}, \#2,
p.~965--987, 2016.

\bibitem{Cor}
 A.~L.~S.~Corner.
   On the exchange property in additive categories.
Unpublished manuscript, 1973, 60~pp.

\bibitem{CS}
 P.~{\v C}oupek, J.~{\v S}{\v{t}}ov{\'{\i}}{\v{c}}ek.
   Cotilting sheaves on Noetherian schemes.
\textit{Math.\ Zeitschrift} \textbf{296}, \#1--2, p.~275--312,
2020. \texttt{arXiv:1707.01677 [math.AG]}

\bibitem{CJ}
 P.~Crawley, B.~J\'onsson.
   Refinements for infinite direct decompositions of algebraic
systems.
\textit{Pacific Journ.\ of Math.} \textbf{14}, \#3, p.~797--855,
1964.

\bibitem{CB}
 W.~Crawley-Boevey.
   Locally finitely presented additive categories.
\textit{Communicat.\ in Algebra} \textbf{22}, \#5, p.~1641--1674, 1994.

\bibitem{EGT}
 P.~C.~Eklof, K.~R.~Goodearl, J.~Trlifaj.
   Dually slender modules and steady rings.
\textit{Forum Math.}\ \textbf{1997}, \#1, p.~61--74, 1997.

\bibitem{Fac}
 A.~Facchini.  
   Module theory. Endomorphism rings and direct sum decompositions
in some classes of modules.
Progress in Mathematics, 167/Modern Birkh\"auser Classics,
Birkh\"auser/Springer Basel, 1998--2012.

\bibitem{Fac2}
 A.~Facchini.
   Semilocal categories and modules with semilocal endomorphism rings.
Progress in Mathematics, 331, Birkh\"auser/Springer Nature
Switzerland, 2019.

\bibitem{FN}
 A.~Facchini, Z.~Nazemian.
   Covering classes, strongly flat modules, and completions.
\textit{Math.\ Zeitschrift} \textbf{296}, \#1--2, p.~239--259, 2020.
\texttt{arXiv:1808.02397 [math.RA]}

\bibitem{FP}
 A.~Facchini, C.~Parolin.
   Rings whose proper factors are right perfect.
\textit{Colloquium Math.}\ \textbf{122}, \#2, p.~191--202, 2011.
\texttt{arXiv:1005.4168 [math.RA]}

\bibitem{Gab}
 P.~Gabriel.
   Des cat\'egories ab\'eliennes.
\textit{Bulletin de la Soc.\ Math.\ de France} \textbf{90}, \#3,
p.~323--448, 1962.

\bibitem{GO}
 P.~Gabriel, U.~Oberst.
   Spektralkategorien und regul\"are Ringe im von-Neumannschen Sinn.
\textit{Math.\ Zeitschrift} \textbf{92}, \#5, p.~389--395, 1966.

\bibitem{GH}
 S.~Givant, P.~Halmos.
   Introduction to Boolean algebras.
Undergraduate Texts in Mathematics, Springer, 2009.

\bibitem{GT}
 R.~G\"obel, J.~Trlifaj.
   Approximations and endomorphism algebras of modules.
Second Revised and Extended Edition.
De Gruyter Expositions in Mathematics 41,
De Gruyter, Berlin--Boston, 2012.

\bibitem{Goo}
 K.~R.~Goodearl.
   Von~Neumann regular rings.
Monographs and Stidies in Mathematics, 4.
 Pitman, 1979.

\bibitem{GB}
 K.~R.~Goodearl, A.~K.~Boyle.
   Dimension theory for nonsingular injective modules.
\textit{Memoirs of the Amer.\ Math.\ Soc.}\ \textbf{7}, \#177, 1976.

\bibitem{GG}
 J.~L.~G\'omez Pardo, P.~A.~Guil Asensio.
   Big direct sums of copies of a module have well
behaved indecomposable decompositions.
\textit{Journ. of Algebra} \textbf{232}, \#1, p.~86--93, 2000.

\bibitem{GG2}
 J.~L.~G\'omez Pardo, P.~A.~Guil Asensio.
   Fitting's lemma for modules with well-behaved clones.
``Rings, modules and representations'', Internat.\ Conference on
Rings and Things in Honor of C.~Faith and B.~Osofsky, June~2007,
Contemporary Math., 480, AMS, Providence, 2009, p.~153--163. 

\bibitem{Har}
 M.~Harada.
   Factor categories with applications to direct decompositions
of modules.
\textit{Lecture Notes in Pure and Applied Math.}\ 88, Marcel Dekker,
New York, 1983.

\bibitem{Harr}
 D.~K.~Harrison.
   Infinite abelian groups and homological methods.
\textit{Annals of Math.}\ \textbf{69}, \#2, p.~366--391, 1959.

\bibitem{IMR}
 M.~C.~Iovanov, Z.~Mesyan, M.~L.~Reyes.
   Infinite-dimensional diagonalization and semisimplicity.
\textit{Israel Journ.\ of Math.}\ \textbf{215}, \#2, p.~801--855,
2016.  \texttt{arXiv:1502.05184 [math.RA]}

\bibitem{Kra}
 H.~Krause.
   Functors on locally finitely presented additive categories.
\textit{Colloquium Math.} \textbf{75}, \#1, p.~105--132, 1998.

\bibitem{Mat}
 E.~Matlis.
   Cotorsion modules.
\textit{Memoirs of the Amer.\ Math.\ Soc.}\ \textbf{49}, 1964.

\bibitem{Mat2}
 E.~Matlis.
   $1$\+dimensional Cohen--Macaulay rings.
\textit{Lecture Notes in Math.}\ \textbf{327}, Springer, 1973.

\bibitem{MM}
 S.~H.~Mohamed, B.~M\"uller.
   Continuous and discrete modules.
London Math.\ Soc.\ Lecture Note Series, 147, Cambridge University
Press, 1990.

\bibitem{MM2}
 S.~H.~Mohamed, B.~M\"uller.
   $\aleph$\+exchange rings.
``Abelian groups, module theory, and topology'',
Proceedings of internat.\ conference in honour of A.~Orsatti's
60th birthday (Padova, 1997), \textit{Lecture Notes in Pure and Appl.\
Math.} 201, Marcel Dekker, New York, 1998, p.~311--137.

\bibitem{Mit}
 B.~Mitchell.
   Theory of categories.
Pure and Applied Mathematics, 17.  Academic Press, 1965.

\bibitem{Pweak}
 L.~Positselski.
   Weakly curved A${}_\infty$-algebras over a topological local ring.
\textit{M\'emoires de la Soci\'et\'e Math\'ematique de France}
\textbf{159}, 2018.  vi+206~pp.  \texttt{arXiv:1202.2697 [math.CT]}

\bibitem{Prev}
 L.~Positselski.
   Contramodules.
\textit{Confluentes Math.}\ \textbf{13}, \#2, p.~93--182, 2021.
\texttt{arXiv:1503.00991 [math.CT]}

\bibitem{PMat}
 L.~Positselski.
   Triangulated Matlis equivalence.
\textit{Journ.\ of Algebra and its Appl.}\ \textbf{17}, \#4,
article ID~1850067, 2018.  \texttt{arXiv:1605.08018 [math.CT]}

\bibitem{Pper}
 L.~Positselski.
   Abelian right perpendicular subcategories in module categories.
Electronic preprint \texttt{arXiv:1705.04960 [math.CT]}.

\bibitem{Pproperf}
 L.~Positselski.
   Contramodules over pro-perfect topological rings.
\textit{Forum Mathematicum} \textbf{34}, \#1, p.~1--39, 2022.
\texttt{arXiv:1807.10671 [math.CT]}

\bibitem{Pcoun}
 L.~Positselski.
   Flat ring epimorphisms of countable type.
\textit{Glasgow Math.\ Journ.} \textbf{62}, \#2, p.~383--439, 2020.
\texttt{arXiv:1808.00937 [math.RA]}

\bibitem{PPT}
 L.~Positselski, P.~P\v r\'\i hoda, J.~Trlifaj.
   Closure properties of $\varinjlim\mathcal{C}$.
\textit{Journ.\ of Algebra} (2022), DOI:10.1016/j.jalgebra.2022.04.029. 
\texttt{arXiv:2110.13105 [math.RA]}

\bibitem{PR}
 L.~Positselski, J.~Rosick\'y.
   Covers, envelopes, and cotorsion theories in locally presentable
abelian categories and contramodule categories.
\textit{Journ.\ of Algebra} \textbf{483}, p.~83--128, 2017.
\texttt{arXiv:1512.08119 [math.CT]}

\bibitem{PS}
 L.~Positselski, J.~{\v S}{\v{t}}ov{\'{\i}}{\v{c}}ek.
   The tilting-cotilting correspondence.
\textit{Internat.\ Math.\ Research Notices} \textbf{2021}, \#1,
p.~189--274, 2021.  \texttt{arXiv:1710.02230 [math.CT]} 

\bibitem{PS2}
 L.~Positselski, J.~{\v S}{\v{t}}ov{\'{\i}}{\v{c}}ek.
   $\infty$-tilting theory.
\textit{Pacific Journ.\ of Math.} \textbf{301}, \#1,
p.~297--334, 2019.  \texttt{arXiv:1711.06169 [math.CT]}

\bibitem{PSab5}
 L.~Positselski, J.~{\v S}{\v{t}}ov{\'{\i}}{\v{c}}ek.
   Exactness of direct limits for abelian categories with
an injective cogenerator.
\textit{Journ.\ of Pure and Appl.\ Algebra} \textbf{223},
\#8, p.~3330--3340, 2019.  \texttt{arXiv:1805.05156 [math.CT]}

\bibitem{Sm}
 J.~R.~Smith.
   Local domains with topologically $T$\+nilpotent radical.
\textit{Pacific Journ.\ of Math.}\ \textbf{30}, \#2, p.~233--245,
1969.

\bibitem{R-MO}
 J.~Rickard
(\texttt{https://mathoverflow.net/users/22989/jeremy-rickard}).
 Name for abelian category in which every short exact sequence
splits.
 A comment in the discussion at the MathOverflow question
\texttt{https://mathoverflow.net/q/327944}.

\bibitem{Ro}
 J.-E.~Roos.
   Locally Noetherian categories and generalized strictly linearly
compact rings.  Applications.
\textit{Category Theory, Homology Theory, and their Applications, II},
Springer, Berlin, 1969, p.~197--277.

\bibitem{St}
 B.~Stenstr\"om.
   Rings of quotients.  An introduction to methods of ring theory.
Springer-Verlag, Berlin--Heidelberg--New York, 1975. 

\bibitem{War}
 S.~Warner.
   Topological rings.
North-Holland Mathematics Studies, 178, North-Holland Publishing Co.,
Amsterdam, 1993.

\bibitem{Wis}
 R.~Wisbauer.
   Foundations of Module and Ring Theory:
A Handbook for Study and Research.
Gordon and Breach Science Publishers, Reading, 1991. 

\bibitem{Zem}
 J.~\v Zemli\v cka.
   Classes of dually slender modules.
Proceedings of the Algebra Symposium, Cluj, 2005, Editura Efes,
Cluj-Napoca, 2006, p.~129--137.

\end{thebibliography}
\end{document}